\documentclass[11pt,a4paper]{article}

\usepackage{natbib}

\usepackage{apalike}
\usepackage{comment}
\usepackage{lipsum}
\usepackage{microtype}
\usepackage{subcaption}

\usepackage[font=small,labelfont=bf]{caption}

\usepackage{setspace}
\usepackage{graphicx}
\graphicspath{{./figures}}
\usepackage{bbm}
\usepackage{verbatim}
\usepackage{multicol,latexsym, array}
\usepackage{comment}
\usepackage{textcomp}
\usepackage[ansinew]{inputenc}
\usepackage{amsmath,amssymb, amsthm}
\usepackage[OT1]{fontenc}
\usepackage[english]{babel}
\usepackage{hyperref}

\hypersetup{
   colorlinks,
   linkcolor={blue},
   citecolor={blue},
   urlcolor={blue}
}

\usepackage{enumitem}
\usepackage{MnSymbol} 
\usepackage{empheq}
\usepackage{rotating}
\usepackage{titletoc}
\usepackage{framed}
\usepackage{multirow,bigdelim}
\usepackage{bbm}
\usepackage[noend]{algpseudocode}
\usepackage{algorithm}
\usepackage{pgf}
\usepackage{pgfplots}
\usepackage{tikz}
\pgfplotsset{compat=1.17}
\usepackage{shadethm}
\usepackage{thmtools}
\usepackage{mdframed}
\usepackage{lipsum}
\usepackage{fixmath}  
\usepackage{multicol}
\usepackage{footmisc}

\let\oldXi\Xi 
\renewcommand{\Xi}{\mathrm{\oldXi}}
\let\oldLambda\Lambda 
\renewcommand{\Lambda}{\mathrm{\oldLambda}}
\let\oldPi\Pi 
\renewcommand{\Pi}{\mathrm{\oldPi}}
\let\oldPsi\Psi 
\renewcommand{\Psi}{\mathrm{\oldPsi}}

\usepackage{caption}
\captionsetup[table]{skip=10pt}

\usepackage{xargs}  
\usepackage{xcolor} 

\definecolor{DarkGray}{RGB}{90,90,90}

\usepackage[colorinlistoftodos,prependcaption,textsize=tiny]{todonotes}
\usepackage{cleveref}

\date{\today}

\newenvironment{ackno}%
    {
    \paragraph{Acknowledgements:} 
    }

\usepackage{caption}
\usepackage{mathrsfs}
\usetikzlibrary{arrows}

\usepackage{xpatch}

\makeatletter
\xpatchcmd{\endmdframed}
  {\aftergroup\endmdf@trivlist\color@endgroup}
  {\endmdf@trivlist\color@endgroup\@doendpe}
  {}{}
\makeatother

\usepackage{nicefrac}
\usepackage{dsfont}

\usepackage{bigints}
\usepackage{mathtools}

\usepackage{mathtools}
\usepackage{color}
\usepackage{fancyhdr}
\usepackage{lastpage}

\setcounter{tocdepth}{2} 

\usepackage{ifthen}

\newcommand{\OT}{\mathrm{OT}}

\newcommand{\KR}{\mathrm{KR}}

\newcommand{\BB}{\mathbb{B}}

\newcommand{\EE}{\mathbb{E}}

\newcommand{\NN}{\mathbb{N}}

\newcommand{\PP}{\mathbb{P}}

\newcommand{\RR}{\mathbb{R}}

\newcommand{\ZZ}{\mathbb{Z}}

\newcommand{\MCsX}{\mathcal{M}(\XC)}

\newcommand{\Poi}{\textup{Poi}}
\newcommand{\mun}{\hat{\mu}_t}
\newcommand{\nun}{\hat{\nu}_t}

\newcommand{\BC}{\mathcal{B}}
\newcommand{\CC}{\mathcal{C}}
\newcommand{\DC}{\mathcal{D}}

\newcommand{\IC}{\mathcal{I}}

\newcommand{\LC}{\mathcal{L}}
\newcommand{\MC}{\mathcal{M}}
\newcommand{\NC}{\mathcal{N}}

\newcommand{\PC}{\mathcal{P}}

\newcommand{\TC}{\mathcal{T}}

\newcommand{\XC}{\mathcal{X}}

\newcommand{\ZC}{\mathcal{Z}}

\newcommand{\norm}[1]{\left\lVert#1\right\rVert}

\newcommand{\Leb}{\mathrm{Leb}}

\newcommand{\id}{\,\mathrm{id}}
\newcommand{\dif}{\, d}  

\newcommand{\supp}{\mathrm{supp}}

\newcommand{\Cov}{\operatorname{Cov}}

\newcommand{\Var}{\mathrm{Var}}

\newcommand{\iid}{\stackrel{\!\mathrm{i.i.d.}}{\sim}\!}

\newcommand{\eps}{\varepsilon}
\renewcommand{\phi}{\varphi}


\newcommand{\Eta}{\mathrm{H}}
\newcommand{\Zeta}{\mathrm{Z}}
\DeclareMathOperator{\pop}{Pop}
\newcommand{\temp}{temp} 
\newcommand{\spat}{spat} 
\newcommand{\eqinlaw}{\stackrel{\mathcal{D}}{=}}
\newcommand{\XCbar}{\bar{\XC}}

\newcommand{\Mbar}{\bar{M}}
\newcommand{\gbar}{\bar{g}}
\newcommand{\nubar}{\bar{\nu}}

\newcommand{\tgz}{\TC_{>0}}
\newcommand{\tgeqz}{\TC_{\geq 0}}


\newcommand{\coloneqq}{:=}
\newcommand{\eqqcolon}{=:}

\newcommand{\U}{\mathrm{D}}

\newcommand{\RNum}[1]{\uppercase\expandafter{\romannumeral #1\relax}}

\newcommand{\diam}{\mathrm{diam}}

\newcommand{\konvD}{\xrightarrow{\;\;w\;\;}}

 \newcommand{\konvP}{\xrightarrow{\;\;p\;\;}}

\makeatletter
\newcommand{\labitem}[2]{%
	\def\@itemlabel{\textbf{#1}}
	\item
	\def\@currentlabel{#1}\label{#2}}
\makeatother

\newcommand{\KL}[1]{\mathrm{KL}(#1)} 
\renewcommand*{\epsilon}{\varepsilon}

\DeclareMathOperator*{\argmin}{\mathrm{argmin}}

\theoremstyle{plain}
\newtheorem{theorem}{Theorem}[section]
\newtheorem{corollary}[theorem]{Corollary}
\newtheorem{lemma}[theorem]{Lemma}
\newtheorem{proposition}[theorem]{Proposition}
\newtheorem*{theorem*}{Theorem}

\theoremstyle{definition}
\newtheorem{definition}[theorem]{Definition}
\newtheorem{remark}[theorem]{Remark}
\newtheorem{example}[theorem]{Example}
\newtheorem*{example*}{Example}

\makeatletter
\def\mysequence#1{\expandafter\@mysequence\csname c@#1\endcsname}
\def\@mysequence#1{%
  \ifcase#1\or (A)\or (B)\or (C)\or (D)\else\@ctrerr\fi}
\makeatother

\makeatletter
\newcommand{\mylabel}[2]{#2\def\@currentlabel{#2}\label{#1}}
\makeatother

\setlength\parindent{15pt}
\numberwithin{equation}{section}

\newcommand{\footremember}[2]{
	\footnote{#2}
	\newcounter{#1}
	\setcounter{#1}{\value{footnote}}
}

\newcommand{\footrecall}[1]{
	\footnotemark[\value{#1}]
}

\makeatletter


\xdefinecolor{dgreenish}{rgb}{0,0.5,0.2}

\hyphenation{Lip-schitz}

\newif\ifexclude
\excludefalse  

\usepackage{calc}
\usepackage{accents}

\usepackage
[
        a4paper,
        left=3cm,
        right=3cm,
        top=3cm,
        bottom=3cm
]
{geometry}

\begin{document}
  \title{Sharp Convergence Rates of Empirical Unbalanced Optimal Transport for Spatio-Temporal Point Processes}
  
\author{Marina Struleva
  \hspace{-0.3em}\footremember{equalcontr}{These authors contributed equally}
  \hspace{-0.7em}\footremember{ims}{\scriptsize
    Institute for Mathematical
		Stochastics, University of G\"ottingen,
		Goldschmidtstra{\ss}e 7, 37077 G\"ottingen, Germany}%
	\\
  \footnotesize{\href{mailto:marina.struleva@uni-goettingen.de}{marina.struleva@uni-goettingen.de}}
  \\[2ex]
	Shayan Hundrieser
  \hspace{-0.3em}\footrecall{equalcontr}
  \hspace{-0.3em}\footrecall{ims}\footremember{dam}{\scriptsize
   Department of Applied Mathematics, University of Twente, Drienerlolaan 5, 7522 NB Enschede, The Netherlands}%
	\\
  \footnotesize{\href{mailto:s.hundrieser@utwente.nl}{s.hundrieser@utwente.nl}}
  \\[2ex]
  Dominic Schuhmacher
  \hspace{-0.3em}\footrecall{ims}%
	\\
  \footnotesize{\href{mailto:schuhmacher@math.uni-goettingen.de}{schuhmacher@math.uni-goettingen.de}}
  \\[2ex]
	Axel Munk
  \hspace{-0.3em}\footrecall{ims}%
  \hspace{-0.05em}\footremember{mbexc}{\scriptsize
    Cluster of Excellence "Multiscale Bioimaging: from Molecular Machines to Networks of Excitable Cells" (MBExC),
    University Medical Center,
    Robert-Koch-Stra{\ss}e 40, 37075 G\"ottingen, Germany}
	\\
  \footnotesize{\href{mailto:munk@math.uni-goettingen.de}{munk@math.uni-goettingen.de}}
}

\pagenumbering{arabic}

\maketitle

\begin{abstract}
\noindent We statistically analyze empirical plug-in estimators for unbalanced optimal transport (UOT) formalisms, focusing on the Kantorovich--Rubinstein distance, between general intensity measures based on observations from spatio-temporal point processes. Specifically, we model the observations by two weakly time-stationary point processes with spatial intensity measures $\mu$ and $\nu$ over the expanding window $(0,t]$ as $t$ increases to infinity, and establish sharp convergence rates of the empirical UOT in terms of the intrinsic dimensions of the measures. We assume a sub-quadratic temporal growth condition of the variance of the process, which allows for a wide range of temporal dependencies. As the growth approaches quadratic, the convergence rate becomes slower. This variance assumption is related to the time-reduced factorial covariance measure, and we exemplify its validity for various point processes, including the Poisson cluster, Hawkes, Neyman--Scott and log-Gaussian Cox processes. Complementary to our upper bounds, we also derive matching lower bounds for various spatio-temporal point processes of interest and establish near minimax rate optimality of the empirical Kantorovich--Rubinstein distance.
\end{abstract}
\vspace{0.5cm}
\noindent \textit{Keywords}: Kantorovich--Rubinstein distance, Wasserstein distance, minimax optimality, Poisson point process, Hawkes process, Cox process
\vspace{0.5cm}

\noindent \textit{MSC 2020 subject classification}: primary 62G05, 62G07, 62R20; secondary: 60D05, 60G60 



\pagenumbering{Roman}

\newpage
\tableofcontents

\newpage
\pagenumbering{arabic}
\section{Introduction}
\subsection{From balanced to unbalanced optimal transport} \label{sec:intro_uots}
\emph{Optimal transport (OT)}, also called the Monge-Kantorovich transport, aims to transform a given probability distribution into another given one while minimizing the average transformation cost. Origins of the OT problem date back to the works of \cite{monge} and \cite{kant1942_original}; see also \cite{rachev1998massTheory, rachev1998massApplications, vil03, villani2008optimal, santambrogio2015optimal, bobkov2019one, chewi2024statistical} for comprehensive treatment and different aspects of the topic. 
Applications of this concept span across machine learning \citep{courty2016optimal, arjovsky2017wasserstein, altschuler2017near, dvurechensky2018computational, sommerfeld19FastProb},  computational biology \citep{Schiebinger19,tameling2021Colocalization,wang2021optimal, bunne2023learning} and statistics \citep{Munk98,del1999tests,evans2012phylogenetic, sommerfeld2018, panaretos2019statistical, hallin2021distribution}. 

To formalize the corresponding optimization problem, throughout this paper we consider two probability measures $\mu, \nu \in \PC(\XC)$ on a \emph{complete separable metric space (c.s.m.s.)} $\XC$ and let $c: \XC\times \XC \to [0, \infty)$ be a continuous function, which we refer to as cost function. Then, the \emph{optimal transport cost} between $\mu$ and $\nu$  is defined as  
\begin{equation} \label{eq:def_OT}
    \OT_c(\mu, \nu) \coloneqq \inf_{\pi\in \Pi_{=}(\mu, \nu)} \int_{\XC\times\XC} c(x_1, x_2) \pi(d x_1, \dif x_2), 
\end{equation}
where $\Pi_{=}(\mu, \nu)$ is the collection of Borel measures $\pi$ on $\XC\times \XC$ with marginals $\mu$ and $\nu$, termed balanced transport plans between $\mu$ and $\nu$. We stress that  the subscript ``$=$'' in $\Pi_{=}$ attributes to the fact that the mass is assumed to be preserved in \eqref{eq:def_OT}, i.e., for any two Borel sets $A,B\in\BC(\XC)$ from the Borel $\sigma$-field on $\XC$ it holds 
$$
    \pi(A\times \XC) = \mu(A) \quad \text{and} \quad  \pi(\XC\times B) = \nu(B).
$$
It is well known that an optimizer to \eqref{eq:def_OT} always exists and is termed \emph{optimal transport plan} \citep{villani2008optimal}.
By taking the underlying ground cost as a power of a metric $d:\XC\times\XC \to [0, \infty)$ on $\XC$, i.e., $c(x,y) = d^p(x,y) $ for $p\geq 1$, the OT cost serves to define the \emph{Wasserstein-$p$ metric} \citep{vaserstein1969markov},
\begin{align}\label{eq:WassersteinDistance}
    W_p(\mu, \nu) := \OT_{d^p}^{1/p}(\mu, \nu) = \inf_{\pi\in \Pi_{=}(\mu, \nu)} \int_{\XC\times\XC} d^p(x_1, x_2) \pi(d x_1, \dif x_2),
\end{align}
also known as Kantorovich distance \citep{vershik2006kantorovich}, which is a metric on the space of  probability measures with finite $p$-th moment. In particular, Wasserstein distances are faithful to the geometry of the underlying space and are highly discriminative: according to \cite{savare2022simple}, $W_p^p$ is the largest functional on the space of probability measures which is jointly convex and fulfills $W_p(\delta_x, \delta_y) = d(x,y)$. 

For the definition of the OT cost, it is crucial that the measures $\mu$ and $\nu$ admit identical (finite) total mass (which then can be assumed to be 1); otherwise, the set $\Pi_{=}(\mu, \nu)$ is empty. However, for many practical applications, the assumption of equal masses is often not appropriate, while the concept of mass transfer is still appealing, in principle. To exemplify, we mention image classification \citep{rubner1997earth,pele2008linear, chizat16, lee2019parallel}, where the total intensities of the images might be different. Similar issues arise when processing brain activity scans \citep{gramfort2015fast}, studying radiation patterns of collider events \citep{komiske2019metric,manole2022background},  and tracking dynamics of growing cells \citep{schiebinger2017reconstruction}.

A simple way to extend the OT cost for measures $\mu$ and $\nu$ with different total masses is to normalize the measures to $\tilde \mu = \mu/\mu(\XC)$ and $\tilde \nu = \nu/\nu(\XC)$ and to consider the OT cost thereof. However, this procedure changes the nature of the OT problem. For example, given two point clouds of different sizes, it is often conceptually preferable to match points one-to-one, leaving some points of the larger or both clouds unmatched. This is especially the case if the points represent individual objects (molecules, nuclei of cells, trees, etc.) or events (times of neural spikes, locations of crimes, earthquakes, etc.) and OT-like distances based on such sub-matchings have a long-standing tradition in this context  \citep{victor1997metric, schuhmacher2008consistent, diez2012algorithms, muller2020metrics, sukegawa2024computing}. The renormalization approach, on the other hand, inevitably leads to an OT plan which lacks the structure of a matching and destroys the natural meaning of points as individual entities by redistributing mass between them, see Figure~\ref{fig:UOTmatching}(a) versus (b\,--\,d). 

The situation is similar in the semi-discrete setting, in particular when comparing a point cloud to a measure with density with respect to Lebesgue measure; 
see the introduction of \cite{bourne2018semi} and their concrete example in Figure 7.

The conceptual drawbacks of the renormalization approach have led to various extensions of OT, of which we only provide a selective overview below. 
To keep the presentation concise, this work is mainly concerned with the $(p,C)$-Kantorovich--Rubinstein distance (KRD) (for extensions, see \Cref{sec:other_UOTs}), which was recently introduced by \cite{heinemann2023kantorovich} for parameters $p\geq 1$ and $C>0$ and forms a natural extension of the unbalanced OT formalism by \cite{KantorovichRubinstein1958}. While the work by \cite{heinemann2023kantorovich} only formally introduces the KRD for finite discrete measures, for our purposes, we have to extend their approach to general finite measures on a c.s.m.s.\ $(\XC,d)$. To this end, we denote the space of finite measures on $\XC$ by $\MC(\XC)$, i.e., $m_{\mu}\coloneqq \mu(\XC)< \infty$ for all $\mu\in \MC(\XC)$.

\begin{figure}[!t]
  \begin{subfigure}{100pt}
    \centering
    \includegraphics[width=100pt]{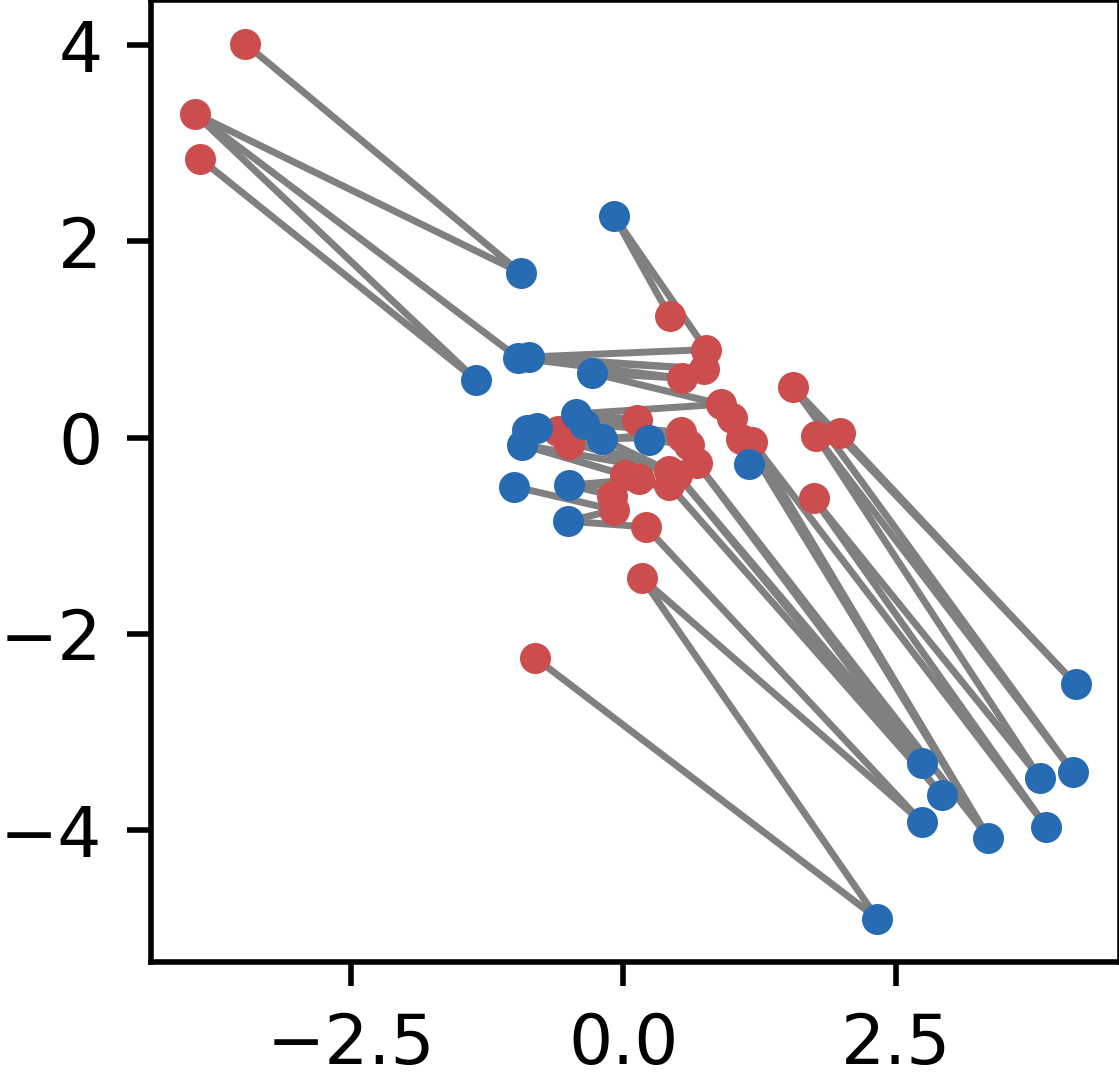}\vspace{0.1cm}
    \caption{Normalized $W_2$-plan}
  \end{subfigure}
	\begin{subfigure}{100pt}
  \centering
  \includegraphics[width=100pt]{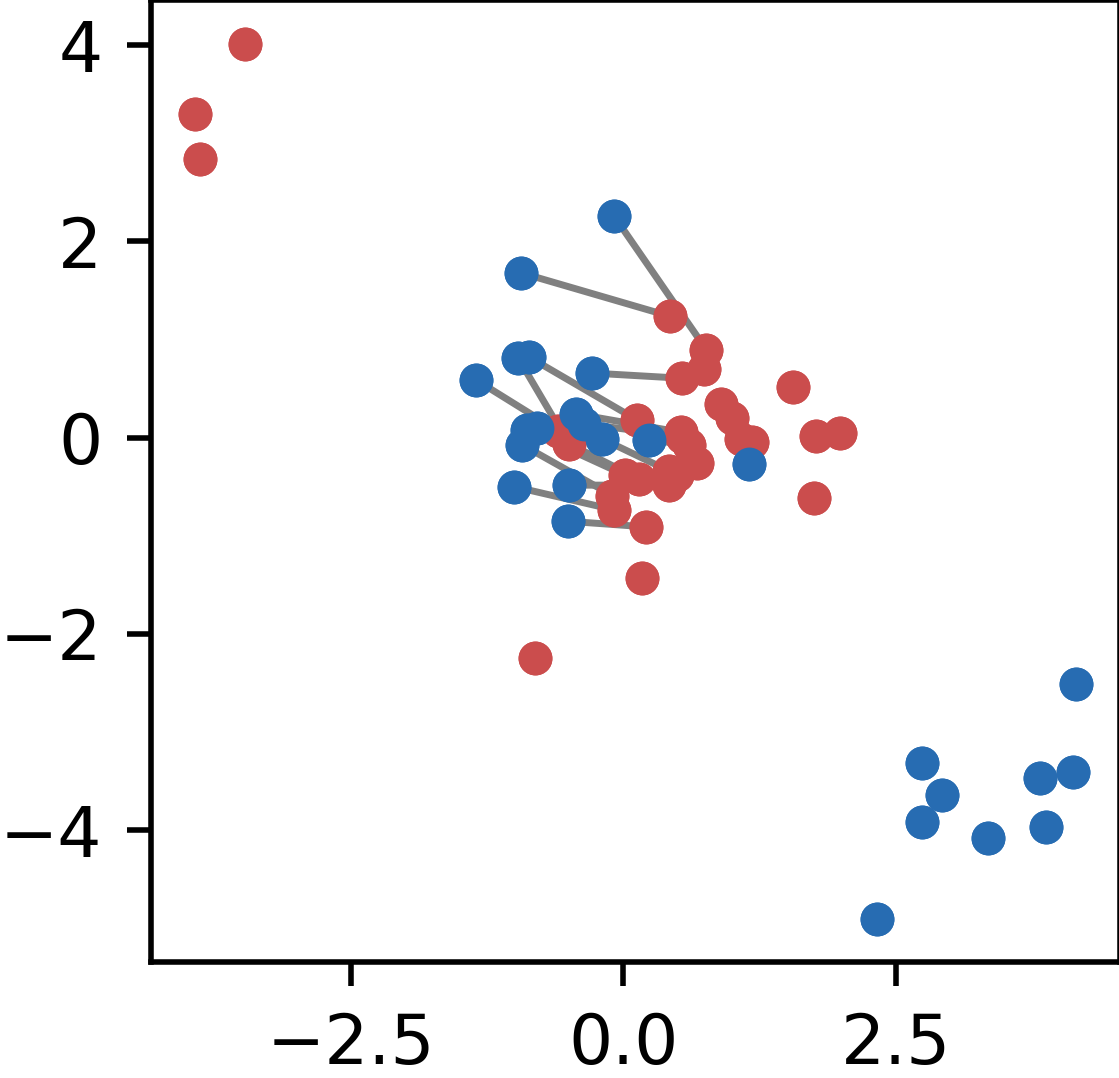}\vspace{0.1cm}
  \caption{$\KR_{2,2}$-plan}
\end{subfigure}
\begin{subfigure}{100pt}
  \centering
  \includegraphics[width=100pt]{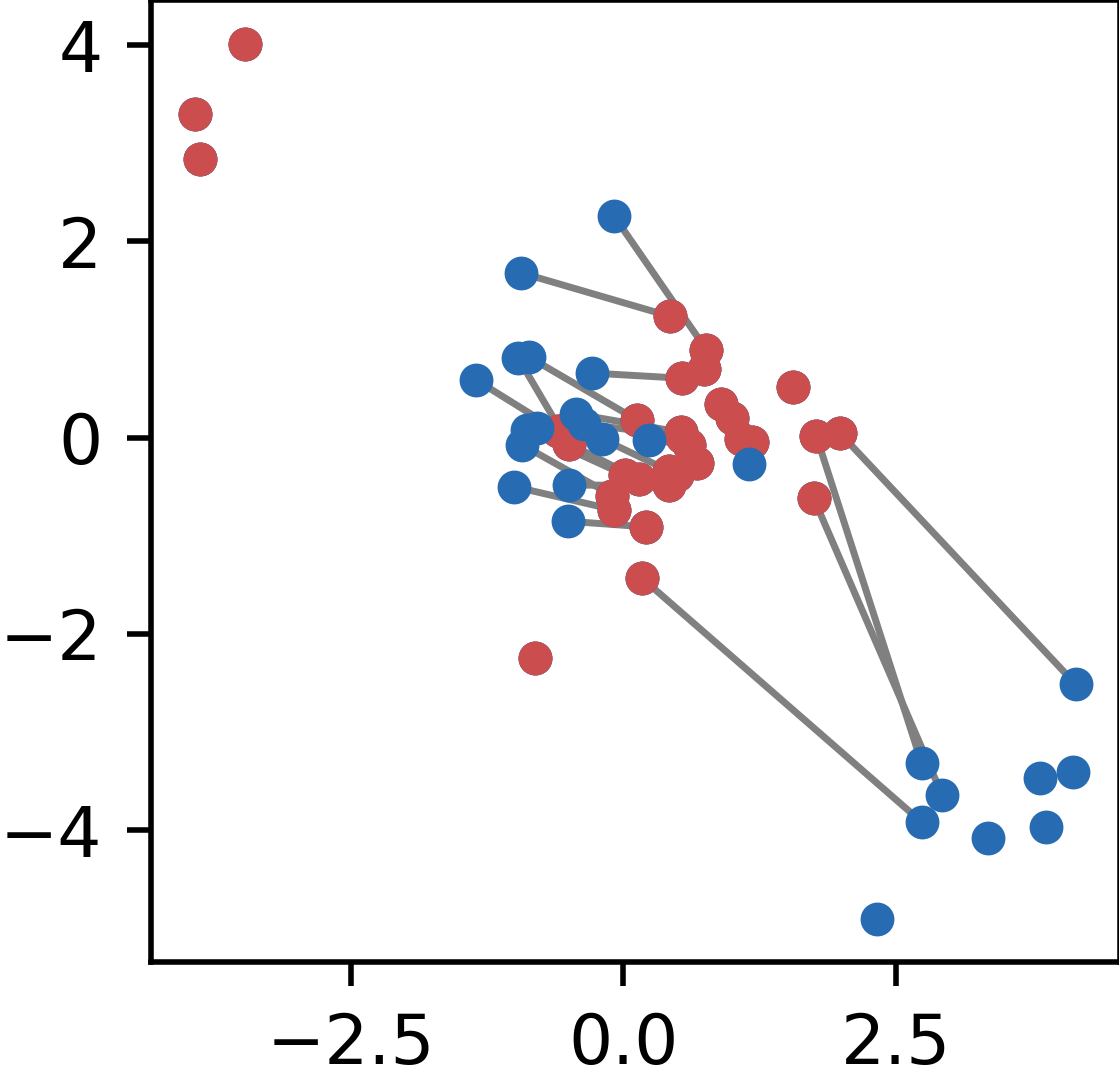}\vspace{0.1cm}
  \caption{$\KR_{2,4}$-plan}
\end{subfigure}
\begin{subfigure}{100pt}
  \centering
  \includegraphics[width=100pt]{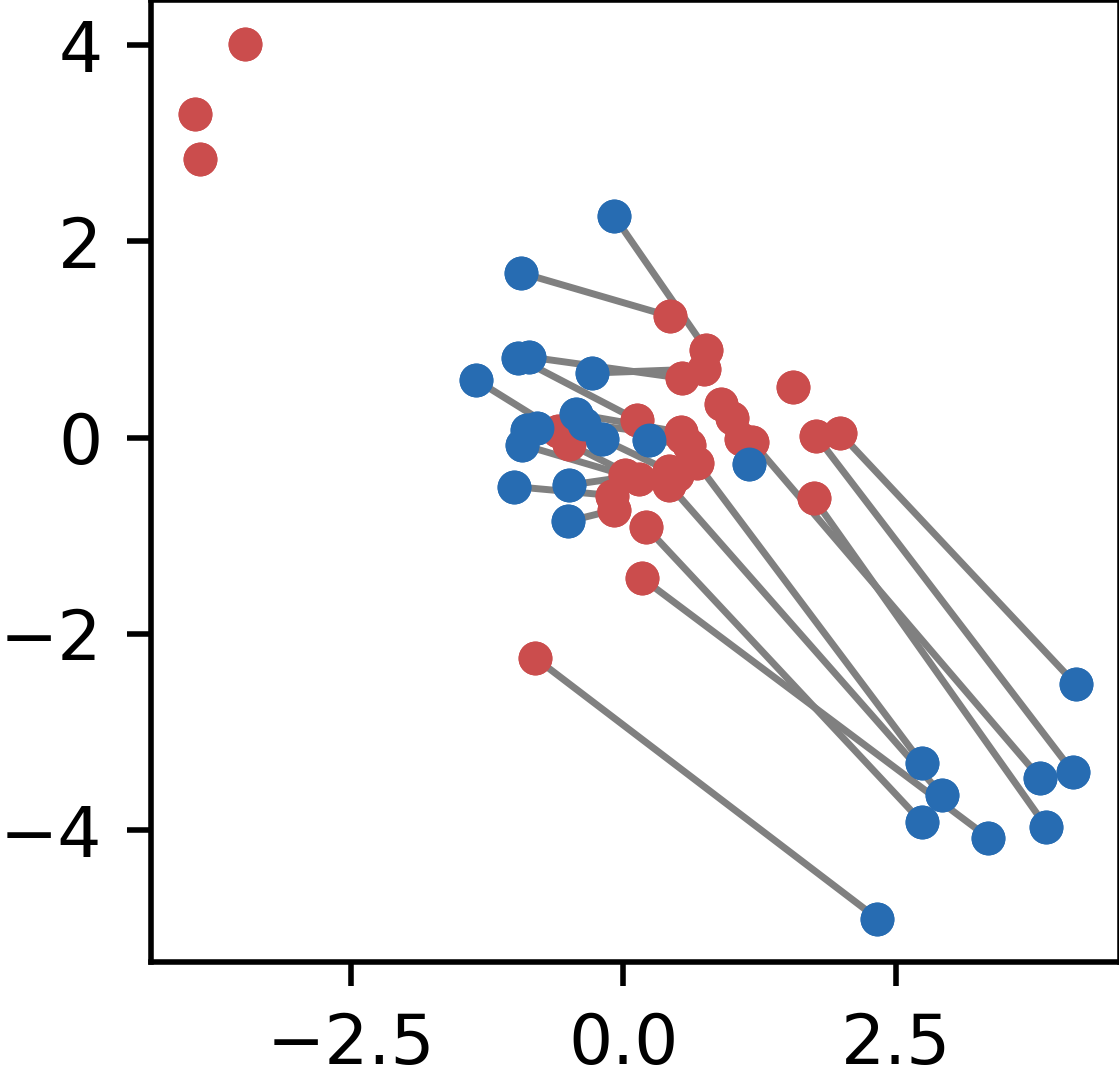}\vspace{0.1cm}
  \caption{$\KR_{2,6}$-plan}
\end{subfigure}
\caption{Optimal plans for balanced and unbalanced optimal transport between two point clouds (32 red and 26 blue points).  The presence of gray lines connecting two points indicates that mass is transported between them. \textbf{(a)}: Optimal transport plan  according to $2$-Wasserstein distance between red and blue points, each weighted with mass $1/32$ and $1/26$, respectively. \textbf{(b\,--\,d)}: Unbalanced optimal transport plan according to $(2,C)$-Kantorovich--Rubinstein plan (\Cref{def:UOT}) between red and blue points, each weighted with unit mass, with mass penalization parameter chosen as $C = 2$ (left), $C= 4$ (middle), and $C = 6$ (right).}
    \label{fig:UOTmatching}
\end{figure}

\begin{definition} \label{def:UOT}
For finite measures $\mu, \nu \in \MC(\XC)$ and parameters $p \geq 1, \ C > 0$ the $(p,C)$-\emph{Kantorovich--Rubinstein-distance} (KRD) between $\mu$ and $\nu$ is defined by 
\begin{equation} \label{eq:def_UOT}
    \KR_{p, C}(\mu, \nu) \coloneqq \left(\inf_{\pi\in \Pi_{\leq}(\mu, \nu)} \int_{\XC\times \XC} d^p(x_1,x_2) \pi(d x_1, \dif x_2) + C^p \left(\frac{m_\mu + m_\nu}{2} - m_\pi \right)\right)^{1/p}, 
\end{equation}
where the subscript ``$\leq$'' in $\Pi_{\leq}$ indicates that the infimum is taken over all \emph{sub-couplings} $\pi$ of $\mu$ and $\nu$, that is, the set
$$
    \Pi_{\leq}(\mu, \nu) \coloneqq \left\{\pi \in  \MC(\XC^2) \;:\; \pi(A \times \XC)\leq \mu(A), \pi(\XC\times B)\leq \nu(B) \text{ for all } A, B \in \BC(\XC)\right\}.
$$ 
\end{definition}
The optimization problem in \eqref{eq:def_UOT} is strongly reminiscent of that in the definition of the Wasserstein distance \eqref{eq:WassersteinDistance} with the key difference that the set of feasible solutions $\Pi_{\leq}(\mu, \nu)$ now additionally contains measures with less mass than $\mu$ and $\nu$, e.g., the zero measure $\pi \equiv 0$. A solution to the optimization problem in \eqref{eq:def_UOT} always exists (\Cref{prop:Prop_d_trunc}) and is called an \emph{unbalanced OT plan}. In addition, the KRD  involves a mass penalization term -- large values of $C$ encourage the unbalanced OT plan to transport as much mass as possible. The parameter $C>0$ also admits an explicit interpretation and quantifies the maximal distance between points  along which mass can be transported, see \Cref{fig:UOTmatching}(b-d) and \Cref{prop:Prop_d_trunc}, which is a consequence of the linearity of the mass penalization term  in $\pi$. This property becomes particularly relevant in the context of statistical data analysis, highlighting that the $(p,C)$-KRD is robust to spatial outliers, i.e., points located far in the tails of the measures under consideration. 
Moreover, for measures of equal mass, the corresponding $(p,C)$-KRD distance coincides for $C$ large enough with the Wasserstein distance (\Cref{prop:interpolate_C}$(iii)$).

Several other approaches to optimal transportation between general measures have been proposed in the literature.
One of the most prominent contributions is the \emph{partial OT formulation} introduced in  \cite{caffarelli2010free} and further studied in \cite{figalli2010optimal}. In this approach, the amount of mass $m \leq \min\{\mu(\XC), \nu(\XC)\}$ to be transported is fixed, and the Wasserstein$-2$ functional is minimized over sub-couplings between $\mu$ and $\nu$ with total mass $m$. 
The approach considered in \cite{piccoli2014generalized, piccoli2016properties} is similar to ours: their \emph{generalized Wasserstein distance} $W_p^{a, b}$ is a linear combination of a ``Wasserstein'' term and a term which penalizes the deviation of the plan from the marginals in $L_1$. %
\cite{figalli2010new} constructed solutions to evolution equations subject to Dirichlet boundary conditions, which differs from the Neumann boundary conditions typically used in OT problems.
Another line of extensions of balanced OT hinges on the \emph{dynamic formulation} of OT, for which we refer to \cite{chizat2018interpolating, chizat2018unbalanced} and the literature review therein. 
\cite{liero2018optimal} introduced unbalanced regularized OT, %
in which the difference of the marginals of the transport plan from the marginal measures is penalized by a convex Csisz\'ar $f$-divergence \citep{csiszar1967information}. The special case of the \emph{Hellinger--Kantorovich distance}, which arises from logarithmic penalization, is studied in~detail. 

Altogether, all these extensions from classical OT come with individual benefits and potential drawbacks, arising from the respective (mathematical or applied) motivations. In this paper, we focus on statistical aspects of the $(p,C)$-Kantorovich--Rubinstein distance \eqref{eq:def_UOT} due to its intuitive interpretation and documented potential for data analysis (see, e.g., \citealp{naas2024multimatch}). 
We stress, however, that our approach is not limited to this case, and we also derive results for alternative notions of unbalanced OT in \Cref{sec:other_UOTs}.
\subsection{Spatio-temporal point processes}
\label{ssec:STpp_intro}
Despite the popularity of various notions of unbalanced OT in practice, there is still not much known about their statistical behavior.
A key reason is potentially the lack of a widely accepted sampling model from general measures. 
To the best of our knowledge, only two works address this gap. \cite{sejourne2019sinkhorn} establish convergence rates for plug-in estimators of the unbalanced entropy-regularized OT cost under a model where empirical measures from i.i.d.\ samples of normalized distributions $\tilde \mu = \mu /\mu(\XC)$ and $\tilde \nu = \nu /\nu(\XC)$ are rescaled by the true masses. This estimator assumes knowledge of total masses and is primarily motivated by computational considerations. \cite{heinemann2022unbalanced} analyze $(p,C)$-KRD quantities under three statistical models -- including the i.i.d. model with unknown total mass, a Poisson regression and a Bernoulli model relevant to microscopy and imaging.  Their work is limited to the setting of finitely supported measures, but serves as a major motivation for the present approach.

In this paper, we consider data-generating approaches that go well beyond the usual independent and identically distributed (i.i.d.) assumption and are well-posed for generic non-discrete domains. For a given time frame $(0,t]$, we assume that we observe a set of a random number of data points $X_1, \dots, X_{N_t}$ with associated timestamps $T_1, \dots, T_{N_t}$ in $(0,t]$. In order to treat both continuous and discrete time in a unified approach, we write $\TC$ for the full time axis, which is either $\RR$ or $\ZZ$. We use generalized interval notation, setting, e.g., $(a,b] = \{ t' \in \TC :  a < t' \leq b \}$ and $[a,b] = \{ t' \in \TC :  a \leq  t' \leq b \}$. 
Our data is then modeled as realizations of a \emph{spatio-temporal (ST) point process} on $\XC \times \tgz$, where $\tgz = (0,\infty) = \{t' \in \TC :  t' > 0\}$ (see \Cref{fig:ABC}(a)-(b) for illustration). Such a point process may be described as a sum of Dirac measures,
\begin{align}\label{eq:STPP}
    \Xi = \sum_{i = 1}^{\infty} \delta_{(X_i,T_i)},
\end{align}
where each pair $(X_i, T_i)\in \XC \times \tgz$ represents the spatial location and temporal occurrence of a (random) event, and we may require $T_i\leq T_j$ for all $i\leq j\in \NN$. The number of observed points up to time $t$ is then given by $N_t \coloneqq \sup\{n \in \NN_0 :  T_n \leq t\}$ with $T_0\coloneq 0$.

This approach interprets data points as events characterized by both spatial locations and temporal occurrences, offering a more flexible and realistic description in various contexts, as it allows for general domains and non-trivial dependencies between data points as we do not assume independence of $X_{n+1}$ nor $T_{n+1}-T_{n}$ from the pairs $(X_i,T_i)$ for $1 \leq i \leq n$.
For instance, observed data points might represent occurrences of ecological events \citep{ogata1998earthquake,peng2005space}, or could describe disease incidents spread across geographical regions and recorded over time \citep{meyer2012space} or brain activity \citep{tagliazucchi2013breakdown}. 
We further emphasize that our setting encompasses the standard scenario of i.i.d.\ sampling on $\XC$ as a special case, where data points are (artificially) interpreted to appear sequentially at time steps in $\NN$ (so that $N_t = t$ for all $t \in \tgz = \NN$) without dependencies (see \Cref{ex:iid_sampling} and \Cref{sec:Bimomial}).

To guarantee that increasing $t$ leads to a more accurate representation of the underlying spatial population measure
(analogous to the identical distribution assumption in the classical i.i.d.\ sampling model), we work under the following notion of stationarity.

\begin{figure}[!t] %
    \centering
    \begin{subfigure}[b]{0.62\textwidth}
        \centering
        \begin{subfigure}[b]{\textwidth}
            \centering
         \includegraphics[width=\textwidth]{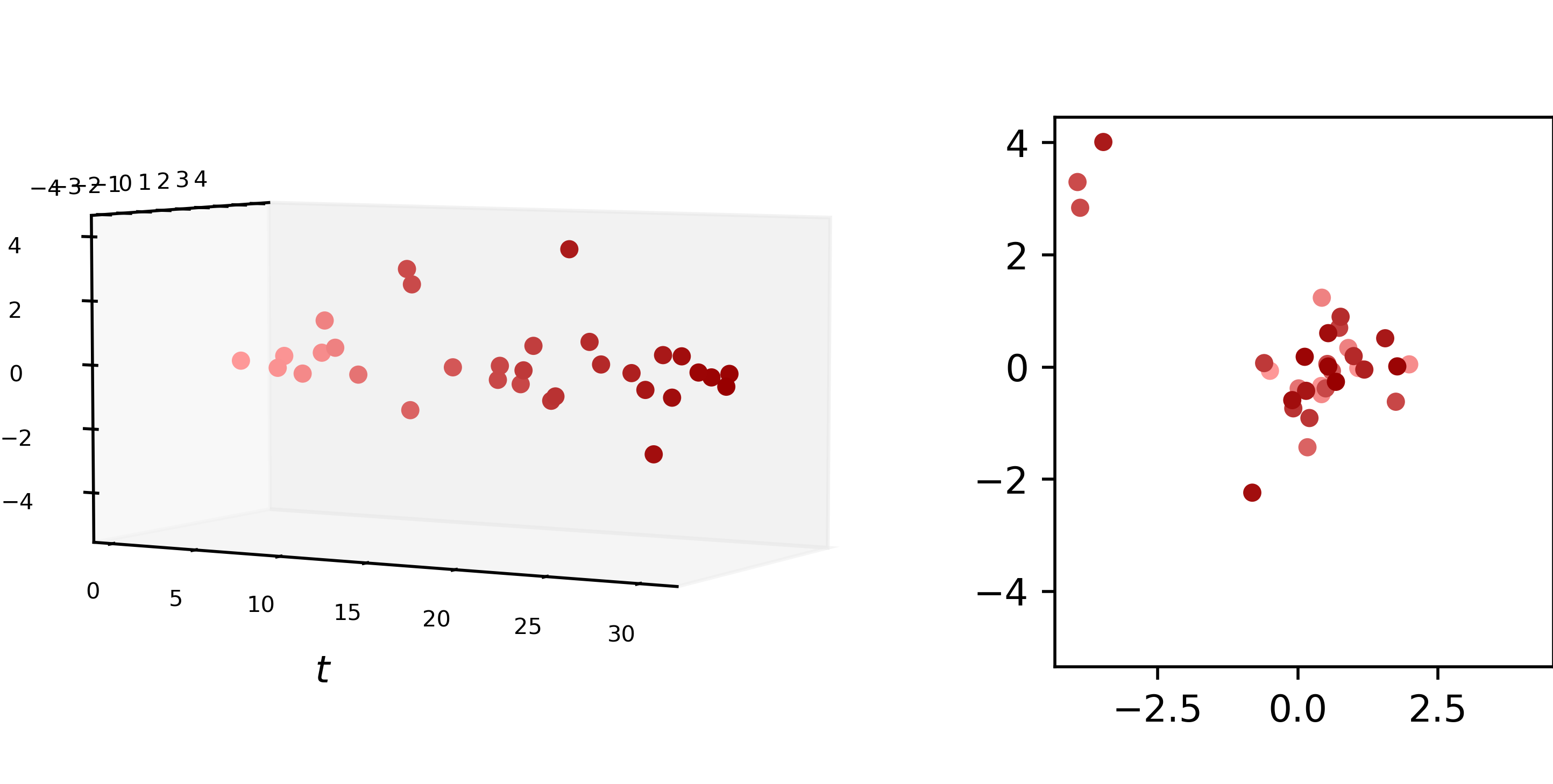}
            \caption{An ST point process and its spatial empirical measure}
            \label{fig:A}
        \end{subfigure}

        \begin{subfigure}[b]{\textwidth}
            \centering
            \includegraphics[width=\textwidth]{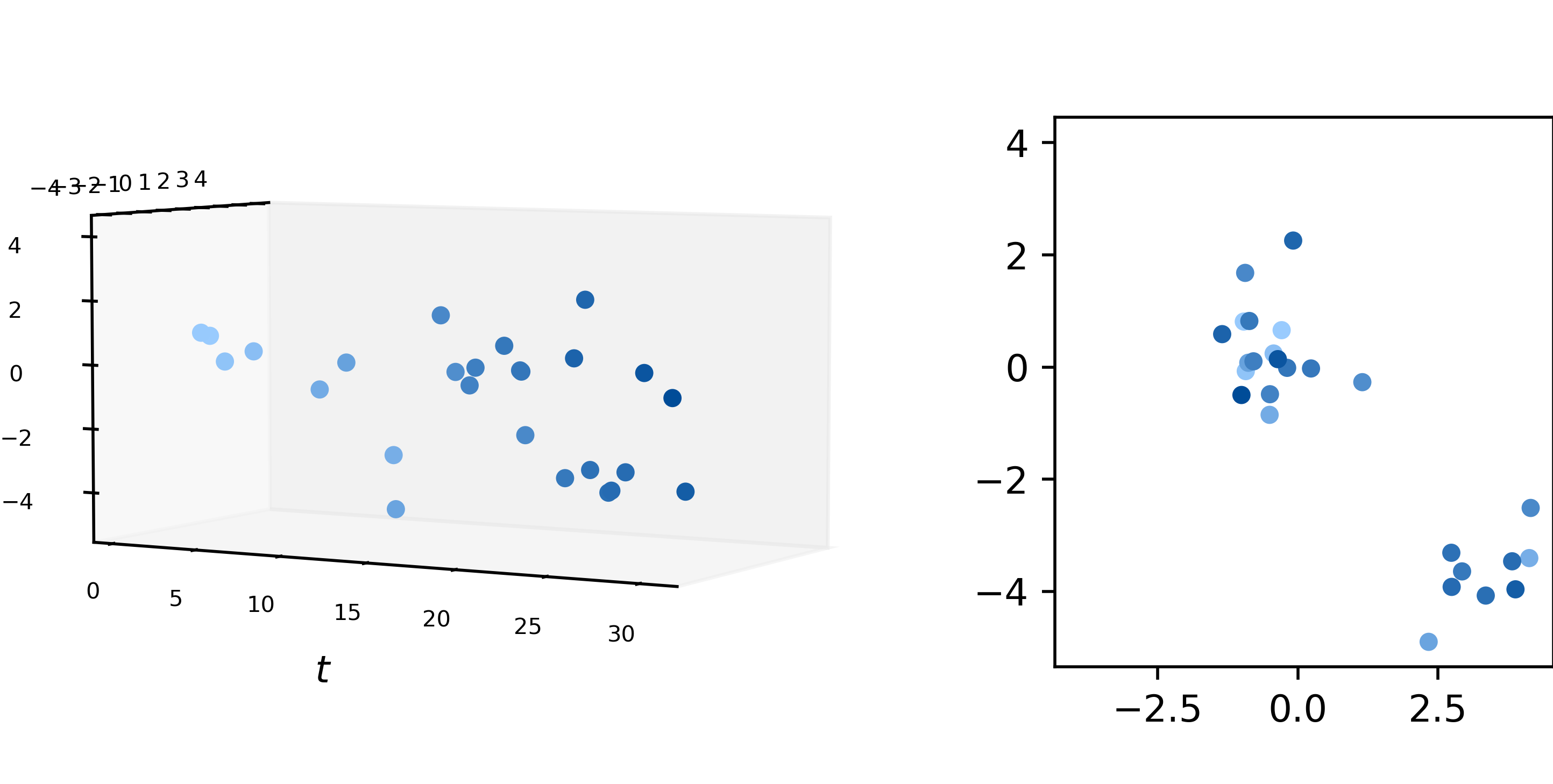}
             \caption{Another ST point process and its spatial empirical measure}
            \label{fig:B}
        \end{subfigure}
    \end{subfigure}
    \hfill
    \begin{subfigure}[b]{0.35\textwidth}
        \centering
        \raisebox{0.23\textheight}{
            \begin{minipage}{\textwidth}
                \centering
                \includegraphics[width=0.80\textwidth]{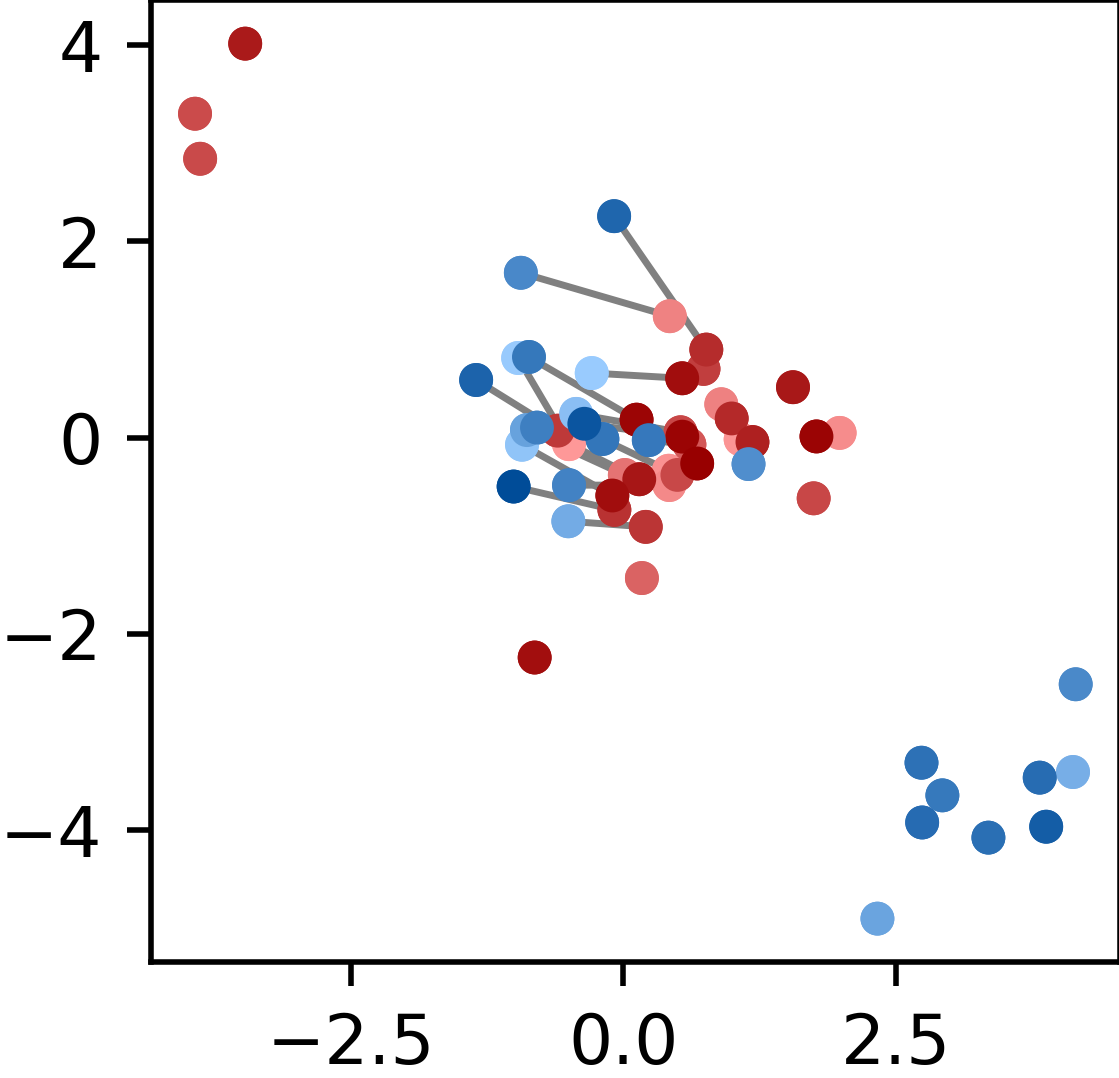}
                \caption{Unbalanced optimal transport\\between the spatial empirical measures}
                \label{fig:C}
            \end{minipage}
        }
    \end{subfigure}
    \caption{
    Estimation of KR distances, exemplified for two ST Poisson processes.
    \textbf{(a)}: Left: a realization of a Poisson point process $\Xi$ on $\RR^2 \times (0,\infty)$ with spatial intensity measure $2\NC((-0.5, 0)^\intercal, \Sigma)+\NC((-3.5, 3.5)^\intercal, \Sigma), \ \Sigma = \mathrm{diag} (0.5, 0.5)$. Right: $\Xi(\cdot \times (0, 50])$ for the same realization. The color shades indicate (from light to dark red/blue) the observation times of points for better visuals.  \textbf{(b)}: A realization of a Poisson point process $\Eta$ on $\RR^2 \times (0,\infty)$ with spatial intensity measure $3\NC((0.5, 0)^\intercal, \Sigma)+\NC((3.5, -3.5)^\intercal, \Sigma)$ using the same $\Sigma$. \textbf{(c)}: Kantorovich--Rubinstein transport between $\Xi(\cdot \times (0, 50])$ and $\Eta(\cdot \times (0, 50])$
    with $(p,C)=(2, 2)$. The presence of gray lines connecting two points indicates that mass is being transported between those points. %
    } 
    \label{fig:ABC}
\end{figure}

\begin{definition} \label{def:ST_stat_L2_process_def}
  An ST point process $\Xi$ on\footnote{\label{fn:full_time_axis}Note that this definition requires, in fact, that the process is defined on the whole of $\XC \times \TC$. We will therefore think of $\Xi$ as the restriction to $\XC \times \tgz$ of a process defined on $\XC \times \TC$ (which we also denote by $\Xi$).} $\XC \times \tgz$ is called  \emph{second-order time-stationary}%
, or \emph{weakly time-stationary},  if the first two moment measures of $\Xi$ exist and are invariant under time-shifts, i.e., $\EE[\Xi \chi_t^{-1}(A)] = \EE[\Xi(A)] < \infty$ and $\EE[\Xi \chi_t^{-1}(A) \, \Xi \chi_t^{-1}(B)] = \EE[\Xi(A) \, \Xi(B)] < \infty$ for all bounded $A,B \in \BC(\XC \times \TC)$ where $\chi_t \colon \XC \times \TC \to \XC \times \TC$, $(x,s) \mapsto (x,s+t)$.
\end{definition}
Weak time-stationarity implies the existence of a (finite) measure $\mu\in \MCsX$ such that $\EE[\Xi(A\times (0,t])] = t \mu(A)$ for all $A\in \BC(\XC), \, t \in \tgz$ (\Cref{prop:time-reduction}), which thereby is to be understood as the underlying \emph{(spatial) population measure}, also referred to as the \emph{(spatial) intensity measure} of $\Xi$. It~is therefore natural to introduce the \emph{(spatial) empirical measure}
\begin{align}\label{eq:emp_measure}
    \hat \mu_t \coloneqq \frac{1}{t}\Xi(\cdot \times (0,t]) = \frac{1}{t}\sum_{i = 1}^{N_t} \delta_{X_i} \in \MCsX, 
\end{align}
which serves as an estimator for $\mu$. Estimation of the unknown (spatial) intensity measure $\mu$ is one of the fundamental tasks in point process statistics \citep{diggle2013statistical, baddeley2016spatial}. Models range from very explicit log-linear parametric ones in terms of known spatial covariates $\XC \to \RR$ \citep{baddeley2000practical} %
to fully nonparametric ones, e.g., based on kernel smoothing \citep{ellis1991density, vanlieshout2012estimation} or reproducing kernel Hilbert spaces \citep{flaxman2017poisson}. Judicious intensity estimation is crucial for distinguishing between spatial inhomogeneity and correlation of points. It is also the basis of more sophisticated approaches for estimating a (hidden) component of the intensity, such as the background seismicity in the very popular ST epidemic-type aftershock sequence model (ETAS) for describing earthquake occurrences \citep{ogata1998earthquake, ogata2006space}

In our context of weakly time-stationary ST point processes, a natural framework for investigating the asymptotic behavior of spatial intensity estimators (per unit time, say) is given by observing the point process on $\XC \times (0,t]$ and letting $t \to \infty$. Similar asymptotics have been considered for various concrete intensity estimators \citep{ellis1991density, schoenberg2005consistent, fuentes2016consistent, macdonald2025bandwidth}. %

\subsection{Main results}

In the present paper, we provide quantitative error bounds for the expected KR distance between the empirical measure $\hat{\mu}_t$ defined in \eqref{eq:emp_measure} and the underlying spatial intensity measure $\mu$. This also allows us to derive  statistical error bounds for the one- and two-sample plug-in estimators $\KR_{p,C}(\hat \mu_t, \nu)$ and $\KR_{p,C}(\hat \mu_t, \hat \nu_s)$ of $\KR_{p,C}(\mu,\nu)$ based on the empirical measures $\hat \mu_t $ and $ \hat \nu_s$ stemming from weakly stationary ST point processes with respective population measures $\mu$ and $\nu$ (see \Cref{fig:ABC}(c)). To derive such bounds, we introduce a condition quantifying the temporal growth of the sum of variances of an ST point process $\Xi$ on $\XC\times \TC_{>0}$ over partitions of the domain in terms of a parameter $\beta \in [0,1)$.

\begin{description}
	\labitem{(PVar$_\beta$)}{ass:Xi_mixing_Var_beta}
  There exist constants $\kappa\geq 0$ and $t_0\in \TC_{>0}$ such that for any partition $(A_i)_{i=1,\ldots,N}$ of $\XC$ into measurable sets and any $t \in \tgz$, $t \geq t_0$, 
\begin{align*}
  \sum_{i = 1}^N \Var(\Xi(A_i \times (0,t])) \leq  \kappa t^{1+\beta}.
 \end{align*}
\end{description}
To exemplify our key insights on the statistical error of the empirical $(p,C)$-KRD, let us focus  on the setting where $\XC= [0,1]^d$ is the $d$-dimensional Euclidean unit cube, and $\Xi$ is a second-order time-stationary ST point process with population measure $\mu \in \MCsX$ and empirical measure $\hat \mu_t$ as defined in \eqref{eq:emp_measure}. If $\Xi$ fulfills Assumption \ref{ass:Xi_mixing_Var_beta} for $\beta\in [0,1)$ and $t_0\in \TC_{>0}$, then we show in \Cref{th:est_mu_inUOT_beta} for $t\in \TC_{>0}$, $t\geq t_0$ and all $C>0$ that 
  \begin{align}  \label{eq:est_mu_inUOT_beta_intro}
	&\EE[\KR_{p,C}(\mun, \mu)] \lesssim_{d,p, \kappa, m_{\mu}} C \wedge  \left(
   C  t^{(-1+\beta)/2p} +  
  \begin{cases}
        C^{1-d/2p} t^{(-1+\beta)/2p} & \text{if } d < 2p, \\
        t^{(-1+\beta)/2p} \log(2+C^{p}t^{(1-\beta)/2})^{1/p} & \text{if }d = 2p, \\
        t^{(-1+\beta)/d} & \text{if } d > 2p.
    \end{cases} \right)
  \end{align}
Notably, these results also apply when $\mu$ is concentrated on a submanifold of dimension $d$ embedded in a high-dimensional Euclidean space.  Moreover, given another second-order time-stationary point process which also fulfills \ref{ass:Xi_mixing_Var_beta} and has spatial empirical and population measures $\hat \nu_t$ and $\nu$, respectively, we obtain from the triangle inequality,
\begin{align*}
 \EE[\left|\KR_{p,C}(\mun, \nun) - \KR_{p,C}(\mu, \nu)\right|] \leq \EE[\KR_{p,C}(\mun, \mu)] + \EE[\KR_{p,C}(\nun, \nu)],
\end{align*}
a similar upper bound for the empirical $(p,C)$-KRD.

Let us shortly discuss the key insights from our error bound in \eqref{eq:est_mu_inUOT_beta_intro}. The first term, $C$, arises from the fact that the $(p,C)$-KRD is suitably bounded in terms of the mass penalization parameter and the total mass of $\mu$ (\Cref{lem:trivialUpperBound}). To achieve a smaller statistical error, the time parameter $t$ needs to be sufficiently large. Specifically, the second term arises from the mean absolute deviation between $m_{\mun}$ and $m_{\mu}$, and achieves for $\beta =0$ a parametric convergence rate in $t$. For $d\geq 2p$ or $\beta>0$ the convergence rate in $t$ is strictly slower than parametric. 
We also note that for $d\leq2p$, choosing the mass penalization parameter  $C$ smaller leads to an improved convergence rate in $t$, whereas for $d> 2p$ the mass penalization parameter does not affect the dimension-dependent rate in $t$ in \eqref{eq:est_mu_inUOT_beta_intro}.

As an extension of our analysis for the empirical KRD, we derive in \Cref{sec:other_UOTs} similar statistical convergence results for empirical plug-in estimators of other unbalanced OT quantities from \Cref{sec:intro_uots}. Specifically, we consider divergences $\U: \MCsX \times \MCsX \to \RR_+$ satisfying certain metric properties (see \Cref{def:niceUOTformalism} for the details), which encompass, among others, the Gaussian--Hellinger and the Hellinger--Kantorovich distance introduced by \cite{liero2018optimal}. For $\XC = [0,1]^d$ and $\Xi$ as above, we establish for $t\in \TC_{>0}, \, t \geq t_0$ that 
\begin{equation*}
	\EE[\U(\mun, \mu)]\lesssim t^{(-1+\beta)/2p} + t^{(-1+\beta)/d}(1 + \mathds{1}(d = 2p) \log(2+t)).
\end{equation*}

One can see that the rate in $t$ (and the dependency in $\beta$) captured in \eqref{eq:est_mu_inUOT_beta_intro} is subject to the data-generating mechanism and not our OT formulation. 
In fact, under the classical i.i.d.\ sampling, Assumption \ref{ass:Xi_mixing_Var_beta} can be confirmed for $\beta = 0$, and our insights assert that the same rates as for the Wasserstein-$p$ distance  (see \eqref{eq:WassersteinRates} below) manifest by choosing $C\geq \diam(\supp(\mu+\nu))$ (see \Cref{th:est_mu_inUOT_beta} and \Cref{prop:interpolate_C}$(iii)$). 
Our findings emphasize, however, that similar convergence rates also manifest for much more complicated statistical models, as long as \ref{ass:Xi_mixing_Var_beta} is met for $\beta$ close to zero. 

To put Assumption \ref{ass:Xi_mixing_Var_beta} in context, we confirm in \Cref{sec:ST_proc} that it is satisfied by a wide range of ST point processes. Perhaps the most prominent case with $\beta = 0$ (and $\kappa = m_{\mu}$) is the ST (time-stationary, spatially inhomogeneous) \emph{Poisson point process}, see \cite{daley2003introduction,daley2007introduction} or \Cref{sec:pp_examples}. We also discuss various ST processes with non-trivial spatio-temporal dependencies, including \emph{Poisson cluster processes}, \emph{Hawkes processes}, and \emph{Log-Gaussian Cox processes}. 
Meanwhile, in case $\beta >0$, the convergence rate in \eqref{eq:est_mu_inUOT_beta_intro} is strictly slower, and can be arbitrarily slow as $\beta$ tends to $1$. Indeed, for the time-stationary point process $\Xi = \sum_{i = 1}^\infty \delta_{(X, i)}$ supported at a single element $X\sim \textup{Unif}[0,1]^d$ and $\TC = \ZZ$  it follows that $\Var(\Xi([0,1/2]^d \times (0,t])) \asymp t^2$ and $\KR_{p,C}(\hat \mu_t,\mu) \asymp C$ independently of $t$, which shows that for $\beta$ close to $1$ slow rates in \eqref{eq:est_mu_inUOT_beta_intro} are inevitable.
In fact, complementary to the upper bound in \eqref{eq:est_mu_inUOT_beta_intro}, we establish in \Cref{sec:lowerbounds}  for $\beta =0$ matching pointwise lower bonds in $C$ and $t$, jointly with minimax rate optimality, and for $\beta \in (0,1)$ provide an example with a matching rate in $t$ under $d< 2p$.

\subsection{Applications and discussion}
The central objective of this work is to establish rigorous statistical guarantees for the spatial empirical measure as an estimator of its population counterpart and, by extension, for the empirical KRD as an estimator of the population KRD. The KRD constitutes a conceptually appealing functional for comparing finite measures, as it naturally incorporates the geometry of the underlying spatial domain. By computing the KRD for varying choices of $C$, the KRD enables a  comparison between two measures at multiple scales of resolution. This property is a direct consequence of the proposed mass penalization framework, which ensures that no mass is matched across distances exceeding $C>0$. A further implication is that the KRD exhibits robustness to spatial outliers.

Our statistical results confirm that the empirical plug-in estimators for the KRD serve as good estimators if Assumption~\ref{ass:Xi_mixing_Var_beta} is met for $\beta\in [0, 1)$ close to zero and if the underlying spatial population measure is of small (intrinsic) dimension. The latter assumption is naturally satisfied in practical imaging contexts, particularly in two and three dimensions \citep{komiske2019metric,tameling2021Colocalization,manole2022background}.  For the former assumption, we develop in Subsection \ref{sec:pp_assumptions} a relation to \emph{time-reduced factorial covariance measures} of point processes. 
Factorial covariance measures are a well-studied and popular tool in spatial statistics, and they can be calculated for many commonly used point process models.
For example, we may infer that Assumption~\ref{ass:Xi_mixing_Var_beta} is met for $\beta=0$ for a Poisson cluster process (\Cref{sec:cluster_ppp}) if each point of a parent Poisson process induces at most an $L^2$ random number of subsequent points over infinite time. 
Meanwhile,  \ref{ass:Xi_mixing_Var_beta} is met for $\beta \in (0,1)$ if the second moment of the number of subsequent points within time $(0,t]$ grows sublinearly in $t$ and, moreover, the growth of the sum of partitioned variances $\sum_{i = 1}^N \Var(\Xi(A_i \times (0,t])$ can at most be of order $t^2$ due to the assumption of weak time-stationarity of the $L^2$ point process. We formalize these insights in Subsection~\ref{sec:pp_assumptions}. 

As a direct consequence of our analysis of the KRD based on the (spatial) empirical measure $\hat \mu_t$, we note that similar statistical error bounds can be confirmed for other measure estimators $\hat \mu_t'$ as long as they are close enough in expected KRD to $\hat \mu_t$. The latter is straightforward to see, e.g., for a kernel intensity estimator. In particular,  in low-dimensional settings (or low degrees of smoothness), previously derived minimax lower bounds for estimation of measures in Wasserstein distance align with the rates in \eqref{eq:WassersteinRates} when the density is not bounded away from zero \citep{niles2022minimax,divol2022measure}. We note, however, that for moderate dimensions and high degrees of smoothness, faster rates can be achieved with respect to the Wasserstein distance, which indicates a similar phenomenon for the KRD.  We do not pursue this here since the corresponding techniques likely require completely different tools.

On a more general note, we would like to stress that we see our work as a first step towards the development of tools for statistically sound data analysis based on the KRD and other notions of unbalanced transport. Our convergence results highlight potential use of the KRD in statistical goodness-of-fit testing on the underlying (spatial) intensity measure. Herein, the empirical KRD would serve as a test statistic to evaluate the adequacy of model assumptions or to detect deviations from a hypothesized distribution. %
 This would also enable the formulation of refined confidence intervals for the KRD. A full development of this direction is left for future research, as it is likely to require distributional limits of $\KR_{p,C}(\mun, \mu)$ and, in turn, stronger assumptions than those imposed by \ref{ass:Xi_mixing_Var_beta}.  

\subsection{Related work on statistical optimal transport}\label{sec:ot_stat_overview}

Most contributions in the statistical OT literature operate under the assumption that the respective data points are independent and identically distributed, say, $X_1, \dots, X_n \sim \mu$ and $Y_1, \dots, Y_n \sim \nu$, where $\mu$ and $\nu$ are (unknown) probability measures with empirical measures 
\begin{align*}
  \hat \mu_n = \frac{1}{n}\sum_{i=1}^n \delta_{X_i} \quad \text{ and } \quad \hat \nu_m = \frac{1}{n}\sum_{j=1}^n \delta_{Y_j}.
\end{align*}
Within this  statistical framework, a rich body of literature has been developed to analyze convergence rates of $W_p(\hat \mu_n, \hat \nu_n)$ towards $W_p(\mu, \nu)$. Starting with the seminal work of \cite{dudley1969}, results for $p \geq 1$ have been established in various forms: we mention high-probability bounds \citep{ajtai1984optimal, talagrand1994matching}, almost sure limit theorems \citep{yukich86, barthe2013combinatorial, ambrosio2019pde}, and expectation-based results \citep{dereich2013constructive, boissard2014, fournier2015rate, singh2018minimax, bobkov2019one, sommerfeld19FastProb, weed2019sharp, chizat2020, Manole21,hundrieser2022empirical}; for a comprehensive treatment we refer to \cite{chewi2024statistical}.

One of the key insights of these works, is that the convergence rates fundamentally depend on the intrinsic dimension of the support of the measures and on certain moment conditions. For instance, in case of a probability measure $\mu$ on $[0,1]^d$, it holds %
\begin{align}\label{eq:WassersteinRates}
\EE[W_p(\hat \mu_n, \mu)]
\lesssim \begin{cases}
    n^{-1/2p}& \text{ if } d < 2p,\\
    n^{-1/2p}\log(1+n)^{1/p} & \text{ if } d = 2p,\\
    n^{-1/d} & \text{ if } d > 2p,
    \end{cases}
\end{align}
and by  triangle inequality  the same bound also holds for $\EE[\left|W_p(\hat \mu_n, \hat \nu_n)  - W_p(\mu, \nu)\right|]$. These convergence rates are known to be minimax rate optimal up to logarithmic terms \citep{singh2018minimax, niles2019estimation}.
Our bound on the empirical KRD \eqref{eq:est_mu_inUOT_beta_intro} is strongly evocative of this result and, in fact, implies \eqref{eq:WassersteinRates} since i.i.d. sampling fulfills \ref{ass:Xi_mixing_Var_beta} for $\beta = 0$ (\Cref{th:est_mu_inUOT_beta} and \Cref{prop:interpolate_C}$(iii)$). 

Under additional structural assumptions, such as connected support with positive density, the convergence rate for empirical estimators improves to $n^{-1/(2\vee d)}$ with an additional logarithmic term for $d = 2$ \citep{ajtai1984optimal,talagrand1992matching,bobkov2019one,ledoux2019OptimalMatchingI}. Another set of articles has revealed that faster convergence rates also manifest for the empirical Wasserstein distance under additional separation constraints on the population measures \citep{chizat2020,Manole21,hundrieser2022empirical, staudt2025convergence}. While it is known that similar phenomena emerge beyond classical i.i.d.\ sampling under certain mixing conditions \citep{deb2024tradeoff}, see also \Cref{rem:mu_in_krd}$(v)$, a sharp characterization of necessary conditions is out of scope of the current paper.

We finally mention that the proof of \eqref{eq:est_mu_inUOT_beta_intro} relies on a dyadic partition argument, a well-known tool applied in the OT context \citep{dereich2013constructive, fournier2015rate, weed2019sharp,sommerfeld19FastProb,heinemann2022unbalanced}. Our two core technical contributions in this context lie in $(i)$ adapting this existing technique to the setting of ST point processes which fulfill the weak but sufficient Assumption \ref{ass:Xi_mixing_Var_beta}, and $(ii)$ achieving a sharp dependence on the mass penalization parameter $C$.

\subsection{Outline} The manuscript is structured as follows. 
In \Cref{sec:UOT} we derive fundamental properties of the $(p,C)$-KRD defined in \eqref{eq:def_OT}. %
Section \ref{sec:main} contains our main results on the statistical performance of the empirical KRD and other notions of unbalanced OT in the framework of ST processes. 
Section \ref{sec:ST_proc} contains some basic theory on ST point processes, discusses our variance and covariance growth conditions, and discusses point process examples satisfying them. 
Section \ref{sec:lowerbounds} derives pointwise and  minimax lower bounds that match our upper bounds. 
Almost all proofs, except for our main result in \Cref{sec:main}, are provided in separate appendices to maintain a clean presentation. 
Appendices \ref{sec:proofs2}--\ref{sec:proofs_LowerBounds} contain the proofs for Sections \ref{sec:UOT}--\ref{sec:lowerbounds}, respectively, and, in \Cref{sec:aux_results}, additional technical results can be found.

\subsection{Notation}

Throughout this work, we use the following notational conventions. The sets of integers and non-negative integers are denoted by $\ZZ$ and $\NN_0$, respectively, while $\RR$ and $\RR_+$ denote the real numbers and non-negative real numbers. For $\TC\in\{\RR, \ZZ\}$ we write $\tgz = \TC \cap (0, \infty)$ and $\tgeqz = \TC \cap [0, \infty)$.
The reference measure on $\mathcal{T}$ is written as $\ell$, which equals the Lebesgue measure $\Leb_{\RR}$ when $\mathcal{T} = \RR$, and the counting measure $\sum_{z \in \ZZ} \delta_z$ when $\mathcal{T} = \ZZ$; analog conventions apply for the reference measures on $\tgz$ and $\tgeqz$.  The Borel $\sigma$-field on a topological space $\mathcal{X}$ is written as $\mathcal{B}(\mathcal{X})$, and we denote by $\mathcal{M}(\mathcal{X})$ the collection of non-negative, finite Borel measures on $\mathcal{X}$. Further, we write $\PC(\XC)$ to denote the Borel probability measures on $\XC$. The Borel space of locally finite point configurations on $\XC \times \TC$ is denoted by $(\mathfrak{N}, \, \NC) = (\mathfrak{N}(\XC \times \TC), \, \NC(\XC \times \TC))$. If $\Xi$ is a Poisson point process with intensity $\Lambda$, we denote it by $\Xi \sim \pop(\Lambda)$.
The minimum and maximum of two numbers $a$ and $b$ are denoted by $a \wedge b$ and $a \vee b$, respectively. 
For non-negative functions $f, g\colon \TC_{>0}\to \RR_{+}$, we write $f(t) \lesssim_\alpha g(t)$ as $t \to \infty$ to mean that there exists a constant $c > 0$, possibly depending on a parameter $\alpha$, such that $f(t) \leq c \, g(t)$ for $t$ large enough, and we write $f(t) \asymp_{\alpha} g(t)$ if both $f(t) \lesssim_{\alpha} g(t)$ and $g(t) \lesssim_{\alpha} f(t)$ hold. 
For $\mu, \nu \in \mathcal{M}(\mathcal{X})$, the inequality $\mu \leq \nu$ indicates that $\mu(A) \leq \nu(A)$ for every Borel set $A \subseteq \mathcal{X}$. %
The total mass of a measure $\mu$ is denoted by $m_\mu = \mu(\mathcal{X})$.  %

\section{Foundations of Kantorovich--Rubinstein distance} \label{sec:UOT}
This section details various structural properties  of our formulation of the unbalanced OT problem defined in \eqref{eq:def_UOT}. We first provide an interpretation of the mass penalization parameter $C$ in the definition of the KRD in terms of unbalanced OT plans. We then develop a link between the KRD and a lifted OT problem, which enables efficient computation. As a byproduct, we also derive how the KRD behaves under changes of the model parameters and inputs, and characterize metric properties.  
Since most statements are already known for finite metric spaces, we defer most proofs to the appendix except for that of Proposition $2.1(ii)-(iii)$, which is provided in Appendix \ref{sec:proofs2}.

\subsection{Basic structural facts on Kantorovich--Rubinstein distance}

Our main insight in this subsection provides a sharp characterization between unbalanced OT plans and the mass penalization parameter. 

\begin{proposition}[Structure of KRD]\label{prop:Prop_d_trunc}
Let $(\XC, d)$ be a c.s.m.s.\ and let $p \geq 1$ and $C>0$. Then for measures $\mu, \nu \in \MCsX$ the following assertions hold. 
\begin{enumerate}[label=$(\roman*)$]
	\item There exists a feasible transport plan $\pi\in \Pi_{\leq}(\mu, \nu)$ which solves \eqref{eq:def_UOT}. The collection of all such optimizers is called the collection of \emph{unbalanced OT plans}.
	\item The $(p,C)$-KRD between $\mu$ and $\nu$ is characterized by 
	\begin{equation*}
    \KR_{p, C}(\mu, \nu) = \left(\inf_{\pi\in \Pi_{\leq}(\mu, \nu)} \int_{\XC\times \XC} d^p(x_1,x_2) \wedge C^p \pi(d x_1, \dif x_2) + C^p \left(\frac{m_\mu + m_\nu}{2} - m_\pi \right)\right)^{1/p}.
  	\end{equation*}
	\item Every unbalanced OT plan $\pi$ solving \eqref{eq:def_UOT} fulfills $\int \mathds{1}(d(x_1,x_2)>C)\pi(d x_1, \dif x_2) = 0$.
\end{enumerate}
\end{proposition}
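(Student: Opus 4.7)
The plan is to prove (ii) and (iii) first via a direct decomposition argument that replaces transport along distances exceeding $C$ by dropping mass, and then to establish (i) by the direct method applied to the truncated formulation from (ii), whose objective is continuous under narrow convergence.

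For (ii) and (iii), the key observation is that restricting a sub-coupling to the ``reachable'' region never increases the objective. Given $\pi \in \Pi_{\leq}(\mu,\nu)$, set $A_C := \{(x_1,x_2) \in \XC \times \XC : d(x_1,x_2) \leq C\}$ and $\pi' := \pi|_{A_C}$. Restriction preserves the sub-coupling property, so $\pi' \in \Pi_{\leq}(\mu,\nu)$, and a direct calculation using $m_\pi - m_{\pi'} = \pi(A_C^c)$ yields
\[
F(\pi) - F(\pi') \;=\; \int_{A_C^c} (d^p - C^p) \, d\pi \;\geq\; 0,
\]
with $F$ the objective in \eqref{eq:def_UOT}, equality holding iff $\pi(A_C^c) = 0$. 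This gives (iii) immediately: no $\pi$ with positive mass on $A_C^c$ can be optimal. For (ii), let $\tilde F$ denote the analogous functional with $d^p$ replaced by $d^p \wedge C^p$. The inequality $\tilde F(\pi) \leq F(\pi)$ is trivial, and an analogous splitting shows $\tilde F(\pi) = F(\pi')$, since the excess mass $\pi(A_C^c)$ contributes $C^p$ per unit mass in $\tilde F$'s integrand and exactly cancels the penalty correction arising from $m_{\pi'} < m_\pi$. Hence $\inf F \leq F(\pi') = \tilde F(\pi)$ for every $\pi$, so $\inf \tilde F = \inf F$.

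For (i), apply the direct method to $\tilde F$. The feasible set $\Pi_{\leq}(\mu,\nu)$ is non-empty (it contains the zero measure) and every element satisfies $m_\pi \leq m_\mu \wedge m_\nu$. Tightness of the finite Radon measures $\mu, \nu$ on the Polish space $\XC$, combined with marginal domination, makes $\Pi_{\leq}(\mu,\nu)$ uniformly tight; Prokhorov's theorem then extracts a narrowly convergent subsequence $\pi_{n_k} \to \pi^*$ along any minimizing sequence. The sub-coupling constraint passes to the limit via portmanteau: for open $U \subseteq \XC$, $\pi^*(U \times \XC) \leq \liminf_k \pi_{n_k}(U \times \XC) \leq \mu(U)$, and outer regularity of finite Borel measures on Polish spaces extends this to all Borel sets (analogously for the second marginal). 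Under narrow convergence, $\pi \mapsto \int (d^p \wedge C^p)\, d\pi$ is continuous (bounded continuous integrand) and $\pi \mapsto m_\pi$ is continuous (testing against $f \equiv 1$), so $\tilde F$ is continuous; hence $\tilde F(\pi^*) = \inf \tilde F = \inf F$ by (ii), and $\pi^*$ (or equivalently its restriction to $A_C$, which coincides with $\pi^*$ by (iii)) is an unbalanced OT plan.

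The main obstacle is ensuring narrow rather than merely vague convergence of sub-couplings, since their total mass is not fixed a priori. The resolution is that marginal domination by the fixed measures $\mu, \nu$ inherits their tightness uniformly on compacts, so Prokhorov yields narrow convergence together with continuity of the total-mass functional. A cleaner alternative would lift the problem to balanced OT on $(\XC \cup \{\partial\})^2$ with a dummy point $\partial$ absorbing unmatched mass at cost $C^p/2$---likely the ``lifted OT problem'' alluded to in the surrounding text---but the direct argument above avoids enlarging the space and makes (iii) transparent without any duality input.
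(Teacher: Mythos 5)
Your proposal is correct, and for part $(i)$ it takes a genuinely different route from the paper. The paper casts $\KR_{p,C}^p$ as a special instance of the optimal entropy--transport problem of Liero, Mielke, and Savar\'e, encoding the mass penalty via the entropy function $\varphi(x) = \frac{C^p}{2}(1-x)$ on $[0,1]$ and $+\infty$ elsewhere, and then invokes their existence theorem (superlinearity $(\varphi)_\infty' = \infty$ holds trivially). Your approach is self-contained: uniform tightness of $\Pi_\leq(\mu,\nu)$ inherited from $\mu$ and $\nu$, Prokhorov compactness, closedness of the marginal-domination constraint under narrow limits, and continuity of the truncated objective $\tilde F$. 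The trade is external machinery for a slightly longer but elementary argument; both are sound. For $(ii)$ and $(iii)$ the underlying observation is the same as the paper's (truncation and restriction to $\{d \leq C\}$ cannot increase the objective), but your presentation --- showing $\tilde F(\pi) = F(\pi|_{A_C})$ directly for every feasible $\pi$, without first needing existence of minimizers for both formulations --- is streamlined compared to the paper's chain of inequalities between $\KR_{p,C}$ and $\KR_{d_C^p,C}$.

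One small imprecision in your last line: invoking $(iii)$ to conclude that $\pi^*|_{A_C}$ "coincides with $\pi^*$" is circular as written, since $(iii)$ concerns minimizers of $F$, and $\pi^*$ a priori only minimizes $\tilde F$. The fix is immediate: your own identity $F(\pi^*|_{A_C}) = \tilde F(\pi^*) = \inf F$ already shows $\pi^*|_{A_C}$ is an unbalanced OT plan, which is all that is needed for $(i)$; alternatively, pre-restrict the minimizing sequence to $A_C$ (this cannot increase $\tilde F$), so the narrow limit is automatically concentrated on the closed set $A_C$ by portmanteau. This is cosmetic and does not affect the validity of the argument.
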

\Cref{prop:Prop_d_trunc}$(iii)$ details a precise interpretation of the mass penalization parameter $C>0$: it controls the maximal distance along which mass is transported. This property of the $(p,C)$-KRD is conceptually appealing and useful for data analysis, as it allows the practitioner to explicitly model distances between points when no transportation of mass is allowed. Specifically, if $\mu$ and $\nu$ are finitely supported and each atom has unit mass, the solution to the unbalanced OT problem in \eqref{eq:def_UOT} amounts to a variable selection problem, namely to find the matching between pairs of (potentially strict) subcollections of points associated with the total cheapest transportation cost. 

\subsection[Linkage between Kantorovich--Rubinstein distance and optimal transport]{Linkage between Kantorovich--Rubinstein distance and optimal\\ transport}
Inspired by \cite{caffarelli2010free, guittet2002extended} and more recently \cite{heinemann2023kantorovich}, we relate in this section the $(p,C)$-KRD in \eqref{eq:def_UOT} to an equivalent balanced OT problem. This linkage is a key feature of our formulation, which helps establish certain topological and analytical properties of the $(p,C)$-KRD. Further, it enables direct porting of algorithms for balanced OT under general cost functions to computing the KRD. 

To formalize the lifted problem, consider an external point $\mathfrak{d} \notin \XC$ and let $\tilde{\XC} := \XC \,\dot\cup\,\{\mathfrak{d}\}$ be  the augmented space equipped with distance 
\begin{equation} \label{eq:dhat_def}
    \tilde{d}_{p,C}\left(x_1, x_2\right) := \begin{cases}
        d\left(x_1, x_2\right) \wedge C & \text{ if } (x_1, x_2) \in \XC\times\XC, \\
        C/2^{1/p} & \text{ if } \mathfrak{d} \in \{x_1, x_2\}, \ x_1 \neq x_2, \\
        0 & \text{ if } x_1 = x_2 =\mathfrak{d}. \end{cases}
\end{equation}
By \citet[Lemma A.1]{muller2020metrics} $\tilde{d}_{p,C}$ defines a metric on $\tilde \XC$.
Further, to extend the measures $\mu$ and $\nu$ to $\tilde \XC$, we define for a fixed $K \geq m_\mu \vee m_\nu$ the measures $\tilde\mu_K, \tilde\nu_K \in \MC(\tilde\XC)$, 
\begin{equation} \label{eq:tilda_measures_def}
    \tilde{\mu}_K \coloneqq \mu +(K - m_\mu)\delta_{\mathfrak{d}}, \quad \tilde{\nu}_K \coloneqq \nu + (K - m_\nu)\delta_{\mathfrak{d}}.
\end{equation}
This way, $\tilde \mu_K$ and $\tilde \nu_K$ are finite measures with identical total mass,  $m_{\tilde{\mu}_K}=m_{\tilde{\nu}_K}=K$. 
The following result links the unbalanced OT problem to a balanced OT problem with (balanced) augmented measures. 
\begin{proposition}[Equivalence of KRD and lifted optimal transport] \label{prop:linkOT}
Let $p \geq 1$, $C>0$, and consider measures  $\mu, \nu \in \MCsX$. %
Then, the following assertions hold.
\begin{enumerate}[label=$(\roman*)$]
    \item The $p$-th power of $(p,C)$-KRD between $\mu$ and $\nu$ coincides with the balanced OT cost between the augmented measures $\tilde \mu_K, \tilde \nu_K$ for $K\geq m_{\mu}\vee m_{\nu}$ and cost function $\tilde{d}^p_C$, i.e.,  \begin{equation} \label{eq:UOT=OT}
        \KR_{p, C}^p(\mu, \nu) = \OT_{\tilde{d}^p_{p,C}}(\tilde{\mu}_K, \tilde{\nu}_K).
    \end{equation}
    \item Fix $K= m_{\mu}\vee m_{\nu}$. Then, for every unbalanced OT plan $\pi$ between $\mu$ and $\nu$ with parameters $(p,C)$, there is an OT plan $\tilde \pi$ between $\tilde \mu_K$ and $\tilde \nu_K$ for cost function $\tilde d_{p,C}^p$ such that $$\pi(\cdot\cap \mathcal{D}_C) = \tilde \pi(\cdot\cap \mathcal{D}_C) \quad \text{ for } \quad \mathcal{D}_C \coloneqq \{(x,x')\in \XC^2 \colon d(x,x')< C\}.$$
    Conversely, for every OT plan $\tilde \pi$ between $\tilde \mu_K$ and $\tilde \nu_K$ for the cost $\tilde d_{p,C}^p$, there is an unbalanced OT plan $\pi$ between measures $\mu$ and $ \nu$ with parameters $(p,C)$ such that the above equality holds. 
\end{enumerate}
\end{proposition}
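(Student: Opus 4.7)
The plan is to establish (i) via an explicit cost-preserving bijection between sub-couplings in $\Pi_{\leq}(\mu,\nu)$ and balanced couplings in $\Pi_{=}(\tilde\mu_K,\tilde\nu_K)$, and then to deduce (ii) from this correspondence combined with \Cref{prop:Prop_d_trunc}$(iii)$.

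For the bijection, given $\pi \in \Pi_{\leq}(\mu,\nu)$ with marginals $\pi_1,\pi_2$, I would lift it to $\tilde\pi \in \Pi_{=}(\tilde\mu_K,\tilde\nu_K)$ by routing all leftover first-marginal mass $\mu-\pi_1$ to the auxiliary point $\mathfrak{d}$, routing the unmatched second-marginal mass $\nu-\pi_2$ from $\mathfrak{d}$ back into $\XC$, and assigning the remaining $K-m_\mu-m_\nu+m_\pi$ to $(\mathfrak{d},\mathfrak{d})$. The marginal constraints on $\tilde\pi$ force this lift to be unique given $\pi$, and the inverse map is simply restriction to $\XC\times\XC$. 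Because $\tilde d_{p,C}^p$ takes the values $d^p\wedge C^p$ on $\XC\times\XC$, $C^p/2$ whenever exactly one argument is $\mathfrak{d}$, and $0$ at $(\mathfrak{d},\mathfrak{d})$, integrating collapses to the KRD functional of \Cref{prop:Prop_d_trunc}$(ii)$; taking infima on each side yields (i). The nonnegativity requirement $K-m_\mu-m_\nu+m_\pi\geq 0$ is not a genuine restriction on the infimum, since filling in additional coupled mass between the slack parts $\mu-\pi_1$ and $\nu-\pi_2$ changes the $d^p\wedge C^p$ cost by at most $C^p$ per unit mass while reducing the penalty $C^p(\cdot - m_\pi)$ by the same amount, hence only weakly decreases the functional.

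For (ii), optimality transfers through the bijection: any unbalanced OT plan lifts to an OT plan of the augmented problem that coincides with it on all of $\XC\times\XC$, and in particular on $\mathcal{D}_C$; conversely, the restriction of an augmented OT plan to $\XC\times\XC$ is a KRD minimizer by the cost identity. The subtle point, and the main obstacle of the proof, is that a generic augmented OT plan may transport mass directly between points at distance $\geq C$ in $\XC^2$, whereas \Cref{prop:Prop_d_trunc}$(iii)$ requires every unbalanced OT plan to be supported inside $\mathcal{D}_C$ (up to its boundary). I would resolve this by rerouting all such long-distance mass in $\tilde\pi$ through $\mathfrak{d}$ at the identical cost $C^p = 2(C/2^{1/p})^p$; after this modification the restriction of $\tilde\pi$ to $\XC\times\XC$ becomes an unbalanced OT plan, and since the rerouting only affects mass at distances $\geq C$, the modified plan still agrees with the original $\tilde\pi$ on $\mathcal{D}_C$.
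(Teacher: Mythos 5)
Your argument for (i) takes a genuinely different route from the paper's. The paper first establishes $K$-invariance of $\OT_{\tilde d^p_{p,C}}(\tilde\mu_K,\tilde\nu_K)$ by invoking a result of \cite{heinemann2023kantorovich} for finitely supported measures together with weak-convergence approximation and the fact that the Wasserstein distance metrizes weak convergence, and only then verifies the identity by a direct computation for the single value $K=m_\mu\vee m_\nu$. You instead build an explicit cost-preserving lift $\pi\mapsto\tilde\pi$ together with the restriction inverse, valid for every $K\geq m_\mu\vee m_\nu$, and dispose of the feasibility obstruction $K-m_\mu-m_\nu+m_\pi\geq 0$ by observing that coupling extra residual mass can only weakly decrease the KRD objective. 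This is more elementary and more self-contained; it is correct, provided the filling construction (e.g.\ a rescaled product of residuals) is actually written out so the reduction to $m_\pi\geq m_\mu+m_\nu-K$ is airtight.

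For (ii), the converse direction --- rerouting long-distance mass of $\tilde\pi$ through $\mathfrak{d}$ at the identical per-unit cost $C^p=2(C/2^{1/p})^p$ --- is correct and matches the paper's argument. The forward direction, however, contains a gap: you assert that every unbalanced OT plan lifts to a balanced OT plan that coincides with it on all of $\XC\times\XC$. With $K=m_\mu\vee m_\nu$ this is false whenever $m_\pi<m_\mu\wedge m_\nu$, which does occur: take $\mu=\delta_x$, $\nu=\delta_y$ with $d(x,y)>C$, where $\pi\equiv 0$ is the unique unbalanced OT plan but the only element of $\Pi_=(\tilde\mu_K,\tilde\nu_K)=\Pi_=(\delta_x,\delta_y)$ is $\delta_{(x,y)}\neq 0$. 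More generally, whenever $m_\pi<m_\mu\wedge m_\nu$, no balanced coupling can restrict to $\pi$ on $\XC^2$ because some mass must be supplied from within $\XC$, not from $\mathfrak{d}$. The fix is to augment $\pi$ with a coupling $\breve\pi$ of the residuals so that $\tilde\pi=\pi+\breve\pi$ is feasible, and then use optimality of $\pi$ to conclude that $\breve\pi$ puts no mass on $\mathcal{D}_C$ (transporting residual mass at distance $<C$ would strictly reduce the KRD objective). The resulting $\tilde\pi$ agrees with $\pi$ on $\mathcal{D}_C$ but not on all of $\XC^2$ --- which is exactly what the proposition claims and what the paper's proof establishes.
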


The above proposition highlights that essentially all available (exact and approximate) methods for computing balanced OT for general cost functions, e.g., the auction algorithm \citep{bertsekas1989}, transportation simplex, \citep{luenberger2008linear}, network flow algorithms \citep{Bonneel2011}, or interior methods such as the Sinkhorn algorithm \citep{cuturi13,cuturi19}, can be immediately employed to compute the KRD.  In particular, they also enable computing the associated UOT plan (for details, see \cite{heinemann2023kantorovich}). Another notable aspect of \Cref{prop:linkOT}$(i)$ is that since the left-hand side of \eqref{eq:UOT=OT} is independent of $K\geq m_\mu \vee m_\nu$, so is the right-hand side. %

\subsection{Effects of parameters on Kantorovich--Rubinstein distance}
Based on \Cref{prop:linkOT}, several properties of the $(p,C)$-KRD follow. %
\begin{proposition}[Role of parameters]\label{prop:kr_properties} For $\mu, \nu \in \MCsX$ the following properties hold. 
    \begin{enumerate}[label=$(\roman*)$]
        \item For $0 < C_1 \leq C_2$ it follows that $\KR_{p,C_1}(\mu, \nu) \leq \KR_{p,C_2}(\mu, \nu)$.
        \item  For $0 < p \leq q$ it follows that $\KR_{p,C}(\mu, \nu) \leq \KR_{q,C}(\mu, \nu)$.
        \item  For $a \geq 0$ it follows that $\KR_{p,C}(a\mu, a\nu) = a^{1/p} \KR_{p,C}(\mu, \nu)$ and moreover it holds that $\KR_{p,C}(\mu, a \mu) = \frac{C^p}{2}|m_\mu - am_\mu|$.
        \item It holds that $\frac{C^p}{2}|m_\mu - m_\nu| \leq \KR_{p,C}^p(\mu, \nu) \leq \frac{C^p}{2}(m_\mu + m_\nu)$.
        \item In particular, if $m_\mu \wedge m_\nu=0$, then $\KR_{p,C}^p(\mu, \nu) = \frac{C^p}{2}(m_\mu +m_\nu)$. 
    \end{enumerate}
\end{proposition}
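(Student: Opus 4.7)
The plan is to treat the five claims (i)--(v) separately, organizing the argument around the truncated-cost representation from \Cref{prop:Prop_d_trunc}(ii), which writes $\KR_{p,C}^p(\mu,\nu)$ as the infimum over $\pi\in\Pi_\leq(\mu,\nu)$ of $\int (d^p\wedge C^p)\,d\pi + C^p\bigl((m_\mu+m_\nu)/2-m_\pi\bigr)$, together with the lifted-OT identity from \Cref{prop:linkOT}(i).

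Claims (i), (iv) and (v) follow directly from this representation. For (i), if $C_1\leq C_2$ then both $d^p\wedge C^p$ and $C^p$ are non-decreasing in $C$, while the mass defect $(m_\mu+m_\nu)/2-m_\pi$ is non-negative on $\Pi_\leq(\mu,\nu)$; hence the objective is pointwise non-decreasing in $C$, which transfers to the infimum and to the $p$-th root. For (iv), the upper bound comes from the feasible plan $\pi\equiv 0$, while the lower bound uses $m_\pi\leq m_\mu\wedge m_\nu$ for every $\pi\in\Pi_\leq(\mu,\nu)$, so that the mass-defect contribution alone already majorizes $\tfrac{C^p}{2}|m_\mu-m_\nu|$. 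Claim (v) is then the special case $m_\mu\wedge m_\nu=0$, which collapses $\Pi_\leq(\mu,\nu)$ to $\{0\}$ and forces the infimum to equal $\tfrac{C^p}{2}(m_\mu+m_\nu)$.

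For (iii), the scaling identity is obtained from the bijection $\pi\leftrightarrow a\pi$ between $\Pi_\leq(\mu,\nu)$ and $\Pi_\leq(a\mu,a\nu)$ for $a>0$, under which the objective scales by $a$; the case $a=0$ is immediate. For the diagonal identity, the explicit sub-coupling $\pi=(a\wedge 1)(\mathrm{id},\mathrm{id})_*\mu$ lies in $\Pi_\leq(\mu,a\mu)$ and has zero transport cost, yielding the upper bound $\tfrac{C^p}{2}|m_\mu-am_\mu|$ on $\KR_{p,C}^p(\mu,a\mu)$; the matching lower bound follows because the marginal constraints force $m_\pi\leq(a\wedge 1)m_\mu$ for every admissible plan, so that the mass-defect term alone already exhausts this value.

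The main obstacle is (ii), because in the defining infimum both the cost $d^p$ and the mass penalty $C^p$ depend on $p$. My plan is to use the lifted identity $\KR_{p,C}^p(\mu,\nu)=\OT_{\tilde d_{p,C}^p}(\tilde\mu_K,\tilde\nu_K)$ from \Cref{prop:linkOT}(i) with $K=m_\mu\vee m_\nu$, and to exploit the pointwise comparison $\tilde d_{p,C}^p\leq(\tilde d_{q,C}^q)^{p/q}$ on $\tilde\XC^2$ valid for $p\leq q$: on $\XC\times\XC$ this is the identity $(d\wedge C)^p=((d\wedge C)^q)^{p/q}$, and on the mixed pairs one checks $(\tilde d_{q,C}^q)^{p/q}=C^p/2^{p/q}\geq C^p/2=\tilde d_{p,C}^p$. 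Feeding the $\KR_{q,C}$-optimal lifted plan into the $(p,C)$-objective, rescaling it to a probability measure, and applying Jensen's inequality to the concave map $x\mapsto x^{p/q}$ then produces the required comparison between $\KR_{p,C}^p$ and $\KR_{q,C}^p$. The delicate step is the bookkeeping of the mass normalization, which has to be done so that Jensen's inequality returns the correct exponent and the two $p$-dependent augmented metrics $\tilde d_{p,C}$ and $\tilde d_{q,C}$ are compared in the right direction.
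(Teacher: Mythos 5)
Parts (i), (iii), (iv), (v) are correct and essentially follow the paper's route. You prove (i), (iv), (v) directly from the truncated-cost representation in \Cref{prop:Prop_d_trunc}$(ii)$ rather than via the lifted OT of \Cref{prop:linkOT}$(i)$ as the paper does, but the two are interchangeable here; (iii) matches the paper's construction (note the paper's statement has a typo — the identity should read $\KR_{p,C}^p(\mu,a\mu)=\frac{C^p}{2}|m_\mu-am_\mu|$, which is what you actually prove).

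For (ii) you follow the same route as the paper, which invokes the monotonicity $W_p\le W_q$ (Villani, Remark~6.6) on the lifted space after the pointwise comparison $\tilde d_{p,C}^p\le\tilde d_{q,C}^p$. But the ``bookkeeping of the mass normalization'' you defer is not a technicality; it is where the argument fails to close. The lifted measures $\tilde\mu_K,\tilde\nu_K$ have total mass $K=m_\mu\vee m_\nu$, so after rescaling the optimal $q$-plan to a probability measure and applying Jensen's inequality to $x\mapsto x^{p/q}$, you obtain only
$$\KR_{p,C}(\mu,\nu)\;\le\;K^{1/p-1/q}\,\KR_{q,C}(\mu,\nu),$$
and the factor $K^{1/p-1/q}$ strictly exceeds $1$ whenever $K>1$. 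This is not a slack in the proof technique: the asserted inequality is genuinely false in that regime. For instance, take $\XC=\{0\}$, $\mu=\delta_0$, $\nu=4\delta_0$, any $C>0$; then the only sub-couplings are $c\,\delta_{(0,0)}$ with $c\le 1$, the transport cost vanishes, and $\KR_{p,C}(\mu,\nu)=C\,(3/2)^{1/p}$, which is strictly \emph{decreasing} in $p$. The paper's own justification inherits the same gap, since Villani's Remark~6.6 is stated for probability measures; the claim holds as written only under the extra hypothesis $m_\mu\vee m_\nu\le 1$ (or after a suitable renormalization). You should either state and use this hypothesis, or recognize that the two cases $K\le 1$ and $K>1$ behave differently and that no argument of this type can close the gap for $K>1$.
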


Extending upon Assertions $(i)$ and $(iv)$ from \Cref{prop:kr_properties}, in the following proposition we analyze the two regimes where the mass penalization parameter $C$ is small or large relative to the support of the measures. 

\begin{proposition}[Effect of $C$] \label{prop:interpolate_C}
For $\mu, \nu \in \MCsX$ and $p \geq 1$, the following assertions hold. 
\begin{enumerate}[label=$(\roman*)$]
    \item If $0<C \leq \inf\limits_{x_1\in \supp(\mu)} \inf\limits_{x_2 \in  \supp(\nu)\backslash\{x_1\}}  d(x_1, x_2)$, then $\KR_{p,C}^p(\mu, \nu) = \frac{C^p}{2}\mathrm{TV}(\mu, \nu)$, 
    where $\mathrm{TV}(\mu, \nu) \coloneqq \sup\limits_{B\in \BC(\XC)}|\mu(B) - \nu(B)|$ 
   is the total variation distance of $\mu$ and~$\nu$.%
    \item 
    If $0<C\leq \inf\limits_{\substack{x_1 \in \supp(\mu), x_2 \in \supp(\nu)}}d(x_1,x_2)$, then $\KR_{p,C}^p(\mu, \nu) = \frac{C^p}{2} (m_\mu + m_\nu).$
    \item If $C \geq \sup\limits_{\substack{x_1\in \supp(\mu), x_2\in \supp(\nu)}}d(x_1, x_2)$ and $m_\mu = m_\nu$, then 
    $
        \KR_{p, C}^p(\mu, \nu)=W_p^p(\mu, \nu).
    $
\end{enumerate}
\end{proposition}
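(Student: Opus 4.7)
The plan is to reduce everything to the truncated formulation provided by \Cref{prop:Prop_d_trunc}$(ii)$, together with the lifted OT identification from \Cref{prop:linkOT}. The common observation is that under each of the three regimes of $C$, the truncated cost $d^p \wedge C^p$ simplifies drastically on $\supp(\mu) \times \supp(\nu)$, which collapses the minimization over sub-couplings to an elementary problem.

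For $(i)$, the hypothesis ensures that $d^p \wedge C^p$ equals $0$ on the diagonal $\Delta = \{(x,x) : x \in \XC\}$ and takes the constant value $C^p$ elsewhere on $\supp(\mu) \times \supp(\nu)$. Since every $\pi \in \Pi_{\leq}(\mu,\nu)$ is concentrated on this product, the truncated objective rearranges to $\tfrac{C^p}{2}(m_\mu + m_\nu) - C^p\,\pi(\Delta)$, so the infimum reduces to maximizing the diagonal mass $\pi(\Delta)$. Any diagonal-supported sub-coupling is the pushforward of some $\eta \in \MC(\XC)$ with $\eta \leq \mu$ and $\eta \leq \nu$ under $x \mapsto (x,x)$, hence the supremum equals $m_{\mu \wedge \nu}$ (with $\mu \wedge \nu$ the greatest common minorant), attained by $\pi^{\star} = (\mathrm{id},\mathrm{id})_{\#}(\mu \wedge \nu)$. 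This yields $\KR_{p,C}^p(\mu,\nu) = \tfrac{C^p}{2}(m_\mu + m_\nu - 2m_{\mu \wedge \nu}) = \tfrac{C^p}{2}\mathrm{TV}(\mu,\nu)$. Part $(ii)$ is then immediate: its stricter infimum condition forces $\supp(\mu) \cap \supp(\nu) = \emptyset$ (otherwise some $x$ in the intersection would contribute $d(x,x)=0 < C$), so that $m_{\mu \wedge \nu} = 0$.

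For $(iii)$, the hypothesis renders the truncation inactive on $\supp(\mu) \times \supp(\nu)$, where the lifted metric $\tilde d_{p,C}$ from \eqref{eq:dhat_def} coincides with $d$. Choosing $K = m_\mu = m_\nu$ in \Cref{prop:linkOT}$(i)$ gives $\tilde \mu_K = \mu$ and $\tilde \nu_K = \nu$ with no auxiliary mass at $\mathfrak{d}$, whence
\begin{equation*}
\KR_{p,C}^p(\mu,\nu) \;=\; \OT_{\tilde d_{p,C}^p}(\mu,\nu) \;=\; \OT_{d^p}(\mu,\nu) \;=\; W_p^p(\mu,\nu).
\end{equation*}

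The main conceptual point to get right is $(iii)$: the sub-coupling formulation allows dropping mass, and one might naively worry this yields a cost strictly below $W_p^p$. The lifted-OT identification bypasses this entirely, since no extra mass is added at $\mathfrak{d}$ when $m_\mu = m_\nu$, and the lifted problem is genuinely balanced. Apart from this, all three parts reduce to routine cost simplifications, with the existence of an optimal $\pi$ from \Cref{prop:Prop_d_trunc}$(i)$ guaranteeing attainment throughout.
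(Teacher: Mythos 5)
Your proposal is correct and follows essentially the same route as the paper: for $(i)$ you observe that the truncated cost is $C^p\mathds{1}(x_1\neq x_2)$ on $\supp(\mu)\times\supp(\nu)$, rearrange the objective to $\tfrac{C^p}{2}(m_\mu+m_\nu)-C^p\pi(\Delta)$, and identify the maximal diagonal mass as $m_{\mu\wedge\nu}$ (the paper's $\xi$ with density $f_\mu\wedge f_\nu$ is exactly this greatest common minorant); $(ii)$ follows from disjointness of supports and $(iii)$ from the lifted-OT identification with $K=m_\mu=m_\nu$, both exactly as in the paper. The only cosmetic difference is that you invoke the lattice operation $\mu\wedge\nu$ directly rather than writing out its Radon--Nikodym density with respect to $\mu+\nu$.
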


\subsection{Metric properties}
The relation between the $(p,C)$-KRD and the augmented OT \eqref{eq:UOT=OT} also enables us to derive certain topological properties of $(p,C)$-KRD. 
\begin{proposition}\label{prop:metricKRD}
Let $(\XC, d)$ be a c.s.m.s. For $p\geq 1$ and $C>0$ the $(p,C)$-KRD defines a metric on the space of finite measures $\MC(\XC)$.
\end{proposition}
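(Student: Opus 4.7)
The plan is to exploit the equivalence between $(p,C)$-KRD and balanced OT on the augmented space, established in \Cref{prop:linkOT}$(i)$, and then transfer the known metric properties of the Wasserstein distance to $\KR_{p,C}$. Non-negativity and symmetry are immediate from the definition in \eqref{eq:def_UOT}, since $\Pi_{\leq}(\mu,\nu) = \Pi_{\leq}(\nu,\mu)$ under coordinate swap and both the cost $d^p$ and the mass penalization term are symmetric in $\mu,\nu$. That $\KR_{p,C}(\mu,\mu) = 0$ also follows by taking $\pi$ to be the pushforward of $\mu$ under $x\mapsto(x,x)$.

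For identity of indiscernibles, I would fix $K = m_\mu \vee m_\nu$ and invoke \Cref{prop:linkOT}$(i)$, yielding
$$\KR_{p,C}^p(\mu,\nu) = \OT_{\tilde d^{\,p}_{p,C}}(\tilde\mu_K,\tilde\nu_K).$$
Since $\tilde d_{p,C}$ is a metric on $\tilde \XC$ (cited from \citealp{muller2020metrics}), the $p$-Wasserstein distance $\OT_{\tilde d_{p,C}^{\,p}}^{1/p}$ is a metric on the subset of $\MC(\tilde\XC)$ of measures with total mass $K$ (after trivial rescaling by $K$ one reduces to probability measures, a standard fact). Hence $\KR_{p,C}(\mu,\nu)=0$ forces $\tilde\mu_K=\tilde\nu_K$. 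Restricting both sides to $\XC = \tilde\XC \setminus\{\mathfrak{d}\}$ yields $\mu=\nu$.

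The triangle inequality is the main technical point. Given $\mu,\nu,\rho \in \MC(\XC)$, I would set $K = m_\mu \vee m_\nu \vee m_\rho$, so that \Cref{prop:linkOT}$(i)$ applies simultaneously to all three pairs:
\begin{equation*}
  \KR_{p,C}(\alpha,\beta) = \OT_{\tilde d_{p,C}^{\,p}}^{1/p}(\tilde\alpha_K,\tilde\beta_K), \qquad \alpha,\beta \in \{\mu,\nu,\rho\}.
\end{equation*}
The right-hand side is the $p$-Wasserstein distance on $(\tilde\XC,\tilde d_{p,C})$ between three measures of common total mass $K$. The triangle inequality for this Wasserstein distance on a c.s.m.s.\ follows from the standard gluing construction for couplings (which extends verbatim from probability measures to measures of equal finite mass, e.g.\ after rescaling by $1/K$ and then by $K^{1/p}$). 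Applying it to $(\tilde\mu_K,\tilde\rho_K,\tilde\nu_K)$ and translating back via \Cref{prop:linkOT}$(i)$ yields
\begin{equation*}
  \KR_{p,C}(\mu,\nu) \leq \KR_{p,C}(\mu,\rho) + \KR_{p,C}(\rho,\nu).
\end{equation*}

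The only subtle point is making sure that the common total mass $K$ can be chosen uniformly for all three pairs; this is justified by the remark after \Cref{prop:linkOT} noting that the value of the right-hand side of \eqref{eq:UOT=OT} is independent of the choice of $K \geq m_\mu \vee m_\nu$, so enlarging $K$ to dominate $m_\rho$ as well does not change any of the three distances. I do not expect any further obstacle beyond invoking the c.s.m.s.\ Wasserstein triangle inequality, whose proof by gluing carries over directly once one works on the complete separable metric space $(\tilde\XC,\tilde d_{p,C})$.
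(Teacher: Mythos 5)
Your proposal is correct and follows essentially the same route as the paper: reduce to balanced optimal transport on the augmented space $(\tilde\XC,\tilde d_{p,C})$ via \Cref{prop:linkOT}$(i)$ with a common $K = m_\mu \vee m_\nu \vee m_\rho$, and then inherit the triangle inequality and identity of indiscernibles from the Wasserstein metric on measures of equal mass. The only cosmetic difference is that the paper first deduces $m_\mu = m_\nu$ from $\KR_{p,C}(\mu,\nu)=0$ via \Cref{prop:kr_properties}$(iv)$ before invoking the augmented OT metric, whereas you conclude $\tilde\mu_K = \tilde\nu_K$ directly and restrict to $\XC$; both are equally valid.
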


Given that the $(p,C)$-KRD is a metric, the following result asserts that it metrizes weak convergence of measures on the space  $\MCsX$ of finite measures on $\XC$. 

\begin{proposition}[Metrizability and continuity] \label{prop:weak_conv_KRD}
Let $(\XC, d)$ be a c.s.m.s.\ with $\mu, \nu \in \MC(\XC)$ and sequences $(\mu_n)_{n \in \NN}, (\nu_n)_{n \in \NN}\subseteq \MCsX$. 
\begin{enumerate}[label=$(\roman*)$]
    \item It holds $\KR_{p, C}\left(\mu_n, \mu\right) \to 0$ if and only if the sequence $\left(\mu_n\right)_{n \in \NN} \subset \MCsX$ weakly converges for $n \to \infty$ to $\mu$. In particular, the sequence $\left(\mu_n\right)_{n \in \NN}$ is tight.
    \item If for $n \to \infty$ the sequences $\left(\mu_n\right)_{n \in \NN},\left(\nu_n\right)_{n \in \NN}$ weakly converge to $\mu, \nu$ respectively, then $\KR_{p, C}\left(\mu_n, \nu_n\right) \to \KR_{p, C}(\mu, \nu)$.
\end{enumerate}
        
\end{proposition}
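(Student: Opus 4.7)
The plan is to reduce both $(i)$ and $(ii)$ to the classical fact that the balanced OT cost associated with a bounded continuous metric cost metrizes weak convergence of finite measures of a common fixed total mass on a Polish space. The bridge is \Cref{prop:linkOT}$(i)$, which identifies $\KR_{p,C}^p(\mu,\nu)$ with the balanced OT cost $\OT_{\tilde d_{p,C}^p}(\tilde\mu_K,\tilde\nu_K)$ on the augmented space $(\tilde\XC, \tilde d_{p,C})$ for any $K\geq m_\mu\vee m_\nu$, with a cost function $\tilde d_{p,C}^p$ that is continuous and uniformly bounded by $C^p$.

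The key topological observation I would record first is that $\tilde d_{p,C}$ makes the adjoined point $\mathfrak d$ isolated (it is at constant distance $C/2^{1/p}$ from every $x\in\XC$), while $\tilde d_{p,C}$ restricted to $\XC\times\XC$ equals $d\wedge C$, which induces the same topology on $\XC$ as $d$. Hence $\tilde\XC$ is homeomorphic to the topological disjoint union $\XC\sqcup\{\mathfrak d\}$, and every bounded continuous $g\colon\tilde\XC\to\RR$ is completely described by a bounded continuous $g|_\XC$ and the value $g(\mathfrak d)$. Consequently, for any $K\geq \sup_n m_{\mu_n}\vee m_\mu$, the identity
\begin{equation*}
\int g\,d\tilde\mu_n^K \;=\; \int g|_\XC\,d\mu_n \;+\; (K-m_{\mu_n})\,g(\mathfrak d)
\end{equation*}
shows that $\tilde\mu_n^K\to\tilde\mu_K$ weakly on $\tilde\XC$ if and only if $\mu_n\to\mu$ weakly on $\XC$ \emph{and} $m_{\mu_n}\to m_\mu$.

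With this equivalence in place, $(ii)$ and the forward direction of $(i)$ are immediate: weak convergence of $\mu_n$ (and $\nu_n$) yields $m_{\mu_n}\to m_\mu$ (and $m_{\nu_n}\to m_\nu$) by testing against the constant function $1$, so a sufficiently large common $K$ can be chosen, the augmented sequences converge weakly on $\tilde\XC$, and the invoked OT metrization result delivers $\KR_{p,C}^p(\mu_n,\nu_n)=\OT_{\tilde d_{p,C}^p}(\tilde\mu_n^K,\tilde\nu_n^K)\to\OT_{\tilde d_{p,C}^p}(\tilde\mu_K,\tilde\nu_K)=\KR_{p,C}^p(\mu,\nu)$. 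For the converse direction of $(i)$, I would first extract $m_{\mu_n}\to m_\mu$ from \Cref{prop:kr_properties}$(iv)$, namely $|m_{\mu_n}-m_\mu|\leq 2C^{-p}\KR_{p,C}^p(\mu_n,\mu)\to 0$; then fix $K$ uniformly larger than all involved masses, so that $\OT_{\tilde d_{p,C}^p}(\tilde\mu_n^K,\tilde\mu_K)\to 0$ in balanced OT, and conclude $\tilde\mu_n^K\to\tilde\mu_K$ weakly on $\tilde\XC$ via the metrization result, hence $\mu_n\to\mu$ weakly on $\XC$ by the topological equivalence. Tightness of $(\mu_n)_{n\in\NN}$ then follows from Prokhorov's theorem on the Polish space $\XC$.

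The main technical point is the OT metrization result invoked above. This is a standard consequence of the classical theorem that $W_p$ metrizes weak convergence on Polish spaces (see e.g.\ \citealp{villani2008optimal}), extended from probability to equal-mass finite measures by rescaling to mass $1$: since $\tilde d_{p,C}$ is a bounded metric on the c.s.m.s.\ $\tilde\XC$, the $p$-th Wasserstein distance with respect to $\tilde d_{p,C}$ metrizes weak convergence on $\PC(\tilde\XC)$, and the statement for measures of any common mass $K$ follows by the scaling identity $\OT_{\tilde d_{p,C}^p}(K\rho,K\sigma)=K\,\OT_{\tilde d_{p,C}^p}(\rho,\sigma)$ for probability measures $\rho,\sigma$. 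Everything else in the plan is a direct bookkeeping of augmented mass and the isolated-point topology, so no further obstacles are anticipated.
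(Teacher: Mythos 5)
Your proposal is correct, and it takes a genuinely different route from the paper's. The paper handles the equivalence by \emph{normalizing}: after a case split on $m_\mu=0$ versus $m_\mu>0$, it rescales to probability measures $\mu_n/m_{\mu_n}$ and $\mu/m_\mu$, uses the triangle inequality together with the scaling identity $\KR_{p,C}(\mu,a\mu)=\tfrac{C}{2^{1/p}}|1-a|^{1/p}m_\mu^{1/p}$ from \Cref{prop:kr_properties}$(iii)$ to control the mass mismatch, and applies the Wasserstein metrization of weak convergence to the normalized sequence via \Cref{prop:interpolate_C}$(iii)$, all on the original space $\XC$. You instead work entirely on the augmented space $(\tilde\XC,\tilde d_{p,C})$ via \Cref{prop:linkOT}$(i)$, observe that $\mathfrak d$ is isolated so that weak convergence on $\tilde\XC$ decomposes as weak convergence on $\XC$ plus mass convergence, and reduce to the Wasserstein metrization on $\tilde\XC$ at a fixed common mass $K$. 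Both arguments are sound; yours is arguably tidier because the topological decomposition handles the $m_\mu=0$ case and the mass-bookkeeping uniformly, with no case split, at the small cost of needing to observe that $K$ can be fixed uniformly in $n$ (which you do correctly, by first extracting $m_{\mu_n}\to m_\mu$ via \Cref{prop:kr_properties}$(iv)$ in the converse direction and by weak convergence of the constant function $1$ in the forward direction and in $(ii)$).
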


\section{Upper bounds on performance of empirical unbalanced optimal transport} \label{sec:main}
In this section, we derive our main results on the statistical error of the empirical KRD (\Cref{sec:mu_n_inUOT}) based on empirical measures generated from weakly stationary ST point processes (\Cref{def:ST_stat_L2_process_def}).  We note that for such a process $\Xi$ there always exists a natural (spatial) population measure $\mu\in\MCsX$ which fulfills $\mu(\cdot) = \EE[t^{-1}\Xi(\cdot \times (0,t])]$ for all $A\subseteq \BC(\XC)$ (\Cref{prop:time-reduction}) and an empirical measure $\hat \mu_t(\cdot) = t^{-1}\Xi(\cdot \times (0,t])$. 
Extensions to empirical unbalanced OT are detailed in \Cref{sec:UOT}, and we provide the proof for our main result for the empirical KRD  in \Cref{sec:proof_main_result}.

\subsection[Statistical error bounds for empirical Kantorovich--Rubinstein distance]{Statistical error bounds for empirical Kantorovich--Rubinstein\\ distance} \label{sec:mu_n_inUOT}

Throughout this subsection, we consider general parameters $p\geq 1$, $C>0$ for the KRD, which are chosen by the practitioner and thus known. To streamline exposition, we first focus on the statistical bounds for $\KR_{p,C}(\hat \mu_t, \mu)$, i.e., where the empirical measure is compared with the population measure, and later treat the general two-sample case.  

As a first step, we show  a generic upper bound that solely depends on the mass penalization parameter and the total mass of the underlying intensity measure. %
\begin{lemma}\label{lem:trivialUpperBound}
    Let $(\XC,d)$ be a metric space and let $\Xi$ be a weakly stationary ST point process on $\XC \times \tgz$  with population measure $\mu\in \MCsX$ and empirical measure $\hat \mu_t$. Then, for all $t\in \tgz$, 
    \begin{equation} \label{eq:trivialUpperBound}
    	\begin{aligned}
    		&\EE[\KR_{p, C}(\mun, \mu)] \leq 2^{1/p}C \, m_\mu^{1/p}.
    	\end{aligned}
    \end{equation}
\end{lemma}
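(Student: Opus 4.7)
The plan is to combine the elementary upper bound from \Cref{prop:kr_properties}$(iv)$ with the identity $\EE[m_{\hat\mu_t}] = m_\mu$ implied by weak time-stationarity, and then apply Jensen's inequality. There is no delicate estimation involved; the bound is a purely crude one that ignores the geometry and uses only total masses.

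First, I would invoke \Cref{prop:kr_properties}$(iv)$ with the pair $(\hat\mu_t, \mu)$ to obtain the pointwise (sample-path) bound
\begin{equation*}
    \KR_{p,C}^p(\hat\mu_t,\mu) \;\leq\; \frac{C^p}{2}\bigl(m_{\hat\mu_t}+m_\mu\bigr).
\end{equation*}
Taking expectations and using the fact that weak time-stationarity guarantees $\EE[\Xi(\XC \times (0,t])] = t\,m_\mu$ (as stated just after \Cref{def:ST_stat_L2_process_def}, via the time-reduction result), hence $\EE[m_{\hat\mu_t}] = t^{-1}\EE[\Xi(\XC\times(0,t])] = m_\mu$, I get
\begin{equation*}
    \EE\bigl[\KR_{p,C}^p(\hat\mu_t,\mu)\bigr] \;\leq\; \frac{C^p}{2}\bigl(\EE[m_{\hat\mu_t}]+m_\mu\bigr) \;=\; C^p\, m_\mu.
\end{equation*}

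Next, since $p\geq 1$, the map $x\mapsto x^{1/p}$ is concave on $[0,\infty)$, so Jensen's inequality yields
\begin{equation*}
    \EE\bigl[\KR_{p,C}(\hat\mu_t,\mu)\bigr] \;\leq\; \bigl(\EE[\KR_{p,C}^p(\hat\mu_t,\mu)]\bigr)^{1/p} \;\leq\; \bigl(C^p m_\mu\bigr)^{1/p} \;=\; C\, m_\mu^{1/p} \;\leq\; 2^{1/p} C\, m_\mu^{1/p},
\end{equation*}
as desired. (Alternatively, one could use the triangle inequality from \Cref{prop:metricKRD} together with $\KR_{p,C}^p(\eta,0)=\tfrac{C^p}{2}m_\eta$ from \Cref{prop:kr_properties}$(v)$ and subadditivity of $x^{1/p}$; this gives a constant $2^{1-1/p}$, which is also $\leq 2^{1/p}$ when $p\leq 2$, but the route above via Jensen is cleaner and works uniformly in $p$.)

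There is no real obstacle here: the only substantive input from the point-process setting is the first-moment identity $\EE[m_{\hat\mu_t}]=m_\mu$, which is immediate from weak time-stationarity. All else is a direct consequence of the structural inequality in \Cref{prop:kr_properties}$(iv)$ and Jensen.
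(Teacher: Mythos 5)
Your proof is correct and follows essentially the same route as the paper's: bound $\KR_{p,C}^p(\hat\mu_t,\mu)$ via \Cref{prop:kr_properties}$(iv)$, take expectations using the first-moment identity $\EE[m_{\hat\mu_t}]=m_\mu$ from weak time-stationarity, and apply Jensen's inequality. In fact you keep the factor $C^p/2$ exactly and thereby obtain the slightly sharper bound $C\,m_\mu^{1/p}$ before relaxing it to the stated $2^{1/p}C\,m_\mu^{1/p}$, whereas the paper discards the $1/2$ before taking the $p$-th root.
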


The statistical bound in \eqref{eq:trivialUpperBound} is to be interpreted as a benchmark bound, which always holds and confirms that for small mass penalization parameter $C$ or small total mass $m_{\mu}$, the statistical error of the empirical $(p,C)$-KRD is small because the KRD is already small. All subsequent results will build on this foundation by demonstrating that the quantity on the left-hand side vanishes as the sample size $t$ increases. As a preliminary result of this type, we confirm consistency of the empirical KRD under a qualitative analog to Assumption \ref{ass:Xi_mixing_Var_beta}.

\begin{lemma}\label{prop:consistency}
	Let $(\XC,d)$ be a c.s.m.s.\ and let $\Xi$ be a weakly stationary ST point process on $\XC\times \tgz$ with population measure $\mu\in \MCsX$ and empirical measure $\hat \mu_t$. Assume that for every open set  $A\subseteq \XC$ it holds $\Var(\Xi(A\times (0,t])) = o(t^2)$ for $t\to \infty$. Then,  it follows that $$\lim\limits_{t\to \infty} \EE\left[\KR_{p,C}(\mun, \mu)\right]=0.$$
\end{lemma}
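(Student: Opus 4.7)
The strategy is to first establish convergence in probability of $\KR_{p,C}(\mun, \mu)$ to zero and then to upgrade this to convergence in expectation via uniform integrability.

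First, I would set up uniform integrability. Taking $A = \XC$ in the hypothesis yields $\Var(m_{\mun}) = t^{-2}\Var(\Xi(\XC\times(0,t])) = o(1)$ as $t \to \infty$, while $\EE[m_{\mun}] = m_\mu$ follows from weak time-stationarity (cf.\ \Cref{prop:time-reduction}). Hence $m_{\mun} \to m_\mu$ in $L^2$, so in particular $\sup_{t} \EE[m_{\mun}^2] < \infty$. Combined with the deterministic bound $\KR_{p,C}(\mun,\mu)^p \leq \tfrac{C^p}{2}(m_{\mun} + m_\mu)$ from \Cref{prop:kr_properties}$(iv)$, this gives $\sup_t \EE[\KR_{p,C}(\mun, \mu)^{2p}] < \infty$, which implies uniform integrability of the family $\{\KR_{p,C}(\mun,\mu)\}_{t}$ since $2p > 1$.

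For the convergence in probability, I would lean on a countable family of open sets. Since $\XC$ is a c.s.m.s., fix a countable base $\{U_j\}_{j \in \NN}$ of its topology and let $\mathcal V$ denote the countable collection of all finite unions of the $U_j$. Each $V \in \mathcal V$ is open, so by hypothesis $\Var(\mun(V)) = t^{-2}\Var(\Xi(V\times(0,t])) = o(1)$ while $\EE[\mun(V)] = \mu(V)$, giving $\mun(V) \to \mu(V)$ in $L^2$ for every $V$. By the subsequence principle, it is enough to show that any sequence $t_n \to \infty$ admits a further subsequence $t_{n_k}$ along which $\KR_{p,C}(\hat\mu_{t_{n_k}}, \mu) \to 0$ almost surely. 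A standard diagonal extraction produces such a subsequence on which $\hat\mu_{t_{n_k}}(V) \to \mu(V)$ almost surely for every $V \in \mathcal V$ (in particular $m_{\hat\mu_{t_{n_k}}} \to m_\mu$ almost surely). On the probability-one event where all these convergences hold, every open $U \subseteq \XC$ is an increasing union of elements of $\mathcal V$, so $\liminf_k \hat\mu_{t_{n_k}}(U) \geq \mu(U)$ by monotone convergence; together with convergence of total masses, the Portmanteau theorem then yields $\hat\mu_{t_{n_k}} \to \mu$ weakly in $\MCsX$. Finally, \Cref{prop:weak_conv_KRD}$(i)$ transfers weak convergence into KR convergence, delivering $\KR_{p,C}(\hat\mu_{t_{n_k}}, \mu) \to 0$ almost surely along the chosen subsequence.

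Combining convergence in probability with uniform integrability closes out the argument. The main technical delicacy will be the Portmanteau reasoning embedded in the subsequence extraction, which crucially exploits second-countability of the c.s.m.s.\ $\XC$ (to obtain a countable convergence-determining family of open sets accessible via the variance hypothesis) together with the fact that the KR distance metrizes weak convergence on $\MCsX$.
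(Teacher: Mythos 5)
Your proof is essentially the paper's own argument: establish convergence in probability of $\KR_{p,C}(\mun,\mu)$ to zero via weak convergence of $\mun$ to $\mu$ (using \Cref{prop:weak_conv_KRD}), bound second moments to get uniform integrability from \Cref{prop:kr_properties}$(iv)$ and the $A=\XC$ case of the hypothesis, and combine. Where you differ is only in thoroughness: the paper's proof condenses the weak-convergence step into the single line ``Since $A$ was arbitrary and $\XC$ is separable, we infer weak convergence $\mun \stackrel{w}{\to}\mu$ in probability,'' whereas you spell out the countable base, the finite-union family $\mathcal V$, the subsequence/diagonal extraction to almost-sure convergence, and the Portmanteau step $\liminf_k \hat\mu_{t_{n_k}}(U) \geq \mu(U)$ together with mass convergence. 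This is precisely the bookkeeping the paper leaves implicit, and it is correct. One small point of hygiene: your ``$\sup_t \EE[m_{\mun}^2] < \infty$'' should be read as $\sup_{t\geq t_0}$ for some $t_0$, since $\Var(m_{\mun}) = o(1)$ only controls the variance for large $t$; the paper makes this restriction explicit, and it suffices because the limit $t\to\infty$ ignores small $t$.
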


Our main result in this section is a quantitative version of \Cref{prop:consistency}. To this end, we employ \ref{ass:Xi_mixing_Var_beta} and introduce an additional dimensionality constraint on the domain, which we formalize in terms of covering numbers as part of Assumption \ref{ass:XC_covering_cond}. Note that this assumption implies the total boundedness of the space $\XC$.

\begin{theorem}[Estimation of measure in KRD] \label{th:est_mu_inUOT_beta}
Let $(\XC, d)$ be a c.s.m.s.\ for which the following assumption is satisfied. 
\begin{description}
	\labitem{(Dim$_{\alpha}$)}{ass:XC_covering_cond} There are constants $A>0, \alpha \geq 0$ such that the covering number\footnote{The covering number $\mathcal{N}(\varepsilon, \mathcal{X}, d)$  of $\XC$ for precision $\epsilon>0$ is defined as the smallest integer $N \in \NN$ such that there exists a set $\{s_1, \dots, s_N\} \subset \XC$ such that  for every $x \in \XC$ there exists some index $i$ with $d(x, s_i) \leq \varepsilon$.} of $\XC$ is bounded~by
$$
\mathcal{N}(\varepsilon, \mathcal{X}, d) \leq A \varepsilon^{-\alpha} \quad \text { for all } \varepsilon \in(0, C].
$$
\end{description}
Let $\Xi$ be a weakly stationary ST point process on $\XC \times \tgz$ with population measure $\mu \in \MCsX$ and empirical measure $\hat \mu_t$. Assume $\Xi$ satisfies \ref{ass:Xi_mixing_Var_beta} for $\beta \in [0,1)$, $\kappa >0$, and $t_0 \in \tgz$. 
Then, it follows for all $t \geq t_0$ that %
\begin{equation} 
\begin{aligned}
	&\;\;\EE[\KR_{p,C}(\mun, \mu)] \\[-0.2cm]&\lesssim_{p, \alpha} %
  \, C \kappa^{1/2p} t^{(-1+\beta)/2p}
  +  A^{1/(2p \vee \alpha)} \begin{cases}
        C^{1-\alpha/2p} \kappa^{1/2p} t^{(-1+\beta)/2p} & \text{if } \alpha < 2p, \\
        \kappa^{1/2p} t^{(-1+\beta)/2p} \log(2+C^pt^{(1-\beta)/2}(A\kappa)^{-1/2}m_{\mu}) & \text{if } \alpha = 2p, \\
        m_{\mu}^{-2/\alpha+1/p} \kappa^{1/\alpha} t^{(-1+\beta)/\alpha} & \text{if } \alpha > 2p.
    \end{cases}%
    \label{eq:est_mu_inUOT_beta}
    \end{aligned}
\end{equation}
\end{theorem}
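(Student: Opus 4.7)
My plan is to prove the bound by a nested dyadic partitioning argument that exploits the truncated cost reformulation of the KRD from \Cref{prop:Prop_d_trunc}$(ii)$, so that the coarsest scale under consideration is already the mass-penalization threshold $C$ and no scale larger than $C$ is ever entered. Using Assumption \ref{ass:XC_covering_cond}, for an integer scale $L \geq 0$ to be optimized I construct a sequence of nested partitions $\{\mathcal{P}_l\}_{l=0}^{L}$ of $\XC$ with cardinalities $N_l := |\mathcal{P}_l| \leq A (C 2^{-l})^{-\alpha}$ and cells of diameter at most $C 2^{-l}$.

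From these partitions I will build an explicit sub-coupling $\pi \in \Pi_{\leq}(\mun, \mu)$ via a greedy multi-scale matching scheme: within each finest-scale cell $A \in \mathcal{P}_L$ match the common minimum $\mun(A) \wedge \mu(A)$ at cost at most $(C 2^{-L})^p$ per unit; then propagate per-cell residuals to the parent cell at scale $L-1$, matching across siblings at cost at most $(C 2^{-L+1})^p$ per unit, and iterate up the hierarchy. The total mass left unmatched at the root equals $|m_\mu - m_{\mun}|$ and is charged $C^p/2$ per unit under the KRD formulation. Rewriting each per-scale matching cost in terms of the cell-level discrepancies, this yields the pathwise bound
\begin{align*}
   \KR_{p,C}^p(\mun, \mu) \leq (C 2^{-L})^p m_\mu + \sum_{l=0}^{L-1} (C 2^{-l})^p \sum_{A \in \mathcal{P}_{l+1}} |\mun(A) - \mu(A)| + \tfrac{C^p}{2}\,|m_\mu - m_{\mun}|.
\end{align*}
Taking expectations, and noting that $\EE[\mun(A)]=\mu(A)$ by weak time-stationarity, Cauchy--Schwarz together with \ref{ass:Xi_mixing_Var_beta} applied to $\mathcal{P}_{l+1}$ yields $\sum_{A\in \mathcal{P}_{l+1}} \EE|\mun(A) - \mu(A)| \leq \sqrt{N_{l+1}\, \kappa\, t^{\beta-1}}$, while applying \ref{ass:Xi_mixing_Var_beta} to the trivial one-set partition gives $\EE|m_\mu - m_{\mun}| \leq \sqrt{\kappa\, t^{\beta-1}}$.

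Substituting $N_{l+1} \leq A (C 2^{-l-1})^{-\alpha}$ reduces the argument to analyzing the geometric sum $\sum_{l=0}^{L-1} 2^{-l(p - \alpha/2)}$, which naturally splits into three regimes according to the sign of $p - \alpha/2$. For $\alpha < 2p$ the series converges as $L \to \infty$ and yields, after taking $p$-th root, the contribution $A^{1/(2p)} C^{1-\alpha/(2p)} \kappa^{1/(2p)} t^{(\beta-1)/(2p)}$. For $\alpha = 2p$ the sum equals $L$, and balancing against the residual term $(C 2^{-L})^p m_\mu$ produces the optimal scale $L \asymp \log(2 + C^p t^{(1-\beta)/2} m_\mu (A\kappa)^{-1/2})$ and the stated logarithmic factor. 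For $\alpha > 2p$ the sum is dominated by its last term, and balancing yields $C 2^{-L} \asymp (A\kappa)^{1/\alpha} m_\mu^{-2/\alpha} t^{(\beta-1)/\alpha}$, producing the contribution $A^{1/\alpha} m_\mu^{1/p - 2/\alpha} \kappa^{1/\alpha} t^{(\beta-1)/\alpha}$. The mass-penalization term contributes $C \kappa^{1/(2p)} t^{(\beta-1)/(2p)}$ after taking $p$-th root, matching the leading term in the claimed bound. Combining the pieces via Jensen's inequality and $(a+b+c)^{1/p} \leq a^{1/p}+b^{1/p}+c^{1/p}$ for $p \geq 1$ yields the stated upper bound.

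The main obstacle is obtaining the sharp dependence on the mass-penalization parameter $C$: this requires the truncated-cost formulation of \Cref{prop:Prop_d_trunc}$(ii)$ so that the dyadic hierarchy never enters scales larger than $C$, and careful accounting of constants in each of the three regimes so that $C$ appears with the correct exponent. The regime $\alpha > 2p$ is the most delicate, as $L$ must be explicitly optimized in terms of $A$, $\kappa$, $m_\mu$ and $t$, and one must verify that the optimizer is a valid non-negative integer, with the edge case $L=0$ handled directly by the benchmark estimate of \Cref{lem:trivialUpperBound}.
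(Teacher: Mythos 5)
Your proposal is, at its core, the same dyadic multi-scale argument the paper uses: bound the per-partition discrepancies via Cauchy--Schwarz and \ref{ass:Xi_mixing_Var_beta}, split into the three regimes $\alpha \lessgtr 2p$, and optimize the discretization scale. The paper's technical realization differs in that it first projects $\mun$ and $\mu$ onto an $\varepsilon$-covering set (its Lemma~\ref{lem:uot_proj_bound}) and then invokes the closed-form expression for the KRD on an ultrametric tree from \citet[Theorem~2.3]{heinemann2023kantorovich} (its Lemma~\ref{lem:KR_tree_est}), which already packages the cutoff at scale~$C$ into the index $l_*$ of \eqref{eq:l_star}. You instead build the sub-coupling explicitly by greedy hierarchical matching; this is more self-contained and avoids the external tree lemma, but leads to the same pathwise inequality up to constants. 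The one slip in your pathwise bound is the final term: after greedy matching up through the $\mathcal{P}_0$-cells (which have diameter $\leq C$ but are in general more than one in number), the unmatched mass is $\sum_{B\in\mathcal{P}_0}|\mun(B)-\mu(B)|$, not $|m_\mu - m_{\mun}|$; to reduce to $|m_\mu - m_{\mun}|$ you would need one more matching level across $\mathcal{P}_0$-cells at truncated cost at most $C^p$ per unit, contributing an extra $\tfrac{C^p}{2}\bigl(\sum_{B\in\mathcal{P}_0}|\mun(B)-\mu(B)| - |m_\mu-m_{\mun}|\bigr)$. In expectation this extra term is of order $A^{1/2}C^{p-\alpha/2}\kappa^{1/2}t^{(\beta-1)/2}$, which is of the same order as your $l=0$ summand and hence does not change any of the three regime conclusions (for $\alpha > 2p$ it is dominated by the finest-scale term, for $\alpha \leq 2p$ it is already the bound you prove); still, it should be tracked, as without it the claimed pathwise inequality, stated as an inequality between random variables, is false in general. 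Aside from this, the construction of nested partitions (rather than bare coverings) from \ref{ass:XC_covering_cond} and the non-negativity/integrality of the optimal $L$ are routine technicalities you flag correctly and can be handled exactly as in the paper's appendix computations.
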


\begin{corollary}\label{th:est_mu_inUOT}
  Let $(\XC,d)$ be a bounded c.s.m.s.\ and assume the setting of \Cref{th:est_mu_inUOT_beta}, including \ref{ass:XC_covering_cond} and \ref{ass:Xi_mixing_Var_beta} with $\beta = 0$. Then it follows for all $t \geq t_0$ that 
  \begin{equation} 
  \begin{split} 
    &\;\;\EE[\KR_{p,C}(\mun, \mu)] \\[-0.4cm]
    & \lesssim_{p, \alpha} %
   C \kappa^{1/2p} t^{-1/2p} 
  + A^{1/(2p \vee \alpha)} \begin{cases}
        C^{1-\alpha/2p} \kappa^{1/2p} t^{-1/2p} & \text{if } \alpha < 2p, \\
       \kappa^{1/2p}t^{-1/2p} \log(2+C^pt^{1/2}(A\kappa)^{-1/2}m_{\mu}) & \text{if } \alpha = 2p, \\
        m_{\mu}^{-2/\alpha  + 1/p}\kappa^{1/\alpha}t^{-1/\alpha}(t) & \text{if } \alpha > 2p.
    \end{cases}%
    \end{split}\label{eq:est_mu_inUOT}
\end{equation}
\end{corollary}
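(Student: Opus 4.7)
My plan is to obtain the corollary as an immediate specialization of \Cref{th:est_mu_inUOT_beta} with $\beta = 0$. First I would verify the hypotheses: the corollary assumes $(\XC,d)$ is a bounded c.s.m.s.\ satisfying \ref{ass:XC_covering_cond}, while $\Xi$ is a weakly stationary ST point process on $\XC \times \tgz$ with population measure $\mu$ and empirical measure $\hat\mu_t$ fulfilling \ref{ass:Xi_mixing_Var_beta} with $\beta = 0$. Since $0 \in [0,1)$, these are precisely the hypotheses of \Cref{th:est_mu_inUOT_beta} under the parameter choice $\beta = 0$. The boundedness assumption on $\XC$ is in fact automatic from \ref{ass:XC_covering_cond}: applying it at $\varepsilon = C$ gives $\mathcal{N}(C, \XC, d) \leq A C^{-\alpha} < \infty$, so $\XC$ is covered by finitely many closed balls of radius $C$ and hence has diameter at most $2C$.

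Next I would substitute $\beta = 0$ into each piece of the three-case bound \eqref{eq:est_mu_inUOT_beta}. The temporal exponent $t^{(-1+\beta)/(2p)}$ appearing in the leading term and in the first two cases collapses to $t^{-1/(2p)}$; the exponent $t^{(1-\beta)/2}$ inside the logarithm in the $\alpha = 2p$ case collapses to $t^{1/2}$; and $t^{(-1+\beta)/\alpha}$ in the $\alpha > 2p$ case collapses to $t^{-1/\alpha}$. The prefactors $C \kappa^{1/2p}$, $A^{1/(2p \vee \alpha)}$, and $m_\mu^{-2/\alpha + 1/p} \kappa^{1/\alpha}$, as well as the implicit constants depending on $p$ and $\alpha$, are unaffected by the substitution. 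This delivers the claimed bound \eqref{eq:est_mu_inUOT} for every $t \geq t_0$.

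Since the corollary is a direct restatement of \Cref{th:est_mu_inUOT_beta} for the parameter value $\beta = 0$, no new analytical work is needed, and I do not foresee any essential obstacle: the substantive difficulty (the dyadic partition argument coupled with the variance hypothesis \ref{ass:Xi_mixing_Var_beta} and the covering control \ref{ass:XC_covering_cond}) is absorbed entirely into the proof of \Cref{th:est_mu_inUOT_beta}.
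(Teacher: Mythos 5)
Your proposal is correct and matches the paper's intent exactly: the corollary is obtained by substituting $\beta = 0$ into every temporal exponent of \eqref{eq:est_mu_inUOT_beta}, with all prefactors unchanged. Your side remark that boundedness of $\XC$ already follows from \ref{ass:XC_covering_cond} at $\varepsilon = C$ is also correct, though the paper simply states boundedness as a hypothesis rather than deducing it.
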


We further note that in case two processes $\Xi$ and $\Eta$ are observed over potentially different time frames, a bound for the statistical error of the plug-in estimator follows from \Cref{th:est_mu_inUOT_beta} via the triangle inequality. 
\begin{corollary}\label{thm:UOT_beta_distinct}
  Let $(\XC, d)$ be a bounded c.s.m.s.\ which fulfill \Cref{ass:XC_covering_cond}. Let $\Xi, \, \Eta$ be two weakly stationary ST point processes on $\XC\times \TC_{>0}$ with population measures $\mu, \, \nu \in \MCsX$ and empirical measures $\hat \mu_t, \, \hat \nu_s$. Assume $\Xi$ and $\Eta$ satisfy \ref{ass:Xi_mixing_Var_beta} for $\beta>0$, $\kappa>0$, and $t_0 \in \TC_0$. Then, it follows for all $t\wedge s\geq t_0$ that 
	\begin{align*}
        &\EE\left[\left| \KR_{p,C}(\hat \mu_t, \hat \nu_s)- \KR_{p, C}(\mu, \nu) \right|\right] \lesssim_{p, \alpha}   C \kappa^{1/2p} (t \wedge s)^{(-1+\beta)/2p} \\
        &\quad  +  A^{1/(2p \vee \alpha)} \begin{cases}
        C^{1-\alpha/2p} \kappa^{1/2p} (t \wedge s)^{(-1+\beta)/2p} & \text{if }\alpha < 2p, \\
        \kappa^{1/2p}(t \wedge s)^{(-1+\beta)/2p} \log^{1/p}(2+C^p(t\wedge s)^{(1-\beta)/2}(A\kappa)^{-1/2}m_{\mu}) & \text{if } \alpha=2p, \\
        m_{\mu+\nu}^{-2/\alpha+1/p} \kappa^{1/\alpha}(t \wedge s)^{(-1+\beta)/\alpha} & \text{if } \alpha > 2p.
    \end{cases} 
\end{align*}
\end{corollary}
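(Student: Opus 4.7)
The proof of \Cref{thm:UOT_beta_distinct} should follow almost directly from \Cref{th:est_mu_inUOT_beta} combined with the triangle inequality. The plan is as follows.

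First, I would invoke \Cref{prop:metricKRD}, which establishes that $\KR_{p,C}$ is a genuine metric on $\MC(\XC)$. Applying the reverse triangle inequality twice yields the pointwise bound
\begin{equation*}
    \left| \KR_{p,C}(\hat \mu_t, \hat \nu_s)- \KR_{p, C}(\mu, \nu) \right| \leq \KR_{p,C}(\hat\mu_t, \mu) + \KR_{p,C}(\hat\nu_s, \nu).
\end{equation*}
Taking expectations reduces the problem to bounding the two one-sample statistical errors separately, to which \Cref{th:est_mu_inUOT_beta} directly applies since both $\Xi$ and $\Eta$ satisfy \ref{ass:Xi_mixing_Var_beta} with the same constants $\beta, \kappa, t_0$, and $\XC$ satisfies \ref{ass:XC_covering_cond}.

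Next I would combine the two resulting bounds into a single expression in $(t \wedge s)$. Since all the rate factors $t^{(-1+\beta)/2p}$, $t^{(-1+\beta)/\alpha}$, and $\log(2 + C^p t^{(1-\beta)/2} (A\kappa)^{-1/2} m_\mu)$ are monotonically non-increasing in $t$ (for the log term this follows from $-1 + \beta < 0$ being irrelevant since the argument $C^p t^{(1-\beta)/2}(A\kappa)^{-1/2}m_\mu$ is actually increasing in $t$, but we may upper bound by evaluating the full product at the smaller time $t \wedge s$; concretely the product $t^{(-1+\beta)/2p}\log(2+C^p t^{(1-\beta)/2}\cdots)$ is non-increasing up to constants for $t \geq t_0$), replacing $t$ and $s$ by $t \wedge s$ in each bound and summing yields the desired form, up to a factor of $2$ that may be absorbed into the constants hidden in $\lesssim_{p,\alpha}$.

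The only mildly non-routine case is $\alpha > 2p$, where \Cref{th:est_mu_inUOT_beta} applied to $\Xi$ and $\Eta$ produces the terms $m_\mu^{-2/\alpha + 1/p}$ and $m_\nu^{-2/\alpha + 1/p}$, respectively. Observing that the exponent $-2/\alpha + 1/p$ is strictly positive in this regime, and that $m_\mu, m_\nu \leq m_{\mu+\nu}$, both can be controlled by $m_{\mu+\nu}^{-2/\alpha + 1/p}$, yielding the stated bound. In the boundary case $\alpha = 2p$ the logarithmic factor $\log^{1/p}(2 + C^p(t\wedge s)^{(1-\beta)/2}(A\kappa)^{-1/2}m_\nu)$ arising from the $\hat \nu_s$ term can be absorbed into $\log^{1/p}(2 + C^p(t\wedge s)^{(1-\beta)/2}(A\kappa)^{-1/2}m_\mu)$ at the cost of a constant (or equivalently one may replace $m_\mu$ in the log by $m_{\mu+\nu}$; the logarithm will absorb the difference).

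Overall, no genuine obstacle arises: the result is essentially a corollary obtained by combining \Cref{prop:metricKRD} with \Cref{th:est_mu_inUOT_beta}. The main point of care is the bookkeeping in handling the mass exponents and the logarithmic factor when merging the one-sample bounds for $\Xi$ and $\Eta$ into a single symmetric expression.
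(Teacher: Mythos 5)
Your proposal is correct and coincides with the paper's own (unstated but explicitly signaled) argument: the corollary is obtained by applying the triangle inequality for $\KR_{p,C}$ (\Cref{prop:metricKRD}) and then invoking \Cref{th:est_mu_inUOT_beta} separately for each process. Your bookkeeping for $\alpha > 2p$ via $m_\mu, m_\nu \leq m_{\mu+\nu}$ and the eventual monotonicity of $t^{(-1+\beta)/2p}\log(2 + c\,t^{(1-\beta)/2})$ is sound; the only imprecision is the claim that $\log(a\,m_\nu)$ can be absorbed into $\log(a\,m_\mu)$ "at the cost of a constant" (the gap $\log(m_\nu/m_\mu)$ is not controlled by $p,\alpha$), but your alternative fix of replacing $m_\mu$ by $m_{\mu+\nu}$ inside the logarithm handles this correctly.
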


Let us now illustrate some explicit implications of \Cref{th:est_mu_inUOT_beta} for the ST Poisson point process, where Assumption \ref{ass:Xi_mixing_Var_beta} is met for $\beta = 0$, and describe a few settings where covering number bounds for \ref{ass:XC_covering_cond} are readily available.

\begin{example}[Classical i.i.d.\ sampling] \label{ex:iid_sampling}
  Our discussion from \Cref{sec:Bimomial} confirms that our bounds imply sharp rates for the empirical KRD when $\mu$ and $\nu$ are probability measures and i.i.d.\ sampling is considered. In particular, for fixed $C\geq \diam(\XC)$, the KRD coincides with the Wasserstein distance, and we obtain the same convergence rates as for the empirical Wasserstein distance from \eqref{eq:WassersteinRates}. 
\end{example}

\begin{example}[Poisson point process]
    If $\Xi$ is a time-homogeneous (and space-
  inhomogeneous) Poisson point process on $\XC\times (0, \infty)$ with intensity measure $\mu \otimes \Leb_{\RR}$ for $\mu\in \MCsX$ it follows according to \Cref{sec:example_PoissonPP} that Assumption \ref{ass:Xi_mixing_Var_beta} is met for $\beta = 0$ and $\kappa = m_{\mu}$. For this case, \Cref{th:est_mu_inUOT}  asserts under \ref{ass:XC_covering_cond} for $\alpha\geq 0$ and $A>0$ that
\begin{align*} 
	&\EE[\KR_{p,C}(\mun, \mu)] \\
  &\lesssim_{A, p, \alpha} %
  (Cm_{\mu}^{1/p})\wedge \left(C m_\mu^{1/2p} t^{-1/2p} +  m_{\mu}^{1/p} \begin{cases}
        C^{1-\alpha/2p} (m_\mu t)^{-1/2p} & \text{if } \alpha < 2p, \\
        (m_\mu t)^{-1/2p} \log^{1/p}(2+C^p(m_\mu t)^{1/2}) & \text{if } \alpha = 2p, \\
        (m_\mu t)^{-1/\alpha} & \text{if } \alpha > 2p.
    \end{cases} \right)
\end{align*}
This bound is comparable to that from \eqref{eq:WassersteinRates} for the empirical Wasserstein distance based on $n$ i.i.d.\ random variables from the probability measure $\mu/m_{\mu}\in \PC(\XC)$ with the identification that the sample size $n$ is proportional to $m_{\mu} t$.  This is not surprising as the expected number of realizations of $\Xi$ within the time frame $(0,t]$ scales with order $m_{\mu}t$. At the same time, under a larger mass $m_{\mu}$, the $(p,C)$-KR distance scales proportionally to the $p$-th root of the mass of the underlying measures. Consequently, while an increase in mass leads to a greater number of observations, the overall error increases as the $(p,C)$-KRD is inflated. %
\end{example}

\begin{example}[Assumption \ref{ass:XC_covering_cond} for different $\XC$] 
\leavevmode 

\begin{enumerate}[label=$(\roman*)$]
  \item %
  Let $R > 0$ be the radius of the centered ball $\XC = \BB(0, R) \coloneqq \{x \in \RR^d: \|x\|_2 \leq R \}$. It is well-known (see, e.g., \citet[Proposition 4.2.12]{vershynin2018high}) that (for $R$ large enough)
  \begin{equation*}
      \NC(\varepsilon, \BB(0, R), \|\cdot\|_2) \leq \frac{\Leb(\BB(0, R + \varepsilon/2))}{\Leb(\BB(0, \varepsilon/2))} = \left( 1 + \frac{2R}{\varepsilon}\right)^d \overset{R \geq \varepsilon/2}{\leq} (4R)^d \varepsilon^{-d},
  \end{equation*}
  so \ref{ass:XC_covering_cond} holds with $\alpha = d$ and $A = (4R)^{d}$.  Repeating the calculations in the proof of \Cref{th:est_mu_inUOT}, we can get an explicit bound in terms of $R$. We obtain 
  \begin{align*}
      &\EE[\KR_{p, C}(\mun, \mu)] \\
      &\lesssim_{d,p, \kappa,m_\mu}
  \left(C m_\mu^{1/p}\right) \wedge \left(C t^{-1/2p} + R \cdot\begin{cases}
      C^{1-d/2p}t^{-1/2p} & \text{if } d < 2p, \\
      t^{-1/2p} \log(2+C^pt^{1/2}) & \text{if } d = 2p, \\
      t^{-1/d} & \text{if } d > 2p.
  \end{cases} \right)
  \end{align*}
    \item Let $\XC = \{x_1, \dots, x_{N}\}$ be a finite metric space with $N \in \NN$ elements. Without any additional assumptions, in this case we have
    $$
        \NC(\varepsilon, \XC, d) \leq N, %
    $$
    meaning that \ref{ass:XC_covering_cond} holds with $A = N$ and $\alpha = 0$. This implies 
    \begin{align}\label{eq:discreteBounds}
        \EE[\KR_{p, C}(\mun, \mu)] \lesssim_{d,p,m_\mu}  C \wedge \left( C N^{1/2p}m_{\mu}^{1/2p} t^{(-1+\beta)/2p}\right) . %
    \end{align}
    If we further impose that $\XC$ is contained in a ball $\BB(0,1)$ in $\RR^d$, further refinements of the rate in terms of $N$ are possible \citep{sommerfeld19FastProb, heinemann2022unbalanced}. To be precise, in this case we have $
        \NC(\varepsilon, \XC, d) \leq \min(4^d\epsilon^{-d}, N)
    $ and from \citet[Section 2.4]{heinemann2022unbalanced} it follows that the bound \eqref{eq:discreteBounds} tightens to
    \begin{align}\label{eq:discreteRatesWasserstein}
    	C \wedge \left( C N^{\max\{1/2p-1/d, 0\}}m_{\mu}^{1/2p} t^{(-1+\beta)/2p}\right),
    \end{align}
    where we suppress a logarithmic term in $N$ for $d = 2p$.
Moreover, the bound given in \eqref{eq:discreteRatesWasserstein} aligns with the known bounds on the accuracy with which an empirical measure approximates a finitely supported population measure in Wasserstein-$p$ distance, provided we identify $t$ as the sample size $n \in \NN$ representing the number of i.i.d.\ observations \citep{sommerfeld19FastProb}.
\end{enumerate}
\end{example}

Let us now discuss some more insights from \Cref{th:est_mu_inUOT_beta}. 

\begin{remark} \label{rem:mu_in_krd}
\begin{enumerate}[label=$(\roman*)$] 
  \item (On the conditions) \Cref{th:est_mu_inUOT_beta} effectively decouples the statistical bounds for the empirical KRD between the spatial empirical measure from an ST point process and the population counterpart into a geometric and a stochastic component. The first condition requires that the domain $\XC$ needs to be of small intrinsic dimension, i.e., \ref{ass:XC_covering_cond} is met for small $\alpha>0$. Alternatively, this condition can be understood as imposing that $\mu$ is defined on a generic domain but is concentrated on a domain of intrinsic dimension $\alpha$.  The second condition boils down to requiring that the sum of variances over partitions of the ST point process scales close to linear in $t$, i.e., if \ref{ass:Xi_mixing_Var_beta} is met for $\beta\geq 0$ close to zero. Later in \Cref{sec:ST_proc} we confirm that this property can be understood as demanding that every point induces only a bounded $(\beta=0)$ or slowly growing $(\beta>0)$ number of other points over the subsequent time. 
  
  \item (Decomposition of the bound) %
  The first term in \eqref{eq:est_mu_inUOT_beta}, which is independent of $\alpha > 0$, arises from the necessity to estimate the total mass of the spatial population measure based on the  ST point process. It scales linearly in $C$.  %
    The second term in \eqref{eq:est_mu_inUOT_beta} aligns with the convergence behavior of the empirical Wasserstein distance between probability measures based on i.i.d.\ samples in terms of the sample size. 
    Further, for the regime $\alpha \in [0, 2p)$ we observe an interpolation phenomenon $C^{1-\alpha/2p}$ in the mass penalization parameter, leading to a linear dependency $C$ for $\alpha = 0$ whereas for $\alpha \searrow 2p$ the dependency in $C$ becomes logarithmic. 
    Moreover, under $\alpha \geq 2p$, the dependency in $t$ deteriorates exponentially in $\alpha$ and is independent of $C$. 
	\item (Curse of dimensionality) Complementary to our upper bounds in \Cref{sec:lowerbounds} we establish for  certain ST point processes fulfilling \ref{ass:Xi_mixing_Var_beta} with $\beta = 0$ matching pointwise lower bounds and minimax rate optimality of the empirical measure in estimating its population counterpart with respect to $m_\mu$, $t$ and $C$. Only for $\alpha = 2p$ we observe a discrepancy of a logarithmic order between the lower and upper bound, and the sharp rate of convergence remains open. Overall, our insights show that estimation of measures with respect to the $(p,C)$-KRD suffers from the curse of dimensionality, i.e., that slow convergence rates are inevitable for large $\alpha$. Moreover, we also provide an example for an ST point process fulfilling \ref{ass:Xi_mixing_Var_beta} with $\beta \in (0,1)$ for which we confirm a matching pointwise lower bound in $t$ for $\alpha< 2p$, emphasizing the necessity of Assumption \ref{ass:Xi_mixing_Var_beta}. 
    \item (Local Wasserstein dimension) Assumption \ref{ass:XC_covering_cond} can be slightly relaxed to demanding that it is satisfied only for $\varepsilon \in (\gamma t^{(-1+\beta)/(2\wedge \alpha)},C]$ with $\gamma>0$, which implies an analogous bound which additionally also depends on $\gamma$. This reflects the multiscale behavior of the empirical plug-in estimator previously observed for empirical OT \citep{weed2019sharp}, meaning that if the support admits (up to some $t$-dependent scale) similar covering number bounds as a low-dimensional domain, then small error bounds are to be expected.  %
	\item (Lower complexity adaptation) The recent  work \cite{hundrieser2022empirical} asserts for the empirical OT cost that the convergence rate is essentially driven by the minimum intrinsic dimension among the measures, a phenomenon coined \emph{lower complexity adaptation (LCA)}. This conclusion is not possible by invoking the triangle inequality, and the proof is based on controlling certain empirical processes over suitable function classes. Hence, the validity of LCA for the empirical $(p,C)$-KRD does not directly follow from \Cref{th:est_mu_inUOT_beta}, and we leave it as an open question to assess if it can be confirmed under a similar condition as \ref{ass:Xi_mixing_Var_beta}. We note, however, that \citet[Remark 6.5]{deb2024tradeoff} establish LCA of the empirical Wasserstein distance for non-i.i.d.\ data under mixing assumptions that appear stronger than our variance-growth requirement. %
\end{enumerate}	
\end{remark}

\subsection{Implications to other unbalanced optimal transport divergences} \label{sec:other_UOTs}

Our statistical bounds for the empirical KRD yield similar convergence statements for other unbalanced OT based divergences. In the following, we briefly discuss these implications. To this end, we first introduce the setting under which we will be operating and discuss it afterward.

\begin{definition}\label{def:niceUOTformalism}
Let $(\XC, d)$ be a metric space. We consider a divergence $\U : \MCsX \times \MCsX \to [0, \infty)$ to be \emph{$p$-dominated}, if for any $\mu, \nu, \tau \in \MCsX$ and any $a \geq 0$  there exists $p \geq 1$ such that %
    \begin{align}
         &\U(\mu, \nu) \lesssim \U(\mu, \tau) + \U(\tau, \nu) , \label{ass:U1} \\
        &\U\left(\mu,\nu\right) \lesssim W_p\left(\mu, \nu\right) \quad \text{if } m_{\mu} = m_{\nu}, \label{ass:U2}\\
        &\U\left(\mu,a\mu\right) \lesssim |1-a|^{1/p} m_\mu^{1/p} \label{ass:U3},
    \end{align}
    where the hidden constants in the three inequalities do not depend on the measures $\mu$, $ \nu$ or $\tau$.
\end{definition}

\begin{example} Many unbalanced OT formalisms (recall introduction) satisfy above conditions, some of which we now recall and discuss. %
    \begin{enumerate} [label=$(\roman*)$]
        \item The first example we provide is a variant of the \emph{earth mover's distance} (EMD) used by  \cite{komiske2019metric} as a metric on the space of collider events. In this context of high energy physics, two collider events are modelled as measures, say $\mu$ and $\nu$, supported on finite sets $I$ and $J$ and with possibly different total energies $m_\mu = \sum_{i \in I} \mu(i)$ and $m_\nu = \sum_{j \in J} \nu(j)$, respectively. Here $\mu(i)$ has the meaning of the energy of $\mu$ at the particle $i$. We introduce
        \begin{equation}
            \begin{aligned}
            & \quad \quad \quad \quad \operatorname{EMD}\left(\mu, \nu\right)=\min _{\pi(i, j) \geq 0 \, \forall i \in I, j \in J} \sum_{i,j} \pi(i, j) \frac{\theta_{i j}}{R}+\left|m_\mu - m_\nu\right|, \\
            &\text{subject to}\quad \sum_{j} \pi(i, j) \leq \mu(i), \quad \sum_{i } \pi(i, j) \leq \nu(j), \quad \sum_{i,j } \pi(i,j)= m_\mu \wedge m_\nu ,
            \end{aligned}
        \end{equation}
        where $\theta_{i j}$ is the angular distance
        between particles and $R$ is a parameter that controls the relative importance of the two minimized terms. Since the $\operatorname{EMD}$ metrizes the space of collider events, \eqref{ass:U1} holds. To see \eqref{ass:U2},  we note that if the total energies of $\mu$ and $\nu$ coincide, we have
        $$
            \operatorname{EMD}\left(\mu, \nu\right) \leq \min _{\sum_{j} f_{ij}=\mu_i, \sum_i \pi(i,j)=\nu_j } \sum_{i,j } \pi(i,j)\frac{\theta_{i j}}{R} = \frac{1}{R} W_1\left(\mu, \nu\right),
        $$
        where the 1-Wasserstein distance is with respect to the angular distance $\theta$.
        Finally, if $\nu = a \mu$, then $\pi(i,j )\equiv 0$ is a feasible plan, and therefore  
        $$
            \operatorname{EMD}\left(\mu, a  \mu\right) \leq \min _{\pi(i,j)\equiv 0} \left|m_\mu-a m_\mu\right| = |1-a| m_\mu.
        $$
        \item The $\KR^p_{p,C}$ functional introduced in the current article is also feasible. Indeed, by \Cref{prop:metricKRD},  \eqref{ass:U1} holds.
        For \eqref{ass:U2} observe that, if $m_\mu = m_\nu$, we can estimate
        $$
            \KR^p_{p,C}(\mu, \nu) \leq \inf_{\pi \in \Pi_{=}(\mu, \nu)} \int_{\XC^2} d^p(x_1, x_2) \pi (d x_1, \dif x_2) + \frac{C^p}{2}(2m_\mu - 2m_\pi) =  W_p^p\left(\mu, \nu\right).
        $$
        Finally, for $\nu = a \mu$, the optimal plan is to not transport anything and delete the excess mass of $|m_\mu - m_{a\mu}| = |1-a|m_\mu$ at rate $C^p/2$, hence $\KR^p_{p,C}(\mu, a\mu) \leq \frac{C^p}{2}|1-a|m_\mu$ and \eqref{ass:U3} follows.
        
        We highlight that our approach to proving \Cref{th:est_mu_inUOT_beta} does not quite follow the general strategy outlined below in \Cref{th:est_mu_in_U} (although essentially employs the same idea). This is due to the necessity for extra attention to be paid to the dependency on the parameter $C$.
        \item More generally, let us show how the conditions \eqref{ass:U2} and \eqref{ass:U3} look like in the generic \emph{entropy-transport }(ET) problem class introduced by \cite{liero2018optimal}. 
        They define for $\mu_1,  \mu_2 \in \MCsX$
        \begin{equation} \label{eq:liero_primal_ex}
    		\mathrm{ET}(\mu, \nu) = \inf_{\pi \in \MC(\XC^2)} \int_{\XC^2} c(x_1, x_2) \pi (d x_1, \dif x_2) + \DC_1\left( \pi_1 \mid \mu_1\right) + \DC_2\left(\pi_2 \mid \mu_2\right),
		\end{equation}
where $c$ is an l.c.s. cost function, and the  relative entropy $\DC_i$ of the $i$-th marginal  $\pi_i$ of $\pi$ to $\mu_i$ is defined by
$$
	\DC\left(\pi_i \mid \mu_i \right)\coloneqq\int_{\XC_1} \varphi \left(\frac{\dif \pi_i}{\dif \mu_i}\right) \dif \mu_i + \left( \varphi \right)_{\infty}^{\prime} \pi_i^{\perp}\left(\XC\right), 
$$
for   $\pi_i^{\perp}$ denoting the singular part of $\pi_i$ with respect to $\mu_i$, the function $\varphi_i\colon [0, \infty) \to [0, \infty)$ is  convex and l.c.s.\ function and $ (\varphi_i)_{\infty}^\prime \coloneqq \lim _{s \rightarrow \infty} \varphi_i(s)/s$.     
        
        For \eqref{ass:U2} we have to impose that the cost function $c$ is the power of a metric $d$, i.e. $c=d^p$ for some $p\geq 1$.
        For $m_{\mu_1}=m_{\mu_2}$ we have
\begin{align*}
    \mathrm{ET}(\mu_1,\mu_2) &= \inf_{\pi \in \MC(\XC^2)} \int_{\XC^2} d^p \dif \pi + \DC_1(\pi_1 \mid \mu_1) + \DC_2(\pi_2 \mid \mu_2) \\
    &\leq \inf_{\Pi_{=}(\mu_1, \mu_2)} \int_{\XC^2} d^p \dif \pi + \DC_1(\pi_1 \mid \mu_1) + \DC_2(\pi_2 \mid \mu_2).
\end{align*}
Since for $\pi \in \Pi_{=}(\mu_1, \mu_2)$ it holds that $\pi_i = \mu_i$, we have
$$
    \DC_i(\pi \mid \mu_i) = \int_{\XC_i} \varphi_i(1) \dif \mu_i = \varphi_i(1) m_{\mu_i},
$$
which implies that \eqref{ass:U2} is met if $\varphi_i(1)=0$ for both $i\in \{1,2\}$. For \eqref{ass:U3}, we note for $a \leq 1$ that $\pi = (\id, \id)_{\#}a \mu$ is a feasible solution transport plan, hence
$$
    \mathrm{ET}(\mu,a\mu) \leq \DC_1(\pi_1 \mid \mu) + \DC_2(\pi_2 \mid a \mu) = \varphi_1(a) m_\mu + \varphi_2(1)m_\mu = F_1(a) m_\mu.
$$
If $a > 1$, we take $\pi = (\id, \id)_{\#}\mu$ bound
$$
    \mathrm{ET}(\mu,a\mu) \leq \DC_1(\pi_1 \mid \mu) + \DC_2(\pi_2 \mid a \mu) = \varphi_1(1) m_\mu + \varphi_2(1/a)m_\mu = \varphi_2(1/a) m_\mu,
$$
therefore the conditions are $\varphi_i(s)\leq 1-s$ for $s \leq 1$ and $\varphi_{i}(1/s) \leq s-1$ for $s > 1$,  which are both met if
$$
    \varphi_i(s)\leq 1-s \text{ for } 1\leq s \leq 1.  
$$
It appears that the question of for which $\varphi$ the entropy-transport induces a metric on $\MCsX$ has not been the subject of discussion. 
However, \cite{liero2018optimal} provide a list of examples that induce a metric on the corresponding space of measures, two of which we mention below in the next point.
        \item \emph{Gaussian--Hellinger} and \emph{Hellinger--Kantorovich} distances are special cases of ET problem for the Euclidean metric, defined by setting $c(x,y)\coloneqq \|x-y\|^2$ or $c(x,y)\coloneqq -2 \log \cos (\|x-y\| \wedge \pi)))$, respectively, and $\varphi_1(s)=\varphi_2(s) =s \log s - s +1 $. Condition \eqref{ass:U1} follows by \cite[Corollary 7.14]{liero2018optimal}. Since $\varphi_1(1)=\varphi_2(1) = 0$ and $\varphi_1(s)=\varphi_2(s) \leq 1-s$ for $s \leq 1$, \eqref{ass:U2}-\eqref{ass:U3} follow from the above considerations for the $\operatorname{ET}$ functional.
    \end{enumerate}
\end{example}

\begin{theorem}[Estimation of measure in generic unbalanced OT] \label{th:est_mu_in_U}
Let $(\XC, d)$ be a bounded c.s.m.s.\ satisfying \ref{ass:XC_covering_cond}. Then, for a $p$-dominated divergence $D \colon \MCsX \times \MCsX \to [0, \infty)$ and a weakly time-stationary $L^2$ ST point process  $\Xi $ on $\XC \times \tgz$ with spatial intensity $\mu \in \MCsX$ with corresponding empirical measure $\mun$, satisfying \ref{ass:Xi_mixing_Var_beta},
it holds for all $t \geq t_0$ that 
\begin{equation}\label{eq:est_Uhat}
    \EE[\U(\mun, \mu)]\lesssim_{A, p, \alpha, m_{\mu}} t^{(-1+\beta)/2p} + \begin{cases}
    t^{(-1+\beta)/2p} & \text{if } \alpha < 2p,\\
     t^{(-1+\beta)/2p}\log(2+t)^{1/p} &\text{if } \alpha = 2p,\\
    t^{(-1+\beta)/\alpha} \log(2+t) & \text{if } \alpha > 2p.
    \end{cases}
\end{equation}
Analogous bounds also hold mutatis mutandis for the two-sample case. 
\end{theorem}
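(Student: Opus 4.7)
The strategy is to reduce the bound on $\EE[\U(\mun, \mu)]$ to an application of \Cref{th:est_mu_inUOT_beta} for the $(p,C)$-KRD, using the three axioms defining a $p$-dominated divergence, together with \Cref{prop:interpolate_C}$(iii)$, which identifies the KRD with the $p$-Wasserstein distance on a bounded space for equal-mass measures.

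First I would introduce the equal-mass rescaling $\widetilde{\mu}_t \coloneqq a_t \mun$ with $a_t \coloneqq m_\mu/m_{\mun}$ (interpreted as $0$ when $m_{\mun}=0$), so that $m_{\widetilde{\mu}_t} = m_\mu$. The quasi-triangle inequality \eqref{ass:U1} yields
\[ \U(\mun, \mu) \lesssim \U(\mun, \widetilde{\mu}_t) + \U(\widetilde{\mu}_t, \mu). \]
For the first summand, axiom \eqref{ass:U3} gives $\U(\mun, \widetilde{\mu}_t) = \U(\mun, a_t\mun) \lesssim |1-a_t|^{1/p} m_{\mun}^{1/p} = |m_\mu - m_{\mun}|^{1/p}$. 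Since $p \geq 1$, Lyapunov's inequality together with $\EE\, m_{\mun} = m_\mu$ gives $\EE[|m_\mu - m_{\mun}|^{1/p}] \leq \Var(m_{\mun})^{1/(2p)}$, and applying \ref{ass:Xi_mixing_Var_beta} to the trivial partition $\{\XC\}$ bounds this by $\kappa^{1/(2p)} t^{(-1+\beta)/(2p)}$.

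For the second summand, $m_{\widetilde{\mu}_t} = m_\mu$ allows axiom \eqref{ass:U2} to give $\U(\widetilde{\mu}_t, \mu) \lesssim W_p(\widetilde{\mu}_t, \mu)$. Setting $D \coloneqq \diam(\XC) < \infty$ (finite since $\XC$ is bounded), \Cref{prop:interpolate_C}$(iii)$ identifies $W_p(\widetilde{\mu}_t, \mu) = \KR_{p,D}(\widetilde{\mu}_t, \mu)$, and the triangle inequality for the KRD (\Cref{prop:metricKRD}) produces
\[ \KR_{p,D}(\widetilde{\mu}_t, \mu) \leq \KR_{p,D}(\widetilde{\mu}_t, \mun) + \KR_{p,D}(\mun, \mu). \]
By \Cref{prop:kr_properties}$(iii)$ the first piece equals $(D^p|m_\mu - m_{\mun}|/2)^{1/p}$, contributing another term of order $t^{(-1+\beta)/(2p)}$ in expectation; the second piece is bounded by \Cref{th:est_mu_inUOT_beta} applied with the \emph{fixed} choice $C = D$. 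Plugging $C = D$ into \eqref{eq:est_mu_inUOT_beta}, the factors in $C$ all become absorbed constants, leaving precisely the three regime-dependent rates claimed in \eqref{eq:est_Uhat}. Combining all contributions and absorbing constants depending on $A,\alpha,\kappa,m_\mu,D$ yields the single-sample bound. The two-sample version follows by one further application of \eqref{ass:U1}, inserting $\mu$ and $\nu$ as intermediate points between $\mun$ and $\num$ and invoking the one-sample bound for each of the two processes.

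The main obstacle is careful constant bookkeeping, since the three axioms \eqref{ass:U1}--\eqref{ass:U3} carry unspecified multiplicative constants; the requirement in \eqref{ass:U2} that $m_\mu = m_\nu$ is what forces the mass-rescaling step and produces two copies of the mass fluctuation $|m_\mu - m_{\mun}|^{1/p}$, but both are of order $t^{(-1+\beta)/(2p)}$ and merge cleanly into the leading term of \eqref{eq:est_Uhat}. A minor subtlety is the log exponent in the borderline case $\alpha = 2p$: direct insertion of $C=D$ into \Cref{th:est_mu_inUOT_beta} yields a $\log(2+t)$ factor, while \eqref{eq:est_Uhat} asks for $\log(2+t)^{1/p}$; the refinement is obtained by instead controlling $\EE[\KR_{p,D}^p(\mun,\mu)]$ inside the proof of \Cref{th:est_mu_inUOT_beta} and applying Jensen's inequality only at the very end.
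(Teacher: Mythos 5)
Your proof matches the paper's argument: the paper uses the same decomposition $\U(\mun,\mu) \lesssim \U\bigl(\mun, \tfrac{m_\mu}{m_{\mun}}\mun\bigr) + \U\bigl(\tfrac{m_\mu}{m_{\mun}}\mun, \mu\bigr)$, bounds the first term via \eqref{ass:U3} and the variance condition \ref{ass:Xi_mixing_Var_beta}, and bounds the second via \eqref{ass:U2} followed by identifying $W_p$ with the $(p,C)$-KRD for $C$ large and invoking \Cref{th:est_mu_inUOT}. Your version supplies a detail the paper elides — the extra triangle-inequality step for the KRD, via \Cref{prop:kr_properties}$(iii)$, that is needed to pass from $\KR_{p,D}\bigl(\tfrac{m_\mu}{m_{\mun}}\mun,\mu\bigr)$ back to $\KR_{p,D}(\mun,\mu)$ before \Cref{th:est_mu_inUOT_beta} applies — and you correctly flag that the $\log(2+t)^{1/p}$ exponent in the $\alpha=2p$ case requires taking the $p$-th root at the level of $\EE[\KR^p]$ rather than reading \eqref{eq:est_mu_inUOT_beta} off directly.
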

The proof essentially reduces to upper-bounding $\EE[\U(\mun, \mu)]$ by the sum of two terms. The first term corresponds to the expected absolute mass deviation of $\mun$, which accounts for the first term in \eqref{eq:est_Uhat}. The second term is a variant of the empirical KR distance
(cf. \Cref{th:est_mu_inUOT_beta}). In particular, all the insights outlined in \Cref{rem:mu_in_krd} naturally extend to this setting.

\subsection{Proof of Theorem \ref{th:est_mu_inUOT_beta}}\label{sec:proof_main_result}

The idea which lies at the heart of the proof is a variant of the  dyadic discretization technique, a well-known tool in the statistical OT literature which leads to sharp convergence results (see e.g. \cite{dereich2013constructive, fournier2015rate, weed2019sharp, sommerfeld19FastProb}). For a similar analysis of the empirical KRD, we boil the problem down to the discrete KRD developed in \cite{heinemann2023kantorovich}, where such bounds with an explicit dependency on $C$ have already been developed. They are based on constructing a suitable \emph{ultrametric tree}, for which an explicit representation of the Wasserstein and Kantorovich--Rubinstein distance is available, see e.g., \cite{kloeckner2015geometric} for a proof. We briefly describe the construction now and defer technical calculations to \Cref{sec:proof_est_mu}.

Let $\mathfrak{T} = (V, E)$ be a rooted tree with at least two elements, i.e., a simple graph with vertex set $V$ where one element serves as the root and edge set $E$ which is connected and acyclic. Assuming every edge is assigned a positive length, for any two nodes $u, v \in V$ we define their tree distance $d_{\mathfrak{T}}(u,v)$ as the total length of the edges along the unique path connecting $u$ and $v$. For a node $v \in V$ different from the root, we denote by $\mathrm{par}(v)$ its parent, i.e., the unique node $v'$ such that $(v,v')\in E$ and $v'$ is closer to the root than $v$, and we denote by $\CC(v)$ its children, that is, the set of nodes such that the unique path from the root to these nodes passes through $v$. Further, a leaf node is a node $v \in V$ such that it is not a parent node of another node. 
\begin{definition}
A tree $\mathfrak{T} = (V, E)$ is \emph{ultrametric} if all the leaf nodes are at the same distance from the root. Equivalently, there exists a monotonically strictly decreasing \emph{height function} $h: V \to [0, \infty)$, such that $d_{\mathfrak{T}}(v, \mathrm{par}(v)) = h(\mathrm{par}(v)) - h(v)>0$ whenever $v$ is not the root and $h(v)=0$ for all leafs $v$ of $\mathfrak T$. 
\end{definition}
In the following, we build an appropriate ultrametric tree based on the covering sets of $\XC$ of different precisions $\varepsilon>0$, where each covering set will be associated with a specific level in the tree, i.e., they will all admit the same height value. 

To elaborate, for $\varepsilon > 0$, consider the $\varepsilon$-covering number $\NC_\varepsilon \coloneqq \NC(\varepsilon, \XC, d)$ of $\XC$ and  let $S_\varepsilon = \{s_1, s_2, \dots, s_{\NC_\varepsilon}\}$ be the associated covering set, that is, for every $x \in \XC$ there exists an $i \in \{1, 2, \dots, \NC_\varepsilon\}$ such that $d(x, s_i) \leq \varepsilon$. 
We will consider the depth level 
\begin{equation} \label{eq:L_definition}
	L \coloneqq L(\varepsilon) \coloneqq  \lceil 0\vee \log_2( \diam( S_\varepsilon)/\varepsilon) \rceil,
\end{equation}
 set $Q_{L+1} = S_\varepsilon$ and define for level $j \in \{0, \dots, L\}$ the set $Q_j$ as the $2^{-j}\diam (S_\varepsilon)$-covering set of $ S_\varepsilon$. %
As our vertex set, we consider $V = \{(q, j), \ q \in Q_j, \ j = 0, 1, \dots, L+1\}$ and in our construction the leaf nodes will be given by $\{(q, L+1), \ q \in S_\varepsilon\}$. Whenever the meaning is clear, we also drop the level indication and write $q \in Q_j$ instead of $(q, j) \in Q_j$. For $j=0, \dots, L$, a node $(q, j)$ at level $j$ is connected to a single node $\left(q', j+1\right)$ at level $j+1$ if $d\left(q, q'\right) \leq 2^{-j} \diam (S_\varepsilon)$; if there are multiple such nodes, ties are broken arbitrarily.  The length of the corresponding edge is set equal to $2^{-j} \diam (S_\varepsilon)$. Consequently, the height of each node $(s, j) \in V$ only depends on its assigned level and is given by
\begin{equation*}
    h_{L}((s,l)) := h_{L}(l) = \sum_{j=l}^L 2^{-j} \diam (S_\varepsilon)= (2^{1-l}-2^{-L}) \diam (S_\varepsilon),
\end{equation*}
making the tree $\mathfrak T$ ultrametric.
In our proof, we approximate $\KR_{p,C}^p(\mu, \nu)$ by its discrete version $\KR_{p,C}^p(\mu^\varepsilon, \nu^\varepsilon)$, where $\mu^\varepsilon$ and $\nu^\varepsilon$ are measures supported on the leaf nodes $Q_{L+1} = S_{\epsilon}$ of the ultrametric tree $\mathfrak{T}$. To this end, consider a Voronoi partition $\{V_s\}_{s\in S_{\epsilon}}$ of $\XC$ induced by $Q_{L+1}$, given for $s_i \in S_{\epsilon}$ (recall the enumeration from above) by %
\begin{align}\label{eq:VoronoiTesselationDef}
   V_{s_i} \coloneqq \{x \in \XC: d(x, s_i) \leq d(x, s') \text{ for all } s' \in S_\epsilon \} \backslash \bigcup_{1\leq h < i}V_{s_h} \subseteq \XC,
\end{align}
and we define $\mu^{\epsilon}\coloneqq \sum_{s\in S_\epsilon}\mu(V_s)\delta_s$ and $\nu^{\epsilon}\coloneqq \sum_{s\in S_\epsilon}\nu(V_s)\delta_s$ as the corresponding nearest-neighbor projection of $\mu$ and $\nu$, onto $S_{\epsilon} = Q_{L+1}$.
The next lemma provides a quantitative bound between $\KR_{p, C}^{p}(\mu, \nu)$ and $\KR_{p, C}^{p}(\mu^\varepsilon, \nu^\varepsilon)$. The proof is given in Appendix \ref{sec:proof_main_result}.
\begin{lemma}[Discrete approximation of the $(p, C)$-KR distance]\label{lem:uot_proj_bound}
For a c.s.m.s.\ $(\XC, d)$, let $\mu, \nu \in \MCsX$. Then, for $\varepsilon >0, \ p\geq 1$ and $C>0$, it holds that
\begin{equation} \label{eq:uot_proj_bound}
    \KR_{p, C}^{p}(\mu, \nu) \leq 3^{p-1} \inf_{\varepsilon > 0}\left(\KR^p_{p, C}(\mu^\varepsilon, \nu^\varepsilon) + (m_\mu +m_\nu) (C\wedge \varepsilon)^p\right).
\end{equation}
\end{lemma}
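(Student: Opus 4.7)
The plan is to apply the triangle inequality for $\KR_{p,C}$ — which is a metric on $\MCsX$ by Proposition 2.5 — followed by the elementary power-mean inequality $(a+b+c)^p \leq 3^{p-1}(a^p+b^p+c^p)$ for $a,b,c\ge 0$ and $p \geq 1$. This yields
\begin{equation*}
	\KR_{p,C}^p(\mu,\nu) \;\leq\; 3^{p-1}\bigl(\KR_{p,C}^p(\mu,\mu^\varepsilon) + \KR_{p,C}^p(\mu^\varepsilon,\nu^\varepsilon) + \KR_{p,C}^p(\nu^\varepsilon,\nu)\bigr).
\end{equation*}
It then suffices to control the two projection errors $\KR_{p,C}^p(\mu,\mu^\varepsilon)$ and $\KR_{p,C}^p(\nu,\nu^\varepsilon)$ by $m_\mu(C\wedge\varepsilon)^p$ and $m_\nu(C\wedge\varepsilon)^p$, respectively; taking the infimum over $\varepsilon>0$ at the end will give the claimed bound.

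To bound the projection error for $\mu$, I would exhibit an explicit sub-coupling. Using the Voronoi partition $\{V_s\}_{s\in S_\varepsilon}$ from \eqref{eq:VoronoiTesselationDef}, set
\begin{equation*}
	\pi(dx,dy) \;\coloneqq\; \sum_{s\in S_\varepsilon} \mathbf{1}_{V_s}(x)\,\mu(dx)\otimes \delta_s(dy).
\end{equation*}
Since $\{V_s\}_{s\in S_\varepsilon}$ partitions $\XC$, the first marginal of $\pi$ is $\mu$ and the second is $\sum_{s}\mu(V_s)\delta_s = \mu^\varepsilon$; in particular $\pi\in\Pi_\le(\mu,\mu^\varepsilon)$ and $m_\pi=m_\mu=m_{\mu^\varepsilon}$, so the mass penalization term vanishes. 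Using the truncated-cost characterization from Proposition 2.1$(ii)$, together with the fact that $x\in V_s$ entails $d(x,s)\leq \varepsilon$ (because $S_\varepsilon$ is an $\varepsilon$-cover and $s$ is the nearest element of $S_\varepsilon$ to $x$), I would estimate
\begin{equation*}
	\KR_{p,C}^p(\mu,\mu^\varepsilon) \;\leq\; \sum_{s\in S_\varepsilon}\int_{V_s}\bigl(d^p(x,s)\wedge C^p\bigr)\mu(dx) \;\leq\; (\varepsilon\wedge C)^p\, m_\mu.
\end{equation*}
The same argument applied to $\nu$ gives the analogous bound, and combining everything and taking $\inf_{\varepsilon>0}$ completes the proof.

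There is no real obstacle here: the argument is a standard triangle-inequality-plus-explicit-coupling decomposition. The only subtlety worth flagging is that one must invoke the truncated-cost form of Proposition 2.1$(ii)$ (rather than the raw $d^p$ cost from Definition 2.1) in order to obtain the sharp factor $(\varepsilon\wedge C)^p$ instead of merely $\varepsilon^p$; this is what produces the correct dependence on $C$ in the final bound and is what makes the lemma suitable for the subsequent analysis of the mass-penalization regime.
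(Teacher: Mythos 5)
Your proof is essentially identical to the paper's: the same triangle inequality plus power-mean decomposition, the same nearest-neighbor projection coupling (which you write out explicitly via the Voronoi cells, whereas the paper writes it compactly as $(\id, P^\varepsilon)_{\#}\mu$), and the same $(C\wedge\varepsilon)^p$ bound on the per-marginal projection error. Your remark about needing Proposition~2.1$(ii)$ to get $C\wedge\varepsilon$ rather than just $\varepsilon$ is correct and makes explicit a step the paper leaves implicit.
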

To derive an upper bound for the KRD on the  right-hand side of \eqref{eq:uot_proj_bound}, we note by  construction of the ultrametric $d_{\mathfrak{T}}$ that for all $s, s' \in S_\varepsilon$, 
\begin{equation*} %
    d(s, s') \leq d_{\mathfrak{T}}((s, L+1), (s', L+1)),
\end{equation*}
implying that
\begin{equation*}
    \KR_{p, C}^p(\mu^\varepsilon, \nu^\varepsilon) \leq \KR_{d_{\mathfrak{T}}^p, C}^p\left(\mu^{\epsilon}, \nu^{\epsilon}\right).
\end{equation*}

The following bound follows from the explicit expression for KRD on ultrametric trees \cite[Theorem 2.3]{heinemann2023kantorovich}; the exact calculations are deferred to Appendix \ref{sec:proof_est_mu}.

\begin{lemma}\label{lem:KR_tree_est}
Let $\mu^{\epsilon}, \nu^{\epsilon}$ be the discretized versions of measures $\mu, \nu \in \MCsX$, supported on the leaf nodes $Q_{L+1}$ of the ultrametric tree $\mathfrak T$ described above. 
Then, for any $p \geq 1 $, $C >0$, it holds for $\{W_{q,j}\}_{q\in Q_j}$ with $W_{q,j}\coloneqq \bigcup_{s\in \CC(q)}V_s$ that
\begin{equation*}
\begin{aligned}
	\KR_{d_{\mathfrak{T}}^p, C}^p\left(\mu^{\epsilon}, \nu^{\epsilon}\right) &\leq \frac{C^p}{2} |m_\mu - m_\nu| + 2^{p-1} \sum_{j=l_*}^{L+1} (h_L^p(j-1) - h_L^p(j))\sum_{q \in Q_j} |\mu(W_{q,j}) - \nu(W_{q,j})|
\end{aligned}
\end{equation*}
where 
\begin{align}\label{eq:l_star}
	l_* = 1 + \min\left(L,\left\lfloor 0 \vee \log_2\left( \frac{2 \diam(S_\varepsilon)}{C+\varepsilon}\right)  \right\rfloor\right).
\end{align}

\end{lemma}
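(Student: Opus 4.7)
The plan is to reduce to the closed-form expression for the Kantorovich--Rubinstein distance on an ultrametric tree provided by \citet[Theorem 2.3]{heinemann2023kantorovich}. That result expresses $\KR$ on a tree as a level-by-level sum in which the subtree mass differences are penalized by the incremental edge cost, capped by the mass-penalty term $C^p$. Applied to our tree $\mathfrak T$, with edge lengths $2^{-j}\diam(S_\varepsilon)$ and leaves $Q_{L+1} = S_\varepsilon$ supporting $\mu^\varepsilon$ and $\nu^\varepsilon$, the resulting identity is directly comparable with the right-hand side of the claim, once the cutoff $l_*$ is identified.

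Concretely, I would certify the upper bound by exhibiting an explicit feasible sub-coupling $\pi \in \Pi_{\leq}(\mu^\varepsilon, \nu^\varepsilon)$ obtained by a greedy level-wise matching. Starting from the leaf level $L+1$ and moving toward the root, within each subtree $W_{q,j}$ match as much $\mu^\varepsilon$-mass with $\nu^\varepsilon$-mass as possible; the residual mismatch is propagated to level $j-1$. Because two leaves with nearest common ancestor at level $j-1$ are at tree distance exactly $2 h_L(j-1)$, transporting a unit of mass between them costs $(2 h_L(j-1))^p = 2^p h_L^p(j-1)$. Using $h_L(L+1)=0$, the telescoping identity $h_L^p(j-1) = \sum_{i=j}^{L+1}(h_L^p(i-1) - h_L^p(i))$ then redistributes this transport cost across the traversed levels. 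Summing over subtrees and recognising that the residual mismatch crossing from level $i$ into level $i-1$ inside $W_{q,i}$ is bounded by $|\mu(W_{q,i}) - \nu(W_{q,i})|$ gives the second summand $2^{p-1}\sum_{j=l_*}^{L+1}(h_L^p(j-1)-h_L^p(j))\sum_{q\in Q_j}|\mu(W_{q,j})-\nu(W_{q,j})|$; the prefactor $2^{p-1}$ absorbs $2^p$ from $(2h_L)^p$ together with a $1/2$ that accounts for the two endpoints contributing to each subtree-mass-difference term.

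The role of the cutoff level $l_*$ is to encode the optimality principle from \Cref{prop:Prop_d_trunc}$(iii)$: no optimal sub-coupling ever moves mass over a distance exceeding $C$. Because $l_*$ is chosen so that $2 h_L(l_*-1) > C$ (via $l_*-1 \leq \log_2(2\diam(S_\varepsilon)/(C+\varepsilon))$), any mass still unmatched at level $l_*$ after the greedy propagation must be discarded rather than pushed further up the tree. The induced deletion cost is $\tfrac{C^p}{2}(m_{\mu^\varepsilon} + m_{\nu^\varepsilon} - 2 m_\pi)$, and the greedy construction maximises $m_\pi$, driving it down to the minimum $\tfrac{C^p}{2}|m_\mu - m_\nu|$ recorded in the first term of the claim.

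The main obstacle I anticipate lies in the interaction between the $\varepsilon$-slack of the Voronoi discretization and the integer-valued dyadic cutoff. The appearance of $C+\varepsilon$ (rather than $C$) in the definition of $l_*$ is not cosmetic: because the tree edge lengths are quantized to powers of two and $\diam(S_\varepsilon)$ may slightly exceed $\diam(\supp(\mu+\nu))$, a straightforward comparison of $2 h_L(j-1)$ with $C$ can fail at the boundary level $l_*-1$. The technical heart of the argument is therefore a careful case analysis at that boundary, verifying that with the stated $l_*$ the deletion term in the greedy sub-coupling is correctly dominated by $\tfrac{C^p}{2}|m_\mu-m_\nu|$ and that the level-$l_*$ truncation of the telescoping sum does not lose the factor $2^{p-1}$.
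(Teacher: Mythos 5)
Your identification of \citet[Theorem 2.3]{heinemann2023kantorovich} as the key ingredient is correct; the paper's proof is nothing more than a direct invocation of that result followed by two small reductions. Your proposal to re-certify the bound by constructing a greedy sub-coupling from scratch is substantially more work than needed, and it steers you straight into the difficulty you flag in your last paragraph. The genuine gap is in your treatment of $l_*$: the tree formula of Heinemann et al.\ applies at the unique level $l$ satisfying $h_L(l)\leq C/2 < h_L(l-1)$, but the $l_*$ of the lemma statement is \emph{not} guaranteed to be that level, nor to satisfy $2h_L(l_*-1)>C$ as you assert. For instance, if $C > 4\diam(S_\varepsilon)$ one obtains $l_*=1$, while $2h_L(0)=(4-2^{1-L})\diam(S_\varepsilon)<C$, so your threshold inequality fails and your proposed ``case analysis at the boundary'' would be attempting to verify something false.

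The paper's resolution is simpler than what you anticipate and completely avoids the case analysis. The inner sum $\sum_{j=l}^{L+1}(h_L^p(j-1)-h_L^p(j))\sum_{q\in Q_j}|\mu(W_{q,j})-\nu(W_{q,j})|$ consists of non-negative terms, so it is non-increasing in its lower limit $l$. One therefore only needs to show $l\geq l_*$, which is where the relation $2^{-L}\leq\varepsilon/\diam(S_\varepsilon)$ (from the definition of $L$) and the resulting $C+\varepsilon$ enter when rewriting the inequality $h_L(l)\leq C/2$. Replacing $l$ by the smaller $l_*$ then only enlarges the bound. Separately, the residual boundary term from the tree formula is bounded by $\tfrac{C^p}{2}|m_\mu-m_\nu|$ as an independent step. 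Do not try to realize $\tfrac{C^p}{2}|m_\mu-m_\nu|$ as the literal deletion cost of a sub-coupling truncated at level $l_*$: the residual mismatch carried into level $l_*-1$ by a greedy matching is $\sum_{q\in Q_{l_*-1}}|\mu(W_{q,l_*-1})-\nu(W_{q,l_*-1})|$, which may strictly exceed $|m_\mu-m_\nu|$ whenever $|Q_{l_*-1}|>1$, so your greedy construction as described would not directly yield the stated right-hand side.
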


Combining \Cref{lem:uot_proj_bound} and \Cref{lem:KR_tree_est}, we are ready to prove our main result.
\begin{proof} [Proof of \Cref{th:est_mu_inUOT_beta}]
By Jensen's inequality for the convex function $t\mapsto t^{p}, \ p \geq 1$ in conjunction with \Cref{prop:kr_properties}$(iv)$ and \Cref{lem:uot_proj_bound} we obtain 
\begin{align} 
   \left(\EE[\KR_{p, C}(\mun, \mu)]\right)^{p} &\leq \EE[\KR_{p, C}^p(\mun, \mu)] \notag
   \\&\leq (C^p m_\mu) \wedge\left(3^{p-1}\inf_{\varepsilon>0}\left( \EE[\KR^p_{p, C}(\mun^\varepsilon, \mu^\varepsilon)] +  2m_\mu(C\wedge \varepsilon)^p\right)\right). \label{eq:inf_KR_K}
\end{align}
To upper bound $\EE[\KR^p_{p, C}(\mun^\varepsilon, \mu^\varepsilon)]$, we use \Cref{lem:KR_tree_est}, which implies 
\begin{align}
	\EE[\KR^p_{p, C}(\mun^\varepsilon, \mu^\varepsilon)] \leq &\frac{C^p}{2} \EE[|m_{\mun}-m_\mu|] \notag \\
	& + 2^{p-1} \sum_{j=l_*}^{L+1} (h^p(j-1) - h^p(j))\sum_{q \in Q_j} \EE|\mun(W_{q,j}) - \mu(W_{q,j})|. \label{eq:KR_hash_est_old}
\end{align}
For the first term on the right-hand side we obtain by Assumption \ref{ass:Xi_mixing_Var_beta} that
$$
	\EE[|m_{\mun}-m_\mu|] \leq \sqrt{\Var(\mun(\XC))} \leq \kappa^{1/2} t^{(-1+\beta)/2}.
$$
For the inner sums in the second term, we bound 
\begin{align*}
	\sum_{q \in Q_j}\EE[|\mun(W_{q,j}) - \mu(W_{q,j})|] & = t^{-1}\sum_{q \in Q_j}  \EE \left| \Xi(W_{q, j} \times (0, t]) - t \mu(W_{q, j})\right| \\
    &\leq t^{-1}\sum_{q \in Q_j}  \sqrt{\Var  \left( \Xi(W_{q, j} \times (0, t])\right)} \\
    &\leq t^{-1} |Q_j|^{1/2} \sqrt{\sum_{q \in Q_j}  \Var  \left( \Xi(W_{q, j} \times (0, t])\right)} \\
    &\leq |Q_j|^{1/2} \kappa^{1/2} t^{(-1+\beta)/2},
\end{align*}
where the second  last inequality holds by the Cauchy--Schwarz inequality, and the last one follows by Assumption~\ref{ass:Xi_mixing_Var_beta}.

By definition of $Q_j$ and Assumption \ref{ass:XC_covering_cond}, it holds for each $j = 1, 2, \dots, L$ that $|Q_j| = \NC(2^{-j}\diam (S_\varepsilon), S_\varepsilon, d)
\leq A (\diam(S_\varepsilon))^{-\alpha} 2^{j \alpha}$%
. Next, note that %
\begin{align*}
    & h_{L}^p(j-1) - h_{L}^p(j) \leq 2^{p} 2^{p-jp} (\diam(S_\varepsilon))^{p} , \quad   j = 1, 2, \dots, L \quad \text{ and } \\
    & h_{2, L}^p(L) - h_{2, L}^p(L+1) = h_{2, L}^p(L) = 2^{-Lp}(\diam(S_\varepsilon))^{p}.
\end{align*}
Using the definitions of $L$ and $l_*$ from \eqref{eq:L_definition} and \eqref{eq:l_star}, respectively, we bound \eqref{eq:KR_hash_est_old} by
\begin{align}
    & 2^{p-1} A^{1/2} (\diam(S_\varepsilon))^{p-\alpha/2} \kappa^{1/2} 2^{L(\alpha/2 - p)} t^{(-1+\beta)/2} \notag\\
    & \hspace{0.5cm} + 2^{3p-1} A^{1/2} (\diam(S_\varepsilon))^{p-\alpha/2} \kappa^{1/2} t^{(-1+\beta)/2}\sum_{j=l_*}^{L} 2^{j(\alpha/2 - p)} \notag\\
     =  & \, 2^{p-1} A^{1/2} (\diam(S_\varepsilon))^{p-\alpha/2} \kappa^{1/2}2^{L(\alpha/2 - p)}  t^{(-1+\beta)/2} \notag\\
    & \hspace{0.5cm} + 2^{3p-1} A^{1/2} (\diam(S_\varepsilon))^{p-\alpha/2} \kappa^{1/2}t^{(-1+\beta)/2}\begin{cases}
    		L+1 - l_*, & \alpha = 2p, \\
    		\frac{1}{2^{\alpha/2 - p} - 1} \left(2^{(\alpha/2 - p)(L+1)} - 2^{(\alpha/2 - p)l_*} \right), & \alpha \neq 2p
    \end{cases} \notag\\
     \leq & \  2^{p-1} A^{1/2} \kappa^{1/2}\varepsilon^{p - \alpha/2}  t^{(-1+\beta)/2} \notag \\
     & \hspace{0.5cm} + 2^{3p-1} A^{1/2} \kappa^{1/2} t^{(-1+\beta)/2} \begin{cases}
    		\frac{1}{1-2^{\alpha/2 - p}}  (C+\varepsilon)^{p-\alpha/2}, & \alpha < 2p, \\
    		(L+1 - l_*), & \alpha = 2p, \\
    		\frac{2^{\alpha/2 - p}}{2^{\alpha/2 - p} - 1}  \varepsilon^{p-\alpha/2} , & \alpha > 2p.
    \end{cases} \label{eq:KR_hash_est}
\end{align}
It remains to minimize \eqref{eq:KR_hash_est}$+ \frac{C^p}{2}\kappa^{1/2}t^{(-1+\beta)/2} +  2m_\mu(C\wedge \varepsilon)^p$ in $\varepsilon > 0$ to obtain the desired bound \eqref{eq:est_mu_inUOT_beta}.
The exact calculations are provided in Appendix \ref{sec:proof_est_mu} and conclude the proof. \qedhere

\end{proof}

\section{ST point processes fulfilling the variance growth condition} \label{sec:ST_proc}

In this section, we examine Assumption \ref{ass:Xi_mixing_Var_beta} for spatio-temporal (ST) point processes on $\XC\times \TC$, where $\XC$ is a bounded c.s.m.s.\ and either $\TC = \RR$ or $\TC = \ZZ$.\footnote{Note that we now use the full time axis $\TC$ as suggested for a precise definition of weak stationarity in \Cref{fn:full_time_axis}. Our observations are still restricted to intervals $(0,t] \subseteq \tgz = (0,\infty)$ (recall that we extend interval notation to the case $\TC = \ZZ$).} We present basic tools for ST point processes in \Cref{sec:pp_definition}. This groundwork enables us to introduce Assumption~\ref{ass:Xi_mixing_cov_beta} in \Cref{sec:pp_assumptions}, which is slightly stronger than \ref{ass:Xi_mixing_Var_beta} in general and is expressed in terms of the time-reduced factorial covariance measure, so that it allows a direct interpretation in terms of the dependence of points. Moreover, it enables us to discuss several important examples of point processes satisfying \ref{ass:Xi_mixing_cov_beta} and hence \ref{ass:Xi_mixing_Var_beta} for both $\beta = 0$ and $\beta > 0$ in \Cref{sec:pp_examples}. 
 All proofs in this section are given in  \Cref{sec:proofs_ST_proc}. 

\subsection{Basic theory} \label{sec:pp_definition}

Here we give a brief introduction into elementary point process theory, which we adapt from \cite{daley2003introduction,daley2007introduction} to the spatio-temporal setting. 

To set notation, we will always consider an ST point process $\Xi$ on $\XC \times \TC$, which is defined as a random element on the space 
$ \mathfrak{N} = \mathfrak{N}(\XC \times \TC)$ (with a Borel $\sigma$-field $\NC$) 
of locally finite counting measures on $\XC \times \TC$, i.e., the number of points in $\XC \times I$ is finite for any bounded interval $I$. We standardize the intervals used to left-open and right-closed, denoting the set of such bounded intervals in $\TC$ by $\IC$. 

Moreover, any point process $\Xi $ on $\XC \times \TC$ admits a measurable representation as a sum of Dirac masses at its points, namely there exists a random $\overline{\mathbb{Z}}_{+}$-valued random variable $N$, a sequence $(T_i)_{i \in \NN}$ of $\TC$-valued random elements and a sequence $(X_i)_{i \in \NN}$ of $\XC$-valued random elements such that %
\begin{equation} \label{eq:Xi_sum_repr}
	\Xi = \sum_{i=1}^{N} \delta_{(X_i, T_i)}.
\end{equation}
If $\Xi$ has a minimal point with respect to the time component or by assigning negative indices, it is possible to require that the points are chronologically ordered, but we usually do not do so except where it is explicitly stated (e.g., representing the points of a spatio-temporal Poisson process in a bounded set requires a non-chronological enumeration).

An important tool for describing the process $\Xi$ are its \emph{moment measures}. For $k \geq 1$, we say that \emph{the $k$-th moment measure exists} if
\begin{equation} \label{eq:def_k-mom}
  M_k (B_1 \times \dots \times B_k) \coloneqq \EE[\Xi(B_1) \cdots \Xi(B_k)] 
\end{equation}
is finite for all bounded $B_1, \dots, B_k \in \BC(\XC \times \TC)$. The \emph{$k$-th moment measure} is then defined as the unique measure $M_k$ on $(\XC\times \TC)^k$ satisfying \eqref{eq:def_k-mom}. Using \eqref{eq:Xi_sum_repr}, we may write $M_k = \EE \Xi^k$ (meaning $M_k(A)=\EE [\Xi^k(A)]$ for $A \in \BC((\XC\times \TC)^k)$), where 
$$
	\Xi^k = \sum_{i_1, \dots, i_k=1}^N \delta_{((X_{i_1}, T_{i_1}), \dots, (X_{i_k}, T_{i_k}))}.
$$
We also introduce 
\begin{equation*}
	\Xi^{[k]} = \sum_{i_1, \dots, i_k=1}^{N, \neq} \delta_{((X_{i_1}, T_{i_1}), \dots, (X_{i_k}, T_{i_k}))},
\end{equation*}
where $\neq$ indicates that the sum is taken over pairwise different indices. We call $M_{[k]} \coloneqq \EE \Xi^{[k]}$ the \emph{$k$-th factorial moment measure} of $\Xi$. For our purposes, we focus on the first- and second-order characteristics only and rearrange the components so that spatial and temporal components stay together. In particular, we have for $A, B \in \BC(\XC), \ T, S \in \BC(\TC)$ 
\begin{equation} \label{eq:2facmom}
	M_{[2]} (A \times B \times T \times S) = M_{2} (A \times B \times T \times S) - M_1(( A \times T) \cap (B \times  S)).
\end{equation}
We usually prefer using %
 $M_{[2]}$ over $M_{2}$ because the spurious mass on the diagonal of $(\XC \times \TC)^2$ stemming from using the same points $(X_i,T_i)$ multiple times is removed.

Another important second-order characteristic of a point process is its \emph{factorial covariance measure} $\gamma_{[2]}$, which is a \emph{signed} measure on $\XC^2 \times \TC^2$ defined by 
\begin{equation} \label{eq:faccovm}
	\gamma_{[2]}(A \times B \times T \times S) = M_{[2]}(A \times B \times T \times S) - M_1(A \times T) M_1(B \times S).
\end{equation}
In the same way, we can define $\gamma_{2}$ based on $M_2$.

In what follows, we assume that our point process is $L^2$ and second-order (or weakly) time-stationary, meaning its first two (factorial) moment measures exist and are invariant under time-shifts, i.e., $M_1 \chi_t^{-1} = M_1$ and $M_{[2]} (\chi_t, \chi_t)^{-1} = M_{[2]}$ for all $t \in \TC$, where $\chi_t \colon \XC \times \TC \to \XC \times \TC$, $(x,s) \mapsto (x,s+t)$.
This allows us to present the following result, which can be viewed as a straightforward extension (and reformulation) of \citet[Proposition 8.1.I and 8.1.II(a)]{daley2003introduction}. %
\begin{proposition} \label{prop:time-reduction}
  Let $\Xi$ be a weakly time-stationary $L^2$ point process. Let $\ell=\Leb_{\RR}$ if $\TC=\RR$ and $\ell = \sum_{z \in \ZZ} \delta_z$ if $\TC=\ZZ$. Then, the following assertions hold. 
   \begin{enumerate}
      \item[(i)] There is a measure $\mu \in \MC(\XC)$ such that $M_1 = \mu \otimes \ell$.
      \item[(ii)] There is a locally finite measure $\breve{M}_{[2]}$ on $\XC^2 \times \TC$ such that
      \begin{equation} \label{eq:red2facmom}
        M_{[2]} \psi^{-1}(dx \, dy \, dr \, ds) = \ell(dr) \, \breve{M}_{[2]}(dx \, dy \, ds),
      \end{equation}
      where $\psi(x,y,r,s) \coloneqq (x,y,r,s-r)$.
      The measure $\breve{M}_{[2]}$ is symmetric in the sense that
      $\breve{M}_{[2]}(A \times B \times S) = \breve{M}_{[2]}(B \times A \times -S)$
      for all $A,B \in \BC(\XC)$ and $S \in \BC(\TC)$.
   \end{enumerate}
\end{proposition}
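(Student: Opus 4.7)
The plan is to handle (i) by extracting the temporal marginal of $M_1$ directly from weak time-stationarity, and to handle (ii) by using the change of variables $\psi$ to convert the diagonal time-shift invariance of $M_{[2]}$ into invariance along a single coordinate, after which the disintegration from (i) can be reapplied.

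For (i), I would fix $A \in \BC(\XC)$ and consider the function $f_A \colon \tgeqz \to [0, \infty)$ defined by $f_A(t) = M_1(A \times (0, t])$. Additivity of $M_1$ combined with weak time-stationarity yields Cauchy's functional equation $f_A(s + t) = f_A(s) + f_A(t)$ for all $s, t \in \tgeqz$. In the case $\TC = \ZZ$ this is immediate, and in the case $\TC = \RR$ the monotonicity of $f_A$ rules out non-measurable Hamel-basis solutions, giving $f_A(t) = t \mu(A)$ with $\mu(A) \coloneqq f_A(1)$. The set function $A \mapsto \mu(A)$ inherits $\sigma$-additivity from $M_1$, and $\mu(\XC) < \infty$ follows from boundedness of $\XC$ combined with local finiteness of $M_1$, so $\mu \in \MC(\XC)$. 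A standard $\pi$-system argument on rectangles $A \times (a, b]$ then extends $M_1 = \mu \otimes \ell$ to all of $\BC(\XC \times \TC)$.

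For (ii), the key observation is that $\psi$ intertwines the diagonal shift $\sigma_t(x, y, r, s) \coloneqq (x, y, r + t, s + t)$ with the single-coordinate shift $\tau_t(x, y, r, u) \coloneqq (x, y, r + t, u)$; a direct calculation shows $\psi \circ \sigma_t = \tau_t \circ \psi$. Weak time-stationarity $M_{[2]} \sigma_t^{-1} = M_{[2]}$ therefore transfers to $\tau_t$-invariance of the pushforward $\tilde M \coloneqq M_{[2]} \psi^{-1}$. Applying the argument from (i) to $\tilde M$ -- this time to the function $t \mapsto \tilde M(A \times B \times (0, t] \times U)$ for $A, B \in \BC(\XC)$ and bounded $U \in \BC(\TC)$ -- yields a locally finite measure $\breve{M}_{[2]}$ on $\XC^2 \times \TC$ such that $\tilde M = \ell \otimes \breve{M}_{[2]}$, which is precisely \eqref{eq:red2facmom}. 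Local finiteness of $\breve{M}_{[2]}$ is inherited from that of $M_{[2]}$, which holds because $\Xi$ is $L^2$.

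The symmetry statement follows from the built-in invariance of $M_{[2]}$ under swapping the two spatio-temporal points. In the rearranged coordinates this swap reads $(x, y, r, s) \mapsto (y, x, s, r)$, and conjugating by $\psi$ turns it into $(x, y, r, u) \mapsto (y, x, r + u, -u)$; integrating out $r$ against the shift-invariant $\ell$ then collapses the action to $(x, y, u) \mapsto (y, x, -u)$ at the level of $\breve{M}_{[2]}$, which gives the claimed identity. The main technical obstacle I expect is the disintegration for $\TC = \RR$: one needs enough $\sigma$-finiteness to justify a Fubini-type factorization, which is precisely where the local finiteness of $M_{[2]}$ (ensured by the $L^2$ hypothesis) is essential; the $\TC = \ZZ$ case reduces to a straightforward counting argument.
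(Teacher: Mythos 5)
Your proposal is correct and follows essentially the same route as the paper: reduce (i) to the classical characterization of translation-invariant measures on $\TC$ (you phrase this via Cauchy's functional equation, the paper simply invokes the characterization), establish the intertwining $\psi \circ (\chi_t,\chi_t) = \tau_t \circ \psi$ to transfer diagonal shift-invariance of $M_{[2]}$ to single-coordinate invariance of $\tilde M = M_{[2]}\psi^{-1}$, and then reapply part (i). One place you improve on the paper is the symmetry claim: you verify that conjugating the swap $(x,y,r,s)\mapsto(y,x,s,r)$ by $\psi$ gives $(x,y,r,u)\mapsto(y,x,r+u,-u)$ and then integrate out $r$ against the translation-invariant $\ell$ using the already-established factorization $\tilde M = \ell\otimes\breve M_{[2]}$, which collapses the swap to $(x,y,u)\mapsto(y,x,-u)$ in one step. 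The paper instead carries out a more hands-on computation directly on $M_{[2]}$ — passing through the diagonal sets $S\cap C(r,s)$ and applying $(\tau_a,\tau_a)$-invariance piecewise — which is correct but visibly more labor. Your route gets the same identity faster because it spends the factorization you have already proved. The only point worth making explicit in a write-up is the extension from rectangles to $\BC(\XC^2\times\TC)$ (a $\pi$-system/Dynkin argument), and the dependence of the finiteness of $\mu$ on the boundedness of $\XC$, both of which you flagged.
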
 
\begin{remark}
	Using \Cref{prop:time-reduction}$(ii)$, we compute in particular 
	\begin{equation} \label{eq:rfmm_explicit}
      	\breve M_{[2]}(A \times B \times S) = \frac{1}{\ell(T)} M_{[2]}\psi^{-1}(A \times B \times T \times S)
      \end{equation}
      for $A, B \in \BC(\XC)$ and $S, T \in \BC(\TC)$ with $\ell(T)>0$. Note that, for the reduced measure~$\breve M_{[2]}$, the time component represents the time difference between points. Hence, including zero is important: in the non-simple case, the reduced measure captures the interaction of points occurring at the same timestamp. Without the zero, the measure is not fully described.
\end{remark}
We refer to $\breve{M}_{[2]}$ as the \emph{time-reduced second factorial moment measure}. We may perform the same reduction for the factorial covariance measure $\gamma_{[2]}$ to obtain the \emph{time-reduced factorial covariance measure} $\breve{\gamma}_{[2]}$ defined on $\XC^2 \times \TC$, which satisfies
\begin{equation} \label{eq:breve_gamma_def}
\breve{\gamma}_{[2]}(A\times B \times S) = \breve{M}_{[2]}(A\times B \times S)  - \mu (A) \, \mu(B) \, \ell(S)
\end{equation}
for all $A,B \in \BC(\XC)$ and $S \in \BC(\TC)$.
This is seen by taking integrals $\int_{[0,1]} \, dr$ on both sides in \eqref{eq:red2facmom} and using the definition of $\gamma_{[2]}$. We may interpret $\breve{\gamma}_{[2]}(A \times B \times dh)$ as the covariance of the point counts in spatial regions $A$ and $B$ at time points that are $h>0$ apart from on one another ($h=0$ is special due to factorization).

Time-reduced factorial covariance measures are a convenient tool because they describe some intuitive and interpretable quantities and are routinely computed at least for purely spatial point processes.

Note that the measures defined above are consistent with the corresponding measures for temporal processes whenever we fix a single $A \in \BC(\XC)$. In particular, the time-reduced factorial covariance measure $\breve{\gamma}_{[2],A}$ of the temporal point process $\Xi(A \times \cdot)$ is given by
\begin{equation}
  \breve{\gamma}_{[2],A}(\cdot) = \breve{\gamma}_{[2]}(A \times A \times \cdot). 
\end{equation}
By the symmetry stated in \Cref{prop:time-reduction}  and \eqref{eq:breve_gamma_def}, we have $\breve{\gamma}_{[2],A}(-S) = \breve{\gamma}_{[2],A}(S)$ for $S \in \BC(\TC)$.
The analog result holds for the reduced (raw) covariance measure $\breve{\gamma}_{2,A}$.

The Jordan decomposition $\breve{\gamma}_{[2],A} = \breve{\gamma}^+_{[2],A} - \breve{\gamma}^-_{[2],A}$ splits up $\breve{\gamma}_{[2],A}$ into (proper non-negative) measures that are mutually singular. Thus, $\breve{\gamma}^+_{[2],A}$ and $\breve{\gamma}^-_{[2],A}$ are concentrated on time-lags at which the point counts in $A$ are positively correlated and negatively correlated, respectively. We sometimes also use the Jordan decomposition of the full measure $\breve \gamma_{[2]} = \breve \gamma_{[2]}^+ - \breve \gamma_{[2]}^-$. We have
\begin{equation}\label{eq:relation_gamma_restrictedAndUnrestricted}
    \breve \gamma_{[2],A}^{+}(\,\cdot\,) \leq \breve \gamma_{[2]}^+(A \times A \times \,\cdot\,)
\end{equation}
for every $A \in \BC(\XC)$ because
\begin{equation*}
    \breve \gamma_{[2], A}^+([0,t]) = \sup_{B \subset [0,t]} \breve \gamma_{[2]}(A^2 \times B) \leq \sup_{\overline{B} \subset A^2 \times [0,t]} \breve \gamma_{[2]}(\overline{B}) =
    \breve \gamma_{[2]}^+(A^2 \times [0,t]),
\end{equation*}
where the suprema are taken over Borel measurable subsets. 

For our main results in \Cref{sec:main} we require a handle on $\Var(\Xi(A \times (0,t]))$ for $A \in \BC(\XC)$, which we can express by means of $\breve{\gamma}_{[2],A}$ in the following result.

\begin{proposition} \label{prop:Xi_Var_vs_Cov}
Let $\Xi$ be a weakly time-stationary $L^2$ point process. Then we have, for any $A \in \BC(\XC)$ and $t \in \tgz$,
\begin{align*}
  \Var(\Xi(A\times (0,t])) &= t \breve{\gamma}_{2,A}(\{0\}) + 2 \int_{(0,t)} (t-s) \; \breve{\gamma}_{2,A}(ds) \\
  &= t \mu(A) + t \breve{\gamma}_{[2],A}(\{0\}) + 2 \int_{(0,t)} (t-s) \; \breve{\gamma}_{[2],A}(ds).
\end{align*} Here, $\breve{\gamma}_{[2],A}(\{0\}) = 0$ if $\Xi(A \times \cdot)$ is simple.
\end{proposition}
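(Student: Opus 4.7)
The plan is to express the variance as the second factorial covariance (or the second covariance) measure evaluated on a product box, then apply the time-reduction formula of \Cref{prop:time-reduction} to pass to the reduced measure, and finally fold the integral using the symmetry of $\breve{\gamma}_{2,A}$.

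First I would write
\begin{equation*}
  \Var(\Xi(A \times (0,t])) = M_2(A^2 \times (0,t]^2) - M_1(A \times (0,t])^2 = \gamma_2(A^2 \times (0,t]^2),
\end{equation*}
using definitions \eqref{eq:def_k-mom} and \eqref{eq:faccovm} (with $M_2$ in place of $M_{[2]}$). By the analog of \Cref{prop:time-reduction}(ii) for the raw covariance measure (mentioned right after the definition of $\breve{\gamma}_{2,A}$), I can reduce via $\psi(r,s) = (r, s-r)$ to obtain, for any bounded measurable $f$,
\begin{equation*}
  \int f(r,s)\, \gamma_2(A^2 \times dr\, ds) = \int_{\TC} \int_{\TC} f(r, r+h)\, \ell(dr)\, \breve{\gamma}_{2,A}(dh).
\end{equation*}
Choosing $f(r,s) = \charfunc\{r \in (0,t]\}\charfunc\{s \in (0,t]\}$ gives $\ell\bigl((0,t] \cap (-h, t-h]\bigr) = (t - |h|) \vee 0$, so
\begin{equation*}
  \gamma_2(A^2 \times (0,t]^2) = \int_{(-t,t)} (t - |h|)\, \breve{\gamma}_{2,A}(dh).
\end{equation*}

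Next I would split this integral into $\{0\}$, $(0,t)$ and $(-t,0)$. For the last piece, substituting $h \mapsto -h$ and invoking the symmetry $\breve{\gamma}_{2,A}(-S) = \breve{\gamma}_{2,A}(S)$ (stated right after \Cref{prop:time-reduction}) collapses it onto $(0,t)$, yielding
\begin{equation*}
  \Var(\Xi(A \times (0,t])) = t\, \breve{\gamma}_{2,A}(\{0\}) + 2 \int_{(0,t)} (t - s)\, \breve{\gamma}_{2,A}(ds),
\end{equation*}
which is the first identity.

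For the second identity, subtracting \eqref{eq:faccovm} from the analog for $\gamma_2$ and using \eqref{eq:2facmom}, I get $\gamma_2 - \gamma_{[2]}$ equal to $M_1$ restricted to the diagonal $\{r = s\}$ (in the time coordinates, with $A = B$). Since $\psi$ maps this diagonal to $\{h = 0\}$, applying time reduction gives that $\breve{\gamma}_{2,A} - \breve{\gamma}_{[2],A}$ is the measure $\mu(A)\, \delta_0$ on $\TC$. Substituting into the first identity yields the second. Finally, for simple $\Xi(A \times \cdot)$, the factorial measure $M_{[2]}(A^2 \times \cdot)$ charges no diagonal atom, so its time-reduction gives $\breve{\gamma}_{[2],A}(\{0\}) = 0$.

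The main technical obstacle I anticipate is the careful bookkeeping in step two (correctly reading off $(t-|h|)_+$ from the translated window) together with the clean justification that the difference $\breve\gamma_{2,A} - \breve\gamma_{[2],A}$ concentrates precisely at zero with mass $\mu(A)$; both are conceptually straightforward but require invoking the reduction formula with a non-standard integrand, which I would verify by a monotone-class argument if the analog of \Cref{prop:time-reduction}(ii) is stated only on rectangles.
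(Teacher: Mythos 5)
Your proof is correct and follows essentially the same route as the paper: reduce the covariance measure via the change of variables $\psi$, compute $\ell$ of the slices to get $(t-|h|)_+$, then fold by symmetry. The only cosmetic difference is the order in which you extract the $t\mu(A)$ term — you run the reduction computation on $\gamma_2$ and recover the factorial version at the end by observing $\breve\gamma_{2,A} - \breve\gamma_{[2],A} = \mu(A)\,\delta_0$, whereas the paper pulls out $M_1(A\times(0,t]) = t\mu(A)$ first and runs the same integral computation on $\gamma_{[2],A}$ directly; both are sound and your route has the mild pedagogical advantage of verifying both displayed identities explicitly.
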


\subsection{Controlling positive covariances} \label{sec:pp_assumptions}

Throughout the remainder of this section, we assume that $\Xi$ is a weakly time-stationary $L^2$ point process and analyze the relation between the time-reduced factorial covariance measure $\breve \gamma_{[2]}$ and our partitioned variance growth assumption \ref{ass:Xi_mixing_Var_beta}. The latter serves as a crucial element for the derivation of quantitative error bounds for the statistical error of the empirical KRD induced by spatio-temporal time-homogeneous point processes. In the following, we introduce for $\beta \in[0,1)$ an assumption in terms of the positive parts $\breve \gamma^{+}_{[2],A}$.

\begin{description}
\labitem{(RCov$_\beta$)}{ass:Xi_mixing_cov_beta}  
There are constants\footnote{In general, $\kappa' \neq \kappa$ from \ref{ass:Xi_mixing_Var_beta}. For the relation between the two, see \Cref{prop:PVar_vs_RCov} below.} $\kappa' \geq 0$ and $t_0 \in \tgz$ such that for any partition $(A_i)_{i=1,\ldots,N}$ of $\XC$ into measurable sets and any $t \in \tgz$, $t \geq t_0$, the positive parts $\breve \gamma_{[2], A_i}^+$ of the time-reduced factorial covariance measures $\breve \gamma_{[2],A_i}$ of $\Xi$ fulfill 
\begin{equation*}
	\sum_{i=1}^N \breve \gamma_{[2], A_i}^+([0,t]) \leq \kappa'  t^{\beta}.
\end{equation*}	
\end{description}

\begin{remark} \label{rem:RCov_XC}
\begin{enumerate}
	\item[$(i)$] Assumption \ref{ass:Xi_mixing_cov_beta} is satisfied if for all $t\in \TC_{>0}$, $t\geq t_0$ 
  \begin{equation} \label{eq:full_poscovmeasure}
	\breve \gamma_{[2]}^+(\XC^2 \times [0,t]) \leq \kappa'  t^{\beta}.
  \end{equation}	
  Indeed, from \eqref{eq:relation_gamma_restrictedAndUnrestricted} and since $\breve{\gamma}_{[2]}^+$ is a (non-negative) measure on $\XC^2\times \TC$, we have 
  \begin{equation*}
	\sum_{i=1}^N \breve \gamma_{[2], A_i}^+([0,t]) 
    \leq \sum_{i=1}^N \breve \gamma_{[2]}^+(A_i^2 \times [0,t]) 
    \leq \breve \gamma_{[2]}^+(\XC^2 \times [0,t]).
  \end{equation*}	
\item[$(ii)$] We say that $\Xi$ has \emph{(only) non-negative correlations} if $\breve \gamma_{[2]}$ is a (non-negative) measure and thus $\breve \gamma_{[2]} = \breve \gamma_{[2]}^+$.
In this case, Assumption \ref{ass:Xi_mixing_cov_beta} is satisfied if 
\begin{equation} \label{eq:full_poscovmeasure_nonneg}
	\breve \gamma_{[2],\XC}^{+}([0,t]) \leq \kappa' t^{\beta}.
  \end{equation}	
 This follows from part $(i)$ since
  \begin{equation*}
    \breve \gamma_{[2]}^+(\XC^2 \times [0,t]) = \breve \gamma_{[2],\XC}([0,t]).
  \end{equation*}
\end{enumerate}
\end{remark}

Based on \Cref{prop:Xi_Var_vs_Cov}, we will in the following show that Assumptions \ref{ass:Xi_mixing_cov_beta} and  \ref{ass:Xi_mixing_Var_beta} are strongly related to each other. 

\begin{proposition} \label{prop:PVar_vs_RCov}
  Let $\Xi$ be a weakly time-stationary $L^2$ point process. Then, the following implications hold for $\beta \in [0,1)$. %
  \begin{enumerate}
    \item[(i)] If \ref{ass:Xi_mixing_cov_beta} is met for $\kappa' \geq 0$ and $t_0\in \TC_{>0}$, it follows
    for any finite partition $(A_i)_{i=1}^N$ of $\XC$ and $t\in \TC_{>0}$, $t\geq t_0$ that 
    $$
    	\sum_{i=1}^N \Var(\Xi(A_i \times (0,t])) - t m_\mu \leq 2\kappa' t^{1+\beta},
    $$
    and Assumption \ref{ass:Xi_mixing_Var_beta} is met with $\kappa = 2\kappa'$.
    \item[(ii)] Assume that $\Xi$ has non-negative correlations. Then, if \ref{ass:Xi_mixing_Var_beta} is met for $\kappa \geq 0$ and $t_0 \in \TC_{>0}$, it follows for all $t\in \TC_{>0}$, $t\geq t_0/2$ that
    $$
        \breve \gamma_{[2],\XC}^+([0,t]) \leq 2^\beta \kappa t^\beta,
    $$
  and Assumption \ref{ass:Xi_mixing_cov_beta} is met with $\kappa' = 2^\beta \kappa$ by~\Cref{rem:RCov_XC}(ii).
\end{enumerate}
\end{proposition}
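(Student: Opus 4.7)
The approach hinges on the exact variance decomposition from \Cref{prop:Xi_Var_vs_Cov}, which for any $A \in \BC(\XC)$ reads
\begin{equation*}
  \Var(\Xi(A \times (0,t])) = t\mu(A) + t\breve\gamma_{[2],A}(\{0\}) + 2\int_{(0,t)}(t-s)\,\breve\gamma_{[2],A}(ds).
\end{equation*}
Both directions of the proposition reduce to upper- or lower-bounding the signed weighted integral on the right in terms of the positive part of the time-reduced factorial covariance measure.

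For part (i), the plan is to sum the identity over the partition $(A_i)_{i=1,\ldots,N}$. The mean terms contribute exactly $t m_\mu$. To handle the remaining two terms I would combine the dominance of the signed measure by its positive part, $\breve\gamma_{[2],A_i}(S) \leq \breve\gamma_{[2],A_i}^+(S)$ for measurable $S$, with the trivial estimate $t-s \leq t$ on $(0,t)$, obtaining
\begin{equation*}
    t\breve\gamma_{[2],A_i}(\{0\}) + 2\int_{(0,t)}(t-s)\,\breve\gamma_{[2],A_i}(ds) \leq 2t\,\breve\gamma_{[2],A_i}^+([0,t]).
\end{equation*}
Summing over $i$ and invoking \ref{ass:Xi_mixing_cov_beta} then yields $\sum_i \Var(\Xi(A_i \times (0,t])) - t m_\mu \leq 2t\cdot \kappa' t^\beta = 2\kappa' t^{1+\beta}$ for all $t \geq t_0$, which is the claimed inequality.

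For part (ii), the non-negative correlation hypothesis ensures that $\breve\gamma_{[2],\XC}$ is a proper non-negative measure coinciding with its positive part. The idea is to apply the variance identity to $\XC$ at the \emph{doubled} time horizon $2t$ and combine it with \ref{ass:Xi_mixing_Var_beta} on the trivial partition $\{\XC\}$ at $2t$:
\begin{equation*}
  2t\breve\gamma_{[2],\XC}(\{0\}) + 2\int_{(0,2t)}(2t-s)\,\breve\gamma_{[2],\XC}(ds) \leq \Var(\Xi(\XC\times(0,2t])) \leq \kappa(2t)^{1+\beta},
\end{equation*}
where the non-negative $2tm_\mu$ contribution has been dropped from the middle expression. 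On the subinterval $s \in (0,t]$ we have $2t-s \geq t$, so together with the mass at $0$ the left-hand side is bounded below by $2t\,\breve\gamma_{[2],\XC}([0,t])$. Dividing by $2t$ delivers $\breve\gamma_{[2],\XC}^+([0,t]) \leq 2^\beta \kappa t^\beta$ for every $t \geq t_0/2$, as required.

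Both parts are direct calculations once \Cref{prop:Xi_Var_vs_Cov} is in hand, so no substantial obstacle is expected. The only subtleties are the dominance-by-positive-part step in part (i) (which relies on integrating non-negative functions and therefore goes in the correct direction) and the doubling of the time horizon in part (ii), which converts the vanishing weight $(t-s)$ near $s=t$ into the sharp prefactor $2^\beta$ on the right-hand side.
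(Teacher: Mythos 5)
Your proof is correct and follows essentially the same route as the paper. Part~(i) is verbatim the paper's argument: dominance of $\breve\gamma_{[2],A}$ by its positive part combined with $t-s \leq t$ on $(0,t)$, then summing over the partition. For part~(ii) the paper invokes its Corollary~C.1 (the lower bound $2\ell\bigl((0,(1-\alpha)t]\bigr)\breve\gamma_{[2],\XC}\bigl((0,\alpha t]\bigr)$ on the weighted integral) at $\alpha=1/2$ and then rescales $t \mapsto t/2$; you instead unfold the same estimate by applying the variance identity directly at horizon $2t$ and using $2t-s \geq t$ on $(0,t]$. These are identical in substance — your formulation has the small advantage of automatically sidestepping the parity bookkeeping the paper needs when $\TC=\ZZ$ (since $2t$ is always even), whereas the paper has to restrict to even $t$ before the change of variables.
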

Total finiteness of some form of covariance measure, such as \eqref{eq:full_poscovmeasure}
with $\beta=0$, is a common condition in limit theorems for point processes (e.g., \citealp{ivanoff1982central,pawlas2009empirical,heinrich2010estimating,ross2016wireless,blaszczyszyn2019limit}). %
The present paper is concerned with bounding the expectation of a point process functional; for bounds and asymptotics on higher-order moments or full distributions, one typically needs finite total variation of higher-order cumulant measures %
or stronger mixing conditions. The appropriate mixing condition that goes well with finite cumulant measures is \emph{Brillinger mixing} \citep{brillinger1975statistical}, which is well understood in relation to other mixing conditions \citep{heinrich2013absolute,heinrich2018brillinger}. 

Assumption \ref{ass:Xi_mixing_cov_beta} can also be interpreted as follows: when $\beta=0$, the dependence structure is sufficiently weak that the effect of a single point in $\Xi$ is essentially local in time, that is, one point can only \emph{induce} a bounded number of additional points in the future. This ensures convergence rates much like in the classical i.i.d.\ setting. By contrast, when $\beta>0$, a single point may influence the system over arbitrarily long time scales, generating correlations that decay only polynomially. 

Combining \Cref{prop:PVar_vs_RCov} with \Cref{th:est_mu_inUOT}, we see that we achieve the same convergence rates for $\EE[\KR_{p,C}(\mun, \mu)]$ under \ref{ass:Xi_mixing_cov_beta} with $\beta=0$ as for the Poisson process. 
Letting $\beta \geq 0$ in \ref{ass:Xi_mixing_cov_beta} allows the covariance measure to grow with the length of the time-interval to infinity. In other words, if $\breve \gamma_{[2], A}^+([0,t])$ has a density, \ref{ass:Xi_mixing_cov_beta} for $\beta > 0$ means that covariances decay (on average) of order $t^{-(1-\beta)}$ as the time lag $t$ gets large. This relaxation incurs a cost on the convergence rate of the empirical KRD, as seen by combining \Cref{prop:PVar_vs_RCov} with \Cref{th:est_mu_inUOT_beta}.

We note that Assumptions~\ref{ass:Xi_mixing_Var_beta}  and \ref{ass:Xi_mixing_cov_beta} are weak in the sense that they allow for the ``almost'' maximum possible growth of a partitioned variance (and covariance measure) according to the following result.
\begin{proposition} \label{prop:max_cov_var_rate}
    For any weakly time-stationary $L^2$ point process $\Xi$ there exists a positive constant $\kappa>0$ such that for any finite partition $(A_i)_{i=1}^N$ of $\XC$ and $t\in \TC_{>0}, \, t > 1$ it holds that 
    $$
    	\sum_{i=1}^N \Var(\Xi(A_i\times (0,t])) \leq \kappa t^2 \quad \text{ and } \quad  \sum_{i =1}^{N}\gamma^+_{[2], A_i}([0,t])\leq  \breve \gamma^+_{[2]}(\XC^2 \times [0,t]) \leq \kappa t.
    $$
    \end{proposition}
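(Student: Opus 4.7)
\textbf{Proof plan for Proposition~\ref{prop:max_cov_var_rate}.} The proof splits naturally into the variance bound and the covariance-measure bound, and I would reduce both to controlling second moments over unit-length time windows.

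For the variance bound, the starting point is the elementary inequality $\sum_i a_i^2 \leq (\sum_i a_i)^2$ valid for non-negative reals, which together with $\Var \leq \EE[(\cdot)^2]$ gives
$$
\sum_{i=1}^N \Var(\Xi(A_i \times (0,t])) \leq \sum_{i=1}^N \EE[\Xi(A_i \times (0,t])^2] \leq \EE[\Xi(\XC \times (0,t])^2]
$$
since the $A_i$ form a partition of $\XC$. To upper bound the right-hand side by a multiple of $t^2$, I would decompose $(0, \lceil t \rceil]$ into unit-length intervals and apply Cauchy--Schwarz, producing $\Xi(\XC \times (0, \lceil t \rceil])^2 \leq \lceil t \rceil \sum_{k=0}^{\lceil t \rceil - 1} \Xi(\XC \times (k, k+1])^2$. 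Weak time-stationarity then yields $\EE[\Xi(\XC \times (0, \lceil t \rceil])^2] \leq \lceil t \rceil^2 \EE[\Xi(\XC \times (0,1])^2]$, and since $\lceil t \rceil \leq 2t$ for $t > 1$ this is of the desired order $t^2$.

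For the covariance-measure bound, the first inequality $\sum_i \breve \gamma_{[2], A_i}^+([0,t]) \leq \breve \gamma_{[2]}^+(\XC^2 \times [0,t])$ follows directly from \eqref{eq:relation_gamma_restrictedAndUnrestricted} applied to each $A_i$, combined with the disjointness of the product sets $A_i \times A_i$ in $\XC^2$ and the fact that $\breve \gamma_{[2]}^+$ is a non-negative measure. For the second inequality, the Hahn decomposition of $\breve \gamma_{[2]} = \breve M_{[2]} - \mu \otimes \mu \otimes \ell$ together with the non-negativity of $\breve M_{[2]}$ gives $\breve \gamma_{[2]}^+(\XC^2 \times [0,t]) \leq \breve M_{[2]}(\XC^2 \times [0,t])$. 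Invoking the time-reduction identity \eqref{eq:rfmm_explicit} with $T = (0,1]$ and unfolding the definition of $\psi$, the latter equals $M_{[2]}(\psi^{-1}(\XC^2 \times (0,1] \times [0,t]))$, which is bounded above by $M_{[2]}(\XC^2 \times (0,1] \times [0, 1+t])$ by monotonicity. Using $\Xi^{[2]}(A \times B) \leq \Xi(A)\Xi(B)$, this in turn is at most $\EE[\Xi(\XC \times (0,1]) \cdot \Xi(\XC \times [0, 1+t])]$, and an application of Cauchy--Schwarz combined with the unit-window second-moment estimate from the first part yields the claimed bound of order $t$.

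The main subtlety I anticipate is essentially one of bookkeeping: the argument must work uniformly in the continuous case $\TC = \RR$ and the discrete case $\TC = \ZZ$, and the boundary point $\{0\}$ together with the truncations $\lceil t \rceil$ versus $t$ must be handled consistently. Since any atom at $\{0\}$ contributes at most $m_\mu$ in expectation by weak time-stationarity, and since $\lceil t \rceil \leq 2t$ for $t > 1$, these effects only modify the universal constant $\kappa$ (which can be taken proportional to $\EE[\Xi(\XC \times (0,1])^2]$) without altering the stated growth rates in $t$.
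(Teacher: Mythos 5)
Your proposal is correct and follows essentially the same route as the paper: bounding the partitioned variance sum by $\EE[\Xi(\XC\times(0,t])^2]$ via the elementary $\sum k_i^2 \le (\sum k_i)^2$ and then by $\lceil t\rceil^2\EE[\Xi(\XC\times(0,1])^2]$ via Cauchy--Schwarz plus weak time-stationarity, and reducing $\breve\gamma^+_{[2]}(\XC^2\times[0,t])$ through \eqref{eq:rfmm_explicit} to a second-moment estimate. The only cosmetic differences are that you invoke the cleaner minimality bound $\breve\gamma^+_{[2]}\le\breve M_{[2]}$ (the paper uses the looser $\breve\gamma^+_{[2]}\le\breve M_{[2]}+\mu\otimes\mu\otimes\ell$ and absorbs the extra term) and that you reduce with $T=(0,1]$ instead of $T=(0,t]$; neither changes the argument.
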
 

\subsection{Examples of point processes} \label{sec:pp_examples}
In this section, we discuss various spatio-temporal point processes which satisfy Assumption \ref{ass:Xi_mixing_cov_beta}.
We refer to \cite{moller2003statistical, illian2008statistical, diggle2013statistical} for comprehensive treatments of the statistical aspects of spatial and spatio-temporal point processes and to \cite{gonzalez2016spatio} %
 for a recent overview of spatio-temporal point process models.

\subsubsection{Poisson point process} \label{sec:example_PoissonPP}
As a prime example in continuous time (see, e.g., \citealp{last2018lectures, kutoyants2023introduction}),
we consider the time-homogeneous Poisson point process  $\Pi$ on $\XC \times (0, \infty)$ with intensity measure $\mu \otimes \mathrm{Leb}, \, \mu \in \MCsX$ (we also use notation $\Pi \sim \pop(\mu \otimes \Leb)$). To construct such a process, let $\mu\in \MCsX$ with $m_{\mu}>0$, and define $\Pi = \sum_{i=1}^\infty\delta_{(X_i, T_i)}$, where $X_i$ are i.i.d.\ with distribution $ \mu/m_{\mu}$ and $T_i \coloneqq \sum_{j = 1}^{i} \tau_j $ with i.i.d.\ random variables $\tau_1, \tau_2, \ldots \sim \mathrm{Exp}(m_{\mu})$ that are also independent of all $X_i$. 

Poisson processes are characterized by the property that, for bounded disjoint sets $B_1, \dots, B_k\in \BC(\XC\times (0, \infty))$, the random variables $\Xi(B_1), \dots, \Xi(B_k)$ are independent. 
It is easily checked that the Poisson point process satisfies $M_{[k]} = M_1^k$ for all natural $k$. Thus, for all $A \in \BC(\XC)$ we have $M_{[2],A}(d t \times d s) := M_{[2]}(A \times A \times d t \times d s) = (\mu(A))^2 \, \Leb^2(d t \dif s)$. By \eqref{eq:rfmm_explicit}, we obtain $\breve M_{[2],A}([0, t]) = (\mu(A))^2 t$, whence $\breve \gamma_{[2],A}([0, t]) = \breve M_{[2],A}([0, t]) - \mu(A) \mu(A) t = 0$. Thus, \ref{ass:Xi_mixing_cov_beta} holds with $\beta=0$.

\subsubsection{Binomial process} \label{sec:Bimomial}
The most basic example of time-discrete ST point process with $\tgz = \NN$ is perhaps the \emph{binomial} point process (see \citealp[Chapter~15]{kallenberg2021} or \citealp{munk2020statistical}, where it is called multinomial for microscopic applications), in which points are placed individually for each $i \in \NN$ at independent spatial locations $X_i \sim \mu$ for a probability measure $\mu \in \PC(X)$, so that $\Xi = \sum_{i=1}^\infty \delta_{(X_i,i)}$. Hence, this describes the classical i.i.d.\ sampling model. Note that $M_{1,A}$ and $M_{[2],A}$ are defined on subsets of $\NN$ and $\NN^{2}$, respectively. A straightforward calculation yields for any $A \in \BC(\XC)$ that
\begin{align*}
	M_1(A \times \{t\}) &= \EE [\Xi(A \times \{t\})] = \PP(X_t \in A) = \mu(A), \\
	M_{[2], A}(\{u,v\}) &= \EE [\Xi^{[2]}(A^2 \times \{u,v\})] = \begin{cases}
          \PP(X_u \in A) \PP(X_v \in A) = \mu(A)^2 &\text{if $u \neq v$}, \\
          0  &\text{if $u = v$},
       \end{cases} 
\end{align*}
where the last line holds because any two points of $\Xi$ have different time coordinates.
It therefore follows that 
$$
  \gamma_{[2],A}(\{u,v\}) = M_{[2], A}(\{u,v\}) - M_1(A \times \{u\})M_1(A \times \{v\}) = \begin{cases}
     0 &\text{if $u \neq v$}, \\
     -(\mu(A))^2 &\text{if $u = v$}.
  \end{cases}
$$
The negative covariance in the last line quantifies the inhibition of points at a single time point. Noting that $\breve{\gamma}_{[2],A}(\{h\}) = %
\gamma_{[2],A}(\{u,u+h\})$ for any $h \in \NN_0$ and arbitrary $u \in \NN$ by~\eqref{eq:rfmm_explicit} and~\eqref{eq:breve_gamma_def}, we obtain
$$
  \breve{\gamma}_{[2],A}([0,t]) = \sum_{h=0}^t \gamma_{[2],A}(\{1,1+h\}) = -(\mu(A))^2
$$
for any $t \in \NN_0$. We conclude that $\breve \gamma_{[2],A}= -\mu(A)^2 \delta_0$ and hence $\breve \gamma^+_{[2],A} = 0$, which implies the validity of \ref{ass:Xi_mixing_cov_beta} for $\beta=0$.

\subsubsection{Time-inhibitory processes}

The previous two examples were point processes with strong independence properties satisfying $\breve{\gamma}^{+}_{[2],A} = 0$ for every $A \in \BC(\XC)$.\footnote{In fact even $\breve{\gamma}_{[2]} \leq 0$, but for notational convenience we show statements about our examples typically for $\breve{\gamma}_{[2]}(A^2 \times \cdot\,)$ rather than $\breve{\gamma}_{[2]}(A \times B \times \cdot\,)$, where $A,B \in \BC(\XC)$.}
It is immediately clear that any process with \emph{overall non-positive time correlations} in the sense that $\breve{\gamma}_{[2],A} \leq 0$ for every $A \in \BC(\XC)$ (as a signed measure) satisfies \ref{ass:Xi_mixing_cov_beta} with $\beta=0$ since in this case $\breve{\gamma}^+_{[2],A} = 0$ holds as well.
A popular example is the \emph{determinantal point process}, which satisfies even $\breve{\gamma}_{[2]} \leq 0$ (\citealp{lavancier2015determinantal,vafaei2023spatio} for the spatio-temporal case).

At first glance, it may appear intuitive that concrete point process models implementing purely inhibitory mechanisms between points should satisfy Assumption~\ref{ass:Xi_mixing_cov_beta} as well. While this is \emph{usually} the case, it should be noted that such point processes may well have non-trivial measures $\breve{\gamma}^+_{[2],A}$ and there is, in fact, no general guarantee that~\ref{ass:Xi_mixing_cov_beta} holds. We give two special cases here for illustration, but concentrate on point processes with mostly excitatory mechanisms (``clustering'') in the remainder of Subsection~\ref{sec:pp_examples}. In order to be concise, we present these special cases for a purely temporal setting, i.e., set $\XC = \{0\}$.

First, consider the temporal \emph{Mat{\'e}rn type~I inhibition process}, which is obtained by starting from a homogeneous Poisson process $\Pi$ on $\XC\times \RR$ with intensity $\lambda>0$ and deleting every pair of points that are within distance $R>0$ of one another. It is straightforward to check (see \citealp[Example~8.1(c)]{daley2003introduction})
that the reduced factorial covariance measure has density
\begin{equation*}
  \breve{g}_{[2]}(r) = \begin{cases}
    -\lambda^2 e^{-4\lambda R} &\text{if $r \leq R$}; \\
    \lambda^2 e^{-2\lambda R} (e^{-\lambda r} - e^{-2\lambda R}) &\text{if $R < r \leq 2R$}; \\
    0 &\text{if $2R < r$}.
  \end{cases}
\end{equation*}
The density is positive between $R$ and $2R$ and thus $\breve{\gamma}^{+}_{[2],\XC}$ is non-trivial, which reflects the intuitive fact that being just beyond the taboo zone of a point makes it more likely to see any points because we know there can be no other points within the taboo zone. While clearly $\breve{\gamma}^{+}_{[2],\XC}(\RR_+)$ is always finite, i.e., \ref{ass:Xi_mixing_cov_beta} is satisfied here with $\beta=0$, even $\breve{\gamma}_{[2],\XC}(\RR_+) = \lambda (e^{-3\lambda R} - (2\lambda R+1) e^{-4 \lambda R})$ (not only the positive part) can get arbitrarily large as $\lambda \to \infty$ while $1.3 \leq \lambda R < \infty$. %

For the second special case, we start from $\Pi$ and delete all the points if one point is deleted, which shall happen with probability $p \in (0,1)$, independently of $\Pi$. Based on \Cref{sec:example_PoissonPP}, the reduced factorial covariance measure then has constant density $\breve{g}_{[2]}(r) = p \lambda^2 - p^2 \lambda^2 = p(1-p) \lambda^2 =: \alpha > 0$, whence $\breve{\gamma}^+_{[2],\XC}([0,t]) = \breve{\gamma}_{[2],\XC}([0,t]) = \alpha t$, so that Assumption~\ref{ass:Xi_mixing_cov_beta} is \emph{not} satisfied in spite of the inhibitory mechanism between the points.

\subsubsection{Spatio-temporal Poisson cluster processes} \label{sec:cluster_ppp}

Poisson cluster processes are superpositions of independent ``offspring'' point clusters positioned according to a Poisson process of (invisible) ``parent'' points. They form an important class comprising a number of well-established spatio-temporal point process models as special cases, including Neyman--Scott processes, Hawkes processes (both of which we describe in the sequel) and shot-noise Cox processes \citep{brix2002spatio}. 
As a special case of more general Poisson cluster processes on metric spaces (without an explicit time-component), spatio-temporal Poisson cluster processes have only non-negative correlations \citep[Equation~(6.3.12)]{daley2003introduction}.

To keep the greatest possible flexibility, we assume here that $\XC$ is a bounded measurable subset of a general c.s.m.s.\ $\XCbar$ and consider a Poisson process of parent points on $\XCbar \times \RR$ that is homogeneous in time but has only finitely many points in every bounded subset of $\XCbar$ in a finite time interval, i.e., has an expectation measure of the form $\Lambda \otimes \Leb$ for some \emph{locally} finite measure $\Lambda$ on $\XCbar$. Besides $\XCbar = \XC$, our main application case is $\XCbar = \RR^d$ equipped with the Euclidean metric (typically with $\Lambda$ a multiple of $\Leb_{\RR^d}$). %

Given a parent point at $(y,s) \in \XCbar \times \RR$, its offspring cluster is generated according to the distribution $P^{cl}(\cdot \mvert (y,s))$. Here $P^{cl}(\cdot \mvert \cdot)$ is a probability kernel from $\XCbar \times \RR$ to $\mathcal{N}(\XCbar \times \RR)$ that is \emph{time-shift equivariant} in the sense that 
\begin{equation} \label{eq:timehomo_kernel}
    P^{cl}(D \mvert (y,s)) = P^{cl}(T^{-1}_{s}(D) \mvert (y,0)),
\end{equation}
where $T_s \colon \mathfrak{N}(\XCbar \times \mathbb{R}) \to \mathfrak{N}(\XCbar \times \mathbb{R})$, $\xi \mapsto \xi(\cdot - (0,s))$, and \emph{causal} in the sense that 
\begin{equation} \label{eq:causal}
    P^{cl} \bigl( \{\xi \mvert \xi(\XCbar \times (-\infty,0)) \geq 1 \} \mvert (y,0) \bigr) = 0.
\end{equation}
Assume furthermore that $P^{cl}(\cdot \mvert (y,s))$ has (locally finite) first and second moment measures, which we denote by $M^{cl}_1(\cdot \mvert (y,s))$ and $M^{cl}_2(\cdot \mvert (y,s))$, respectively.

\begin{definition}
  Let $\Eta = \sum_{i=1}^{\infty} \delta_{(Y_i,S_i)}$ be a time-homogeneous Poisson process on $\XCbar \times \RR$ with expectation measure $\Lambda \otimes \mathrm{Leb}$ for some locally finite measure $\Lambda$ on $\XCbar$. Let furthermore $\bar{\Xi}_i \sim P^{cl}(\cdot \mvert (Y_i,S_i))$, $i \in \NN$, be independent.
  If $\bar{\Xi} = \sum_{i=1}^{\infty} \bar{\Xi}_i$ is locally finite and hence a point process, it is called a \emph{spatio-temporal Poisson cluster process}. 
\end{definition}
 In what follows, we consider the process $\Xi = \bar{\Xi}\vert_{\XC \times (0, \infty)} = \sum_{i=1}^{\infty} \Xi_i$, where $\Xi_i = \bar{\Xi}_i\vert_{\XC \times (0, \infty)}$, which is a Poisson cluster process in its own right. We give a general characterization for $\Xi$ being a well-defined stationary $L^2$ process, and an expression for the measures $\breve{\gamma}_{[2],A}$ which allows us to assess Assumption~\ref{ass:Xi_mixing_cov_beta}.
 Subsequently, we discuss two popular special cases of ST Poisson cluster processes, viz.\ the ST Neyman--Scott process (\Cref{sec:Neyman--Scott}) and the ST Hawkes process (\Cref{sec:Hawkes}).
\begin{proposition}[Expectation and covariance of an ST Poisson cluster process]
\label{prop:E_and_Var_for_PCP}
  The spatio-temporal Poisson cluster process $\Xi$ is well-defined, (strongly) time-stationary and $L^2$ if and only if, for every $A \in \BC(\XC)$,
  \begin{align}
    \int_{\XCbar} M^{cl}_1(A \times \RR_+ \mvert (y,0)) \; \Lambda(dy) &< \infty  \label{eq:L1_cond_poisson_cluster},\\
    \int_{\XCbar} M^{cl}_{[2],A}\bigl(\varphi^{-1}(\RR_+ \times [0,t]) \bigm| (y,0) \bigr) \; \Lambda(dy) &< \infty \quad \text{for every $t > 0$,} \label{eq:L2_cond_poisson_cluster}
  \end{align}
  where $\varphi \colon \RR^2 \to \RR^2$, $(r,s) \mapsto (r,s-r)$.
  Moreover, we have
  \begin{align}
    M_1(A \times (0, t]) &= t \, \mu(A) = t \int_{\XCbar} M^{cl}_1(A \times \RR_+ \mvert (y,0)) \; \Lambda(dy), \label{eq:mom1_poisson_cluster}\\ 
    \breve{\gamma}_{[2],A}([0,t]) = \breve{\gamma}^+_{[2],A}([0,t])
      &= \int_{\XCbar} M^{cl}_{[2],A}\bigl(\varphi^{-1}(\RR_+ \times [0,t]) \bigm| (y,0) \bigr) \; \Lambda(dy).
      \label{eq:rf_covm_poisson_cluster}
  \end{align}
\end{proposition}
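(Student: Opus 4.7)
The plan is to exploit the Poisson nature of the parent process $\Eta$ together with the time-shift equivariance \eqref{eq:timehomo_kernel} and causality \eqref{eq:causal} of the offspring kernel to reduce all moment computations for $\Xi$ to moment computations for a single cluster whose parent is pinned at time zero. Conditionally on $\Eta$, the clusters $\bar{\Xi}_i$ are independent, so expectations of functionals of $\bar{\Xi} = \sum_i \bar{\Xi}_i$ reduce to Campbell-type integrals against $\Lambda \otimes \mathrm{Leb}$.

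For the first moment I would start from Campbell's formula, $M_1(B) = \int_{\XCbar \times \RR} M^{cl}_1(B \mid (y,s)) \, \Lambda(dy) \, ds$, specialize to $B = A \times (0,t]$, and use \eqref{eq:timehomo_kernel} to shift the cluster back to $s = 0$. This gives
\[
   M_1(A \times (0,t]) = \int_{\XCbar} \int_{\RR} M^{cl}_1(A \times (-s, t-s] \mid (y,0)) \, ds \, \Lambda(dy),
\]
and Fubini together with causality identifies the inner integral with $t \cdot M^{cl}_1(A \times \RR_+ \mid (y,0))$, yielding \eqref{eq:mom1_poisson_cluster} and equating $L^1$-finiteness with \eqref{eq:L1_cond_poisson_cluster}. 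Strong time-stationarity of $\Xi$ follows directly, since time-shifting the translation-invariant $\Eta$ together with \eqref{eq:timehomo_kernel} leaves $\Xi$ invariant in distribution.

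For the second-order analysis I would split pairs of points of $\Xi$ into same-cluster and different-cluster contributions, $\Xi^{[2]} = \sum_i \Xi_i^{[2]} + \sum_{i \neq j} \Xi_i \otimes \Xi_j$. Conditioning on $\Eta$ removes dependence between distinct clusters, so the Campbell and factorial moment formulas for $\Eta$ identify the off-diagonal sum with $M_1 \otimes M_1$ and the diagonal sum with $\int M^{cl}_{[2]}(\cdot \mid (y,s)) \, \Lambda(dy) \, ds$. Subtraction produces
\[
  \gamma_{[2]}(B_1 \times B_2) = \int_{\XCbar \times \RR} M^{cl}_{[2]}(B_1 \times B_2 \mid (y,s)) \, \Lambda(dy) \, ds,
\]
which is non-negative as a measure, hence $\gamma_{[2]}^+ = \gamma_{[2]}$. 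To extract \eqref{eq:rf_covm_poisson_cluster} I would apply the pushforward $\psi(x,y,r,u) = (x,y,r,u-r)$ to the test set $A^2 \times T \times [0,t]$, shift the cluster back to $s=0$ via \eqref{eq:timehomo_kernel}, and integrate $\mathds{1}_{T-s}(r)$ over $s \in \RR$ to generate the factor $\ell(T)$ required by \eqref{eq:rfmm_explicit}; causality then restricts the remaining cluster-time coordinate to $\RR_+$, and \eqref{eq:L2_cond_poisson_cluster} emerges as the finiteness condition for the right-hand side.

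The main obstacle I anticipate is making the off-diagonal expectation $\EE\bigl[\sum_{i \neq j} \bar{\Xi}_i(B_1) \bar{\Xi}_j(B_2)\bigr]$ rigorous, since each factor is itself a random functional of $\Eta$; the cleanest route is to condition on $\Eta$, exploit independence of the clusters, and then invoke the Poisson factorial moment formula $\EE[\Eta^{[2]}(f)] = \int f \, d(\Lambda \otimes \mathrm{Leb})^{\otimes 2}$ in the parent coordinates. Everything else is careful bookkeeping of pushforwards and repeated invocation of causality and \eqref{eq:timehomo_kernel}.
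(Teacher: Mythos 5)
Your plan reproduces the paper's proof essentially step for step: Campbell's formula for the first moment followed by the $\chi_s$-equivariance shift and causality, the decomposition $\Xi^{[2]} = \sum_i \Xi_i^{[2]} + \sum_{i \ne j} \Xi_i \otimes \Xi_j$ with conditioning on $\Eta$ and the Poisson factorial-moment identity for the cross term, and finally the $\varphi$-reduction producing the factor $\ell(T)$ and restricting to $\RR_+$ via causality. The technical hurdle you flag — handling $\sum_{i\ne j}\bar\Xi_i(B_1)\bar\Xi_j(B_2)$ rigorously — is exactly the point the paper addresses by conditioning on $\Eta$ and invoking the second factorial moment formula for Poisson processes, so the proposal is faithful to the paper's argument.
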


For any partition $(A_i)_{i =1}^{N}$ of $\XC$ into measurable sets and every $t>0$ we have
\begin{align*}
    \sum_{i = 1}^{N}\breve{\gamma}^+_{[2],A_i}([0,t])
      &= \int_{\XCbar} \sum_{i = 1}^{N} M^{cl}_{[2],A_i}\bigl(\varphi^{-1}(\RR_+ \times [0,t]) \bigm| (y,0) \bigr) \; \Lambda(dy) \\
    &\leq \int_{\XCbar} M^{cl}_{[2],\XC}\bigl(\varphi^{-1}(\RR_+ \times [0,t]) \bigm| (y,0) \bigr) \; \Lambda(dy) = \breve{\gamma}^+_{[2],\XC}([0,t])
\end{align*}
since $M^{cl}_{[2]}\bigl(\,\cdot\, \times \varphi^{-1}(\RR_+ \times [0,t]) \bigm| (y,0) \bigr)$ are (non-negative) measures and $A_i^2$ are disjoint subsets of $\XC^2$.
It follows therefore that Assumption~\ref{ass:Xi_mixing_cov_beta} is satisfied for general $\beta \in [0,1)$ if there are $\kappa' \geq 0$ and $t_0 \in \RR_+$ such that 
  \begin{equation} \label{eq:cov_beta_poisson_cluster}
    \int_{\XCbar} M^{cl}_{[2],\XC}\bigl(\varphi^{-1}(\RR_+ \times [0,t]) \bigm| (y,0) \bigr) \; \Lambda(dy) \leq \kappa' t^{\beta} \quad \text{for all }t \geq t_0.
  \end{equation}
  For $\beta=0$, it suffices to ensure that
  \begin{equation} \label{eq:cov_zero_poisson_cluster}
    \int_{\XCbar} M^{cl}_{[2],\XC}\bigl(\RR_+ \times \RR_+ \bigm| (y,0) \bigr) \; \Lambda(dy) < \infty.
  \end{equation}

\begin{remark}\label{rmk:discussionPoissonCluster}
We can obtain $\breve{\gamma}_{[2],\XC}^+([0,t]) \asymp t^{\beta}$ for any $\beta \in [0,1)$ with a Poisson cluster process. To see this, let $\XC$ be arbitrary and define the distribution of a cluster for parent point $(y,0)$ as follows: let its number of points $N$ have probability mass function $p(n) \asymp n^{-(3-\beta)}$ for $n \to \infty$ and position these points at locations $\{(X_1,1),(X_2,2),\ldots,(X_N,N)\}$, where the joint distribution of the $X_i$ is arbitrary. %
Note that $M_{[2],\XC}^{cl}$ is the expectation measure of $\Zeta = \sum_{i,j=1}^{N, \neq} \delta_{(i,j)}$ here. We may assume without loss of generality (w.l.o.g.) that $\PP(N \geq 2)=1$ (otherwise condition on $N \geq 2$) and let $t \geq 2 =: t_0$. We have $\Zeta(\varphi^{-1}(\RR_+ \times [0,t])) = N(N-1)/2$ on $\{t \geq N\}$ and
\begin{equation*}
  N t / 4 \leq N \lfloor t \rfloor / 2 \leq \Zeta(\varphi^{-1}(\RR_+ \times [0,t])) \leq N t
\end{equation*}
on $\{t < N\}$.
Thus,
    \begin{align*}
      \breve{\gamma}_{[2],\XC}([0,t]) &\asymp \EE[N \min\{N,t\}] \\
      &= \EE[N^2 \mathds{1}\{N \leq t\}] + t \, \EE[N \mathds{1}\{N>t\}] \\
      &\asymp \int_1^t s^2 s^{-(3-\beta)} \; ds + t \int_t^{\infty} s \, s^{-(3-\beta)} \; ds \\
      &\asymp \frac{1}{\beta} t^{\beta} + \frac{1}{1-\beta} t^{\beta} \asymp t^{\beta},
    \end{align*}
where the third line is obtained by sandwiching sums with integrals.
\end{remark}

\subsubsection{Neyman--Scott process} \label{sec:Neyman--Scott}

To further illustrate \Cref{prop:E_and_Var_for_PCP}, consider a popular special case of a Neyman--Scott process \citet[Section 5.3]{moller2003statistical}, which is a Poisson cluster process where the cluster center process $\Eta$ is spatially homogeneous on $\XCbar = \RR^d$, so that $\Lambda = a \, \Leb_{\RR^d}$ for some $a > 0$, and the individual clusters are themselves independent Poisson processes $\bar{\Xi}_i$ with finite (temporally and spatially) equivariant intensity measure $M_1^{cl}(\cdot \mvert (y,s)) = M_1^{cl}(\cdot - (y,s) \mvert (0,0)) =: M_1^{cl}(\cdot - (y,s))$ on $\RR^d \times \RR_+$. 
Originating from cosmology \citep{neyman1958statistical}, Neymann-Scott models are used in meteorology \citep{cowpertwait2002space} and seismology \citep{de2016statistical}, among other fields.
We compute
\begin{align*}
  \int_{\XCbar} M^{cl}_1(A \times \RR_+ \mvert (y,0)) \; \Lambda(dy)
  &= \int_{\RR^d} \int_{\RR^d} \mathds{1}_{A-y}(x) \; M_1^{cl}(dx \times \RR_+) \; dy \\
  &= \int_{\RR^d} \int_{\RR^d} \mathds{1}_{A-x}(y) \; dy \; M_1^{cl}(dx \times \RR_+)  \\
  &= M^{cl}_1(\RR^d \times \RR_+) \, \Leb(A).
\end{align*}
Furthermore, since the clusters are Poisson processes, we have
\begin{align*}
  \int_{\XCbar} &M^{cl}_{[2],\XC}\bigl(\RR_+ \times \RR_+ \bigm| (y,0) \bigr) \; \Lambda(dy) \\
  &= \int_{\RR^d} \int_{\RR^d \times \RR^d} \mathds{1}_{\XC-y}(x) \mathds{1}_{\XC-y}(x') \; M_1^{cl}(dx \times \RR_+) \; M_1^{cl}(dx' \times \RR_+) \; dy \\
  &= \int_{\RR^d \times \RR^d} \int_{\RR^d} \mathds{1}_{\XC-x}(y) \mathds{1}_{\XC-x'}(y) \; dy \; M_1^{cl}(dx \times \RR_+) \; M_1^{cl}(dx' \times \RR_+) \\[1mm]
  &\leq M^{cl}_1(\RR^d \times \RR_+)^2 \, \Leb(\XC) < \infty.
\end{align*}

Therefore, \eqref{eq:L1_cond_poisson_cluster}, \eqref{eq:L2_cond_poisson_cluster} and \eqref{eq:cov_zero_poisson_cluster} are all satisfied and thus the Neyman--Scott process defined above is a (even strongly) time-stationary $L^2$ process satisfying Assumption~\ref{ass:Xi_mixing_cov_beta} with $\beta=0$. Note that we have $\mu = M_1(\cdot \times (0,1]) = M^{cl}_1(\RR^d \times \RR_+) \, \Leb_{\RR^d}\vert_{\XC}$ here due the spatial homogeneity of $\Eta$ and the spatial equivariance of the cluster distribution.

\subsubsection{Spatio-temporal Hawkes process} \label{sec:Hawkes}

A general spatio-temporal point process $\Xi$ on $\XC \times (0, \infty)$ can be defined (under assumptions which we outline below) via its conditional intensity $\lambda(x,t \mvert \xi_t)$, describing the (normalized) expected number of events occurring in an infinitesimal space-time interval $[x,x+dx)\times[t,t+dt)$ given that the history
of $\Xi$ in $\XC$ before time $t$ is $\xi_t$. If the conditional intensity is of the form
\begin{equation}\label{eq:Hawkes_density}
  \lambda(x,t \mvert \xi_t) = \lambda(x) + \int_{\XC} \int_0^t g(y,x,t-s) \; \xi_t(d y, \dif s),
\end{equation}
for measurable $\lambda \colon \XC \to \RR_+$ with $m_\lambda = \int_{\XC} \lambda(x) \, dx < \infty$ and $g \colon \XC^2 \times \RR \to \RR_+$ with $g(y,x,h)=0$ for $h<0$ and $\nubar = \sup_{y \in \XC} \nu(y) < 1$ for $\nu(y) = \int_{\XC \times \RR_+} g(y,x,h) \dif x \dif h$, we refer to $\Xi$ as \emph{spatio-temporal Hawkes process} (see \citealp{hawkes1971spectra} for the original purely temporal process) with background intensity $\lambda$ and excitation (or infection) intensity $g$. Compared to the usual definition of a spatio-temporal Hawkes process \citep{musmeci1992space,daley2003introduction}, where $\lambda(x)$ is constant and $g(y,x,t-s) = g(x-y,t-s)$, we only require temporal but not spatial stationarity.
The general intuition from \eqref{eq:Hawkes_density} is that ``immigrants'' (in a wide abstract sense) arrive at a constant rate $m_\lambda$, each immigrant being distributed in space according to the density $\lambda/m_\lambda$, and that each former point $(y,s)$ (immigrant or otherwise) produces on average $\nu(y)$ additional direct offspring points according to the spatial-temporal density $(x,t) \mapsto g(y,x,t-s)/\nu(y)$. Due to this cascading generation of offspring points, Hawkes processes are often referred to as \emph{self-exciting}.
Applications include meteorological, seismological, criminological, epidemic and plant ecological data (e.g., \citealp{ogata1998earthquake, mohler2011crime, meyer2012space}) and, more recently, neural \citep{zuo2020transformer,mei2017neural,chen2020neural} and social \citep{zhao2015seismic} networks. 

Following the arguments in \cite{hawkes1974cluster} (temporal case) and \cite{musmeci1992space} for our somewhat more general setting, a time-stationary Hawkes process satisfying \eqref{eq:Hawkes_density} %
can be constructed as a Poisson cluster process having a branching structure as follows.
Let $\Zeta_0 = \sum_{i=1}^{N_0} \delta_{(X_{0i},T_{0i})}$ be (for now) a general initial point process on $\XC \times \RR$. Given the $k$-th generation is $\Zeta_k = \sum_{i=1}^{N_k} \delta_{(X_{ki},T_{ki})}$, let $\Psi_{ki} \sim \pop(g(X_{ki},x,t-T_{ki}) \, dx \, dt)$, $1\leq i \leq N_k$, be independent (also of all previous processes) and define the next generation by $\Zeta_{k+1} = \sum_{i=1}^{N_k} \Psi_{ki}$. Assuming now that $\Zeta_0 = \Eta = \sum_{i=1}^{N_0} \delta_{(X_{0i},T_{0i})} \sim \pop(\lambda(x) \, dx \, dt)$, we obtain a process with the required spatio-temporal Hawkes distribution as $\Xi = \sum_{k=0}^{\infty} \Zeta_k$. A Poisson cluster process representation is obtained by performing the same construction independently for each $\delta_{(X_{0i},T_{0i})} =: Z^{(i)}_0$ in the $\pop(\lambda(x) \, dx \, dt)$ process. Setting $\Xi_i = \sum_{k=0} Z^{(i)}_k$, where $Z^{(i)}_k$ is the $k$-th generation obtained from $Z^{(i)}_0$, we have $\Xi = \sum_{i=1}^{N_0} \Xi_i$. 

We only discuss the case where the total excitation over $\XC$ is dominated uniformly in $y \in \XC$ by a temporal excitation function.
\begin{description}
\labitem{(Tdom)}{ass:subprob_temporal_domination}
There is a measurable function $\gbar_{\temp} \colon \RR_+ \to \RR_+$ so that $\nubar_{\temp} = \int_{\RR_+} \gbar_{\temp}(h) \dif h < 1$ and for all $h \geq 0$ it holds 
\begin{equation} \label{eq:stsep_majorant}
  \sup_{y \in \XC} \int_{\XC} g(y,x,h) \; dx \leq \gbar_{\temp}(h).
\end{equation}

\end{description}
This assumption implies that the temporal marginal of the ST Hawkes process is dominated by a purely temporal stationary Hawkes process with (constant) background intensity $m_{\lambda}$ and excitation function $\gbar_{\temp}(h)$; see the proof of \Cref{prop:hawkes_moments} below.
This is especially the case in the popular (but rather idealistic) setting that $g$ is space-time separable, i.e., of the form $g(y,x,h) = g_{\spat}(y,x) \, g_{\temp}(h)$, as can be seen by setting $\gbar_{\temp} = \bigl(\sup_{y \in \XC} \int_{\XC} g_{\spat}(y,x) \, dx)\bigr) \cdot g_{\temp}$.

Write generally $(f_1 \ast f_2)(y,x,t) = \int_{\XC} \int_{\RR} f_1(y,x',t') \, f_2(x',x,t-t') \, dx' \, dt'$ for integrable functions $f_1, f_2\colon \XC \times \XC \times \RR \to \RR_+$. We define $g^{\ast 1} = g$ and $g^{\ast k} = g \ast g^{\ast(k-1)}$ for $k \geq 2$. Furthermore, $g^{*0}(y,x,t) = \delta^{(d)}(x-y) \, \delta^{(1)}(t)$, where $\delta^{(j)}$ is the Dirac delta function on $\RR^j$ (centered at $0$). 
In the proposition below, we interpret $\lambda$ as a function $\XC \times \XC \times \RR \to \RR_+$ with $\lambda(y,x',t') = \lambda(x') \delta^{(1)}(t')$ for the purpose of the generalized convolution, leading to $(\lambda \ast f_2)(x,t) = \int_{\XC} \lambda(x') \, f_2(x',x,t) \, dx'$ for a function $f_2$ from above. 
\begin{proposition}[Expectation and covariance of an ST Hawkes process] \label{prop:hawkes_moments}
  For $\lambda \colon \XC \to \RR_+$ satisfying $m_{\lambda} = \int_{\XC} \lambda(x) \, dx < \infty$ and $g \colon \XC^2 \times \RR \to \RR_+$ satisfying $g(y,x,h)=0$ for $h<0$ and Assumption~\ref{ass:subprob_temporal_domination}, a (strongly) time-stationary Hawkes process $\Xi$ exists and is $L^2$. We then have, for every $A \in \BC(\XC)$ and $t > 0$,
  \begin{equation} \label{eq:mom1_hawkes}
    \EE[\Xi(A\times (0, t])] = t \, \mu(A) = t \int_{A \times \RR_{+}} \biggl(\lambda \ast \sum_{k=0}^{\infty} g^{\ast k}\biggr)(x, r)  \; dx \; dr \leq t \, \frac{m_{\lambda}}{1-\nubar}.
  \end{equation}
  In particular, $\EE[\Xi(\XC \times (0, t])] = t \, m_\mu = t \, \frac{m_\lambda}{1-\nu}$ if $g$ is constant in the first argument and hence $\nu(y) =: \nu$ is constant as well. 
  
  Furthermore, writing $\bar{G}_{\temp}(s) := \sum_{k=0}^{\infty} \gbar_{\temp}^{*k}(s)$ for the intensity at time $s$ of a cluster generated with the excitation function $\gbar_{\temp}$ and started from a single point at time $0$ (represented by the delta function $\bar{g}_{\temp}^{\ast 0}$), we have for $t \geq 0$
  \begin{equation} \label{eq:rf_covm_hawkes}
    \breve \gamma_{[2],\XC} ([0,t]) \leq \frac{m_\lambda}{1-\nubar_{\temp}} \, \biggl( \int_0^t \int_0^{\infty} \bar{G}_{\temp}(s) \, \bar{G}_{\temp}(s+r) \; ds \; dr - 1\biggr).
  \end{equation}
  
  In particular, since $\int_0^{\infty} \bar{G}_{\temp}(s) \, ds \leq (1-\bar{\nu}_{\temp})^{-1} < \infty$, Assumption~\ref{ass:Xi_mixing_cov_beta} is met with $\beta=0$ by~\Cref{rem:RCov_XC}(ii).
\end{proposition}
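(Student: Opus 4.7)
I plan to realize $\Xi$ as the Poisson cluster process described in the text, with cluster kernel $P^{cl}(\cdot \mid (y,s))$ being the law of the Hawkes cluster started by a single immigrant at $(y,s)$. Causality \eqref{eq:causal} and time-shift equivariance \eqref{eq:timehomo_kernel} of $P^{cl}$ follow from the support of $g$ and the fact that $g$ has no dependence on absolute time, so \Cref{prop:E_and_Var_for_PCP} becomes applicable once its integrability hypotheses are verified. The first-moment formula \eqref{eq:mom1_hawkes} then follows by induction on generations: the intensity of the $k$-th generation of the cluster started at $(y,0)$ is $g^{\ast k}(y,\cdot,\cdot)$, and summing against $\lambda(y)$ yields $\mu = \lambda \ast \sum_{k\geq 0} g^{\ast k}$. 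The estimate $\int_{\XC\times \RR_+} g^{\ast k}(y,x,h)\,dx\,dh \leq \bar\nu^k$, proved inductively from $\int_{\XC\times\RR_+} g(y,x,h)\,dx\,dh = \nu(y) \leq \bar\nu$, then gives the geometric bound $m_\mu \leq m_\lambda/(1-\bar\nu)$, with equality in the spatially translation-invariant case.

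For existence and the covariance bound, the key construction is a coupling between the temporal projection of the ST Hawkes cluster started at $(y,0)$ and a purely temporal Hawkes cluster $\tilde \Zeta$ started at $0$ with excitation $\bar g_{\temp}$. Constructed generation by generation using Assumption~\ref{ass:subprob_temporal_domination}, this coupling embeds every ST offspring (after projection to time) into a point of the dominating temporal cluster, so pointwise domination holds both for counts over bounded intervals and, crucially, for ordered pairs of distinct points. This verifies \eqref{eq:L2_cond_poisson_cluster} via the $L^1$-control of the pair count in the temporal cluster and, combined with \eqref{eq:rf_covm_poisson_cluster} for $A = \XC$, gives
\begin{equation*}
	\breve\gamma_{[2],\XC}([0,t]) \leq m_\lambda \, \tilde M^{cl}_{[2]}\bigl(\varphi^{-1}(\RR_+ \times [0,t]) \bigm| 0\bigr),
\end{equation*}
reducing the problem to an explicit bound for the purely temporal cluster.

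For the remaining cluster bound I plan an ancestor decomposition: every ordered distinct pair of points in $\tilde\Zeta$ has a most recent common ancestor $A$, itself a point of the cluster with intensity $\bar G_{\temp}$. Splitting by whether $A$ is the root at $0$ or a strict descendant, and using that conditional on $A$ the two points of the pair are either (root, descendant), (descendant, root), or descendants of $A$ through two distinct direct Poisson offspring, Campbell's formula for the Poisson offspring yields the convolution identity
\begin{equation*}
	\tilde m^{cl}_{[2]} = \bar G_{\temp} \ast_{\mathrm{diag}} \bigl(\bar G_{\temp} \otimes \bar G_{\temp} - \delta_{(0,0)}\bigr),
\end{equation*}
where the Dirac mass at $(0,0)$ subtracts the trivial non-pair (root, root) from the root-MRCA term and $\ast_{\mathrm{diag}}$ denotes convolution along the temporal diagonal. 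Integrating over $\varphi^{-1}(\RR_+ \times [0,t])$ after the substitution $(s_1,s_2) \mapsto (s, s+r)$ and using $\int_0^\infty \bar G_{\temp}(s)\,ds = (1-\bar\nu_{\temp})^{-1}$ produces exactly the right-hand side of \eqref{eq:rf_covm_hawkes}. Finally, the same identity combined with Tonelli's theorem gives $\int_0^t \int_0^\infty \bar G_{\temp}(s) \bar G_{\temp}(s+r)\,ds\,dr \leq (1-\bar\nu_{\temp})^{-2}$, so the right-hand side of \eqref{eq:rf_covm_hawkes} is uniformly bounded in $t$ and Assumption~\ref{ass:Xi_mixing_cov_beta} with $\beta = 0$ follows by \Cref{rem:RCov_XC}(ii). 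The main obstacle will be the clean bookkeeping of the ancestor decomposition, particularly ensuring that the atoms of $\bar G_{\temp}$ at $0$ (corresponding to the ``no-branching'' case) are handled consistently so that the $-1$ correction emerges correctly.
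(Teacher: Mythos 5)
Your plan follows the paper for the Poisson-cluster representation, the first-moment recursion $\mu = \lambda \ast \sum_{k\geq 0} g^{\ast k}$ with the geometric bound via $\bar\nu<1$, and the coupling under Assumption~\ref{ass:subprob_temporal_domination} that dominates the temporal marginal of the ST cluster by a purely temporal Hawkes cluster with excitation $\gbar_{\temp}$. Where you genuinely depart is in bounding the dominating cluster's factorial second moment: the paper simply cites the known closed form from \citet[Eq.~(6.3.25)]{daley2003introduction} and plugs it in, while you re-derive it via a most-recent-common-ancestor decomposition. Your convolution identity, written out in the paper's notation, is
\begin{equation*}
	\Mbar^{cl}_{[2]}(ds_1 \, ds_2) = \Bigl(\int_0^\infty \bar{G}_{\temp}(a)\bigl[\bar{G}_{\temp}(s_1-a)\,\bar{G}_{\temp}(s_2-a) - \delta(s_1-a)\,\delta(s_2-a)\bigr]\,da\Bigr)\, ds_1 \, ds_2,
\end{equation*}
and it is correct: partitioning ordered distinct pairs by the position $a$ of their MRCA (intensity $\bar{G}_{\temp}(a)$, the Dirac mass at $0$ accounting for the MRCA being the root), the two branches from $a$ are either ($a$ itself, a strict descendant), (a strict descendant, $a$), or two strict descendants reached via two distinct direct Poisson offspring; by Mecke's formula the last case contributes $(\gbar_{\temp}\ast\bar{G}_{\temp})^{\otimes 2} = (\bar{G}_{\temp}-\delta_0)^{\otimes 2}$, and summing the three cases collapses exactly to $\bar{G}_{\temp}\otimes\bar{G}_{\temp} - \delta_0\otimes\delta_0$, so the bookkeeping you worried about resolves cleanly. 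Integrating over $\varphi^{-1}(\RR_+\times[0,t])$, which is invariant under diagonal shifts, the outer integral against $\bar{G}_{\temp}(a)\,da$ factors out $(1-\bar{\nu}_{\temp})^{-1}$ and \eqref{eq:rf_covm_hawkes} drops out at once. Your route is self-contained and makes the factorization $\Mbar^{cl}_{[2]}(\varphi^{-1}(\RR_+\times I)) = (1-\bar{\nu}_{\temp})^{-1}\bigl[(\Mbar^{cl}_1)^2(\varphi^{-1}(\RR_+\times I)) - \delta^{(1)}(I)\bigr]$ explicit, with the prefactor $m_\lambda/(1-\bar{\nu}_{\temp})$ in \eqref{eq:rf_covm_hawkes} emerging with no slack; the paper's citation is quicker if one already knows the textbook formula. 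The verification of \eqref{eq:L2_cond_poisson_cluster} via the coupling, and the closing step $\int_0^t\int_0^\infty\bar{G}_{\temp}(s)\bar{G}_{\temp}(s+r)\,ds\,dr \leq (1-\bar{\nu}_{\temp})^{-2}$ giving Assumption~\ref{ass:Xi_mixing_cov_beta} with $\beta=0$ by \Cref{rem:RCov_XC}(ii), both match the paper.
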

The proof of \Cref{prop:hawkes_moments} is based on \Cref{prop:E_and_Var_for_PCP}. %

\begin{example}
Computing expressions of type $\sum_{k=1}^\infty g^{*k}$ explicitly is not straightforward. A particularly nice example is given by the exponential kernel $\gbar_{\temp}(t) = \alpha e^{-\beta t}$ \citep{hawkes1971spectra}, where $0<\alpha < \beta$. Using the properties of the Laplace transform $\LC[f](s) = \int_0^\infty f(t) e^{-st} \dif t$, namely that $\LC[f * g] = \LC[f]\LC[g]$ and $\LC[f+g] = \LC[f] + \LC[g]$, we conclude that
 $$
  	\sum_{k=1}^\infty f^{*k} = \LC^{-1}\left[ \sum_{k=1}^\infty \LC[f]^{k} \right].
  $$
  For the exponential kernel we have $\LC[\gbar_{\temp}](s) = \int_0^\infty \alpha e^{-(\beta+s)t} \dif t = \frac{\alpha}{\beta + s}$ and
  $$
  	\sum_{k=1}^\infty \LC[\gbar_{\temp}]^{k} = \sum_{k=1}^\infty \left(\frac{\alpha}{\beta + s} \right)^k = \frac{\alpha}{\beta-\alpha +s}.
  $$
  Hence, for $s \geq 0$ we obtain, 
  upon denoting $\delta^{(1)}(\cdot)$ the Dirac function at zero on the real line,  
  $$
  	\bar{G}_{\temp}(s) - \delta^{(1)}(s) = \sum_{k=1}^\infty \gbar_{\temp}^{*k}(s) = \LC^{-1}\left[ \frac{\alpha}{\beta-\alpha +s} \right] = \alpha e^{-(\beta-\alpha)s}.
  $$
\end{example}

\subsubsection{Log-Gaussian Cox process}

Another popular spatial and spatio-temporal point process is the Cox (or doubly stochastic) point process, which is obtained by randomizing the parameter measure of a Poisson process. More precisely, let $\Lambda$ be a random measure on $\XC \times (0,\infty)$ (see \citealp[Definition~9.1.VI]{daley2007introduction}). We call a point process $\Xi$ a \emph{Cox process with directing measure $\Lambda$} if its conditional distribution given $\Lambda$ is $\pop(\Lambda)$ almost surely.

One of the most studied models, particularly in spatial and spatio-temporal ecology and epidemics, is the log-Gaussian Cox process (LGCP, see \citealp{moller1998log,brix2001spatiotemporal,diggle2006spatio,simpson2016going,lindgren2024inlabru}). Here, the directing measure is given by a density that is a log-Gaussian random field. More precisely, we consider a product-measurable 
time-stationary Gaussian random field $(Z(x,t))_{x \in \XC, t>0}$ on $\XC \times (0,\infty)$ with mean function $m(x,t) = m(x)$ and covariance function\footnote{That is, $k \colon (\XC \times \TC)^2 \to \RR$ is symmetric and positive semidefinite.} $k(x,x',|t-t'|) \coloneqq k((x,t),(x',t'))$. Define then $\Lambda(A) = \int_A \exp(Z(x,t)) \, dx \, dt$ for all $A \in \BC(\XC \times (0,\infty))$ assuming the integral is finite for bounded $A$. This can be assured (including the abovementioned product-measurability) by assuming continuity of $m$ and a mild continuity condition for $k$, see \citet[Subsection~5.6.1]{moller2003statistical}. 
There are nice formulae for the first two moment measures of such a process. By \citet[Proposition~5.4]{moller2003statistical}, the densities of the first and the second reduced factorial moment measure are given by
\begin{equation*}
  m_1(x) = \exp(m(x) + k(x,x,0)/2) \quad \text{and} \quad \breve g_{[2]}(x,x',\tau) = m_1(x)m_1(x')\exp(k(x,x',\tau)),
\end{equation*}
respectively, so that
\begin{align*}
	M_1(A \times (0, t]) &= t \, \mu(A) = t \int_A m_1(x) \dif x, \\
	\breve \gamma_{[2],A}([0, t]) &= \int_{A^2}\int_{0}^t m_1(x)m_1(x') \left(\exp(k(x,x',\tau))-1\right) \dif \tau \dif x \dif x'.
\end{align*}
The Lebesgue density of $\breve \gamma_{[2],A}^+$ is given by
\begin{align*}
	\breve g^{+}_{[2],A}(\tau) &= \max\left\{0, \int_{A^2}m_1(x)m_1(x') \left(\exp(k(x,x',\tau))-1\right) \dif x \dif x' \right\} \\
	&\leq \int_{A^2}m_1(x)m_1(x') \left(\exp(k^{+}(x,x',\tau))-1\right) \dif x \dif x',
\end{align*}
where $k^+$ denotes the positive part of $k$.

A simplifying assumption on $k^+$ is a space-time separable bound \citep{gelfand2010handbook} of the form
$$
	k^+(x,x',\tau) \leq k_{spat}(x, x') k_{temp}(\tau),
$$ 
where $k_{spat} \colon \XC^2 \to \RR_+$ and $k_{temp} \colon \RR \to \RR_+$ are measurable (not necessarily positive semidefinite) functions. We further assume that $k_{spat}$ and $k_{temp}$ are bounded\footnote{Note that $k$ itself is bounded by Cauchy--Schwarz if $\sup_{x \in \XC} k(x,x,0) = \sup_{(x,t) \in \XC \times (0,\infty)} \Var(Z(x,t))$ is bounded.} by positive constants $L$ and $L'$, respectively, and that $k_{temp}(\tau) \to 0$ as $\tau \to \infty$. By the convexity of the function $r \mapsto \exp(Lr)-1$ for $ r \in [0,L']$ , we obtain for $K = (\exp(LL')-1)/L'$ that
\begin{equation*} 
	\exp(k^+(x,x',\tau)) -1 \leq \exp(L k_{\temp}(\tau)) -1 \leq  K \, k_{temp}(\tau) \quad \text{for all } x, \, x' \in \XC \text{ and } \tau \geq 0,
\end{equation*}
which yields 
\begin{equation} 
  \breve \gamma^{+}_{[2],A}([0, t]) \leq K \mu(A)^2 \int_{0}^t k_{\temp}(\tau) \, d\tau \quad \text{for all $t \geq 0$}.
\end{equation}
Therefore, for any partition $(A_i)_{i =1}^{N}$ of $\XC$ into measurable sets and every $t \geq 0$, 
\begin{align} \label{eq:lgcp_gammabound}
    \sum_{i = 1}^{N}\breve{\gamma}^+_{[2],A_i}([0,t])
  &\leq K \left( \sum_{i = 1}^{N} \mu(A_i)^2 \right) \int_{0}^t k_{\temp}(\tau) \, d\tau \notag\\
  &\leq K m_\mu^2 \int_{0}^t k_{\temp}(\tau) \, d\tau.
\end{align}

\begin{example}	
We check Assumption~\ref{ass:Xi_mixing_cov_beta} for two choices of $k_{\temp}$ that are particularly popular kernels in the machine learning community \citep{rasmussen2003gaussian, liu2020gaussian}, namely, the exponentiated quadratic kernel $k_{temp}(\tau) = \sigma^2 \exp\left(-\frac{\tau^2}{2 l^2}\right)$ and the rational quadratic kernel $k_{temp}(\tau)= \sigma^2 \left(1 + \frac{\tau^2}{2 a l^2} \right)^{-a}, \ a > 0$. We assume $k_{\temp}$ to be standardized to have both variance $\sigma^2$ and length scale $l$ set to~$1$ (working with the general kernels $\sigma^2 k_{\temp}(\tau/l)$ would give the same conclusions with respect to the choice of $\beta$). 
\begin{enumerate}
	\item[$(i)$] For the exponentiated quadratic kernel, we obtain
	\begin{align*}
		\int_{0}^t k_{\temp}(\tau) \, d\tau = \int_{0}^t \exp(-\tau^2/2)\dif \tau \leq  \int_{0}^\infty \exp(-\tau^2/2)\dif \tau
	\end{align*}
        for all $t \geq 0$,
	which is a finite constant independent of $t$. Therefore, Assumption~\ref{ass:Xi_mixing_cov_beta} is satisfied with $\beta=0$ by~\eqref{eq:lgcp_gammabound}.
	\item[$(ii)$] For the rational quadratic kernel we have
	\begin{align*}
		\int_{0}^t k_{\temp}(\tau) \, d\tau &\leq  \int_{0}^t \frac{1}{\left(1 + \tau^2/2a \right)^{a}} \dif \tau
		\leq \sqrt{2a} \int_{0}^t \frac{1}{\tau^{2a}} \dif \tau %
		\leq \frac{\sqrt{2a}}{1-2a} t^{1-2a} %
	\end{align*}
        for all $t \geq 0$ and $a \in (0, 1/2)$.
	Thus it follows that Assumption~\ref{ass:Xi_mixing_cov_beta} is satisfied with $\beta = 1-2a \in (0,1)$ by~\eqref{eq:lgcp_gammabound}.
\end{enumerate}
\end{example}

\section{Lower bounds on estimation of the Kantorovich--Rubinstein distance}\label{sec:lowerbounds}
In this section, we derive several results that are complementary to our upper bounds on the statistical error of the empirical KRD. We first derive lower bounds that arise from the empirical plug-in estimators (\Cref{subsec:lowerBoundPlugIn}). In addition to these results, we also obtain minimax lower bounds for the setting of the Poisson point process (\Cref{subsec:minimaxLowerBounds}), which confirm that the plug-in approach is nearly rate optimal in $t$ and $C$. 
We defer all proofs to Appendix \ref{sec:proofs_LowerBounds}.

\subsection{Lower bounds for plug-in estimators of the Kantorovich--Rubinstein distance}\label{subsec:lowerBoundPlugIn}

Our first lower bound for the empirical KRD is derived from accurately estimating the total mass of the spatial intensity measure. The second lower bound follows from the necessity of correctly allocating mass to each connected component. The third arises from a volumetric argument, which we rigorously formulate using covering numbers, and which is inspired by \citet[Proposition 4.6]{weed2019sharp}. 

\begin{proposition}\label{lem:lowerBoundPlugIn}
Let $(\XC, d)$ be a c.s.m.s.\ and let $\Xi$ be a weakly stationary ST point process on $\XC\times \tgz$ with spatial population measure $\mu \in \MCsX$ and empirical measure $\hat \mu_t(\cdot) \coloneqq t^{-1}\Xi(\cdot \cap (0, t])$ for $t\in \TC_{>0}$. Then, the following holds for $p\geq 1$ and $C>0$. \begin{enumerate}
  \item[$(i)$] (Total mass estimation) It holds 
  \begin{align*}
    \EE\left[\KR_{p,C}(\hat \mu_t, \mu)\right]\geq \frac{C}{2}\EE\left[\left|m_{\hat \mu_t} - m_{\mu}\right|^{1/p}\right].
  \end{align*}
  \item[$(ii)$] (Mass estimation of components) Assume that the support of $\mu$ can be decomposed into $\supp(\mu) = \XC_1 \cup \XC_2$ with $\delta\coloneqq \inf_{x_1 \in \XC_1, x_2\in \XC_2} d(x_1,x_2)>0$. Then it holds 
  \begin{align*}
     \EE\left[\KR_{p,C}(\hat \mu_t, \mu)\right]\geq 
      \left( \frac{C}{2}\wedge \delta \right) \EE\left[\big(\left|\hat \mu_t(\XC_1) - \mu(\XC_1) \right| + \left|\hat \mu_t(\XC_2) - \mu(\XC_2) \right|\big)^{1/p}\right].
  \end{align*}
  \item[$(iii)$] (Dimensionality) Assume for that $\diam(\XC)=1$ and for $\alpha>0$ that $\NC(\epsilon, \supp(\mu), d)\geq \epsilon^{-\alpha}$ for all $\epsilon \in (0,C]$ and $C\in (0,1]$. %
  Then it holds
  \begin{align*}
    \EE\left[\KR_{p,C}(\hat \mu_t, \mu)\right]\geq \EE\left[4^{-1}m_{\mu}^{1/p}((t m_{\hat\mu_t}+1)^{-1/\alpha}\wedge (C/2^{1/p})) \right].
  \end{align*}
\end{enumerate}
\end{proposition}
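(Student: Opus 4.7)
The three parts of the proposition address distinct structural features of the KRD, so I plan to treat them separately.

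For part $(i)$, I invoke the mass-deviation bound $\KR_{p,C}^p(\mu,\nu) \geq (C^p/2)|m_\mu - m_\nu|$ from \Cref{prop:kr_properties}$(iv)$ applied to $(\mu,\nu) = (\hat\mu_t,\mu)$. Taking $p$-th roots and using $2^{1/p} \leq 2$ for $p \geq 1$ gives $\KR_{p,C}(\hat\mu_t,\mu) \geq (C/2)|m_{\hat\mu_t}-m_\mu|^{1/p}$, and taking expectations on both sides closes the argument.

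For part $(ii)$, my plan is a two-step reduction. First, by the monotonicity of $\KR_{p,C}$ in $C$ from \Cref{prop:kr_properties}$(i)$, it suffices to bound $\KR_{p,C'}(\hat\mu_t,\mu)$ from below for the smaller parameter $C' \coloneqq C \wedge \delta$. Second, for this reduced parameter, \Cref{prop:Prop_d_trunc}$(iii)$ guarantees that no mass is transported over distances exceeding $C' \leq \delta$; since any cross-component pair $(x_1,x_2) \in \XC_1 \times \XC_2$ satisfies $d(x_1,x_2) \geq \delta$, no mass can flow between $\XC_1$ and $\XC_2$ in any optimal plan. Combined with the fact that $\mu$ is concentrated on $\XC_1 \cup \XC_2$, the optimization problem decouples into two independent sub-problems, yielding
\begin{equation*}
    \KR_{p,C'}^p(\hat\mu_t,\mu) \geq \KR_{p,C'}^p(\hat\mu_t|_{\XC_1}, \mu|_{\XC_1}) + \KR_{p,C'}^p(\hat\mu_t|_{\XC_2}, \mu|_{\XC_2}).
\end{equation*}
Applying \Cref{prop:kr_properties}$(iv)$ to each sub-problem and taking $p$-th roots then delivers the claimed bound, up to the usual factor absorbed through $2^{1/p} \leq 2$. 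The main subtlety is treating atoms of $\hat\mu_t$ lying outside $\XC_1 \cup \XC_2$, which can be discarded since $\mu$ offers them no mass to receive, and discarding them only strengthens the lower bound.

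For part $(iii)$, I plan to derive a pointwise bound conditional on $N \coloneqq t m_{\hat\mu_t}$ and then integrate over the randomness of $N$. Setting $\varepsilon \coloneqq (N+1)^{-1/\alpha} \wedge C$ ensures $\varepsilon \leq C$, so that the hypothesis $\NC(\varepsilon, \supp(\mu), d) \geq \varepsilon^{-\alpha} \geq N+1$ applies and the standard packing--covering inequality produces points $\{s_1, \ldots, s_K\} \subseteq \supp(\mu)$ with $K \geq N+1$ and pairwise distances exceeding $\varepsilon$, so that the balls $B(s_j, \varepsilon/2)$ are disjoint. Since $\hat\mu_t$ has only $N$ atoms, there is at least one index $i_0$ for which $B(s_{i_0}, \varepsilon/2)$ contains no empirical atom. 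By \Cref{prop:Prop_d_trunc}$(iii)$, any unbalanced OT plan must serve the $\mu$-mass sitting near $s_{i_0}$ either by transporting it in from an atom at distance at least $\varepsilon/2$, at cost $(\varepsilon/2)^p$ per unit of mass, or by paying the mass penalty $C^p/2$ per unit of non-transported mass; in either case the per-unit cost is at least $(\varepsilon/2)^p \wedge (C^p/2)$. Aggregating this contribution across the atom-free cells and invoking the mass-homogeneity from \Cref{prop:kr_properties}$(iii)$ to extract the prefactor $m_\mu^{1/p}$ yields, after taking $p$-th roots, the stated bound $4^{-1} m_\mu^{1/p}((N+1)^{-1/\alpha} \wedge (C/2^{1/p}))$. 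The principal obstacle is that the covering-number hypothesis constrains only the geometric complexity of $\supp(\mu)$ without directly restricting how $\mu$-mass is distributed over the Voronoi cells of the packing; I expect to handle this via a pigeonhole-style argument that shows a universal fraction of the total mass $m_\mu$ must reside in atom-free cells after rescaling via \Cref{prop:kr_properties}$(iii)$.
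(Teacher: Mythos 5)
Part $(i)$ is correct and follows the same route as the paper: apply \Cref{prop:kr_properties}$(iv)$, take $p$-th roots, use $2^{-1/p}\geq 1/2$, and take expectations.

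For part $(ii)$, your two-step plan (shrink the penalization parameter to $C'\coloneqq C\wedge\delta$ by \Cref{prop:kr_properties}$(i)$, then decouple over the components $\XC_1,\XC_2$ via \Cref{prop:Prop_d_trunc}$(iii)$) is sound as a strategy and the decoupling inequality you write down does hold. However, the resulting constant is $C'/2^{1/p}=(C\wedge\delta)/2^{1/p}$, and this is \emph{not} $\geq \frac{C}{2}\wedge\delta$ in the regime $C>\delta$; for example with $p=1$ and $C\geq 2\delta$ you get $\delta/2$ whereas the statement asks for $\delta$. The remark that ``the usual factor'' is ``absorbed through $2^{1/p}\leq 2$'' does not fix this — you would be comparing $\tfrac{C\wedge\delta}{2}$ against the larger quantity $\tfrac{C}{2}\wedge\delta$. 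The paper avoids the premature reduction to $C'$ by working directly in the augmented space of \Cref{prop:linkOT} with cost $\tilde d^p_{p,C}$: any mass that crosses between components or goes to/from the dummy point $\mathfrak{d}$ pays at least $\frac{C^p}{2}\wedge\delta^p$ per unit, whose $p$-th root is already $\geq\frac{C}{2}\wedge\delta$, so no factor is lost in an intermediate truncation. Also, your comment about ``discarding'' atoms of $\hat\mu_t$ outside $\XC_1\cup\XC_2$ would \emph{weaken}, not strengthen, a lower bound on the KRD (deletion is a positive cost, not a freebie); fortunately this is moot since weak time-stationarity implies $\hat\mu_t$ is supported in $\supp(\mu)$ almost surely.

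Part $(iii)$ has a genuine gap, and you correctly identify it but do not resolve it. A packing of $\supp(\mu)$ with $K\geq N+1$ disjoint balls of radius $\varepsilon/2$ combined with $N=tm_{\hat\mu_t}$ empirical atoms yields, by pigeonhole, at least \emph{one} atom-free ball. But a covering-number lower bound on $\supp(\mu)$ carries no information about how $\mu$-mass is distributed across those balls: $\mu$ could place essentially all its mass in cells already occupied by atoms, leaving your single empty ball with negligible (or zero) $\mu$-mass. No ``pigeonhole-style argument'' can extract a universal fraction of $m_\mu$ in atom-free cells from these hypotheses alone. The paper does not attempt this; it invokes \citet[Proposition~6]{weed2019sharp} to obtain the discrete-approximation lower bound $\OT^{1/p}_{d^p}(\mu,\nu)\geq m_\mu^{1/p}\,4^{-1}n^{-1/\alpha}$ for $n$-atomic $\nu$ of equal mass, and then translates it to the $\KR_{p,C}$ setting by handling the truncated metric $d\wedge C$ and the mass mismatch via the augmented space $\tilde\XC$ and \Cref{lem:impactCoveringC}. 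To close your argument you would need either to cite that result or to reproduce its (multi-scale, not single-scale) proof; the single-scale pigeonhole is not enough.
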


To put the above bound lower bounds into context, we provide some implications for specific ST point processes that fulfill \ref{ass:Xi_mixing_Var_beta} with $\beta= 0$ before discussing the case $\beta>0$. 

\begin{example}[Binomial and Poisson point processes]
  \begin{enumerate}
    \item[$(i)$] We first focus on the binomial point process from \Cref{sec:Bimomial}, which fulfills \ref{ass:Xi_mixing_Var_beta} for $\beta = 0$ and models the classical i.i.d.\ sampling from a probability measure $\mu\in \PC(\XC)$. Here, $\hat \mu_t =\frac{1}{t}\sum_{i = 1}^{t} \delta_{X_i}$ is an empirical measure indexed over $t\in \NN$ with $X_1,  X_2, \dots \iid \mu$. Then, the bound provided by \Cref{lem:lowerBoundPlugIn}$(i)$ is vacuous since $m_{\hat \mu_t}= m_{\mu} = 1$. Moreover, if $\mu$ has disconnected support with $\supp(\mu) = \XC_1 \cup \XC_2$ and $\inf_{x_1\in \XC_1, x_2\in \XC_2}d(x_1, x_2) = \delta >0$, then it follows from \Cref{lem:lowerBoundPlugIn}$(ii)$ combined with the Paley-Zygmund inequality for $C>0$ and $t\in \NN$ that 
    \begin{align*}
      \EE[\KR_{p,C}(\hat \mu_t, \mu)]
      &\gtrsim_{p} (C\wedge \delta) (\mu(\XC_1)\mu(\XC_2))^{1/2p}t^{-1/2p}.
    \end{align*}
    Finally, if $\mu$ is supported on a compact smooth submanifold of $\RR^d$ with dimension $\alpha$ and diameter $1$, and if $\mu$ is absolutely continuous with respect to the volume measure, then it holds (after potential rescaling) that $\NC(\epsilon, \mu, \|\cdot\|)\geq \epsilon^{-\alpha}$ for all $\epsilon \in (0,1]$. Hence, \Cref{lem:lowerBoundPlugIn} implies with $m_{\hat \mu_t} = 1$ for $C\in (0,1]$ and $t\in \TC_{>0}$ that 
    \begin{align*}
      \EE[\KR_{p,C}(\hat \mu_t, \mu)]\gtrsim_{p} C\wedge (t^{-1/\alpha}).
    \end{align*}
    Further, when $C = t^{-1/\alpha}$, the above bound can be rewritten to $C \wedge t^{-1/\alpha} = C^{1-\alpha/2p} t^{-1/2p}$. Overall, these lower bounds align with the upper bounds displayed in \Cref{th:est_mu_inUOT_beta} for $\alpha \neq 2p$ and $\beta = 0$, and for $\alpha =2p$ they coincide up to a logarithmic term  in $t$.

    \item[$(ii)$] Now let $\mu\in \MCsX$ be a general spatial intensity measure and consider a time-homogeneous Poisson point process (\Cref{sec:example_PoissonPP}) with corresponding empirical measure $\hat \mu_t$ for $t>0$. Then, from \Cref{lem:lowerBoundPlugIn}, using the same dimension assumptions on $\mu$ as for the binomial process setting in $(i)$, we obtain from the Paley-Zygmund inequality and Jensen's inequality for the convex function $x\mapsto x^{-1/\alpha}\wedge (C/2^{1/p})$ that 
    \begin{align*}
       \EE[\KR_{p,C}(\hat \mu_t, \mu)]\gtrsim_  {p}  C m_{\mu}^{1/2p} t^{-1/2p}  +   m_{\mu}^{1/p}(C\wedge (tm_{\mu})^{-1/\alpha}). 
    \end{align*}
    In particular, for $C = (tm_{\mu})^{-1/\alpha}$ we observe that $C = C^{1-\alpha/2p} (tm_{\mu})^{-1/2p}$, and the right term in the lower bounds aligns with the right term in \Cref{th:est_mu_inUOT_beta} for $\beta = 0$ and $\alpha<2p$. Comparing these bounds with the binomial point process, we observe that the term $Cm_{\mu}^{1/2p}t^{-1/2p}$ always manifests even if the support of $\mu$ is connected. 
  \end{enumerate}
\end{example}
In the following, we provide two examples for point processes that fulfill \ref{ass:Xi_mixing_Var_beta} only for positive $\beta\in (0,1)$ and which also achieve convergence rates  for the empirical KRD which are slower than parametric. %
\begin{example}[Long-range dependence]\label{ex:lb_beta_positive}
  We will consider a time-homogeneous spatio-temporal point process $\Xi = \sum_{i = 1}^\infty W_i \delta_{(X_i, T_i)}$, where $W_i \in \{0,1\}$, $X_i \in \XC$, and $T_i \in \RR$ are independent random variables, and $X_1, X_2, \dots, \iid  \mu$ for some probability measure $\mu \in \PC(\XC)$. In particular, the weights $\{W_i\}_{i \in \NN}$ effectively act as a temporal thinning of the process, and the process could also be expressed as $\Xi= \sum_{i = 1}^{\infty} \delta_{\tilde X_i, \tilde T_i}$ for i.i.d.\ random variables $\tilde X_i \in \XC$ and independent thereof but not i.i.d.\ $\tilde T_i \in \RR$.

  In particular, setting $N_t \coloneqq \sum_{i = 1}^{\infty} W_i \delta_{T_i}((0,t])$, which will admit finite variance in our two settings, we obtain for arbitrary $A\in \BC(\XC)$ and $t \in \tgz$ by our independence assumptions from the law of total expectation and variance that
  \begin{align}
    	\EE[\Xi(A \times (0, t])] &= \EE\left[\sum_{i=1}^{\infty} W_i\mathds{1}(X_i \in A)\mathds{1}(T_i \in (0,t])\right] = \mu(A) \EE[N_t],\label{eq:lb_example_mean}\\ 
      \Var(\Xi(A \times (0, t])) &= \mu(A)\left(1-\mu(A)\right) \EE[N_t] + \mu(A)^2 \Var(N_t).\label{eq:lb_example_variance}
  \end{align}
  In the following, we describe two different settings for weights and the time points which will lead to different convergence behaviors for the empirical KRD than the Poisson point processes.

	\begin{enumerate}
\item[$(i)$] Let $\tgz =\NN$. 
We consider deterministic time points $T_i \coloneqq i$ and a stationary sequence of random weights $\{W_i\}_{i \in \NN}$ which will be defined as the map of a stationary Gaussian process. Concretely, we consider a stationary sequence $(\tau_i)_{i \in \NN}$ of Gaussian random variables with %
\[
	\Cov(\tau_t, \tau_1) \asymp t^{2H-2}
\]
for some $H \in (1/2, 1)$. Such a stationary sequence of random variables exists by considering the increments of a fractional Brownian motion with Hurst index $H$, cf. Equations (1--2) of \cite{bai2018instability}.
We then define for $k\in \NN$ the weights as $W_i = G_k(\tau_i)$, where the binary-valued function $G_k\colon \RR\to \{0,1\}$ is selected according to Proposition \ref{prop:G_exists} and fulfills $$\Var\left(\sum_{i=1}^t G_k(\tau_i)\right) \asymp t \,\Var(G_k(\tau_1)) + \frac{2}{(a+1)(a+2)} t^{(2H-2)k +2}.$$ 
Together with \eqref{eq:lb_example_mean}-\eqref{eq:lb_example_variance} this implies that $\Xi$ is weakly time-stationary and that \ref{ass:Xi_mixing_Var_beta} is satisfied with $\beta = (2H-2)k + 1 \in (0,1)$. Moreover, \cite{bai2018instability} (see Proposition \ref{prop:gaussian_clt} for details) show for $t\to \infty$ that $$
		t^{(1-H)k - 1}\left(\sum_{i=1}^{t}G_k(\tau_i) - t\EE[G_k(\tau_1)]\right) 
	$$ converges in distribution to some non-degenerate law. Hence, there exist positive constants $\rho>0$ and $t_0 \in [1, \infty)$ such that for all $t \geq t_0$ it holds with probability at least $1/2$ that
$
	|N_t -  m_\mu t| \geq \rho t^{(-1+\beta)/2}/2 
$, and by Markov inequality combined with \Cref{lem:lowerBoundPlugIn}$(i)$ we infer
	$$
	\EE[\KR_{p,C}(\hat \mu_t, \mu)] \gtrsim_{\rho} \frac{C^p}{2} t^{(-1+\beta)/2},
  $$
  matching the rate in $t$ detailed in \Cref{th:est_mu_inUOT_beta} for $\alpha<2p$ and $\beta \in (0,1)$ and large $C$.
\item[$(ii)$] Now let $\tgz = (0, \infty)$. We mention here a slightly simpler construction which, however, leads to a lower bound that does not match the upper bound in \Cref{th:est_mu_inUOT_beta} for the regime $\alpha<2p$ and $\beta\in (0,1)$. 
	We consider constant weights $W_i = 1$ for all $i\in \NN$ and choose the time-point such that their temporal increments are heavy-tailed. 
    Let $T_2-T_1, T_3-T_2, \dots \overset{i.i.d}{\sim} \mathrm{Pareto}\left(\gamma, \frac{\gamma - 1}{\gamma} m_{\mu}^{-1}\right) =: F$ with $\gamma \in (1,2)$ and assume that $T_1$ has a distribution with cumulative distribution function $G(t) = m_\mu \int_{0}^t  (1-F(s)) \dif s$. We then obtain  $\EE[N_t] = m_\mu t$ \citep[Chapter XI, Section 4]{fellerIntroductionProbabilityTheory1971} and $\Var(N_t) \asymp t^{3-\gamma}$ (Proposition \ref{prop:renewal_mean_var}). 
    Hence, choosing $\beta = 2-\gamma$ and plugging it into \eqref{eq:lb_example_mean}-\eqref{eq:lb_example_variance}, we verify that $\Xi$ is weakly time-stationary and that \ref{ass:Xi_mixing_Var_beta} holds with $\beta \in (0,1)$. Next, the classical renewal theory result  \citep{gnedenko1968limit} in combination with the general inverse CLT argument \citep[Theorem 7.3.1]{whitt2002stochastic} (included as Theorem \ref{prop:renewal_clt}) implies that $t^{-1/(2-\beta)}(N_t- m_\mu t)$ 
    converges in distribution to some stable non-degenerate law as $t \to \infty$. Repeating the argument from the previous example, we conclude that
$$
	\EE[\KR_{p,C}(\hat \mu_t, \mu)] \gtrsim_{\rho} \frac{C^p}{2} t^{1/(2-\beta) -1},
$$
which for $\beta \in (0,1)$ is decreasing strictly slower than the parameter rate but faster than the rate $t^{(-1+\beta)/2}$ obtained in \Cref{th:est_mu_inUOT_beta}.
\end{enumerate}
\end{example}

\begin{remark}[Extension of lower bounds]
The dimension-based lower bound cannot leverage non-trivial dependencies of the ST point process since $t m_{\hat \mu_t}$ grows with order $t$ due to weak stationarity. Assumption~\ref{ass:Xi_mixing_Var_beta} with $\beta>0$ captures this dependence, and we conjecture that it is indeed the sharp condition, giving errors of order $t^{(-1+\beta)/\alpha}$.  Proving matching lower bounds for specific models remains difficult, however, because one must exploit the dependence structure of the respective ST point process and all currently known examples described in \Cref{sec:pp_examples} with $\beta>0$ are technically demanding in this respect.
\end{remark}

\subsection{Minimax lower bounds for estimating the Kantorovich--Rubinstein distance}
\label{subsec:minimaxLowerBounds}

In this section, we derive a minimax lower bound on the estimation of measures with respect to the KRD and of the KRD itself. In particular, these results emphasize that our upper bounds in \Cref{th:est_mu_inUOT_beta} are not improvable. 
Notably, while the pointwise bounds in \Cref{lem:lowerBoundPlugIn} were stated for general ST point processes, in the following we only focus on Poisson point processes whose spatial intensity measure is contained in $\MC(\XC,1)$, the collection of measures on $\XC$ with total mass bounded by one. 

\begin{proposition} \label{prop:KR_minimax}
Let $(\XC,d)$ be a bounded c.s.m.s.\ with $\diam(\XC) = 1$ which fulfills $\NC(\epsilon, \XC, d) \cdot \epsilon^{\alpha}\in [a, A]$ for all $\epsilon \in (0,1]$ and some $A\geq a>0$, $\alpha\geq 0$. Denote by $\Pi_{\mu}$ and $ \Pi_{\nu}$ two independent time-homogeneous ST Poisson point processes with spatial intensity measures $\mu, \nu \in \MCsX$, respectively. Then, for $p\geq 1$ and $C>0$, the following holds. 
\begin{enumerate}[label=$(\roman*)$]
    \item (Measure estimation) For $t\geq 1$ sufficiently large (dependent on $a, p, \alpha$) it holds that %
    \begin{align}\label{eq:minimaxLowerBoundMeasure}
        &\inf_{\tilde \mu_t}\sup_{\mu \in \mathcal{M}(\XC,1)} \EE\left[ \KR_{p,C}(\tilde \mu_t, \mu)\right]\gtrsim_{p,\alpha, a} %
        C\wedge \left(C t^{-1/2p} + C^{1-\alpha/2p} t^{-1/2p} + t^{-1/\alpha}\right),
    \end{align}
    where the infimum is taken over all (measurable) estimators $\tilde \mu_t\in \MCsX$ based on $\Pi_{\mu}(\cdot \times (0,t])$.
    \item (Functional estimation) Let $\kappa>0$ be a positive constant. Then, for $t\geq 1$ sufficiently (dependent on $a, A, p, \alpha, \kappa$) it holds 
    \begin{align}
        &\inf_{\tilde \KR_t}\sup_{\mu, \nu \in \mathcal{M}(\XC,1)} \EE\left[ \left|\tilde \KR_t - \KR_{p,C}(\mu, \nu)\right|\right]\notag \\ %
        &\gtrsim_{a, A, p, \alpha, \kappa} C\wedge \left(C t^{-1/2p}+ \begin{cases}
        C^{1-\alpha/2p} t^{-1/2p}\log(t)^{-1+1/2p} &\text{ if } \alpha\leq 2p \text{ and } C\in I(t), \\
        (t \log(t))^{-1/\alpha}&\text{ if } \alpha> 2p
        \end{cases}\right),%
        \label{eq:minimaxLowerBoundEstimationKRD}
\end{align}
where we define\footnote{Here we use the convention that $I(t) = \emptyset$ whenever the left-hand endpoint exceeds the right-hand one.} $I(t) \coloneqq [(2 t/(a\log(t)))^{-1/\alpha}/2, \min((3/a)^{-1/\alpha}, 2(t/a)^{-1/(\alpha \kappa)})]$, and the infimum is taken over all (measurable) estimators $\tilde \KR_t\in \RR_+$ based on independent $\Pi_\mu(\cdot \times (0,t])$ and $\Pi_\nu(\cdot \times (0,t])$.
\end{enumerate}
\end{proposition}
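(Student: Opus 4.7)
The plan is to establish the three principal rate terms of Part (i) by classical Le Cam and Assouad reductions for Poisson observations, then adapt them to the functional-estimation setting of Part (ii). The $C$-cap inside both minima is automatic from \Cref{lem:trivialUpperBound}, so it suffices to produce each additive term separately.

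\textbf{Part (i).} For the mass term $Ct^{-1/2p}$ I would apply Le Cam's two-point method to $\mu_\pm = (1/2 \pm c t^{-1/2}) \delta_{x_0}$ at an arbitrary $x_0 \in \XC$: by \Cref{prop:kr_properties}$(iv)$, $\KR_{p,C}(\mu_+, \mu_-) = C(c t^{-1/2})^{1/p} \asymp C t^{-1/2p}$, while the total-count laws $\Poi(t/2 \pm c \sqrt t)$ have Hellinger bounded away from $1$ for small $c$. For the dimensional term $C^{1-\alpha/2p} t^{-1/2p}$ I would apply Assouad's lemma on $N \asymp C^{-\alpha}$ pairwise $C$-separated points (from a maximal $C$-packing, guaranteed by \ref{ass:XC_covering_cond}) with hypotheses $\mu_\sigma = \sum_{i=1}^N (a + \sigma_i \delta)\delta_{x_i}$, $\sigma \in \{0,1\}^N$, $a = 1/(2N)$, $\delta = \sqrt{a/t}$, so that $m_{\mu_\sigma} \leq 1$. \Cref{prop:interpolate_C}$(i)$ combined with the $C$-separation yields $\KR_{p,C}^p(\mu_\sigma, \mu_{\sigma'}) = (C^p/2)\delta\, d_H(\sigma,\sigma')$ (with $d_H$ the Hamming distance), and the elementary bound $d_H^{1/p} \geq d_H / N^{1-1/p}$ converts this into the linear-in-Hamming Assouad separation $\KR_{p,C}(\mu_\sigma,\mu_{\sigma'}) \geq (C^p\delta/2)^{1/p} N^{-(1-1/p)} d_H(\sigma,\sigma')$; the single-coordinate Poisson Hellinger squared is $\asymp \delta^2 t/a \asymp 1$, and Assouad returns $(C^p\delta)^{1/p} N^{1/p} \asymp C^{1-\alpha/2p} t^{-1/2p}$. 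Finally, for the volumetric term $t^{-1/\alpha}$ (in the regime $C \gtrsim t^{-1/\alpha}$; otherwise the cap already dominates), take $N \asymp t$ pairs $(x_i,y_i)$ of points in $\XC$ pairwise $\epsilon$-separated with $\epsilon \asymp N^{-1/\alpha}$, and set $\mu_\sigma = N^{-1}\sum_i ((1-\sigma_i)\delta_{x_i} + \sigma_i \delta_{y_i})$; each flipped coordinate contributes $\asymp \epsilon^p/N$ to $\KR_{p,C}^p$ (intra-pair transport being cheaper than deletion since $C \geq \epsilon$), while the Poisson rate $t/N \asymp 1$ per site keeps the Hellinger bounded, and Assouad yields $\gtrsim \epsilon \asymp t^{-1/\alpha}$.

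\textbf{Part (ii).} The mass term $Ct^{-1/2p}$ here requires a different Le Cam pair that exploits the non-differentiability of $a \mapsto C a^{1/p}$ at $0$: fix $\mu = \delta_{x_0}$ and compare $\nu_0 = \delta_{x_0}$ (so $\KR_{p,C}(\mu,\nu_0) = 0$) with $\nu_1 = (1 - c t^{-1/2}) \delta_{x_0}$ (so $\KR_{p,C}(\mu,\nu_1) = C(c t^{-1/2}/2)^{1/p} \asymp C t^{-1/2p}$); the Poisson distinguishability follows from the same Hellinger calculation as in Part (i). The two dimensional-with-log terms should arise from a Fano reduction applied to a carefully designed family of $M$ alternatives obtained by compressing the Assouad hypotheses of Part (i): the $M$ scalar values $\KR_{p,C}(\mu_i, \nu_i)$ are arranged evenly on an interval of length comparable to the Part-(i) rate while the pairwise KL between the Poisson product laws is kept of order $(\log M)/M$, and $M$ is then calibrated to produce the factors $\log(t)^{-1+1/2p}$ and $\log(t)^{-1/\alpha}$. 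The window $I(t)$ isolates the range of $C$ for which such an ensemble can simultaneously satisfy $\mu_i, \nu_i \in \MC(\XC,1)$, the KL budget, and consistency with the packing scale set by $C$ through \ref{ass:XC_covering_cond}; outside $I(t)$ one of the other terms already dominates.

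The main obstacle lies in Part (ii): the logarithmic factors are characteristic of scalar functional estimation and cannot be produced by a direct Assouad-type argument. Carrying out the Fano reduction demands joint control of the KL budget per pair of alternatives, the total-mass normalization onto $\MC(\XC, 1)$, and the covering-scale consistency encoded in the precise endpoints of $I(t)$ and the auxiliary constant $\kappa$; this calibration is the most delicate step and drives the intricate form of the lower bound in (ii).
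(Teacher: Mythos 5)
Your direct Le Cam/Assouad route is genuinely different from the paper's: the paper conditions on $N_t$ to reduce to i.i.d.\ sampling and then invokes the packing-radius minimax bound of Singh and P\'oczos (2018, Theorem~2), lower bounding the packing radii via \Cref{lem:impactCoveringC}. Your Assouad constructions produce the same three rates and are essentially correct (the Hellinger/mass calibrations check out, and the $d_H^{1/p}\geq d_H/N^{1-1/p}$ reduction is the standard trick). The paper's route is more black-box; yours is more self-contained. One small gap: you dismiss the regime $C\lesssim t^{-1/\alpha}$ with ``the cap already dominates,'' but producing the lower bound $\gtrsim C$ there still requires a construction — your dimensional Assouad argument needs $\delta\lesssim a$, i.e.\ $t\gtrsim N\asymp C^{-\alpha}$, and your volumetric one needs $\epsilon\lesssim C$, both of which fail when $C<t^{-1/\alpha}$. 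A modified Assouad on $N\asymp C^{-\alpha}$ $C$-separated atoms, each carrying mass $1/t$ (so the per-site Poisson rate is $\asymp 1$), patches this, but the step is missing from your sketch.

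\textbf{Part (ii).} Here there is a genuine gap: Fano's inequality is the wrong tool for the logarithmic improvements in functional estimation, and your plan as written would not produce the factors $\log(t)^{-1+1/2p}$ and $(t\log t)^{-1/\alpha}$. For $M$ scalar alternatives evenly spaced on an interval of length $L$, Fano yields a lower bound of order $L/M$, which for $M\geq 2$ is never better than the two-point rate; it does not capture the refined tradeoff that produces these log factors. The paper instead uses the method of two \emph{fuzzy} hypotheses (Tsybakov, Theorem~2.14/2.15): one constructs two priors over $\mathcal{M}(\XC,1)$ whose pushforwards under $\KR_{p,C}$ are well-separated while the mixture Poisson data laws are nearly indistinguishable, which is achieved by polynomial moment matching. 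Concretely, for $\alpha\le 2p$ the paper adapts the TV-norm functional-estimation lower bound of Jiao, Han and Weissman (2018) via \Cref{lem:TsybakovExtension} and \Cref{prop:minimax_lowerbound_pth-root-TV} (which explains the window $I(t)$ and the parameter $\kappa$), and for $\alpha>2p$ it adapts the Wasserstein-estimation lower bound of Niles-Weed and Rigollet (2022) via \Cref{lem:adaptation_Prop9_NWR19}, which involves a \emph{random} embedding of a finite hypothesis class into $\XC$. Your sketch neither identifies the fuzzy-hypothesis mechanism nor offers a substitute for the moment-matching priors, so the hardest part of the proof remains unaddressed.
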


The lower bounds in Proposition \ref{prop:KR_minimax}$(i)$ match the three error terms achieved by the empirical plug-in estimator in Theorem \ref{th:est_mu_inUOT_beta} up to a logarithmic term for $\alpha = 2p$.  Consequently, no plug-in procedure, i.e., no estimator obtained by replacing the unknown measures by an estimator (based on observations of an ST Poisson point process) can improve the statistical error bound from \eqref{eq:est_mu_inUOT_beta} for $\beta = 0$ in $t$ and $C$. Moreover, based on \Cref{thm:UOT_beta_distinct} and \Cref{prop:KR_minimax}$(ii)$, we also infer that the empirical KRD is also nearly minimax rate optimal in estimating the KRD uniformly in $C$ and $t$ up to logarithmic terms on $t$. We note, however,  that the sharp minimax estimation risk remains open and based on prior work on functional estimation \citep{lepski1999estimation,cai2011testing, jiao2018minimax,han2021optimal, wang2021optimal}, there is reason to believe that the lower bound is not improvable.

\begin{ackno}
    All authors gratefully acknowledge support from the Deutsche Forschungsgemeinschaft (DFG, German Research Foundation) through DFG RTG 2088. Additional support was provided to S. Hundrieser and A. Munk by DFG CRC 1456 and DFG EXC 2067/1-390729940, and to A. Munk by  DFG FOR 5381. S. Hundrieser further gratefully acknowledges funding from the German National Academy of Sciences Leopoldina under grant number LPDS 2024-11. In~addition, M.\ Struleva and S.\ Hundrieser both thank Tudor Manole for helpful discussions on minimax lower bounds in functional estimation and  Leoni Wirth for insightful conversations on spatio-temporal point processes. 
\end{ackno}

\appendix


\section{Proofs of properties of $(p,C)$-KRD}\label{sec:proofs2}

\begin{proof}[Proof of \Cref{prop:Prop_d_trunc}]
To prove the existence of an optimal plan, we show that the $(p,C)$-KR problem can be viewed as a special case of the so-called Optimal Entropy-Transport problem introduced by \cite{liero2018optimal}, for which the existence result is readily available (see Theorem 3.1 therein). Let us first introduce the necessary notation. For a given convex l.c.s. \emph{entropy function} $\varphi:[0, \infty) \rightarrow[0, \infty]$ we introduce the \emph{recession constant} $\varphi_{\infty}'$ by
\begin{align*}
    (\varphi)_{\infty}^\prime & \coloneqq \lim _{s \rightarrow \infty} \frac{\varphi(s)}{s}. 
\end{align*}
Given an \emph{entropy function} $\varphi$, the \emph{relative entropy} of a measure $\gamma \in \MC(\XC)$ to $\mu \in \MC(\XC) $ is defined as
\begin{equation}
    \DC\left(\gamma \mid \mu \right)\coloneqq\int_{\XC} \varphi \left(\frac{\dif \gamma}{\dif \mu}\right) \dif \mu + \left( \varphi \right)_{\infty}^{\prime} \gamma^{\perp}\left(\XC\right), 
\end{equation}
where $\gamma=\frac{\dif \gamma}{\dif \mu}\mu+\gamma^{\perp}$ is the Lebesgue decomposition of $\gamma$ with respect to $\mu$. 
Hence, for an entropy function $\varphi$, a continuous function $c \colon \XC \times \XC \to \RR$ and finite non-negative Borel measures $\mu, \nu \in \MC(\XC)$ on $\XC$, the \emph{Entropy-Transport problem} is formulated as
\begin{equation} \label{eq:liero_primal}
    \mathrm{ET}(\mu, \nu) = \inf_{\pi \in \MC(\XC_1 \times \XC_2)} \int_{\XC_1\times\XC_2} c(x_1, x_2) \dif \pi + \DC\left( \pi_1 \mid \mu\right) + \DC\left(\pi_2 \mid \nu\right).
\end{equation}
According to \citet[Theorem 3.1]{liero2018optimal},
for a feasible problem $\mathrm{ET}(\mu, \nu)$ to have at least one optimal solution, the entropy function has to be superlinear, that is, 
\begin{equation} \label{eq:superlin}
    (\varphi)_{\infty}'=\infty.
\end{equation}
To relate the $(p,C)$-KR problem to the $\mathrm{ET}$ problem, first note that it is feasible since we always have $\pi \equiv 0 \in \Pi_{\leq}(\mu, \nu)$. Further, observe that the mass penalizing terms in \eqref{eq:def_UOT} can be equally rewritten as
\begin{align} \label{eq:alt_penalization1}
     \frac{C^p}{2}\int_{\XC_1} \dif(\mu- \pi_1) = \frac{C^p}{2} \int_{\XC_1} \left(1 - \frac{\dif \pi_1}{\dif \mu} \right)\dif \mu + \iota\left(\sup_{x \in \XC_1} \frac{\dif \pi_1}{\dif \mu}(x) -1\right)
\end{align}
(and in the same manner for $\nu$ and $\pi_2$), where
\begin{equation*}
    \iota(x) \coloneqq \begin{cases} 
        0 & \text{ if } x < 0, \\ 
        +\infty & \text{ if } x \geq 0.
    \end{cases}
\end{equation*}
Due to \eqref{eq:alt_penalization1}, we infer that if we put
\begin{equation} \label{eq:KRD_entropy_func}
    \varphi(x) = \begin{cases} 
        \frac{C^p}{2}(1-x) & \text{ if } x \in [0,1], \\ 
        +\infty & \text{ else,}
    \end{cases}
\end{equation}
then $\KR_{p,C}^p$ has the form \eqref{eq:liero_primal}. The existence of an optimal solution follows immediately since \eqref{eq:superlin} trivially holds.  

To show $(ii)$, let us denote $d_C(x_1, x_2) = d(x_1, x_2) \wedge C$ and define $\KR_{d_C^p}(\mu, \nu)$ for the associated KRD induced by the metric $d_C$. 
Let $\pi$ be an optimal solution to the $\KR_{p,C}$-problem. Then, since $d_C^p(x_1, x_2) \leq d^p(x_1, x_2)$ for any $x_1, x_2 \in \XC$ it follows that
\begin{align*}
    \KR^p_{p, C}(\mu, \nu) &\geq \int_{\XC^2} d^p_C(x_1, x_2) \dif \pi(x_1, x_2) + C^p \left(\frac{m_\mu + m_\nu}{2} - m_\pi\right) \\
    &\geq \inf_{\gamma \in \Pi_\leq(\mu, \nu)} \int_{\XC^2} d^p_C(x_1, x_2) \dif \gamma(x_1, x_2) + C^p \left(\frac{m_\mu + m_\nu}{2} - m_\gamma \right) \\
    &= \KR^p_{d^p_C,C}(\mu, \nu).
\end{align*}
Conversely,  let $\pi_C$ be an optimal solution for the problem with cost function $d^p_C$. Then, upon defining $\tilde \pi_C(\cdot)\coloneqq \pi_C(\cdot\cap \mathcal{D}_C)$ for $\mathcal{D}_C = \{(x,x')\in \XC^2 \,|\, d(x,x')\leq C\}$ it follows that 
\begin{align*}
    \KR^p_{d^p_C,C}(\mu, \nu) &=  \int_{\XC^2} d^p\wedge C^p \dif \pi_C + C^p \left(\frac{m_\mu + m_\nu}{2} - m_{\pi_C} \right) \\
    &= \int_{\mathcal{D}_C} d^p\wedge C^p \dif \pi_C + C^p\pi_C(\XC^2\setminus \mathcal{D}_C) + C^p \left(\frac{m_\mu + m_\nu}{2} - m_{\pi_C} \right) \\
    &= \int_{\XC^2} d^p \dif \tilde \pi_C + C^p \left(\frac{m_\mu + m_\nu}{2} - m_{\tilde \pi_C} \right) \geq \KR_{p,C}^p(\mu, \nu), 
\end{align*}
where the last inequality follows from the fact that $\tilde \pi_C$ is a sub-coupling between $\mu$ and $\nu$ and the definition of the $(p,C)$-KRD. This concludes the proof of Assertion $(ii)$.

Finally, for Assertion $(iii)$ we notice from above computation it follows for any unbalanced OT plan $\pi$ between $\mu$ and $\nu$ with respect to the $(p,C)$-KRD that 
\begin{align*}
   \KR^p_{p, C}(\mu, \nu) &= \int_{\XC^2} d^p(x_1,x_2) \dif \pi(x_1, x_2) + C^p\left(\frac{m_{\mu} + m_{\nu}}{2} - m_{\pi}\right)\\
   &\geq \int_{\XC^2} d_C^p(x_1,x_2) \dif \pi(x_1, x_2)+ C^p\left(\frac{m_{\mu} + m_{\nu}}{2} - m_{\pi}\right)\\
   &\geq \KR_{d_{C},Cx}^p(\mu, \nu) = \KR^p_{p, C}(\mu, \nu),
\end{align*}
which yields that $\pi(\XC^2\setminus \mathcal{D}_C) = 0$. 
\end{proof}

\begin{proof}[Proof of \Cref{prop:linkOT}]
For the proof of Assertion $(i)$ we first note  by \Cref{prop:Prop_d_trunc}$(ii)$ that substituting the cost function $d$ by $d_{C} \coloneqq d \wedge C$ does not change the value of $\KR_{p, C}(\mu, \nu)$. 
Hence, by Lemma 3.1 in \cite{heinemann2023kantorovich} and due to the fact that the balanced Wasserstein distance metrizes weak convergence of measures \citep[Theorem~6.9]{villani2008optimal}, it follows for any $a>0$ and any weakly converging finitely supported sequences $(\tilde\mu_n)_{n \in\NN}, (\tilde\nu_n)_{n \in\NN} \subset \MCsX$  on $\XC$ with respective limits $\mu$ and $\nu$ that
\begin{align*}
    \OT_{\tilde{d}^p_{p,C}}(\tilde \mu, \tilde\nu) = \lim_{n \to \infty} \OT_{\tilde{d}^p_{p,C}}(\tilde \mu_n, \tilde\nu_n) &= \lim_{n \to \infty} \OT_{\tilde{d}^p_{p,C}}(\tilde \mu_n + a\delta_{\mathfrak{d}}, \tilde\nu_n + a\delta_{\mathfrak{d}}) \\
    &= \OT_{\tilde{d}^p_{p,C}}(\tilde \mu + a\delta_{\mathfrak{d}}, \tilde\nu + a\delta_{\mathfrak{d}}).
\end{align*}
In particular, by the almost sure consistency of empirical measures from probability measures \citep{varadarajan1958weak} such sequences always exist. 
Therefore, it is enough to show Equality \eqref{eq:UOT=OT} for an arbitrary fixed parameter $K \geq m_\mu \vee m_\nu$.
Hence, assume w.l.o.g.\ that $m_\mu \geq m_\nu$ and set $K = m_\mu$ (otherwise, if $m_\mu < m_\nu$, set $K=m_\nu$), i.e.
$$
    \tilde \mu_{K} = \mu, \quad \tilde \nu_{K} = \nu + (m_\mu - m_\nu) \delta_{\mathfrak{d}}.
$$
To prove the claim that $\KR_{p,C}^p(\mu, \nu) =  \OT_{\tilde d_{p,C}^p}(\tilde \mu_K, \tilde \nu_K)$ we thus need to confirm the equality
\begin{equation} \label{eq:Kmu_UOT=OT}
  \begin{aligned}
     \inf_{\pi \in \Pi_{\leq}(\mu, \nu)} \int_{\XC^2}  d_{C}^p  \dif \pi + C^p 
    \left( \frac{m_\mu + m_\nu}{2} - m_\pi\right) &=\inf_{\tilde \pi \in \Pi_=(\tilde \mu_{K}, \tilde \nu_{K})} \int_{\tilde\XC^2} \tilde d_{p,C}^p  \dif \tilde \pi.
  \end{aligned}
\end{equation}
Let $\tilde \pi \in \Pi_=(\tilde \mu, \tilde \nu)$ be a balanced optimal transport plan for the right-hand side of \eqref{eq:Kmu_UOT=OT}, which exists by \citet[Theorem 4.1]{villani2008optimal}. Observe that $\tilde\pi(\{\mathfrak d\}\times \{\mathfrak d\}) \leq \tilde\pi(\{\mathfrak d\}\times \tilde\XC) = \tilde \mu (\{\mathfrak d\}) = 0$, hence  $\tilde\pi(\{\mathfrak d\}\times \{\mathfrak d\}) =0$, which leads to $\tilde\pi(\XC \times \{\mathfrak d\}) = \tilde\pi(\tilde \XC \times \{\mathfrak d\}) = \tilde \nu (\{\mathfrak d\}) = m_\mu - m_\nu$. Also note that $m_{\tilde\pi} = m_{\tilde\mu} = m_{\tilde\nu} = m_{\mu}$ and
\begin{align*}
    m_{\tilde\pi|_{\XC^2}} &= \tilde\pi(\XC \times \XC) = \tilde\pi(\tilde\XC^2) - \tilde\pi(\{\mathfrak d\} \times \XC) - \tilde\pi(\XC \times \{\mathfrak d\}) \\
    &= \tilde\pi(\tilde\XC \times \tilde \XC) - \tilde\mu(\{\mathfrak d\}) - \tilde\nu(\{\mathfrak d\}) \\
    &= m_\mu - 0 - m_\mu + m_\nu = m_\nu.
\end{align*}
The right-hand side of \eqref{eq:Kmu_UOT=OT} is therefore equal to
\begin{align*}
    \OT_{\tilde{d}^p_{p,C}}(\tilde\mu_{K}, \tilde\nu_{K}) &=\int_{\tilde\XC^2} \tilde d_{p,C}^p \dif \tilde\pi = \int_{\XC^2}   d_{C}^p  \dif \tilde \pi + \int_{\XC\times\{\mathfrak{d}\}} \frac{C^p}{2} \dif \tilde \pi \\
    &= \int_{\XC^2}   d_{C}^p  \dif \tilde \pi +  \frac{C^p}{2} (m_\mu - m_\nu) \\
    &= \int_{\XC^2}   d_{C}^p  \dif \tilde \pi +  C^p
    \left(\frac{m_{\mu} + m_{\nu}}{2} - m_{\tilde \pi|_{\XC^2}}\right)\geq \KR_{p,C}(\mu, \nu), 
\end{align*}
where the last inequality follows since $\tilde \pi \in \Pi_=(\tilde\mu_{K}, \tilde\nu_{K})$ induces a subcoupling $\tilde \pi|_{\XC^2}\in\Pi_{\leq}(\mu, \nu)$. 

To show the complementary inequality, assume w.l.o.g.\ that $m_{\mu}\geq m_{\nu}$ and let $\pi\in \Pi_{\leq}(\mu, \nu)$ be an unbalanced OT plan for the left-hand side of \eqref{eq:Kmu_UOT=OT}. If $m_{\pi} = m_{\nu}$, then define $\pi^* \coloneqq \pi$. Otherwise, if $m_{\pi}< m_{\nu}\leq m_{\mu}$, we define
$$
    \pi^*\coloneqq \pi + \frac{(\mu-\pi_1)}{ (m_{\mu-\pi_1})} \otimes (\nu-\pi_2)\in \Pi_{\leq}(\mu, \nu),
$$
which fulfills $m_{\pi^*} = m_{\nu}$. 
Therefore, it follows that
\begin{align*}
    \int_{\XC^2} d^p_{C} \dif \pi +  C^p\left(\frac{m_\mu + m_\nu}{2} - m_{\pi}\right)
    &= \int_{\XC^2}\tilde d^p_{C} \dif \pi + C^p m_{\check \pi} + \frac{C^p}{2} (m_\mu + m_\nu - 2(m_{\pi}+m_{\check\pi})) \\
    &\geq \int_{\XC^2}\tilde d^p_{C} \dif \pi^* + \frac{C^p}{2} (m_\mu + m_\nu - 2m_{\pi^*}).
\end{align*}
Finally, upon defining $\tilde \pi^* \coloneqq \pi^* + (\mu-\mathrm p^{1}_\# \pi^*) \otimes \delta_{\mathfrak{d}}\in \Pi_=(\tilde\mu_{K}, \tilde\nu_{K})$, we have %
\begin{align*}
    \int_{\XC^2} d^p_{C} \dif \pi^* + \frac{C^p}{2}(m_\mu + m_\nu - 2m_{\pi^*}) &= \int_{\XC^2} d^p_{C} \dif \pi^* + \frac{C^p}{2}(m_\mu - m_\nu ) \\
    &= \int_{\XC^2} d^p_{C} \dif \tilde \pi^* + \int_{\XC\times\{\mathfrak{d}\}} \frac{C^p}{2}\dif \tilde \pi^*\\
    &= \int_{\tilde \XC^2}\tilde d^p_{p,C} \dif \tilde \pi^* \geq \OT_{d^p_{p,C}}(\tilde \mu_K, \tilde \nu_K), 
\end{align*}
which completes the proof of Assertion $(i)$.

To prove Assertion $(ii)$ we use a similar reasoning. For the first inclusion, let $\pi$ be an unbalanced OT plan between $\mu$ and $\nu$ with respect to the $(p,C)$-KRD. Further, consider the augmented measures $\tilde \mu_K$ and $\tilde \nu_K$ with $K= m_{\mu}\vee m_{\nu}$ and assume w.l.o.g.\ that $m_{\mu}\geq m_{\nu}$. Then it follows that $\tilde \mu_K= \mu$ and $\tilde \nu_K = \nu  + (m_{\mu} - m_{\nu})\delta_{\mathfrak{d}}$. Now consider a feasible transport plan $\breve \pi \in \Pi_=(\tilde \mu_K - \pi_1, \tilde \nu_K - \pi_2)$. Then, it follows that $\tilde \pi = \pi + \breve \pi \in \Pi_{=}(\tilde\mu_K, \tilde \nu_K)$ and thus
\begin{equation}\label{eq:marginalConstraintsExtendedOTPlan}
  \begin{aligned}
   &\tilde \pi(\{\mathfrak{d}\}\times \tilde \XC) = \tilde \mu_{K}(\{\mathfrak{d}\}) = 0\; \text{ and } \; \\
   &\tilde \pi(\XC\times \{\mathfrak{d}\})= \tilde \pi(\tilde \XC\times \{\mathfrak{d}\}) = \tilde \nu_{K}(\{\mathfrak{d}\}) = (m_\mu - m_{\nu}).
  \end{aligned}
\end{equation}
In consequence, it also follows that $\tilde \pi(\tilde \XC\times \tilde \XC)  = m_{\mu}$ and $\tilde \pi(\XC\times \XC)= m_{\nu}$. We thus infer, 
\begin{align*}
 \OT_{\tilde d^p_{p,C}}(\tilde \mu_K, \tilde \nu_K) \leq &\int_{\tilde \XC^2} \tilde d^p_{p,C}\dif \tilde \pi\\
=& \int_{\XC^2}d^p\wedge C^p \dif \tilde \pi + \frac{C^p}{2}\pi(\XC\times \{\mathfrak{d}\})\\
=& \int_{\XC^2} d^p\wedge C^p \dif \tilde \pi + C^p\left(\frac{m_{\mu} + m_{\nu}}{2} - m_{\tilde{\pi}(\cdot \cap \XC^2)}\right)\\
=& \int_{\XC^2} d^p\wedge C^p \dif \pi + C^p\left(\frac{m_{\mu} + m_{\nu}}{2} - m_{\pi}\right) +  \int_{\XC^2} (d^p \wedge C^p)  - C^p \dif \breve \pi \\
\leq & \,\KR_{p,C}^p(\mu, \nu) = \OT_{\tilde d^p_{p,C}}(\tilde \mu_K, \tilde \nu_K),
\end{align*}
where the last inequality follows from the fact that $(d^p \wedge C^p)  - C^p\leq 0$. The above computation shows that $\tilde \pi$ is an OT plan between $\tilde \mu_K$ and $\tilde \nu_K$ for cost function $d^p_{p,C}$ and moreover that $\breve\pi(\mathcal{D}_C) = 0$. From this, it is also evident that $\pi(\cdot \cap \mathcal{D}_C)= \tilde \pi(\cdot \cap\mathcal{D}_C)$. 

For the converse statement, let $\tilde \pi$ be an OT plan between $\tilde \mu_K$ and $\tilde \nu_K$ for cost function $d^p_{p,C}$. W.l.o.g.\ assuming again that $m_{\mu}\geq m_{\nu}$ it follows that $\tilde \pi$ fulfills the marginals constraints from \eqref{eq:marginalConstraintsExtendedOTPlan} and the conclusion. It thus follows for $\pi \coloneqq \tilde \pi(\cdot \cap \mathcal{D}_C)$ that 
\begin{align*}
  \KR_{p,C}^p(\mu, \nu) &= \OT_{d^p_{p,C}}(\tilde \mu_K, \tilde \nu_K)\\
  &= \int_{\tilde \XC^2}d_{p,C}^p\dif \tilde \pi\\
  &= \int_{\XC^2}d^p\wedge C^p \dif \tilde \pi + \int_{\XC\times \{\mathfrak{d}\}} \frac{C^p}{2}\dif \tilde \pi\\
  &= \int_{\mathcal{D}_C} d^p\dif \tilde \pi + C^p \tilde \pi(\XC^2 \backslash \mathcal{D}_C) + C^p\left(\frac{m_{\mu}+ m_{\nu}}{2} - m_{\tilde \pi}\right)\\
  &= \int_{\XC^2}d^p \dif \pi  + C^p\left(\frac{m_{\mu}+ m_{\nu}}{2} - m_{\pi}\right) \geq \KR_{p,C}^p(\mu, \nu),
\end{align*}
where the fourth equality follows from \eqref{eq:marginalConstraintsExtendedOTPlan} and the last equality follows since $\pi\in \Pi_{\leq}(\mu, \nu)$. Hence, $\pi$ is an unbalanced OT plan between $\mu$ and $\nu$, and by definition of $\pi$ in terms of $\tilde \pi$ the assertion follows. 
\end{proof}

\begin{proof}[Proof of \Cref{prop:kr_properties}]

    Based on \Cref{prop:Prop_d_trunc}, Assertion $(i)$ follows once we show for $K\geq m_{\mu}\vee m_{\nu}$ that $$\OT_{\tilde d_{p,C_1}^p}(\tilde \mu_K, \tilde \nu_K)\leq \OT_{\tilde d_{p,C_2}^p}(\tilde \mu_K, \tilde \nu_K).$$ Indeed, this follows since $\tilde d_{C_1}\leq \tilde d_{C_2}$ on $\tilde \XC\times \tilde \XC$ and since the OT cost is non-decreasing in the cost function.  Likewise, for Assertion $(ii)$ we note that 
     $$\OT_{d_{p,C}^p}^{1/p}(\tilde \mu_K, \tilde \nu_K)\leq \OT_{d_{q,C}^p}^{1/p}(\tilde \mu_K, \tilde \nu_K) \leq \OT_{d_{q,C}^q}^{1/q}(\tilde \mu_K, \tilde \nu_K),$$
     where the first inequality follows from $2^{-1/p} \leq 2^{-1/q}$, which yields $d^p_{p,C} \leq d^p_{q,C}$, and the second inequality follows by \citet[Remark 6.6]{villani2008optimal}.
     For Assertion $(iii)$ we note for $K = m_\mu\vee m_\nu$ that $\OT_{\tilde d_{p,C}^p}(\widetilde {a\mu}_{aK}, \widetilde {a\nu}_{aK})=  a \OT_{\tilde d_{p,C}^p}(\tilde \mu_{K}, \tilde \nu_{K})$, which yields the first claim using \Cref{prop:Prop_d_trunc}.  For the second statement, we consider for $a\in [0,1]$ the transport plan $\pi_a = (\textup{Id},\textup{Id})_{\#}(a \mu)$ and for $a>1$ we take $\pi_a = (\textup{Id},\textup{Id})_{\#}\mu$. Note that $\pi_a\in \Pi_{\leq}(\mu, a\mu)$ for every $a\in [0,\infty)$ and that 
     \begin{align*}
    \KR_{p,C}^p(\mu, a\mu) &\leq C^p \left(\frac{m_\mu + a m_{\mu}}{2}- \min\{1,a\}m_{\mu}\right) 
    = \frac{C^p}{2}|a-1|m_{\mu}.
     \end{align*}    
     The second claim now follows from the lower bound on the $(p,C)$-KRD in Assertion $(iv)$, which we show next. 
    To show the first inequality assume w.l.o.g.\ that $m_\mu \geq m_\nu$. Then, for an unbalanced OT plan  $\pi$ between $\mu$ and $\nu$, it follows, 
    $$
        \KR_{p,C}^p(\mu, \nu) \geq C^p \left(\frac{m_\mu + m_\nu}{2} - m_\pi \right) = \frac{C^p}{2} \left(m_{\mu-\pi_1} + m_{\nu-\pi_2}\right).
    $$
    Since $m_{\pi_1} = m_{\pi_2} \leq m_\nu$ we infer that the right-hand side is lower bounded by $\frac{C^p}{2}(m_\mu - m_\nu) = \frac{C^p}{2}|m_\mu - m_\nu|$.  The second inequality follows by plugging the zero measure into the objective of \eqref{eq:def_UOT}. 
    
    Finally, for Assertion $(v)$ we note under $m_\mu\wedge m_\nu=0$ that the lower and upper bound from Assertion $(iv)$ coincide. 
    \end{proof}

    \begin{proof}[Proof of \Cref{prop:interpolate_C}]
        For Assertion $(i)$ note that under the described assumption on $C$ we have $d^p(x_1, x_2)\wedge C^p= C^p \mathds{1}(x_1\neq x_2)$ for $x_1\in \supp(\mu), x_2\in \supp(\nu)$ and hence using \Cref{prop:Prop_d_trunc} we obtain
        \begin{align*}
          \KR_{p,C}^p(\mu, \nu) &= \inf_{\pi\in \Pi_{\leq}(\mu, \nu)} \int C^p\mathds{1}(x_1\neq x_2) \mathrm{d}\pi(x_1,x_2) + C^p\left(\frac{m_\mu + m_\nu}{2} - m_\pi\right)
        \end{align*}
        Plugging $\pi\in \Pi_{\leq}(\mu, \nu)$ into the objective on the right-hand side, we obtain 
        \begin{align*}
          C^p \pi(\XC^2\backslash\{(x,x): x\in \XC\}) + C^p\left(\frac{m_\mu + m_\nu}{2} - m_\pi\right)&=C^p\left(\frac{m_\mu + m_\nu}{2} - \pi(\{(x,x): x\in \XC\})\right)
        \end{align*}
         Now define the measure $\xi$ in terms of $\dif \xi \coloneqq (f_\mu \wedge f_\nu)\dif(\mu + \nu)$. Based on the marginal constraints on $\pi$ it follows that the marginals of the restricted measure $\tilde \pi\coloneqq \pi|_{\{(x,x): x\in\XC\}}$ are dominated by $\xi$, and  consequently \begin{align}\label{eq:ineq_TV_plan}
            \pi(\{(x,x): x\in \XC\}) \leq \xi(\XC).
        \end{align}
        Meanwhile, the measure $\overline \pi \coloneqq (\id, \id)_{\#}(\xi)$ is contained in $\Pi_{\leq}(\mu, \nu)$ and attains the bound in \eqref{eq:ineq_TV_plan}. Hence, we conclude that 
        \begin{align*}
           \KR_{p,C}^p(\mu, \nu) &= C^p\left(\frac{m_\mu + m_\nu}{2} - m_\xi\right) \\
           &= C^p\int \frac{f_\mu(x) + f_\nu(x)}{2} - f_\mu(x)\wedge f_\nu(x) \dif(\mu+\nu)(x)\\
           &= C^p \int \frac{|f_\mu(x) - f_\nu(x)|}{2}\dif(\mu + \nu)(x) = \frac{C^p}{2} \textup{TV}(\mu, \nu).
        \end{align*}
        
        For Assertion $(ii)$ note that under the described setting, since the supports of $\mu$ and $\nu$ are disjoint, $\textup{TV}(\mu, \nu) = \int f_\mu(x) - f_\nu(x)\dif(\mu + \nu)(x) = m_\mu + m_\nu.$
        
        Finally, for Assertion $(iii)$ we employ the link to the augmented OT problem (\Cref{prop:linkOT}). Observe that $C \geq d(x_1, x_2)$ for $x_1\in \supp(\mu), \ x_2 \in \supp(\nu)$ implies that $\tilde d_{p,C}(x_1, x_2) = d(x_1, x_2)$ on $\supp(\mu)\times \supp(\nu)$. Further, by choosing $K\coloneqq m_\mu = m_\nu$ in the definition of the lifted OT problem it follows that $\tilde \mu_K = \mu$ and $\tilde \nu_K = \nu$ and the lifted OT functional coincides with the Wasserstein distance on $\supp(\mu)\times \supp(\nu)$.
        \end{proof}

        \begin{proof}[Proof of \Cref{prop:metricKRD}]
            Symmetry of the $(p,C)$-KRD follows by symmetry of sub-couplings and the objective. Further, $\KR_{p,C}(\mu, \nu)=0$ if $\mu =\nu$. Conversely, if $\KR_{p,C}(\mu, \nu)=0$, then necessarily by \Cref{prop:kr_properties}$(i)$ it follows that $\mu$ and $\nu$ admit identical mass. Hence, in the augmented OT problem \Cref{prop:linkOT} one can choose $\tilde \mu = \mu$ and $\tilde \nu=\nu$, which implies by $\OT_{\tilde d^p_{p,C}}(\mu, \nu) = \KR_{p,C}^p(\mu, \nu) =0$ and since $\OT_{\tilde d^p_{p,C}}$ is a metric on the space of measures with identical mass, that $\mu= \nu$. 
            To show the triangle inequality, we use \Cref{prop:linkOT}. For three measures $\mu, \nu, \tau \in \MC(\XC)$ and $K\coloneqq \max\{m_\mu, m_\nu, m_\tau\}$ it holds that
            \begin{align*}
                \KR_{p, C}(\mu, \tau) = \OT_{\tilde{d}^p_{p,C}}^{1/p}(\tilde{\mu}_K, \tilde{\tau}_K) &\leq \OT_{\tilde{d}^p_{p,C}}^{1/p}(\tilde{\mu}_K, \tilde{\nu}_K) + \OT_{\tilde{d}^p_{p,C}}^{1/p}(\tilde{\nu}_K, \tilde{\tau}_K) \\
                &= \KR_{p, C}(\mu, \nu) + \KR_{p, C}(\nu, \tau),
            \end{align*}
            where the inequality follows by the triangle inequality for the Wasserstein distance \cite[Chapter 6, p.\ 106]{villani2008optimal}. 
            \end{proof}

            \begin{proof}[Proof of \Cref{prop:weak_conv_KRD}]
                For $(i)$, observe that $\mu_n \to \mu$ weakly implies $m_{\mu_n} \to m_\mu$. In particular, if $m_{\mu} = 0$, then by \Cref{prop:kr_properties}$(v)$ it follows that $\KR_{p,C}(\mu_n, \mu) = \frac{C^p}{2}m_{\mu_n} \searrow 0$. Moreover, if $m_{\mu} > 0$, then for all $n$ large enough it also follows that $m_{\mu_n}>0$ and the renormalized measures $\mu/m_{\mu}$ and $\mu_n/m_{\mu_n}$ are well-defined. Hence, using \Cref{prop:kr_properties}$(iii)$, we have
                \begin{align*}
                    \KR_{p,C}(\mu_n, \mu) &= m^{1/p}_\mu \KR_{p,C}\left(\frac{1}{m_\mu} \mu_n, \frac{1}{m_\mu} \mu \right) \\
                    &\leq m^{1/p}_\mu \KR_{p,C}\left(\frac{1}{m_{\mu_n}} \mu_n, \frac{1}{m_\mu} \mu_n \right) 
                    + m^{1/p}_\mu \KR_{p,C}\left(\frac{1}{m_{\mu}} \mu, \frac{1}{m_{\mu_n}} \mu_n \right) \\
                    &\leq \frac{C}{2^{1/p}} \left|1-\frac{m_\mu}{m_{\mu_n}} \right|^{1/p} + m^{1/p}_\mu \KR_{p,C}\left(\frac{1}{m_{\mu}} \mu, \frac{1}{m_{\mu_n}} \mu_n \right) .
                \end{align*}
                By the convergence $\lim_{n\to \infty}m_{\mu_n}= m_\mu$ it follows that $\lim_{n\to \infty}|1-m_\mu/m_{\mu_n}|^{1/p}=0$. Further, as  $\frac{1}{m_{\mu}} \mu$ and $\frac{1}{m_{\mu_n}} \mu_n $ both have mass 1, it follows from \Cref{prop:interpolate_C}$(iii)$ that
                $$
                    \KR_{p,C}\left(\frac{1}{m_{\mu}} \mu, \frac{1}{m_{\mu_n}} \mu_n \right) = W_{p, d \wedge C}\left(\frac{1}{m_{\mu}} \mu, \frac{1}{m_{\mu_n}} \mu_n \right),
                $$
                which vanishes for $n \to \infty$ since $\frac{1}{m_{\mu_n}} \mu_n $ weakly converges to $\frac{1}{m_{\mu}} \mu$ and $W_{p,d\wedge C}$ metrizes weak convergence \citep[Section 6]{villani2008optimal}. 
                
                For the converse implication let $\mu, (\mu_n)_{n\in \NN}\subseteq \MC(\XC)$ be such that $\lim_{n\to \infty}\KR(\mu_n, \mu) = 0$, then by \Cref{prop:kr_properties}$(iv)$ it follows that $\lim_{n\to \infty}m_{\mu_n}= m_{\mu}$. Now, if $m_{\mu} = 0$ it must follow that $\mu_n\to \mu = 0$. Meanwhile, under $m_{\mu}>0$ it follows for all $n$ large enough that $m_{\mu_n}>0$ and thus using \Cref{prop:kr_properties}$(iii)$ that
                \begin{align*}
                    \KR_{p,C}\left(\frac{1}{m_{\mu_n}} \mu_n,\frac{1}{m_{\mu}} \mu\right) &\leq \KR_{p,C}\left(\frac{1}{m_{\mu_n}} \mu_n, \frac{1}{m_{\mu}} \mu_n \right)+ \KR_{p,C}\left(\frac{1}{m_{\mu}} \mu_n, \frac{1}{m_{\mu}} \mu \right)\\
                    &\leq \frac{C}{2^{1/p}}\left|1- \frac{m_{\mu_n}}{m_{\mu}}\right|^{1/p} +m_{\mu}^{-1/p} \KR_{p,C}(\mu, \mu_n)
                \end{align*}
                By assumption, both terms on the right-hand side tend to zero for $n\to \infty$, hence we conclude that $\mu_n/m_{\mu_n}$ weakly converges to $\mu/m_{\mu}$. Jointly with the convergence $\lim_{n\to \infty}m_{\mu_n}= m_{\mu}$, the weak convergence of $\mu_n$ to $\mu$ for $n\to \infty$ is evident. 

                Assertion $(ii)$ follows from triangle inequality for $\KR_{p,C}$ and Assertion $(i)$, i.e., for $n\to \infty$ it holds 
                \begin{align*}
                    \left|\KR_{p, C}(\mu_n, \nu_n) -  \KR_{p, C}(\mu, \nu)\right|\leq \KR_{p, C}(\mu_n, \mu)+ \KR_{p, C}(\nu_n, \nu) \searrow 0.& \qedhere
                \end{align*}

                \end{proof}

\section{Additional proofs for the empirical KRD}\label{sec:proof_est_mu}

\begin{proof}[Proof of \Cref{lem:trivialUpperBound}]
  Combining Jensen's inequality with \Cref{prop:kr_properties}$(iv)$, and due to $$\EE[m_{\mun}] = \frac{1}{t} \EE[\Xi(\XC \times (0, t])] = \frac{1}{t} \mu(\XC) t = m_\mu,$$ we obtain from first-order time-stationary of $\Xi$ that
    \begin{equation*}
   \EE[\KR_{p, C}(\mun, \mu)]\leq \EE[\KR^p_{p, C}(\mun, \mu)]^{1/p} \leq C \EE[m_{\mun} + m_\mu]^{1/p} = 2^{1/p}C^p m_\mu^{1/p}. 
\end{equation*}
The second assertion follows by the triangle inequality.
\end{proof}

\begin{proof}[Proof of \Cref{prop:consistency}]
  Let $\epsilon>0$. Then we obtain for every open set  $A\subseteq \XC$  by Chebyshev inequality and our assumptions on $\Xi$ that
\begin{align*}
  \PP( |\hat{\mu}_t(A) - \mu(A)| \geq \eps) &= \PP \left( \left(\frac{1}{t}\Xi(A \times (0, t]) - \mu(A) \right)^2 \geq \eps^2 \right) \\
  &\leq \frac{1}{t^2 \eps^2} \Var \left( \Xi(A \times (0, t]) \right)\\
  &= o(1)\quad \text{as $t \to \infty$}.
\end{align*}
Hence,  it follows for $t\to \infty$ that $\hat{\mu}_t(A) \stackrel{P}{\to } \mu(A)$. Since $A$ was arbitrary and $\XC$ is separable,  we infer weak convergence $\mun \stackrel{w}{\to}\mu$ in probability. This asserts from \Cref{prop:weak_conv_KRD} that $\KR_{p,C}(\mun, \mu)\stackrel{P}{\to } 0$. 
Moreover, using the asymptotic variance bound for $A=\XC$, we obtain for $t \to \infty$ that
\begin{align*}
  \EE\left[\KR_{p,C}^{2p}(\hat \mu_t, \mu)\right] \leq \frac{C^{2p}}{2}\EE\left[(m_{\hat \mu_t}^2 + m_{\mu}^2)\right]=  \frac{C^{2p}}{2}\left(t^{-2}\Var\left(\Xi(\XC\times [0,t])\right) +2m_{\mu}^2\right) \leq o(1) + 2,
\end{align*}
which is uniformly bounded for all $t$ sufficiently large, say, for $t\geq t_0>0$. Hence, it follows that $\{\KR_{p,C}^{p}(\hat \mu_t, \mu)\}_{t\geq t_0}$ is uniformly integrable. The assertion now follows by combining convergence in probability with uniform integrability. 
\end{proof}

\begin{proof}[Proof of \Cref{lem:uot_proj_bound}]
As shown in \Cref{prop:metricKRD}, $\KR_{p, C}$ defines a metric on the space $\MCsX$. Therefore, employing H\"older's inequality, we get
\begin{align} 
    \KR_{p, C}^{p}(\mu, \nu) &\leq \left(\KR_{p, C}(\mu, \mu^\varepsilon) + \KR_{p, C}(\mu^\varepsilon,\nu^\varepsilon) + \KR_{p, C}(\nu^\varepsilon, \nu)\right)^p \notag \\
    &=3^p \left(\frac{1}{3}\KR_{p, C}(\mu, \mu^\varepsilon) + \frac{1}{3}\KR_{p, C}(\mu^\varepsilon,\nu^\varepsilon) + \frac{1}{3}\KR_{p, C}(\nu^\varepsilon, \nu)\right)^p \notag \\
    &\leq 3^{p-1}\left(\KR_{p, C}^{p}(\mu, \mu^\varepsilon) + \KR_{p, C}^{p}(\mu^\varepsilon,\nu^\varepsilon) + \KR_{p, C}^{p}(\nu^\varepsilon, \nu)\right).\label{eq:uot_dif_proj_bound}
\end{align}
To bound $\KR^p_{p, C}(\mu, \mu^\varepsilon)$, we construct a coupling $\pi_\epsilon = (\id, \mu^\varepsilon) \in \Pi_{\leq}(\mu, \mu^\varepsilon)$, for which $m_{\pi} = m_{\mu} = m_{\mu^\varepsilon}$. By construction, $d(x, P^\varepsilon(x)) \leq \varepsilon$ for every $x\in \XC$. It therefore follows that
\begin{align*}
    \KR^p_{p, C}(\mu, \mu^\varepsilon) &\leq \int_{\XC}C^p\wedge \varepsilon^p \dif \mu = (C\wedge \varepsilon)^p m_\mu.
\end{align*}
Similarly, $\KR_{p, C}^{p}(\nu, \nu^\varepsilon)\leq (C\wedge \varepsilon)^p m_\nu$. 
This observation in conjunction with \eqref{eq:uot_dif_proj_bound} leads to the desired bound. 
\end{proof}

\begin{proof}[Proof of \Cref{lem:KR_tree_est}]
The result follows from \citet[Theorem 2.3]{heinemann2023kantorovich} which provides for discrete measures ($\mu^{\varepsilon}$ and $\nu^{\varepsilon}$ supported on $S_\varepsilon = Q_{L+1}$ in our notation) an upper bound
\begin{equation} \label{eq:ultratree_KRD_expr}
\begin{aligned}
	\KR_{p, C}^p(\mu^{\varepsilon}, \nu^{\varepsilon}) &\leq 2^{p-1} \sum_{j=l}^{L+1} 
	\sum_{s \in Q_j} (h_L^p(j-1) - h_L^p(j))|\mu^{\varepsilon}(\CC(s)) - \nu^{\varepsilon}(\CC(s))|\\
	&+ \left(\frac{C^p}{2} - 2^{p-1}h_L^p(l)\right) \sum_{s' \in Q_{l}} |\mu^{\varepsilon}(\CC(s')) - \nu^{\varepsilon}(\CC(s'))|
\end{aligned}
\end{equation}
whenever $h_{L}(l) \leq C/2 < h_L(l-1), \ l = 1, 2, \dots, L+1$. %
We first note that by construction, $\mu^{\epsilon}(\CC(s))$ for $s \in Q_j$ equals $\mu(W_{s,j})$ as it accumulates all the mass coming from children of $s$. Further, we bound the second summand in \eqref{eq:ultratree_KRD_expr} by 
$
	\frac{C^p}{2} |m_\mu - m_\nu|.
$

Observe that the sum on the right-hand side of \eqref{eq:ultratree_KRD_expr} is decreasing in $l$, as we sum over fewer non-negative summands. Therefore, rewriting $h_{L}(l) \leq C/2$ as (using \eqref{eq:L_definition} and assuming $\varepsilon < \diam(S_\varepsilon)$ and $l\leq L+1$)
\begin{align*}
	l &\geq 1 +\min\left\{L,- 0 \vee\log_2\left( \frac{C/2}{\diam(S_\varepsilon)} + 2^{-L}\right)\right\} \\
	&\geq 1 \min\left\{L,- 0 \vee\log_2\left( \frac{C}{2 \diam(S_\varepsilon)} + \frac{\varepsilon}{2 \diam(S_\varepsilon)}\right)\right) \\
	&\geq 1 + \min\left\{L,\left\lfloor 0 \vee \log_2\left( \frac{2 \diam(S_\varepsilon)}{C+\varepsilon}\right)  \right\rfloor\right\} \eqqcolon l_*, 
\end{align*}
we obtain the desired bound.
\end{proof}

\begin{proof}[Omitted calculations in the proof of \Cref{th:est_mu_inUOT_beta}] 
	It remains to minimize the term \eqref{eq:KR_hash_est} plus $ \frac{C^p}{2} \kappa^{1/2}t^{(-1+\beta)/2} +  2m_\mu\varepsilon^p$ in $\epsilon>0$, which we treat separately for the three cases $\alpha < 2p$, $\alpha=2p$ and $\alpha>2p$.
  
  For $\alpha <2p$, we have a sum of three non-decreasing terms in $\epsilon$, and hence the minimum is obtained at minimal value $\varepsilon = 0$. We obtain
\begin{equation*}
	\EE[\KR_{p, C}^p(\mun, \mu)] \leq \frac{C^p}{2}\kappa^{1/2}t^{(-1+\beta)/2} + \frac{2^{3p-1}}{1-2^{\alpha/2 - p}} A^{1/2} C^{p-\alpha/2} \kappa^{1/2}t^{(-1+\beta)/2}.
\end{equation*}
For $\alpha = 2p$, we note that 
\begin{align*}
  |L +1-l_*| &\leq \left\lceil 0 \vee \log_2\left(\frac{\diam(S_\epsilon)}{\epsilon}\right)\right\rceil - \left\lfloor 0 \vee \log_2\left(\frac{\diam(S_\epsilon)}{C+\epsilon} \right)\right\rfloor\\
  &\leq 2 + \left( 0 \vee \log_2\left(\frac{\diam(S_\epsilon)}{\epsilon}\right)\right) - \left( 0 \vee \log_2\left(\frac{\diam(S_\epsilon)}{C+\epsilon} \right)\right)\\
  &\leq 2 + \left( \log_2\left(\frac{\diam(S_\epsilon)}{\epsilon}\right) - \log_2\left(\frac{\diam(S_\epsilon)}{C+\epsilon}\right) \right)\\
  &\leq 2+ \log_2\left(\frac{C+\epsilon}{\epsilon}\right).
\end{align*}
Hence, we need to minimize the following expression in $\epsilon>0$, 
\begin{align*}
  	\frac{C^p}{2} \kappa^{1/2}t^{(-1+\beta)/2} +  2m_\mu(C\wedge \varepsilon)^p + 2^{3p-1} A^{1/2} \left(2+\log_2\left( \frac{C+\epsilon}{\varepsilon}\right)\right) \kappa^{1/2}t^{(-1+\beta)/2}.
\end{align*}
Plugging in $\epsilon = C\wedge (m_{\mu}^{-1/p}A^{1/2p}\kappa^{1/2p}t^{(-1+\beta)/2p})$ yields the upper bound 
\begin{align*}
  \EE[\KR_{p, C}^p(\mun, \mu)]
 &\leq \;\frac{C^p}{2} \kappa^{1/2}t^{(-1+\beta)/2} + 2m_{\mu}(C^p\wedge (m_{\mu}^{-1}A^{1/2}\kappa^{1/2}t^{(-1+\beta)/2})) + 2^{3p} A^{1/2}\kappa^{1/2}t^{(-1+\beta)/2}\\
  & \quad +2^{3p-1}A^{1/2}\kappa^{1/2}\log_2\left(\frac{2C}{C\wedge (m_{\mu}^{-1/p}A^{1/2p}\kappa^{1/2p}t^{(-1+\beta)/2p})}\right)t^{(-1+\beta)/2p}\\
  &\lesssim_p \frac{C^p}{2} \kappa^{1/2}t^{(-1+\beta)/2} +  A^{1/2}\kappa^{1/2}t^{(-1+\beta)/2}(1\vee \log(C^pt^{(1-\beta)/2}(A\kappa)^{-1/2}m_{\mu})).
\end{align*}
Finally, for $\alpha > 2p$, we find the minimum at
$$
	\varepsilon = \left( \frac{\alpha}{2} - p\right)^{2/\alpha}(2p m_\mu)^{-2/\alpha} A^{1/\alpha}  \left(2^{p-1} + 2^{3p-1} \frac{2^{\alpha/2 - p}}{2^{\alpha/2 - p} - 1}  \right)^{2/\alpha} \kappa^{1/\alpha}t^{(\beta-1)/\alpha}
$$
which again goes to 0 with $t \to \infty$. Plugging it back in, we find that 
\begin{align*}
	\EE[\KR_{p, C}^p(\mun, \mu)]\,&\leq 4\left( \frac{\alpha}{2} - p\right)^{2p/\alpha} m_{\mu}(2pm_\mu)^{-2p/\alpha}  \left(2^{p-1} + 2^{3p-1} \frac{2^{\alpha/2 - p}}{2^{\alpha/2 - p} - 1}  \right)^{2p/\alpha}
	\!\!\!\!\!\!A^{p/\alpha} \kappa^{p/\alpha}t^{p(1-\beta)/\alpha} \\
	& \hspace{1cm} + \frac{C^p}{2} \kappa^{1/2}t^{(-1+\beta)/2} \\
	&\lesssim_{p, \alpha} \frac{C^p}{2} \kappa^{1/2}t^{(-1+\beta)/2} + A^{p/\alpha} m_\mu^{1-2p/\alpha} \kappa^{p/\alpha}t^{p(1-\beta)/\alpha}.
\end{align*}
It remains to take the $p$-th root of both sides (recalling that $(a+b)^{1/p} \leq a^{1/p} + b^{1/p}$ for $a, b, > 0$, $p \geq 1$) in conjunction with Jensen's inequality to obtain the bound \eqref{eq:est_mu_inUOT_beta}.
\end{proof}

\begin{proof}[Proof of \Cref{th:est_mu_in_U}]
We treat the case $\mu = \nu$. If $\mu \neq \nu$, the result follows by the triangle inequality. 
Since $\U$ is $p$-dominated, we write
\begin{align*}
    \EE \left[\U(\mun,\mu)\right] &\leq  \EE \left[\U\left(\mu,\frac{m_\mu}{m_{\mun}}\mun\right)\right] + \EE \left[\U\left(\mun,\frac{m_\mu}{m_{\mun}}\mun\right)\right].
\end{align*}
For the second term, we have $\EE \left[\U\left(\mun,\frac{m_\mu}{m_{\mun}}\mun\right)\right] \leq \left(\EE \left[|m_\mu - m_{\mun}|\right] \right)^{1/p}$.
The right-hand side does not depend on $\U$ and is upper bounded by $\kappa^{1/2p} t^{(-1+\beta)/2p}$ by \ref{ass:Xi_mixing_Var_beta}.

Again using $p$-dominance of $\U$, we bound the first term by $\EE \left[W_p\left(\mu,\frac{m_\mu}{m_{\mun}}\mun\right)\right]$. Observe that $W_p\left(\mu,\frac{m_\mu}{m_{\mun}}\mun\right) = \KR_{p,C} \left(\mu,\frac{m_\mu}{m_{\mun}}\mun\right)$ for $C$ large enough (e.g. we can put $C = \diam(\supp \, \mu ) + \diam(\supp \, \nu )+1$).  The bound then follows by \Cref{th:est_mu_inUOT}.
\end{proof}

\section{Proofs of properties of ST point processes}
\label{sec:proofs_ST_proc}

\subsection{Proofs on moment measures for time-homogenous ST point processes}

\begin{proof}[Proof of \Cref{prop:time-reduction}]
The result is a straightforward extension of Proposition 8.1.I and 8.1.II(a) in \citet{daley2003introduction}. For self-containedness and since the former has only a sketch of a proof, we provide a complete proof here.
  
  $(i)$ $M_1 \chi_t^{-1} = M_1$ implies that for any $A \in \BC(\XC)$, the measure $M_1(A \times \cdot)$ is a translation invariant measure that is finite on bounded intervals, hence a multiple of Lebesgue measure if $\TC=\RR$ and a multiple of the counting measure if $\TC=\ZZ$. Thus, setting $\mu = M_1(\cdot \times (0,1])$, we have $M_1(A \times I) = \mu(A) \, \ell(I)$ for all $A \in \BC(\XC)$ and $I \in \IC$ regardless whether $\TC=\RR$ or $\TC=\ZZ$. 
  
  $(ii)$ We first show that $M_{[2]} \psi^{-1}$ is translation invariant in the first time-component. Indeed, for any $t \in \TC$, writing $\tau_t \colon \TC \to \TC$, $s \mapsto s+t$, we have
  \begin{align*}
    \psi^{-1}(A \times B \times \tau_t^{-1}(I) \times J) %
    &= A \times B \times \{(r,s) \in \TC \times \TC \mvert \tau_t(r) \in I, \tau_t(s)-\tau_t(r) \in J \} \\
    &= (\chi_t,\chi_t)^{-1} ( \psi^{-1}(A \times B \times I \times J) )
  \end{align*}
  for all $A,B \in \BC(\XC)$ and $I,J \in \IC$. Thus, $M_{[2]}(\chi_t,\chi_t)^{-1} = M_{[2]}$ yields
  \begin{align*}
    M_{[2]}\psi^{-1}(A \times B \times \tau_t^{-1}(I) \times J) = M_{[2]}\psi^{-1}(A \times B \times I \times J).
  \end{align*} 
  
  Proceeding similarly as in $(i)$, we set $\breve{M}_{[2]}(A \times B \times J) = M_{[2]}\psi^{-1}(A \times B \times (0,1] \times J)$ and obtain from the translation invariance in the first time component that 
  \begin{equation*}
    M_{[2]}\psi^{-1}(A \times B \times I \times J) = \breve{M}_{[2]}(A \times B \times J) \, \ell(I)
  \end{equation*}  
  for all $A,B \in \BC(\XC)$ and $I,J \in \IC$ regardless whether $\TC=\RR$ or $\TC=\ZZ$. 
  
  The required symmetry of $\breve{M}_{[2]}$ is seen as follows. By construction,
  \begin{equation} \label{eq:2facmom_sym}
    M_{[2]}(A \times B \times I \times J) = M_{[2]}(B \times A \times J \times I).
  \end{equation}
  Writing $\varphi \colon \TC^2 \to \TC^2$, $(r,s) \mapsto (r,s-r)$ and $\sigma \colon \TC^2 \to \TC^2$, $(r,s) \mapsto (s,r)$, we obtain
  \begin{align*}
    \breve{M}_{[2]}(A \times B \times J) &= M_{[2]}(A \times B \times \varphi^{-1}((0,1] \times J)) \\
    &= M_{[2]}(B \times A \times \sigma^{-1} (\varphi^{-1}((0,1] \times J))) \\
    &= M_{[2]}(B \times A \times \varphi^{-1}((0,1] \times (-J))) \\
    &= \breve{M}_{[2]}(B \times A \times (-J)),
  \end{align*}
  where the third equality follows by the joint translation invariance $M_{[2]}(\chi_t, \chi_t)^{-1} = M_{[2]}$. More precisely, let $J = (a,b]$ for $a,b \in \TC$, $a<b$. By additivity, it is enough to consider the case $b-a \leq 1$. Write $C(r,s) = (r,r+1] \times (s,s+1]$ and $S=\{(r,s) \in \TC \mvert a < r-s \leq b \}$. Then 
  \begin{align*}
    \sigma^{-1} (\varphi^{-1}((0,1] \times J)) &= \{(r,s) \in \TC^2 \mvert 0 < s \leq 1, a < r-s \leq b \} \\
    &= (S \cap C(a,0)) \, \dot{\cup} \, (S \cap C(a+1,0))
  \end{align*}
  and 
  \begin{align*}
    \varphi^{-1}((0,1] \times (-J)) &= \{(r,s) \in \TC^2 \mvert 0 < r \leq 1, a < r-s \leq b \}\\
    &= (S \cap C(0,-a)) \, \dot{\cup} \, (S \cap C(0,-a-1)). %
  \end{align*}
  Since
  \begin{align*}
    (\tau_a,\tau_a)^{-1}(S \cap C(a,0))) &= (S \cap C(0,-a)) \quad \text{and} \\
    (\tau_{a+1},\tau_{a+1})^{-1}(S \cap C(a+1,0)) &= (S \cap C(0,-a-1)),
  \end{align*}
  the third equality above follows from $M_{[2]}(B \times A \times (\tau_x,\tau_x)^{-1}(\cdot)) = M_{[2]}(B \times A \times \cdot \,)$.
\end{proof}

\begin{proof}[Proof of \Cref{prop:Xi_Var_vs_Cov}]
By combining \eqref{eq:2facmom} and \eqref{eq:faccovm} (for $\gamma_2$ rather than $\gamma_{[2]}$), we obtain
\begin{equation*}
  \Var (\Xi(A \times (0,t])) = \gamma_{[2], A}( (0,t]^2) + M_1(A \times (0,t])
    =\gamma_{[2], A}((0,t]^2) + t \mu(A).
\end{equation*}

Let again $\varphi \colon \TC^2 \to \TC^2$, $(r,s) \mapsto (r, s-r)$, and write 
\begin{align*}
  B_t := \varphi((0,t]^2) &= \{(r,s) \in \TC^2 \mvert 0 < r \leq t, -r < s \leq t-r \} \\
  &= \{(r,s) \in \TC^2 \mvert \max\{0,-s\} < r \leq \min\{t,t-s\}, -t < s < t \}.
\end{align*}
Then
\begin{align*}
  \gamma_{[2], A}((0,t]^2) &= \gamma_{[2], A}( \varphi^{-1}(B_t)) \\
  &= \left( \ell \otimes \breve{\gamma}_{[2], A}\right)(B_t) \\
  &= \int_{(-t,t)} \int_{(0 \vee (-s),\,(t-s) \wedge t]}  \ell(dr) \; \breve{\gamma}_{[2], A}( ds) \\
  &= \int_{(-t,t)} (t-|s|)  \breve{\gamma}_{[2], A}( ds) \\
  &= t \breve{\gamma}_{[2], A}(\{0\}) + 2 \int_{(0,t)} (t-s) \; \breve{\gamma}_{[2], A}(ds),
\end{align*}
where the last line follows from the symmetry of the measure $\breve{\gamma}_{[2],A}$.
\end{proof}

\begin{corollary} \label{cor:Xi_Var_vs_Cov}
Let $\Xi$ be a weakly time-stationary $L^2$ point process. Suppose that there is an $A \in \BC(\XC)$ such that $\breve{\gamma}_{[2],A}$ is a non-negative measure and therefore $\breve{\gamma}_{[2],A} = \breve{\gamma}^+_{[2],A}$.
Then, for any $t \in \tgz$ and $\alpha \in [0,1]$,
\begin{equation*}
  2  \ell\bigl((0,(1-\alpha) t]\bigr) \, \breve{\gamma}_{[2],A}((0,\alpha t])\leq  \Var \left( \Xi(A \times (0,t]) \right) - t (\mu(A) + \breve \gamma_{[2],A}(\{0\})).%
  \end{equation*}

\end{corollary}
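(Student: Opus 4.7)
The plan is to derive this corollary directly from Proposition~\ref{prop:Xi_Var_vs_Cov}. That proposition yields
\begin{equation*}
  \Var(\Xi(A \times (0,t])) - t(\mu(A) + \breve{\gamma}_{[2],A}(\{0\})) = 2 \int_{(0,t)} (t-s) \, \breve{\gamma}_{[2],A}(ds),
\end{equation*}
so the whole claim reduces to showing
\begin{equation*}
  \ell\bigl((0,(1-\alpha)t]\bigr) \, \breve{\gamma}_{[2],A}\bigl((0,\alpha t]\bigr) \;\leq\; \int_{(0,t)} (t-s) \, \breve{\gamma}_{[2],A}(ds).
\end{equation*}
This is where the hypothesis $\breve{\gamma}_{[2],A} \geq 0$ enters: it allows us to drop a portion of the integrand without reversing the inequality.

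First I would restrict the domain of integration. For $\alpha \in [0,1)$ we have $(0,\alpha t] \subseteq (0,t)$, and since $\breve{\gamma}_{[2],A}$ is a non-negative measure,
\begin{equation*}
  \int_{(0,t)} (t-s) \, \breve{\gamma}_{[2],A}(ds) \;\geq\; \int_{(0,\alpha t]} (t-s) \, \breve{\gamma}_{[2],A}(ds).
\end{equation*}
Next, on $s \in (0,\alpha t]$ the integrand satisfies $t - s \geq (1-\alpha) t$. In the case $\TC = \RR$ we have $\ell((0,(1-\alpha)t]) = (1-\alpha)t$ exactly, and in the case $\TC = \ZZ$ we have $\ell((0,(1-\alpha)t]) = \lfloor (1-\alpha)t \rfloor \leq (1-\alpha)t$, so in either case $t - s \geq \ell((0,(1-\alpha)t])$ for every $s \in (0,\alpha t]$. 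Inserting this pointwise lower bound into the integral yields the desired inequality.

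The degenerate cases are immediate: if $\alpha = 0$ the left-hand side vanishes because $\breve{\gamma}_{[2],A}((0,0]) = 0$, and if $\alpha = 1$ it vanishes because $\ell((0,0]) = 0$, so no additional argument is needed to patch the fact that $(0,\alpha t] \not\subseteq (0,t)$ when $\alpha = 1$. There is no real obstacle here: the only subtlety is being careful about the time-axis type ($\RR$ vs.\ $\ZZ$) when bounding $t-s$ by $\ell((0,(1-\alpha)t])$, and about the boundary of the integration interval at $\alpha = 1$, both of which are handled by the observations just made.
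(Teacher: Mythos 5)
Your proof is correct and in substance the same as the paper's. The only cosmetic difference is the packaging: the paper rewrites $\int_{(0,t)}(t-s)\,\breve{\gamma}_{[2],A}(ds)$ as the product measure $(\ell\otimes\breve{\gamma}^+_{[2],A})$ evaluated on the triangular set $U=\{(r,s):r\in(0,t-s],\,s\in(0,t)\}$ and uses the set containment $U\supseteq(0,(1-\alpha)t]\times(0,\alpha t]$, whereas you stay with the one-dimensional integral, restrict to $(0,\alpha t]$, and bound the integrand pointwise by $t-s\geq(1-\alpha)t\geq\ell((0,(1-\alpha)t])$ — the same underlying estimate, reached more directly, including the correct handling of $\TC=\ZZ$ via the floor and of the degenerate endpoints $\alpha\in\{0,1\}$.
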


\begin{proof}[Proof of \Cref{cor:Xi_Var_vs_Cov}]
Note that, for any $\alpha \in [0,1]$,
  \begin{equation*}
    U := \{(r,s) \mvert r \in (0,t-s], s \in (0,t) \} \supseteq (0, (1-\alpha) t] \times (0, \alpha t].
  \end{equation*}
  Therefore, by non-negativity of $\breve \gamma_{[2],A}$, we infer that 
  \begin{align*}
    \int_{(0,t)} (t-s) \; \breve{\gamma}_{[2],A}(ds) = \int_{(0,t)} (t-s) \; \breve{\gamma}^+_{[2],A}(ds) &= (\ell \otimes \breve{\gamma}^+_{[2],A})(U) \\[-1.5mm]
    &\geq (\ell \otimes \breve{\gamma}^+_{[2],A})((0, (1-\alpha) t] \times (0, \alpha t]).
  \end{align*}
Utilizing \Cref{prop:Xi_Var_vs_Cov}, we arrive at the desired bound.
\end{proof}

\begin{proof}[Proof of \Cref{prop:PVar_vs_RCov}]
$(i)$ 
Assume that \ref{ass:Xi_mixing_cov_beta} is met. 
Observe that we have, for any $A \in \BC(\XC)$,
  \begin{equation*} %
    \int_{(0,t)} (t-s) \; \breve{\gamma}_{[2],A}(ds) \leq \int_{(0,t)} (t-s) \; \breve{\gamma}^{+}_{[2],A}(ds) \leq t\, \breve{\gamma}^{+}_{[2],A}((0,t)).
  \end{equation*}
Hence, invoking \Cref{prop:Xi_Var_vs_Cov}, we obtain since $t\breve \gamma_{[2],A}(\{0\}) \leq 2t\breve \gamma_{[2],A}^+(\{0\})$ and since $\breve \gamma_{[2],A}^+$ is a (non-negative) measure for $t \geq t_0$ that 
\begin{align*}
	\Var \left( \Xi(A \times (0,t]) \right) - t \mu(A) \leq t \breve{\gamma}^{+}_{[2],A}(\{0\}) + 2 t \, \breve{\gamma}^{+}_{[2],A}((0,t)) \leq  2 t \, \breve{\gamma}^{+}_{[2],A}([0,t]).
\end{align*}
Therefore, for any finite partition $(A_i)_{i = 1, \dots, N}$ of $\XC$, we have
$$
	\sum_{i=1}^N \Var \left( \Xi(A_i \times (0,t]) \right) - t \mu(A) \leq 2 t \sum_{i=1}^N  \breve\gamma_{[2], A_i}^+([0,t]) \leq 2 \kappa' t^{1+\beta}.
$$

\noindent    
$(ii)$ Assume that \ref{ass:Xi_mixing_Var_beta} holds. 
Since we assume non-negative correlations only, by \Cref{rem:RCov_XC}$(ii)$, it suffices to bound $\breve\gamma_{[2], \XC}([0,t])$.
Recall that either $\TC = \RR$ or $\TC = \ZZ$. In the former case we consider general $t\in \TC_{>0}$ with $t\geq t_0$
 whereas in the latter case we limit ourselves to even $t$. From \Cref{cor:Xi_Var_vs_Cov}, 
 choosing $\alpha = 1/2$, we obtain for both cases, $\TC = \RR$ and $\TC=\ZZ$, that $2\ell((0,t/2])= t$ and consequently  
\begin{align*}
       t \breve \gamma_{[2], \XC}([0,t/2]) = 2 \frac{t}{2} \breve \gamma_{[2], \XC}((0,t/2]) + t\breve \gamma_{[2],\XC}(\{0\})  \leq \Var(\Xi(\XC\times (0,t])) - t m_{\mu} \leq \kappa t^{1+\beta}.
    \end{align*}
After the change of variables, $t'\coloneqq t/2$, we obtain for any $t' \in \TC_{>0}$ with $t' \geq t_0/2$ that
\begin{align*}
       \breve \gamma_{[2], \XC}([0,t']) \leq \kappa 2^\beta (t')^{\beta}. & \qedhere
    \end{align*}

\end{proof}

\begin{proof}[Proof of \Cref{prop:max_cov_var_rate}]

Let $(A_i)_{i = 1}^{N}$ be an arbitrary finite partition of $\XC$ into measurable sets and observe that 
\begin{align*}
  \sum_{i=1}^N \Var(\Xi(A_i \times (0,t])) &\leq \sum_{i=1}^N \EE[\Xi(A_i \times (0,t])^2] \\
  &\leq \EE\left[\sum_{i=1}^N \Xi(A_i \times (0,t])^2\right] \\
  &\leq \EE\left[\left(\sum_{i=1}^N \Xi(A_i \times (0,t])\right)^2\right] \\
  &= \EE[\Xi(\XC \times (0,t])^2],
\end{align*}
where the third inequality follows from the elementary inequality $(\sum_{i = 1}^{N} k_i^2) \leq \left(\sum_{i = 1}^{N} k_i\right)^2$ for non-negative integers $k_1, \dots, k_N\in \NN_0$. To bound the right-hand side in the above display, define $T = \lceil t \rceil$ and set $\tilde \Xi_k \coloneqq \Xi(\XC \times (k-1, k])$ for $k = 1, \dots, T$. This way, it follows  that (deterministically) $\Xi(\XC \times (0,t]) \leq \sum_{k=1}^T \tilde \Xi_k$. Therefore, using the Cauchy--Schwarz inequality and weak time-stationarity of $\Xi$, which implies  $\EE[\tilde \Xi_k ^2] = \EE[\tilde \Xi_1 ^2]$, we infer 
\begin{align}\label{eq:bound_Xi_secondMoment}
	\EE[\Xi(\XC \times (0,t])^2] \leq \EE\left[\left(\sum_{k=1}^T \tilde \Xi_k \right)^2 \right] &= \sum_{k, j=1}^T \EE[ \tilde \Xi_k  \tilde \Xi_j ] \leq  \sum_{k,j=1}^T \sqrt{ \EE[ \tilde \Xi_k^2]\EE[ \tilde \Xi_j ^2]} = T^2 \EE[\tilde \Xi_1^2].
\end{align}
Upper bounding $T = \lceil t \rceil$ by $2t$ since $t\geq 1$, we arrive at the bound
$$
	\sum_{i=1}^N \Var(\Xi(A_i \times (0,t])) \leq 4 t^2 \EE[(\Xi(\XC \times (0,1])^2].
$$

Moreover,  recalling the definition of $\breve\gamma_{[2]}$ from \eqref{eq:breve_gamma_def}, we have for any $A, B \in \BC(\XC)$ and $S\in \BC(\TC)$ that $$\breve \gamma^+_{[2]}(A\times B\times S) \leq \breve M_{[2]}(A\times B\times S) + \mu(A)\mu(B) \ell(S).$$
Further,  by \eqref{eq:rfmm_explicit} with $T \coloneqq (0,t]$ and $t> 1$ it holds that $t/2\leq \ell(T)\leq t$ and for the function $\psi(x,y,r,s)\coloneqq (x,y,r,s-r)$ 
that \begin{align*}
    \breve M_{[2]}(\XC\times \XC\times [0,t]) &= \frac{1}{l(T)} M_{[2]}\psi^{-1}(\XC\times \XC\times (0,t] \times [0,t])\\
    &\leq \frac{2}{t} M_{[2]}(\XC\times \XC\times (0,t]\times (0,2t]) \\
    &\leq \frac{2}{t} \EE[\Xi(\XC \times (0,t])\cdot \Xi(\XC \times (0,2t])]\leq 16 t \EE[(\Xi(\XC \times (0,1])^2],
\end{align*}
where the last inequality follows analogously to \eqref{eq:bound_Xi_secondMoment}. 
Consequently, we obtain \begin{align*}
    \breve \gamma_{[2]}^+(\XC\times \XC\times [0,t]) &\leq 16 t \EE[(\Xi(\XC \times (0,1])^2] +  \ell([0,t]) \mu(\XC)^2 \\&\leq t\left( 16 \EE[(\Xi(\XC \times (0,1])^2]  + 2 \mu(\XC)^2\right),
\end{align*}
for the last inequality we used that $\ell([0,t])= 1+\lfloor t \rfloor\leq 2t$ for both $\TC = \RR$ and $\TC = \ZZ$. The assertion now follows at once via Remark \ref{rem:RCov_XC}$(i)$ by selecting $\kappa \coloneqq \left(16 \EE[(\Xi(\XC \times (0,1])^2] +  2\mu(\XC)^2\right)$
\qedhere

\end{proof}

\subsection{Proofs for properties of examples of ST point processes}

\begin{proof}[Proof of \Cref{prop:E_and_Var_for_PCP}]
  Recall that we denote by $\IC$ the set of bounded left-open and right-closed intervals of $\TC=\RR$.
  
  For the claimed well-definedness, we have to check that $\Xi(A \times I)$ is almost surely finite for every $A \in \BC(\XC)$ and every $I \in \IC$. This follows directly from the finiteness of $\EE[\Xi(A \times I)]$ shown below, which in turn uses only the local finiteness of $M_1^{cl}$ and $\EE \Eta$, but not of $\Xi$.
To show time-stationarity, we first note that, in general, in order to show equality of point process distributions, it suffices by \citet[Proposition 9.2.III]{daley2007introduction},
  to check equality on sets of the form 
  \begin{align*}
    \bigl\{\xi \in \mathfrak{N}(\XC \times \RR) \bigm| \xi(A_j \times I_j) = k_j \ \text{for $1 \leq j \leq l$}\bigr\}
  \end{align*}
  for any $l \in \NN$, $A_1,\ldots, A_l \in \BC(\XC)$, $I_1,\ldots,I_l \in \IC$ and $k_1,\ldots,k_l \in \NN_0$.
  Write $\tilde{S}_i := S_i+s$ and $\tilde{\Eta} := \sum_{i=1}^{\infty} \delta_{(Y_i,\tilde{S}_i)} = T_s(\Eta) \eqinlaw \Eta$ (by time-stationarity of the cluster center process $\Eta$). 
  Time-stationarity then follows (after checking that $\Xi$ is a well-defined point process) by
  \begin{align*}
    \PP \bigl( \Xi(A_j \times (I_j-s)) = k_j \, \forall j \bigr) &= \EE \left[\PP\bigl( \textstyle\sum_{i=1}^{\infty} \Xi_i (A_j \times (I_j-s)) = k_{j} \ \forall j \bigm| \Eta \bigr)\right] \\[1mm]
    &= \EE \Bigl[\sum_{(k'_{ij})} \PP \bigl( \Xi_i(A_j \times (I_j-s)) = k'_{ij} \ \forall i,j \bigm| Y_i,S_i \bigr)\Bigr] \\
    &= \EE \Bigl[\sum_{(k'_{ij})} \PP \bigl( \Xi_i(A_j \times I_j) = k'_{ij} \ \forall i,j \bigm| Y_i,\tilde{S}_i \bigr)\Bigr] \\
    &= \EE \left[ \PP\bigl( \textstyle\sum_{i=1}^{\infty} \Xi_i (A_j \times I_j) = k_j \ \forall j \bigm| \tilde{\Eta} \bigr)\right] \\[1mm]
    &=\PP \bigl( \Xi(A_j \times I_j) = k_i \, \forall j \bigr),
  \end{align*}
  where the sums are taken over all matrices $(k'_{ij})_{i \in \NN, 1 \leq j \leq l}$ with entries $k'_{ij} \in \NN_0$ and column sums $\sum_{i=1}^{\infty} k'_{ij} = k_j$ for all $j \in \{1,\ldots,l\}$. The third equality follows by \eqref{eq:timehomo_kernel} and the last equality by $\tilde{\Eta} \eqinlaw \Eta$.

  The considerations for the moment measures are generalizations from \citet[Section~6.3]{daley2003introduction}. For self-containedness, we spell out the full arguments.

  For $A \in \BC(\XC)$ and general $I \in \IC$, we have
  \begin{align*} 
  \EE [ \Xi(A\times I)] &= \EE \left[ \EE [\Xi (A\times I) \mvert \Eta ] \right] \\
  &= \EE \left[\sum_{i=1}^\infty \EE \left[ \Xi_i(A \times I) \mvert Y_i,S_i \right]\right] \\
  &= \EE \left[\int_{\XCbar \times \RR} M^{cl}_1(A \times I) \mvert (y,s)) \; \Eta(dy \, ds)\right] \\
  &= \int_{\XCbar} \int_{\RR} M^{cl}_1(A \times (I-s)) \mvert (y,0)) \; ds \; \Lambda(dy) \\
  &= \int_{\XCbar} \int_{\RR} \int_{\RR} \mathds{1}(r \in I-s) \, M^{cl}_1(A \times dr \mvert (y,0))  \; ds \; \Lambda(dy) \\
  &= \int_{\XCbar} \int_{\RR} \int_{I-r} ds \, M^{cl}_1(A \times dr \mvert (y,0))  \; \Lambda(dy) \\
  &= |I| \int_{\XCbar} M^{cl}_1(A \times \RR_+ \mvert (y,0))  \; \Lambda(dy),
  \end{align*}
  using \eqref{eq:timehomo_kernel} for the third line and Campbell's formula \citet[Equation~(9.5.2)]{daley2007introduction} for the fourth line. Furthermore, we have used Tonelli's theorem for the second last and \eqref{eq:causal} for the last line. We have used local finiteness but not total finiteness of $M^{cl}_1(\cdot \mvert (y,0))$ for this argument. Thus, \eqref{eq:mom1_poisson_cluster} follows (with the possibility that both sides of the equation are $\infty$) and the expectation measure of $\Xi$ is locally finite if and only if $\int_{\XCbar} M^{cl}_1(\cdot \mvert (y,0)) \, \Lambda(dy)$ is totally finite.

  We do a similar computation for the second factorial moment measure of $\Xi$. Splitting up $\Xi^{[2]}$ into contributions coming from a single cluster and contributions from combining different clusters, we obtain for $A \in \BC(\XC)$ and $I,J \in \IC$
  \begin{align}
  &\EE \bigl[ \Xi^{[2]}(A \times A \times I \times J) \bigr] \notag\\
  &\hspace*{1mm} = \EE \bigl[ \EE \bigr[\Xi^{[2]} \bigl(A \times A \times I \times J \bigr) \bigm| \Eta \bigr] \bigr] \notag\\
  &\hspace*{1mm} = \EE\left[\sum_{i=1}^\infty \EE \left[ \Xi_i(A \times I) \, \Xi_i(A \times J) \bigm| Y_i,S_i \right]\right] - \EE\left[\sum_{i=1}^\infty \EE \left[ \Xi_i(A \times (I \cap J))  \bigm| Y_i,S_i \right]\right]  \notag\\
  &\hspace*{10mm} {} + \EE\left[\sum_{\substack{i,j=1 \\ i\neq j}}^\infty \EE \left[ \Xi_i(A \times I) \, \Xi_j(A \times J) \mid Y_i,S_i,Y_j,S_j \right]\right] \notag\\
  &\hspace*{1mm} = \int_{\XCbar \times \RR} M^{cl}_{[2],A}\bigl(I \times J \bigm| (y,s) \bigr) \; \Lambda(d y ) \; \dif s \notag\\
  &\hspace*{10mm} {} + \int_{\XCbar \times \RR} \int_{\XCbar \times \RR} M^{cl}_{1,A}\bigl(I \bigm| (y, s) \bigr) \, M^{cl}_{1,A}\bigl(J \bigm| (y', s') \bigr) \; \Lambda(d y) \dif s \; \Lambda(d y') \dif s' \notag\\
  &\hspace*{1mm} = \int_{\XCbar \times \RR} M^{cl}_{[2],A}\bigl(I \times J \bigm| (y,s) \bigr) \; \Lambda(d y ) \; \dif s + M_{1,A}(I) M_{1,A}(J) \in [0,\infty]. \label{eq:f2mom_poisson_cluster}
  \end{align}
  This computation uses that the term subtracted in the third line is finite (which we know since $\Xi$ is $L^1$ by the previous part) but makes no assumptions on any of the other terms.

  In an abuse of notation we refer to the left-hand side as $M_{[2], A}(I \times J)$ even in case the result is infinite. 
On the right-hand side of \eqref{eq:f2mom_poisson_cluster}, the first summand may be infinite as well on this stage, whereas the second summand is always finite due to \eqref{eq:L1_cond_poisson_cluster} by the previous part. Note that the above definition on rectangles fully specify unique measures if we just interpret $M_{[2], A}$, $\gamma_{[2],A}$ by their generalized definitions using $\EE$ and the measures $\Xi^{[2]}$ and~$\Xi$. By definition, we have
  \begin{align*}
  \gamma_{[2],A}(I \times J) = \int_{\XCbar \times \RR} M^{cl}_{[2],A}\bigl(I \times J \bigm| (y,s) \bigr) \; \Lambda(d y ) \dif s.
  \end{align*}
By time-stationarity, we obtain that $\gamma_{[2],A}$ is invariant under joint shifts. Thus, we can reduce it, using the function $\varphi \colon \RR^2 \to \RR^2$, $(u,v) \mapsto (u, v-u)$, to obtain, for any bounded interval $J \subset \RR$,
  \begin{align}
    \breve{\gamma}_{[2],A}(J) &= \gamma_{[2],A}\bigl(\varphi^{-1}((0,1] \times J) \bigr) \notag\\
    &= \int_{\XCbar \times \RR} M^{cl}_{[2],A}\bigl(\varphi^{-1}((0,1] \times J) \bigm| (y,s) \bigr) \; \Lambda(d y ) \; ds \notag\\
    &= \int_{\XCbar \times \RR} M^{cl}_{[2],A}\bigl(\varphi^{-1}((-s,-s+1] \times J) \bigm| (y,0) \bigr) \; \Lambda(d y ) \; ds \notag\\
    &= \int_{\XCbar} \int_{\RR} \int_{\RR} \mathds{1}(u \in (-s, -s+1]) \; M^{cl}_{[2],A}\bigl(\varphi^{-1}(du \times J) \bigm| (y,0) \bigr) \; ds \; \Lambda(d y )  \notag\\
    &= \int_{\XCbar} \int_{\RR} \int_{-u}^{-u+1} ds \; M^{cl}_{[2],A}\bigl(\varphi^{-1}(du \times J) \bigm| (y,0) \bigr) \; \Lambda(d y )  \notag\\
    &= \int_{\XCbar} M^{cl}_{[2],A}\bigl(\RR_+^2 \cap \varphi^{-1}(\RR \times J) \bigm| (y,0) \bigr) \; \Lambda(d y) ,\label{eq:rfcov_poisson_cluster}
  \end{align}
  using $\varphi^{-1}((0,1] \times J) - (s,s) = \varphi^{-1}((-s,-s+1] \times J)$ for the third line and that $M^{cl}_{[2],A}(\cdot \mvert (y,0))$ is concentrated on $\RR_+^2$ by~\eqref{eq:causal} for the last line.

  Since the right-hand side of \eqref{eq:rfcov_poisson_cluster} is always non-negative, $\breve{\gamma}_{[2],A} = \breve{\gamma}^+_{[2],A}$ is a proper (non-negative) measure %
  and \eqref{eq:rf_covm_poisson_cluster} follows. 
  
It remains to check that local finiteness of $\EE \Xi^{[2]}$ is equivalent to \eqref{eq:L2_cond_poisson_cluster}. %
By time-stationarity, this is the case if and only if $\EE [\Xi^{[2]}(A^2 \times (0,t] \times [0,t])] < \infty$ for all $A \in \BC(\XC)$ and all $t \geq 0$. By \eqref{eq:f2mom_poisson_cluster} and $M_{1,A}([0,t])<\infty$ due to \eqref{eq:L1_cond_poisson_cluster}, this holds if and only if $$\int_{\XCbar \times \RR} M^{cl}_{[2],A}((0,t] \times [0,t] \mvert (y,s)) \; \Lambda(d y ) \; \dif s < \infty.$$
  This last statement is seen to be equivalent to \eqref{eq:L2_cond_poisson_cluster} by using
  \begin{align*}
      M^{cl}_{[2],A}\bigl(\varphi^{-1}((0,t/2] \times [0,t/2]) \bigm| (y,s) \bigr)
      &\leq M^{cl}_{[2],A}\bigl((0,t] \times [0,t] \bigm| (y,s) \bigr) \\
      &\leq M^{cl}_{[2],A}\bigl(\varphi^{-1}((0,t] \times [-t,t]) \bigm| (y,s) \bigr)
  \end{align*}
  and applying \eqref{eq:rfcov_poisson_cluster} for $(0,t]$ rather than $(0,1]$ (using that $M^{cl}_{[2], A}$ is a second factorial moment measure and hence symmetric), which only adds a factor of $t$ on the right-hand side. 
\end{proof}

\begin{proof}[Proof of \Cref{prop:hawkes_moments}]
  According to the construction provided right before Assumption~\ref{ass:subprob_temporal_domination}, $\Xi$ is a spatio-temporal Poisson cluster process, as it is straightforward to see that the time-shift equivariance \eqref{eq:timehomo_kernel} and the causal property \eqref{eq:causal} are satisfied by the fact that the infection intensity $g$ depends only on the time difference and is zero if this difference is negative. We aim at applying \Cref{prop:E_and_Var_for_PCP} for $\XCbar = \XC$, for which we need to control the first and second factorial moment measure of a generic cluster initiated by a single point. 

  Let $(y,s) \in \XC \times \RR_+$ and write $\Xi' = \sum_{k=1}^{\infty} Z_k$ for the total progeny of $Z_0 = \delta_{(y,s)}$ obtained by the usual branching construction, not counting the parent point $(y,s)$.
  We may then write $\Xi' = \sum_{i=1}^{N_1} \Xi'_i$, where, given $Z_1$, the $\Xi'_i$ are the independent subclusters initiated by each of the points of $Z_1 = \sum_{i=1}^{N_1} \delta_{(X_{1i},T_{1i})}$, each including its respective parent point. 

  Noting that $Z_1 = \Psi_{01} \sim \pop(g(y,x,t-s) \, dx \, dt)$, we obtain for the expectation measure $M^{cl}_1(\,\cdot \mvert (y,s))$ of the whole cluster $\Xi'+\delta_{(y,s)}$ initiated by $(y,s)$, letting $A \in \BC(\XC)$ and $I \in \IC$, 
  \begin{equation} \label{eq:hawkes_expection_recursion}
  \begin{aligned}
    M_1^{cl}(A \times I \mvert (y,s)) - \mathds{1}(y \in A, s \in I ) &= \EE[\Xi'(A \times I)] \\
    &= \EE \bigl[ \EE \bigl[\Xi'(A \times I) \bigm| Z_1 \bigr] \bigr] \\
    &= \EE \sum_{i=1}^{N_1} \EE[\Xi'_i(A \times I) \mvert (X_{1i},T_{1i})] \\
    &= \EE \int_{\XC \times \RR_+} M_1^{cl}(A \times I \mvert (x,t)) \; \Psi_{01}(\dif x \dif t) \\
    &= \int_{\XC \times \RR_+} M_1^{cl}(A \times I \mvert (x,t)) \, g(y,x,t-s) \dif x \dif t,
    \end{aligned}
  \end{equation}
  where we used Campbell's formula for the last line. 

  We can now repeatedly apply \eqref{eq:hawkes_expection_recursion} to itself to obtain a candidate for a solution. Since $\int_{\XC} \int_{\RR_+} g(y,x,t-s) \, g^{\ast(k-1)}(x,x',t'-t) \, dx \, dt = g^{\ast k}(y,x',t'-s)$, we get an infinite series %
  \begin{equation} \label{eq:hawkes_expection_recursion_sol}
  \begin{aligned}
      M_1^{cl}(A \times \RR_+ \mvert (y,s)) 
      &= \mathds{1}(y \in A) \begin{aligned}[t]
      		&+\int_{A \times \RR_+} \, g(y,x,t-s) \; dx \; dt \\
      &+ \int_{A \times \RR_+} \int_{\XC \times \RR_+} g(y,x,t-s) \, g(x,x',t'-t) \; dx \; dt \; dx' \; dt' \\[0.5mm]
      &+ \ldots 
      \end{aligned} \\
      &=  \int_{A \times \RR_+} \sum_{k=0}^{\infty} g^{*k}(y,x,t-s) \; dx \; dt.
  \end{aligned}
  \end{equation}
  The infinite series of successive spatio-temporal convolutions $(x,t) \mapsto \sum_{k=0}^{\infty} g^{*k}(y,x,t-s)$ is a well-defined $L_1$ function since
  \begin{align} \label{eq:hawkes_expectation_finite}
    \int_{\XC \times \RR_+} \sum_{k=0}^{\infty} g^{*k}(y,x,t-s) \; dx \; dt
    = \sum_{k=0}^{\infty} \int_{\XC \times \RR_+} g^{*k}(y,x,t-s) \; dx \; dt \leq \sum_{k=0}^{\infty} \nubar^k = \frac{1}{1-\bar{\nu}} < \infty
  \end{align}
due to $\nubar = \sup_{y \in \XC} \int_{\XC \times \RR_+} g(y,x,h) \, dx \, dh < 1$. It is clear from the construction that there can be no other solution, and it is immediately checked that the right-hand side of \eqref{eq:hawkes_expection_recursion_sol} solves the integral equation~\eqref{eq:hawkes_expection_recursion}. 
  Setting $s=0$ and integrating over $y \in \XC$ w.r.t.\ the finite measure $\Lambda(dy) = \lambda(y) \, dy$ yields that \eqref{eq:L1_cond_poisson_cluster} is satisfied and thus that~\eqref{eq:mom1_hawkes} holds (using~\eqref{eq:hawkes_expectation_finite} for the last inequality) once we have established also \eqref{eq:L2_cond_poisson_cluster}.

For bounding the factorial second moment measure $M^{cl}_{[2],\XC}(\,\cdot \mvert (y,0))$, we can make direct use of the known form of the factorial second moment measure of a purely temporal stationary Hawkes process with excitation function $\gbar_{\temp}$. In each generation $k$ of the general branching construction of the progeny $\Xi'$ of $Z_0=\delta_{(y,0)}$, we add up independent $\pop(g(X_{ki},x,t-T_{ki}) \, dx \, dt)$-processes $\Psi_{ki}$, $1\leq i \leq N_k$. The time-marginal of such a process is $\Psi_{ki}(\XC \times \cdot) \sim \pop\bigl((\int_{\XC} g(X_{ki},x,t-T_{ki}) \, dx) \, dt)\bigr)$. Using Assumption~\ref{ass:subprob_temporal_domination}, we can couple $\Psi_{ki}(\XC \times \cdot)$ with a temporal Poisson process $\bar{\Psi}_{ki} \sim \pop(\gbar_{\temp}(t-T_{ki}) \, dt)$ such that $\Psi_{ki}(\XC \times \cdot) \leq \bar{\Psi}_{ki}$, $1\leq i \leq N_k$. (This coupling can be achieved by adding, possibly on an enlarged probability space, an independent Poisson process whose intensity is the pointwise difference of the intensities of the two processes.) In addition, we typically require further offspring processes $\bar{\Psi}_{ki}$, $N_k+1 \leq i \leq \bar{N}_k$, say, of parent points in $\bar{\Psi}_{k-1,i}$ that are not present in $\Psi_{k-1,i}(\XC \times \cdot)$. In this way, we obtain the total progeny $\bar{\Xi}'$ of $\bar{Z}_0=\delta_{0}$ based on the excitation function $\gbar_{\temp}$ as $$\bar{\Xi}' := \sum_{k=0}^{\infty} \sum_{i=1}^{\bar{N}_k} \bar{\Psi}_{ki} \geq \sum_{k=0}^{\infty} \sum_{i=1}^{N_k} \Psi_{ki}(\XC \times \cdot) = \Xi'(\XC \times \cdot).$$

Denoting the first and second factorial moment measures of $\bar{\Xi}'+\delta_{0}$ by $\Mbar^{cl}_{1}$ and $\Mbar^{cl}_{[2]}$, respectively (without explicit conditioning on 0), we can bound the second factorial moment measure of the whole cluster $\Xi'+\delta_{(y,0)}$, for $A \in \BC(\XC)$ and $I \in \IC$, as
\begin{align*}
  M^{cl}_{[2], A}(\varphi^{-1}(\RR_+ \times I) \mvert (y,0)) &\leq M^{cl}_{[2], \XC}(\varphi^{-1}(\RR_+ \times I) \mvert (y,0)) \\
  &\leq \Mbar^{cl}_{[2]}(\varphi^{-1}(\RR_+ \times I)) \\
  &= \frac{1}{1-\nubar_{\temp}} (\Mbar^{cl}_{1})^2(\varphi^{-1}(\RR_+ \times I))  - \delta^{(1)}(I)
\end{align*}
by Equation (6.3.25) in \cite{daley2003introduction}. %

Since
\begin{equation} \label{eq:domcluster_expectation_finite}
  \Mbar^{cl}_{1}(\RR_+) = \int_{\RR_+} \bar{G}_{\temp}(t) \, dt \leq \frac{1}{1-\bar{\nu}_{\temp}} < \infty
\end{equation}
is obtained if we apply \eqref{eq:hawkes_expection_recursion_sol} and \eqref{eq:hawkes_expectation_finite} to $\bar{\Xi}'+\delta_{0}$ (i.e., using a trivial spatial component), we see that \eqref{eq:L2_cond_poisson_cluster} holds.

\Cref{prop:E_and_Var_for_PCP} yields therefore that $\Xi$ is a well-defined, %
time-stationary and $L^2$ point process satisfying \eqref{eq:mom1_hawkes} and 
  \begin{align*} 
    \breve \gamma_{[2],\XC} ([0,t]) &= \int_{\XC} M^{cl}_{[2],\XC}\bigl(\varphi^{-1}(\RR_+ \times [0,t]) \bigm| (y,0) \bigr) \; \Lambda(dy) \\
    &\leq \frac{m_\lambda}{1-\nubar_{\temp}} \Bigl((\Mbar^{cl}_{1})^2(\varphi^{-1}(\RR_+ \times [0,t])) - 1 \Bigl) \\
    &= \frac{m_\lambda}{1-\nubar_{\temp}} \, \biggl( \int_0^t \int_0^{\infty} \bar{G}_{\temp}(s) \, \bar{G}_{\temp}(s+r) \; ds \; dr - 1\biggr),
  \end{align*}
which by \eqref{eq:domcluster_expectation_finite} is bounded in $t$, and by \Cref{rem:RCov_XC}$(ii)$ yields that Assumption~\ref{ass:Xi_mixing_cov_beta} holds with $\beta=0$.
\end{proof}

\section{Proofs and background for the lower bounds on empirical KRD}\label{sec:proofs_LowerBounds}
\subsection{Proofs and background for lower bounds for plug-in estimators} \label{sec:MAD_lb}

\begin{proof}[Proof of \Cref{lem:lowerBoundPlugIn}]
  For Assertion $(i)$ we note from \Cref{prop:kr_properties}$(iv)$ that, deterministically, 
  \begin{align*}
    \KR_{p,C}(\hat \mu_t, \mu) \geq \frac{C}{2}\left|m_{\hat \mu_t} - m_\mu\right|^{1/p}.
  \end{align*}
  Taking expectations yields the inequality.

  To show Assertion $(ii)$, we define $K\coloneqq m_{\mu}\vee m_{\hat \mu_t}$ and invoke \Cref{prop:linkOT}$(i)$ with augmented measures $\tilde \mu_{t,K}, \tilde \mu_K$, which yields
  \begin{align*}
    \KR_{p,C}^p(\hat \mu_t, \mu) &= \OT_{\tilde d^p_{p,C}}(\tilde \mu_{t,K}, \tilde \mu_K)\\
    &= \inf_{\pi\in \Pi_{=}(\tilde \mu_{t,K}, \tilde \mu_K)}\int_{\tilde \XC\times \tilde \XC} \tilde d^p_{p,C}(x,x')\dif\pi(x,x')\\
    &\geq\inf_{\pi\in \Pi_{=}(\tilde \mu_{t,K}, \tilde \mu_K)} \left(\frac {C^p}{2}\wedge \delta^p\right) \pi( \XC_1\times (\XC_2\cup\{\mathfrak{d}\}) \cup \XC_2 \times (\XC_1\cup\{\mathfrak{d}\}))\\
    &= \left(\frac {C^p}{2}\wedge \delta^p\right)\left( |\tilde \mu_{t,K}(\XC_1) - \tilde\mu_{K}(\XC_1)| + |\tilde \mu_{t,K}(\XC_2) - \tilde\mu_{K}(\XC_2)| \right).
  \end{align*}
  Taking the $p$-th roots and taking expectations yields the claim.

   Finally, for Assertion $(iii)$ we invoke \citet[Proposition 6]{weed2019sharp} which asserts for $\mu\in \MCsX$ if $\NC(\epsilon, \supp(\mu), d\wedge C)\geq \epsilon^{-\alpha}$ for all $\epsilon< \epsilon'$ that any finitely supported measure $\nu$ with $n\geq {\epsilon'}^{-\alpha}$ support points and $m_{\nu} = m_{\mu}$ fulfills
  \begin{align*}
    \KR_{p,C}(\mu, \nu) = \OT^{1/p}_{d^p\wedge C^p}(\mu, \nu) &\geq m_{\mu}^{1/p} 4^{-1} n^{-1/\alpha}.
  \intertext{
  Moreover, when $m_{\nu} < m_{\mu}$, then the measure $\tilde \nu \coloneqq \nu  + \delta_{\mathfrak{d}}(m_{\mu}-  m_{\nu})$ on $\tilde \XC$ has $n+1$ support points. Further, if $\NC(\epsilon, \supp(\mu), \tilde d_{p,C})\geq \epsilon^{-\alpha}$ for all $\epsilon< \epsilon'$, then it follows for $n\geq {\epsilon'}^{-\alpha}$ that }
    \KR_{p,C}(\mu, \nu) = \OT^{1/p}_{\tilde d_{p,C}^p}(\mu, \tilde \nu  ) &\geq m_{\mu}^{1/p}4^{-1} (n+1)^{-1/\alpha}.
  \intertext{
  Likewise, when $m_{\mu}>m_{\nu}$ it follows for $\tilde \mu \coloneqq \mu + \delta_{\mathfrak{d}}(m_{\nu} - m_{\mu})$ in case $\NC(\epsilon, \supp(\mu), \tilde d_{p,C})\geq \epsilon^{-\alpha}$ for all $\epsilon< \epsilon'$ also that $\NC(\epsilon, \supp(\tilde \mu), \tilde d_{p,C})\geq \epsilon^{-\alpha}$. Hence, under this assumption, we infer for $n\geq {\epsilon'}^{-\alpha}$ that }
    \KR_{p,C}(\mu, \nu) = \OT^{1/p}_{\tilde d_{p,C}^p}(\tilde \mu, \nu) &\geq m_{\nu}^{1/p} 4^{-1} n^{-1/\alpha}.
  \end{align*}
  Meanwhile, for $n\leq {\epsilon'}^{-\alpha}$, we deduce by approximating an $n$-atomic measure by a discrete measure with $\lceil{\epsilon'}^{-\alpha}\rceil$ support points and due to continuity of the $(p,C)$-KRD under weak convergence (\Cref{prop:weak_conv_KRD}) that 
  \begin{align*}
    \KR_{p,C}(\mu, \nu) \geq (m_{\mu}\vee m_{\nu})^{1/p} \epsilon'.
  \end{align*}

  It remains to confirm that we can choose $\epsilon'$ proportional to $C$ for all three cases. Indeed, based on \Cref{lem:impactCoveringC}$(i)$ it follows for $\epsilon< C$ that $\NC(\epsilon, \supp(\mu), d) = \NC(\epsilon, \supp(\mu), d\wedge C)$. Moreover, when $\epsilon< C/2^{1/p}$, then it also follows that $\NC(\epsilon, \supp(\tilde \mu), d\wedge C) = 1+ \NC(\epsilon, \supp(\mu), d\wedge C)\geq \epsilon^{-\alpha}$. Choosing  $\epsilon' = C/2^{1/p}$ therefore yields by the above considerations the validity of the lower bound 
  \begin{align*}
      \KR_{p,C}(\mu, \nu) &\geq (m_{\mu}\vee m_{\nu})^{1/p} 4^{-1} (n+1)^{-1/\alpha}.
  \end{align*}
  Plugging in $\hat \mu_t$ for $\nu$ and taking expectations on both sides yields the assertion.
\end{proof}

\subsection{Background and construction of ST point processes in Example \ref{ex:lb_beta_positive}}

To proceed with \Cref{ex:lb_beta_positive}$(i)$, let us first introduce the Hermite rank of a function. 
\begin{definition}
	Let $Z$ be a standard normal random variable and let $\gamma(x) = (2 \pi)^{-1 / 2} e^{-x^2 / 2}x$ be the standard Gaussian density on $\RR$. With slight abuse of notation denote by $L^2(\gamma) = \{G : \EE[G(Z)^2] < \infty\}$ the $L^2$-space with Gaussian measure. It is known that $\{H_m(\cdot)/\sqrt{m!} \,\colon\, m \in \NN\}$, where $H_m$ are the (probabilists') Hermite polynomials, forms an orthonormal basis of $L^2(\gamma)$. The \emph{Hermite rank} $k$ of a function $G : \RR \to \RR$ is then defined as
	\begin{equation*}
		k = \inf\left\{m \in \NN: \EE[G(Z)H_m(Z)] = \int_\RR G(x) H_m(x) \gamma(x)\dif x  \neq 0 \right\}.
	\end{equation*}
\end{definition}

\begin{proposition}[Existence of a binary function with a given rank] \label{prop:G_exists}
Let $k \in \NN$. There exists a measurable function $G: \RR \to \{0,1\}$ with Hermite rank $k$. Moreover, for a given stationary Gaussian process $(\tau_i)_{i\in \ZZ}$ with $\Cov(\tau_1, \tau_t) \asymp t^{2H-2}$, it holds that
	\begin{equation} \label{eq:VarG_expression}
		\Var\left(\sum_{i=1}^t G(\tau_i) \right) \asymp t \,\Var(G(\tau_1)) + \frac{2}{(a+1)(a+2)} t^{(2H-2)k +2}.
	\end{equation}
\end{proposition}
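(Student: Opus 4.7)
The plan is to split the argument into two tasks: (i) producing a binary-valued function with Hermite rank exactly $k$, and (ii) expanding the variance of the partial sum $\sum_{i=1}^t G(\tau_i)$ in the Hermite chaos basis and extracting the long-range contribution.

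For task (i), I would take $G = \mathds{1}_A$ with $A \subset \RR$ a finite union of intervals whose endpoints are tuned to cancel the first $k-1$ Hermite coefficients. Writing $c_m(A) = \int_A H_m(x)\gamma(x)\, dx$, a union of $k$ intervals provides $2k$ endpoint parameters against the $k-1$ real-analytic equations $c_1(A) = \cdots = c_{k-1}(A) = 0$; a standard transversality or implicit-function-theorem argument perturbs a seed configuration (e.g.\ $A_0 = \{H_k > 0\}$) to satisfy the constraints while preserving $c_k(A) \neq 0$, the latter being a codimension-one non-vanishing condition. This construction is classical in the long-range-dependence literature, and for the existence statement one may directly invoke \cite{bai2018instability}.

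For task (ii), I would centre $G$ and expand it in Hermite polynomials, $G(x) - \EE[G(Z)] = \sum_{m \geq k} (c_m/m!)\, H_m(x)$, with $c_k \neq 0$ by (i). The Gaussian orthogonality $\EE[H_m(\tau_i) H_n(\tau_j)] = \delta_{mn}\, m!\, \rho(i,j)^m$, where $\rho(i,j) \coloneqq \Cov(\tau_i,\tau_j)$, together with Parseval's identity for the diagonal contribution yields
\begin{equation*}
    \Var\Bigl(\sum_{i=1}^t G(\tau_i)\Bigr) = t\,\Var(G(\tau_1)) + \sum_{m \geq k} \frac{c_m^2}{m!} \sum_{1 \leq i \neq j \leq t} \rho(i,j)^m.
\end{equation*}
The leading $m=k$ term is evaluated by a Riemann-sum approximation: using $\rho(i,j) \asymp |i-j|^{2H-2}$ for large $|i-j|$,
\begin{equation*}
    \sum_{i \neq j} \rho(i,j)^k \asymp 2\int_0^t (t-h)\, h^{(2H-2)k}\, dh = \frac{2}{((2H-2)k+1)\bigl((2H-2)k+2\bigr)}\, t^{(2H-2)k+2},
\end{equation*}
matching the stated rate under the identification $a = (2H-2)k$, which in the relevant regime $k < 1/(2(1-H))$ lies in $(-1,0)$.

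The main obstacle is to show that the higher-order contribution $\sum_{m>k} (c_m^2/m!) \sum_{i\neq j} \rho(i,j)^m$ is dominated by the $m=k$ term plus $t\,\Var(G(\tau_1))$. Using $|\rho(i,j)| \leq 1$ and the envelope $|\rho(i,j)|^m \leq |\rho(i,j)|^{k+1}$ for $m>k$, the remainder is bounded in absolute value by $\bigl(\sum_{m>k} c_m^2/m!\bigr) \sum_{i\neq j} |\rho(i,j)|^{k+1}$; the prefactor is finite by $\ell^2$-summability of the Hermite coefficients of the bounded function $G$, and $\sum_{i\neq j} |\rho(i,j)|^{k+1}$ is either $O(t^{(2H-2)(k+1)+2})$, $O(t\log t)$, or $O(t)$ depending on whether $(2H-2)(k+1)$ exceeds, equals, or falls below $-1$. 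Since $(2H-2)(k+1) < (2H-2)k$, each of these quantities is absorbed in $t + t^{(2H-2)k+2}$, and positivity of $c_k^2/k!$ ensures a strictly positive leading constant, so combining the bounds yields the claimed two-sided equivalence $\asymp$.
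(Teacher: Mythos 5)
Your part (ii) is essentially an unpacking of what the paper achieves by citing Bai and Taqqu's Lemma~2.4 together with the paper's own \Cref{lem:sum_Var_and_cov}: you expand $G - \EE[G(Z)]$ in the Hermite chaos, isolate the $m=k$ term via orthogonality, Riemann-approximate $\sum_{i\neq j}\rho(i,j)^k$, and dominate the $m>k$ tail by $|\rho|^{k+1}$. That argument is sound (and correctly handles all three sub-cases for $(2H-2)(k+1)$ relative to $-1$), so you arrive at the same two-sided estimate by a more self-contained route.

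Part (i), however, has a genuine gap and also diverges from the paper's method. Your transversality/IFT sketch starts from the seed $A_0 = \{H_k > 0\}$, but this seed does not satisfy the constraints $c_1(A_0) = \cdots = c_{k-1}(A_0) = 0$ — indeed $\{H_k>0\}$ is generally correlated with lower-order polynomials — so there is no base point at which to apply the implicit function theorem, and the claimed surjectivity of the constraint Jacobian at a feasible point is asserted but never verified. You also say one may "directly invoke" Bai and Taqqu for existence, but their Remark~2.3 only tells you how to pass from a $\{-1,1\}$-valued function of rank $k$ to a $\{0,1\}$-valued one; it does not construct the $\{-1,1\}$-valued function in the first place. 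The paper's construction sidesteps all of this with a functional-analytic argument: Hahn--Banach separation yields a bounded functional orthogonal to $H_1,\dots,H_{k-1}$ and non-orthogonal to $H_k$, $L^1$--$L^\infty$ duality represents it as an $L^\infty$ function, and then maximizing the linear functional $\langle \cdot, H_k\gamma\rangle$ over the (weak-* compact, convex) constraint set produces an extreme point, which must take values in $\{-1,1\}$ a.e.\ because those are the extreme points of the $L^\infty$ unit ball under finitely many linear constraints. That argument avoids any differentiability or rank condition on the constraint map and gives a clean existence proof where your perturbation sketch leaves the key transversality unproved.
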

\begin{proof}
	It suffices to find a function $G': \RR \to \{-1, 1\}$ which is orthogonal to $H_1, H_2, \dots, H_{k-1}$ and not orthogonal to $H_k$ with respect to $L^2(\gamma)$ since by \citet[Remark 2.3]{bai2018instability} the function $G\coloneqq (G'+1)/2 \colon \RR\to \{0, 1\}$ will then also fulfill this property. 

Consider the set of functions $(\tilde g_i)_{i = 1, \dots, k}$ on $[-\pi, \pi]$ defined by $g_i'(x) \coloneqq g_i(\tan(x))$ for $g_i(y) = (1+y^2)(H_i(y))\gamma(y)$ and note that they are continuous and bounded on $[-\pi, \pi]$ due to the exponential decay of the Gaussian kernel $\gamma$. Then, by the Hahn-Banach separation theorem \citep[Theorem 3.5]{rudin1991functionalAnalysis}, since $g_k'$ is not in the span of $\{g_1, \dots, g_{k-1}\}$, there exists a continuous linear functional $\Psi\colon L^1([-\pi, \pi])\to \RR$ such that 
\begin{align*}
  \Psi(\tilde g_i) = 0 \quad \forall i = 1, \dots, k-1, \quad \Psi(\tilde g_k) = 1.
\end{align*}
Further, since the dual space of $L^1([-\pi, \pi])$ is isomorphic to $L^\infty([-\pi, \pi])$, it follows that there exists some bounded function $\tilde G\in L^\infty([-\pi, \pi])$ such that $\Psi(f) = \int_{-\pi}^\pi f(x) \tilde G(x) \dif x$ for all $f\in L^1([-\pi, \pi])$. Next, consider the function $\tilde G' \coloneqq \tilde G/\norm{\tilde G}_{L^\infty([-\pi, \pi])}$, and note that 
\begin{align*}
  \int_{-\pi}^\pi \tilde g_i(x) \tilde G'(x) \dif x = 0 \quad \forall i = 1, \dots, k-1, \quad \int_{-\pi}^\pi \tilde g_k(x) \tilde G'(x) \dif x = 1/\|\tilde G'\|_{L^\infty([-\pi, \pi])} > 0.
\end{align*}
Next, we consider the maximization problem, 
\begin{align*}
  &\max \int \tilde g_{k} (x) \tilde G''(x) \dif x \\ &\text{subject to } \int_{-\pi}^\pi \tilde g_i(x)  \tilde G''(x) \dif x  = 0\,  \text{ for all }  i =1, \dots, k-1, \quad \|\tilde G''\|_{L^\infty([-\pi, \pi])} \leq 1.
\end{align*}
The objective is convex and continuous with respect to the weak-$*$-topology and the constraint set is convex and weak-$*$-compact by the Banach-Alaoglu-theorem. Consequently, there exists a maximizer $\tilde G''' \in L^\infty([-\pi, \pi])$ which is an extremal point of the constraint set, thus fulfills $|\tilde G'''(x)| = 1$ for Lebesgue almost all $x \in [-\pi, \pi]$, and without loss of generality we may also assume that it fulfills this relation everywhere on $[-\pi, \pi]$. Further, it holds 
\begin{align*}
  \int_{-\pi}^\pi \tilde g_k(x) \tilde G'''(x) \dif x = 1/\|\tilde G'\|_{L^\infty([-\pi, \pi])} \geq \int_{-\pi}^\pi \tilde g_k(x) \tilde G'(x) \dif x = 1/\|\tilde G'\|_{L^\infty([-\pi, \pi])}>0.
\end{align*}
Setting $G' \coloneqq \tilde G''' \circ \arctan \colon \RR \to \{-1,1\}$, we conclude via a change of variables $x = \tan(y)$ that
$$
  \int_{\RR} G'(y) H_i(y) \gamma(y)\dif y =  \int_{\RR} G'(y) \frac{g_i(y)}{1+y^2} \dif y = \int_{-\pi}^\pi \tilde G'''(x) \tilde g_i(x) \dif x \begin{cases} = 0& \text{ if } i \in\{1, \dots, k-1\}, \\ > 0 & \text{ if } i = k, \end{cases}
$$
hence $G'$ has a Hermite rank at least $k$. %
	
	The asymptotic equivalence \eqref{eq:VarG_expression} follows by combining Lemma 2.4 in \cite{bai2018instability} and \Cref{lem:sum_Var_and_cov}.
\end{proof}

\begin{proposition}[\cite{bai2018instability}] \label{prop:gaussian_clt}
	Let $(\tau_i)_{i \in \NN}$ be a stationary sequence of Gaussian random variables such that $\Cov(\tau_t, \tau_1) \asymp t^{2H - 2}$ for some $H \in (1/2, 1)$. Let $G : \RR \to \{0,1\}$ have a Hermite rank $k \geq 1$ and suppose that $(H-1)k + 1 \in (1/2, 1)$. Then, for $t \in \NN$ we have
	$$
		\frac{1}{t^{(H-1)k + 1}} \sum_{i=1}^{t}(G(\tau_i) - \EE[G(\tau_1)]) \konvD c \, \ZC_{H,k},
	$$
	where $\konvD$ denotes convergence in distribution, $c>0$ and $\ZC_{H, k}$ is the law of a well-defined $L^2$-random variable $Z_{H, k}(1)$, where $(Z_{H, k}(t))_{t>0}$ denotes the $k$-th order Hermite process with self-similarity index $H$, see \citet[Section 2.2]{tudor2023non} for its definition.
	\end{proposition}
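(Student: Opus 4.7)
The statement is a direct instance of the classical non-central limit theorem (NCLT) of Dobrushin--Major and Taqqu, applied to bounded (in particular binary) subordinated functions of a long-range dependent stationary Gaussian sequence, and rigorously carried out in \cite{bai2018instability}. My plan is therefore to reduce it to the NCLT for Hermite polynomials via the Hermite expansion of $G$ and to control the tail.

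First, since $G$ is bounded we have $G \in L^2(\gamma)$, so $G$ admits the Hermite expansion
\[
  G(x) = \sum_{m=0}^{\infty} \frac{c_m}{m!} H_m(x), \qquad c_m = \EE[G(Z) H_m(Z)],
\]
converging in $L^2(\gamma)$. By the Hermite rank assumption, $c_m = 0$ for $m < k$ and $c_k \neq 0$. Consequently
\[
  G(\tau_i) - \EE[G(\tau_1)] = \frac{c_k}{k!} H_k(\tau_i) + \sum_{m > k} \frac{c_m}{m!} H_m(\tau_i),
\]
and I will treat the leading term and the tail separately.

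Next, the NCLT of Dobrushin--Major--Taqqu (in the form applicable to a single Hermite polynomial of a long-range dependent stationary Gaussian sequence, under the condition $k(2H-2) > -1$, which is exactly equivalent to $(H-1)k+1 \in (1/2, 1)$) yields
\[
  \frac{1}{t^{(H-1)k + 1}} \sum_{i=1}^{t} H_k(\tau_i) \konvD c' \, Z_{H,k}(1)
\]
for a constant $c' > 0$ depending on $k$, $H$ and the asymptotic covariance constant, and with $Z_{H,k}$ the Hermite process of order $k$ represented as a $k$-fold Wiener--It\^o integral (hence a well-defined $L^2$ variable). Multiplying by $c_k/k!$ identifies the target law up to the constant $c = c_k \, c'/k!$.

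For the tail, I will show that $t^{-((H-1)k+1)} \sum_{i=1}^{t} \sum_{m>k} (c_m/m!) H_m(\tau_i) \to 0$ in $L^2$. Using orthogonality of Hermite polynomials and the diagram formula $\EE[H_m(\tau_i) H_m(\tau_j)] = m!\, (\Cov(\tau_i,\tau_j))^m$, the variance of $\sum_{i=1}^t H_m(\tau_i)$ is of order $t$ when $m(2H-2) < -1$ and of order $t^{1+m(2H-2)}$ otherwise; in both regimes the exponent is strictly less than $2 + k(2H-2) = 2((H-1)k+1)$ since $2H-2 < 0$ and $m > k$. Using the $L^2(\gamma)$-convergence $\sum_{m>k} c_m^2/m! < \infty$ to sum these contributions uniformly in $t$, the tail is $o_{L^2}(1)$ after renormalization. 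A Slutsky-type argument then combines the two pieces to obtain the stated convergence in distribution.

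The main obstacle is the uniform-in-$m$ control needed to sum the $L^2$-bounds over $m>k$ while exploiting the square-summability of the Hermite coefficients of $G$; this is precisely the delicate part in \cite{bai2018instability}, and for the present application it may simply be invoked verbatim.
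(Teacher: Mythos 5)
The paper does not give a proof of this proposition: it is stated with the citation ``\cite{bai2018instability}'' and is invoked as a known external result (the Dobrushin--Major/Taqqu non-central limit theorem for subordinated long-range-dependent Gaussian sequences). So there is nothing in the paper to compare your proof against line by line; what you have written is a proof sketch for a theorem the paper only cites.

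That said, your sketch is the correct and standard route, and the ingredients are the right ones: Hermite expansion of $G$, the NCLT for the leading Hermite polynomial $H_k$ under the exponent condition $(H-1)k+1\in(1/2,1)$ (equivalently $k(2H-2)>-1$, the other endpoint being automatic for $H<1$, $k\geq 1$), tail control in $L^2$, and Slutsky. On the one point you flag as ``delicate'' — the uniform-in-$m$ control over the tail — the standard resolution is cleaner than your phrasing suggests and does not require matching regimes term by term. Normalize so that $\Var(\tau_1)=1$, so $|r(s)|:=|\Cov(\tau_1,\tau_{1+s})|\leq 1$; then by orthogonality of Hermite chaoses,
\begin{align*}
\Var\Bigl(\sum_{i=1}^t\sum_{m>k}\tfrac{c_m}{m!}H_m(\tau_i)\Bigr)
 &= \sum_{m>k}\frac{c_m^2}{m!}\sum_{i,j=1}^t r(i-j)^m
 \leq \Bigl(\sum_{m>k}\frac{c_m^2}{m!}\Bigr)\sum_{i,j=1}^t |r(i-j)|^{k+1},
\end{align*}
using $|r|^m\leq |r|^{k+1}$ for $m\geq k+1$. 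The first factor is $\leq\|G\|_{L^2(\gamma)}^2<\infty$, and the double sum is $O\bigl(t^{(k+1)(2H-2)+2}\vee t\log t\bigr)$, which is $o\bigl(t^{k(2H-2)+2}\bigr)$ precisely because $2H-2<0$ and $k(2H-2)+2>1$. This dispatches the tail without any further appeal to Bai--Taqqu. With that tightened, your argument is a self-contained derivation of what the paper outsources to the reference.
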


To provide the relevant background for the point process described in \Cref{ex:lb_beta_positive}$(ii)$, we put it into the general context of \emph{delayed} (or \emph{persistent}) renewal processes (see, e.g., Section XI.4 in \cite{fellerIntroductionProbabilityTheory1971}). 
\begin{proposition} \label{prop:renewal_mean_var}
Assume that $S_2, S_3, \dots$ is a sequence of  i.i.d.\ random variables with a common cumulative distribution function $F$ and $\EE[S_2] =1/ m_\mu > 0$ and that independent of them $S_1$ is a random variable with a cumulative distribution function $G$ given by $ \ G(t) = m_\mu \int_{0}^t (1 - F(s)) \dif s$. For $t \geq 0$, we define $N_t \coloneqq 0 \vee \max\{n \in \NN : \sum_{i=1}^n S_i \leq t\}$. Then, 
	\begin{align}
	\EE[N_t] &= m_\mu t  \quad \text{and} \label{eq:delayed_renewal_expe} \\
	\Var(N_t) &= m_\mu t + 2  m_\mu  \int_{0}^t (U^0(s) -  m_\mu  s ) \dif s, \label{eq:delayed_renewal_var} 
\end{align}
where $U^0(t)= \sum_{k=1}^\infty F^{k*}(t)$. If $F$ is such that $\overline{F}(t) := 1 - F(t) \asymp t^{-\alpha} L(t)$ for some slowly varying function $L$, bounded on every compact subset of $[0, \infty)$ and $\alpha \in (0, 2)$, then there exists $t_0 > 0$ such that, for all $t \geq t_0$,
\begin{equation*}
	\Var(N_t) = m_\mu t + 2 m_\mu^{3} t^{3-\alpha} L(t).
\end{equation*}
\end{proposition}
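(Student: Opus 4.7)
The proof divides naturally into three stages.

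For $\EE[N_t] = m_\mu t$, I would set $T_n = S_1 + \ldots + S_n$ and write
\begin{equation*}
U(t) := \EE[N_t] = \sum_{n\geq 1} \PP(T_n \leq t) = G(t) + (G \ast U^0)(t),
\end{equation*}
with $U^0 = \sum_{k \geq 1} F^{k\ast}$ the ordinary renewal function of $(S_i)_{i \geq 2}$. Passing to Laplace transforms of the densities, the definition $g(t) = m_\mu(1-F(t))$ gives $\tilde g(s) = m_\mu(1-\tilde f(s))/s$, while the standard identity $\tilde u^0(s) = \tilde f(s)/(1-\tilde f(s))$ for the renewal density holds. Consequently $\tilde U(s) = \tilde g(s)(1+\tilde u^0(s))/s = m_\mu/s^2$, inverting to $U(t) = m_\mu t$. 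Equivalently, this reflects the classical fact that a delayed renewal process initialized by the equilibrium distribution $G$ is a stationary point process on $[0,\infty)$ with intensity $m_\mu$.

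For the variance formula \eqref{eq:delayed_renewal_var}, I would exploit the monotonicity $T_n \geq T_m$ for $n\geq m$ to expand
\begin{equation*}
N_t^2 = \Bigl(\sum_{n\geq 1}\mathds{1}\{T_n \leq t\}\Bigr)^2 = N_t + 2 \sum_{1 \leq m < n}\mathds{1}\{T_n \leq t\},
\end{equation*}
so that $\EE[N_t^2] = \EE[N_t] + 2 V(t)$ with $V(t) := \sum_{k \geq 1} k\,(G \ast F^{k\ast})(t)$. The Laplace transform of $v := V'$ evaluates, using $\sum_{k\geq 1} k x^k = x/(1-x)^2$, to $\tilde v(s) = \tilde g(s)\tilde f(s)/(1-\tilde f(s))^2 = m_\mu \tilde f(s)/\bigl(s(1-\tilde f(s))\bigr) = m_\mu \tilde U^0(s)$, so $v = m_\mu U^0$ and $V(t) = m_\mu \int_0^t U^0(s)\,\dif s$. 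Subtracting $\EE[N_t]^2 = m_\mu^2 t^2 = 2 m_\mu \int_0^t m_\mu s\,\dif s$ then delivers \eqref{eq:delayed_renewal_var}.

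For the asymptotic bound, the plan is to invoke the classical heavy-tailed renewal expansion: for $\overline F(t) \asymp t^{-\alpha} L(t)$ with $\alpha \in (1,2)$ and $L$ slowly varying, the small-$s$ expansion $1 - \tilde f(s) = s/m_\mu + c\, s^{\alpha} L(1/s) (1+o(1))$ obtained by Karamata--Tauberian theorems leads, via elementary algebra applied to $\tilde U^0(s) = \tilde f(s)/\bigl(s(1-\tilde f(s))\bigr)$, to
\begin{equation*}
U^0(t) - m_\mu t \asymp t^{2-\alpha} L(t) \quad \text{as } t \to \infty.
\end{equation*}
Integrating via Karamata's direct theorem gives $\int_0^t (U^0(s) - m_\mu s)\,\dif s \asymp t^{3-\alpha} L(t)$, and multiplying by $2 m_\mu$ through the variance identity from the previous step yields the stated expansion. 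The main obstacle is pinning down the precise leading constant $2 m_\mu^3$: the exponent $3-\alpha$ and the presence of the slowly varying factor fall out cleanly from Karamata--Tauberian arguments, but matching the exact coefficient either requires interpreting the ``$=$'' in the statement as the paper's $\asymp$-notation or absorbing the $\alpha$-dependent constants generated during the Karamata integration into a redefined slowly varying function $L$.
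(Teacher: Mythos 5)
Your proof is correct and follows essentially the same route as the paper, though it is more self-contained in two places. Where the paper simply cites \cite{fellerIntroductionProbabilityTheory1971} for the equilibrium-delay identity $\EE[N_t]=m_\mu t$ and \cite{pekalp2020plug} for the variance formula \eqref{eq:delayed_renewal_var}, you rederive both from scratch: the expectation via the equilibrium-delay Laplace-transform cancellation $\tilde g(s)/(1-\tilde f(s)) = m_\mu/s$, and the variance via the monotonicity decomposition $N_t^2 = N_t + 2\sum_{m<n}\mathds{1}\{T_n\le t\}$ followed by recognizing $\tilde v(s)=m_\mu\tilde U^0(s)$. Both computations are correct (the key observation $\mathds{1}\{T_m\le t\}\mathds{1}\{T_n\le t\}=\mathds{1}\{T_n\le t\}$ for $m<n$ is exactly what makes the second-moment generating identity tractable), and they give a reader the full derivation rather than a pointer. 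For the asymptotic step you use a Karamata--Tauberian small-$s$ expansion of $1-\tilde f(s)$ where the paper cites \cite{teugels1968renewal}; these are essentially equivalent routes to $U^0(t)-m_\mu t \asymp m_\mu^2 t^{2-\alpha}L(t)$, and both then invoke Karamata's integral theorem. You also correctly flag that the ``$=$'' in the concluding display must be read as $\asymp$: the $\alpha$-dependent constants $(\alpha-1)^{-1}(2-\alpha)^{-1}$ and $(3-\alpha)^{-1}$ generated by the Tauberian theorem and the Karamata integration are not absorbed into the stated coefficient $2m_\mu^3$, and indeed the paper's own proof ends with $\Var(N_t)\asymp m_\mu t + 2m_\mu^3 t^{3-\alpha}L(t)$ rather than an equality, so the proposition's display is stated a touch too strongly. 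One further small observation you make implicitly is correct: since $\EE[S_2]<\infty$ is assumed, the tail index must satisfy $\alpha>1$, so the effective range is $\alpha\in(1,2)$ despite the statement writing $(0,2)$.
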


\begin{proof}
	Linearity of $\EE[N_t]$ in $t$ follows by a classical argument \citep{fellerIntroductionProbabilityTheory1971}. From \cite{pekalp2020plug}, we obtain an explicit expression for the variance in \eqref{eq:delayed_renewal_var}. 
	
	Now, assume that $\overline{F}(t) \asymp t^{-\alpha} L(t)$. To bound $U^0(s) -  m_\mu  s$, we use the result by \cite{teugels1968renewal}, that leads to
	$$
	U^0(t) - m_\mu t \asymp \frac{t^{2-\alpha}}{m_\mu^{-2}(\alpha - 1)(2 - \alpha)} \asymp m_\mu^2 t^{2-\alpha} L(t)
$$
as $t \to \infty$. Using Karamata's integral theorem for regularly varying functions, if $L$ is bounded on every compact subset of $[0, \infty)$, we have
$
	\int_0^t (U^0(s) - m_\mu s) \dif s \asymp \frac{t^{3-\alpha}}{3-\alpha} L(t)
$, and we infer for $t \to \infty$ that
\begin{align*}
	\Var(N_t) \asymp m_\mu t + 2 m_\mu^{3} t^{3-\alpha} L(t).&\qedhere
\end{align*}
\end{proof}
Next we show that such a process experiences a degradation of speed of convergence in distribution. 
Put $T_n = \sum_{i=1}^n S_i$ and $T_n' = S_1' + \sum_{i=2}^n S_i$ where $S_1' \eqinlaw S_2$ is independent from $S_2, S_3, \dots$. The following result is classical \citep{gnedenko1968limit}, see also \citet[Theorem 4.5.2]{whitt2002stochastic}.

\begin{theorem} \label{prop:renewal_clt}
	Let $S_1', S_2, S_2, \dots$ be a sequence of i.i.d.\ non-negative random variables with $\EE[S_1'] = 1/m_\mu$ and a common cumulative distribution function $F$ such that $\overline F(t)=t^{-\alpha} L(t)$ for some $\alpha \in (1,2)$ and slowly varying function $L$. Then,
	\begin{equation} \label{eq:th_S_n_distrlimit}
		c_n^{-1} (T'_n - n/m_\mu) \konvD \LC_\alpha
	\end{equation}
	in $\RR$ as $t \to \infty$, where $\LC_{\alpha}$ is a stable non-degenerate random law. The normalizing sequence $(c_n)_{n \in \NN}$ can be chosen such that $c_n = n^{1/\alpha} L_0(n)$ for some slowly varying function $L_0$ (generally different from $L$).
	Moreover, if $\overline F(t) \asymp A t^{-\alpha}$ when $t \to \infty$, we can choose $c_n $ as $ c n^{1/\alpha}$ for some $c>0$.
\end{theorem}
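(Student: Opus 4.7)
The plan is to deduce the statement directly from the classical generalized central limit theorem (GCLT) for sums of i.i.d.\ random variables in the domain of attraction of a stable law of index $\alpha \in (1,2)$; see, e.g., \citet[Chapter~XVII.5]{fellerIntroductionProbabilityTheory1971} or \citet[Theorem~4.5.1]{whitt2002stochastic}. So the bulk of the work is to verify the tail/centering hypotheses of that theorem for the centered summands $X_i := S_i - 1/m_\mu$ (with $X_1 := S_1' - 1/m_\mu$) and to read off the normalization.

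First I would check that $\EE[X_i]=0$ (given) and that the right tail of $X_i$ is regularly varying with index $-\alpha$: writing $\PP(X_i > t) = \overline F(t + 1/m_\mu)$ for $t > 0$ and using $(t+1/m_\mu)^{-\alpha}/t^{-\alpha} \to 1$ together with the slow variation of $L$ (so that $L(t+1/m_\mu)/L(t) \to 1$), one obtains $\PP(X_i>t) \sim t^{-\alpha}L(t)$ as $t \to \infty$. The left tail is trivially controlled because $S_i \geq 0$ forces $X_i \geq -1/m_\mu$, so $\PP(X_i < -t) = 0$ for all $t > 1/m_\mu$. Hence $X_i$ is in the domain of attraction of a one-sided (or, after recentering, totally skewed) $\alpha$-stable law: the tail balance parameters are $p=1$, $q=0$.

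Next I would invoke the GCLT: there exists a sequence $c_n>0$ with $n\PP(|X_i|>c_n) \to $ constant such that $c_n^{-1}(X_1+\dots+X_n) \konvD \LC_\alpha$ for a non-degenerate $\alpha$-stable law. By inverting the relation $n\, c_n^{-\alpha} L(c_n) \asymp 1$, standard arguments in the theory of regularly varying functions (Karamata's theorem and de Bruijn conjugates, \citet[Section~1.5]{bingham1989regular}) yield $c_n = n^{1/\alpha} L_0(n)$ for a slowly varying $L_0$; if moreover $L(t) \to A \in (0,\infty)$, then $L_0$ is asymptotically constant and we may take $c_n = c\,n^{1/\alpha}$. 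The only subtle point is that $X_1 = S_1' - 1/m_\mu$ has the same distribution as the other summands (we are in the i.i.d.\ case of the theorem, not the delayed renewal case), so no separate argument is needed to absorb the first term.

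The main obstacle, if any, is not conceptual but bookkeeping: ensuring that the one-sidedness of the tail is compatible with the non-degeneracy of $\LC_\alpha$ (it is, because $\alpha>1$ together with $\Var(X_i)=\infty$ still yields an $\alpha$-stable limit, possibly totally skewed to the right) and that the slowly varying function in $c_n$ is extracted cleanly via Karamata inversion. Once these points are handled, the displayed convergence~\eqref{eq:th_S_n_distrlimit} follows immediately.
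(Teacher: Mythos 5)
Your proposal is correct and in spirit matches the paper's treatment: the paper does not actually prove this theorem but labels it "classical" and cites Gnedenko--Kolmogorov and \citet[Theorem 4.5.2]{whitt2002stochastic}, precisely the generalized CLT you invoke. Your verification of the hypotheses (shift-invariance of the regularly varying right tail, trivial left tail giving tail-balance $p=1,q=0$, Karamata inversion for the normalizing sequence) fills in the bookkeeping the paper leaves implicit, and correctly notes that $T_n'$ is a sum of genuinely i.i.d.\ summands since $S_1' \eqinlaw S_2$, so no delayed-first-term argument is needed.
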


Observe now that $T_n = T_n' - S_1'+ S_1$, implying that the same convergence result holds for the delayed $T_n$.  Next,  since $N_t=0 \vee \max\{n \in \NN: T_{n} \leq t\}$ 
is a \emph{generalized inverse} of $T_n$, suitable theory can be applied to  obtain (after appropriate renormalization) a distributional limit for $N_t - m_\mu t$. By \citet[Theorem 7.3.1]{whitt2002stochastic} we infer distributional convergence 
\begin{equation}
	\frac{1}{c(t)}(N_t- m_\mu t) \konvD - m_\mu^{-(1+1/\alpha)} \LC_{\alpha}  
\end{equation}
in $\RR$ as $t \to \infty$, where $c(t) = t^{1/\alpha}L_0(t)$ (or, if $1-F(t) \asymp A t^{-\alpha}$ as $t \to \infty$, $c(t) = c \, t^{1/\alpha}$).

\subsection{Proofs for minimax lower bounds}

\begin{proof}[Proof of \Cref{prop:KR_minimax}$(i)$]
We first reduce the left-hand side in \eqref{eq:minimaxLowerBoundMeasure} to a minimax lower bound derived by \cite{singh2018minimax}. To this end, observe that 
\begin{align*}
    \inf_{\tilde \mu_t}\sup_{\mu\in\mathcal{M}(\XC,1)} \EE\left[ \KR_{p,C}(\tilde \mu_t, \mu) \right] &\geq  \inf_{\tilde \mu_t}\sup_{\mu\in\mathcal{P}(\XC)} \EE\left[ \KR_{p,C}(\tilde \mu_t, \mu) \right],
  \end{align*}   
  where in the infimum in taken over measure estimators in $\MC(\XC)$. To reduce the range of the measure estimators to $\PC(\XC)$, we utilize a $(p,C)$-KRD projection of measures. More precisely, based on \citet[Proposition 7.33]{bertsekas1996stochastic} it follows from compactness of $\PC(\XC)$ and continuity of $\KR_{p,C}$ under weak convergence (\Cref{prop:weak_conv_KRD}) that there exists a Borel measurable map $\Psi\colon \MC(\XC) \to \PC(\XC)$ such that $\Psi(\tilde \mu) \in \argmin_{\mu\in\PC(\XC)}\KR_{p,C}(\tilde \mu, \mu)$. 
  Then, it follows for any measure estimator $\tilde \mu_t\in \MCsX$ and $\mu \in \PC(\XC)$ from triangle inequality and the definition for $\Psi$ that 
  \begin{align*}
    \KR_{p,C}( \Psi(\tilde \mu_t), \mu)\leq \KR_{p,C}( \Psi(\tilde \mu_t), \tilde \mu_t) + \KR_{p,C}( \tilde \mu_t, \mu)\leq 2 \KR_{p,C}(\tilde \mu_t, \mu). 
  \end{align*}
  Hence, we can replace the infimum over measure estimators in $\tilde \mu_t\in \MC(\XC)$ by an infimum over probability measure estimators $\overline \mu_t \in \PC(\XC)$ at the cost of a factor $1/2$, 
  \begin{align*}
    \inf_{\tilde \mu_t}\sup_{\mu\in\mathcal{P}(\XC)} \EE\left[ \KR_{p,C}(\tilde \mu_t, \mu) \right] &\geq \frac{1}{2}\inf_{\overline \mu_t}\sup_{\mu\in\mathcal{P}(\XC)} \EE\left[ \KR_{p,C}(\overline \mu_t, \mu) \right].
  \end{align*}
  By conditioning on $N_t$, the right-hand side in the above display can further be rewritten to
  \begin{align*}
    &\frac{1}{2}\inf_{\overline \mu_t}\sup_{\mu\in\mathcal{P}(\XC)} \EE\left[\EE\left[ \KR_{p,C}\left(\overline \mu_t\left( \sum_{i = 1}^{N_t}\delta_{X_i}\right), \mu\right) \middle| N_t \right]\right]\\
    &=\frac{1}{2}\inf_{\overline \mu_t}\EE\left[\sup_{\mu\in\mathcal{P}(\XC)} \EE\left[ \KR_{p,C}\left(\overline \mu_t\left(\sum_{i = 1}^{N_t}\delta_{X_i}\right), \mu\right) \middle| N_t \right]\right],
\end{align*}
where the last equality follows from the fact $N_t\sim \Poi(t)$ for every $\mu \in \PC(\XC)$. In particular, conditionally on $N_t = n\in \NN_0$, it follows that $\sum_{i = 1}^{N_t}\delta_{X_i}$ has the same law as $\sum_{i = 1}^{n}\delta_{X_i'}$ for $n$ i.i.d.\ random variables $X_1', \dots, X_n'\sim \mu$. Further, note that conditionally on $N_t = n$, any measure estimator $\overline \mu_t(\sum_{i = 1}^{n}\delta_{X_i})\in \PC(\XC)$ can also be written as a measurable estimator $\tilde \mu_n(X_1, \dots, X_n)\in \PC(\XC)$, hence it follows that 
\begin{align*}
    \sup_{\mu\in \PC(\XC)}\EE\left[ \KR_{p,C}\left(\overline \mu_t\left(\sum_{i = 1}^{N_t}\delta_{X_i}\right), \mu\right) \middle| N_t = n\right]&\geq \inf_{\overline \mu_n}\sup_{\mu\in \PC(\XC)}\EE_{\mu^{\otimes n}}\left[ \KR_{p,C}\left(\overline \mu_n(X_1, \dots, X_n), \mu\right) \right]\\
   & = \inf_{\overline \mu_n}\sup_{\mu\in \PC(\XC)}\EE_{\mu^{\otimes n}}\left[ \OT_{d^p\wedge C^p}^{1/p}\left(\overline \mu_n(X_1, \dots, X_n), \mu\right) \right],
\end{align*}
where the infimum on the right-hand sides is taken over all measurable estimators $\overline \mu_n\colon \XC^n\to \PC(\XC)$ based on $n$ i.i.d.\ random variables $X_1, \dots, X_n\sim \mu$. 
For $n = 0$, i.e., when no observations are available, it holds $\sup_{\mu, \nu\in \MCsX}\KR_{p,C}(\mu, \nu) \geq(\diam(\XC)\wedge C)$, which implies that the minimax risk is lower bounded by $(\diam(\XC)\wedge C)/2$. Moreover, for $n\in \NN$, according to \citet[Theorem 2]{singh2018minimax} the right-hand side in the above display is lower bounded up to a universal constant as in terms of the packing radius\footnote{\label{fn:packingRadius}Given a metric space $(\XC, d)$ and  $k\in \NN$ the packing radius $\mathcal{R}(k, \XC, d)$ denotes the largest possible minimum distance among $k$ points in $\XC$, i.e., it is defined as $\max_{x_1, \dots, x_k\in \XC }\min_{i\neq j} d(x_i,x_j)$.}
\begin{align}\label{eq:packingLowerbound}
    \sup_{k\in \{2, \dots, 32n\}} \mathcal{R}(k,\XC,d_{p,C}) \left(\frac{k-1}{n}\right)^{1/2p}.
\end{align}
To lower bound the packing radius, we utilize \Cref{lem:impactCoveringC}.  
Choosing $ k = 2$ we see that \eqref{eq:packingLowerbound} is lower bounded by $(C\wedge \diam(\XC))n^{-1/2p}$. Choosing $k = (n+1)$ we obtain (up to a constant which depends on $\alpha, A, \diam(\XC)$) for \eqref{eq:packingLowerbound} the lower bound $C\wedge (n^{-1/\alpha})$. And finally, under $\lceil C^{-\alpha}\rceil\leq n$ we also consider $k = \lceil C^{-\alpha}\rceil+1$ which implies that $\mathcal{R}(k,\XC,d_{p,C})\gtrsim_{\alpha, A, \diam(\XC)} C \wedge (\lceil C^{-\alpha}\rceil^{-1/\alpha}) \gtrsim_\alpha C$ and hence \eqref{eq:packingLowerbound} is lower bounded up to constants which depend on $\alpha$, $A$ and $\diam(\XC)$ by $C \cdot \left(\lceil C^{-\alpha}\rceil/n \right)^{1/2p}\asymp_{p,\alpha} C^{1-\alpha/2p}n^{-1/2p}$. Combining these bounds  with \Cref{lem:PoissonRateLowerbound} yields for $t>0$ sufficiently large and any $C>0$ that 
\begin{align*}
    &\inf_{\tilde \mu_t}\EE\left[\sup_{\mu\in\mathcal{P}(\XC)} \EE\left[ \KR_{p,C}\left(\tilde \mu\left( \frac{1}{t}\sum_{i = 1}^{N_t}\delta_{X_i}\right), \mu\right) \middle| N_t \right]\right]\\
   & \gtrsim_{A, p, \alpha} \EE\left[ C\wedge \left(  CN_t^{-1/2p} + C^{1-\alpha/2p} N_t^{-1/2p} + N_t^{-1/\alpha} \right) \mathds{1}(N_t\geq 1) +  C\mathds{1}(N_t= 0)\right]  \\
   &\geq \EE\left[ C\wedge \left(  C(N_t+1)^{-1/2p} + C^{1-\alpha/2p} (N_t+1)^{-1/2p} + (N_t+1)^{-1/\alpha} \right)\right]  \\
    &\gtrsim_{p,\alpha} C\wedge \left(Ct^{-1/2p} + C^{1-\alpha/2p}t^{-1/2p} + t^{-1/\alpha}\right).\qedhere
\end{align*}
\end{proof}

\begin{proof}[Proof of \Cref{prop:KR_minimax}(ii)]
The proof is divided into three steps, we first show the lower bound $C\wedge C t^{-1/{2p}}$, then show the lower bound $C\wedge (C^{1-\alpha/2} (t\log(t))^{-1/2p})$, and finally establish the dimension dependent lower bound $C\wedge ((t\log(t))^{-1/\alpha})$. The first step is based on LeCam's two-point method (see, e.g, \citealt{tsybakov2009introduction}), the second step relies on the relation (\Cref{prop:interpolate_C}) between the $(p,C)$-KRD and the total variation distance in conjunction with minimax lower bounds for estimating the total variation distance, which utilizes prior work by  \cite{jiao2018minimax}, and the third step builds on a prior analysis by \cite{niles2019estimation} on estimating the Wasserstein distance. Notably, the last two steps rely on the method of two fuzzy hypotheses \citep[Theorem 2.14]{tsybakov2009introduction}.

\emph{Step 1}: Let $\mu \in \PC(\XC)$ and define $\nu^{\epsilon}\coloneqq (1-\epsilon)\mu\in \mathcal{M}(\XC, 1)$ for $\epsilon\in [0,1)$. Then it follows from \Cref{prop:kr_properties}$(iii)$ that $$\KR_{p,C}(\mu, \nu^\epsilon) - \KR_{p,C}(\mu, \nu^0)= \KR_{p,C}(\mu, \nu^\epsilon)  = \frac{C}{2^{1/p}}|m_\mu - m_{\nu^\epsilon}|^{1/p} \geq \frac{C}{2^{1/p}} \epsilon^{1/p}\geq \frac{C}{2} \epsilon^{1/p}. $$
Further, note that $tm_{\hat \mu_t}\sim \Poi(t)$ and $tm_{\hat \nu^\epsilon_t}\sim \Poi((1-\epsilon)t)$ and observe for the Kullback-Leibler divergence between the respective laws that \begin{align*}
  \KL{\Poi(t), \Poi((1-\epsilon)t)} = t \log(1/(1-\epsilon)) - \epsilon t \leq t \left(\frac{1}{1-\epsilon}-1-\epsilon\right) = \frac{t\epsilon^2}{1-\epsilon}.
\end{align*}
Hence, for pairs of distributions $(\mu, \nu^{0})$ and $(\mu, \nu^{\epsilon_t})$ for $\epsilon_t = t^{-1/2}/2$ we obtain by the tensorization property of the Kullback-Leibler divergence for $t\geq 1$ that \begin{align*}
   &\KL{\Poi(t)\otimes \Poi(t),\Poi(t)\otimes  \Poi((1-\epsilon_t)t)}= \KL{\Poi(t), \Poi((1-\epsilon_t)t)} \leq 1/4.
\end{align*}
This implies by \citet[Theorem 2.2]{tsybakov2009introduction} that the left-hand side in \eqref{eq:minimaxLowerBoundEstimationKRD} is lower bounded for $t\geq 1$ up to a universal constant by $C\epsilon_t^{1/p} = C t^{-1/2p} = C\wedge \left(C t^{-1/2p}\right)$.   

\emph{Step 2}: %
For the second lower bound, we aim to reduce the minimax risk for the KRD to that of estimating the $p$-th root of the total variation norm. To this end, for $C>0$ consider a $C$-separated packing of $(\XC, d)$, i.e., a collection of points $\XC_C \coloneqq \{x_1, \dots, x_m\}\in \XC$ such that $d(x_i, x_j) \geq C$ for all $i\neq j$. Then, for two measures $\mu, \nu \in \MC(\XC_C)$ it follows by \Cref{prop:interpolate_C} that $\KR_{p,C}(\mu, \nu) = \frac{C}{2^{1/p}}\cdot \textup{TV}(\mu, \nu)^{1/p}$. 

The minimax risk of the total variation distance has been derived by \citet[Theorem 3]{jiao2018minimax}.
Specifically, if $t/\log t\ge |\XC_C|$ with $|\XC_C|\geq 2$ and if there exists some $\kappa>0$ such that $\log t\ge\kappa\log|\XC_C|$, they establish that there exists $\kappa'=\kappa'(\kappa)>0$ for which the additional condition
$\sqrt{\frac{|\XC_C|}{t\log t}}\;\ge\;\kappa'\!\left(\sqrt{\frac{\log t}{t}}+\frac{\sqrt{|\XC_C|}\,\log t}{t}\right)$
implies for $\xi = \frac{1}{|\XC_C|}\sum_{x\in \XC_C}\delta_{x}y$ the uniform distribution on $\XC_C$ that
\begin{align*}
  \inf_{\widehat{\textup{TV}}_{t}}\sup_{\mu\in \PC(\XC_C)} \EE\left[ \left| \widehat{\textup{TV}}_t -\textup{TV}(\mu, \xi)\right|^2 \right]\gtrsim_{\kappa} 1 \wedge \left(\frac{|\XC_C|}{t\log(t+1)}\right),%
\end{align*}
where the infimum is taken over all measurable estimators $\widehat{\textup{TV}}_{t}$ based on $N_t \sim \Poi(t)$ i.i.d.\ random variables $X_1, \dots, X_{N_t}\sim \mu$ and the estimator may depend in particular on $\xi$. This result does not immediately enable inferring that the minimax risk for estimating $\textup{TV}^{1/p}(\mu, \xi)$ in mean absolute deviation is of order $ 1 \wedge \left(\frac{|\XC_C|}{t\log(t+1)}\right)^{1/2p}$. Nonetheless, by modifying their proof technique, i.e., by adapting the method of two fuzzy hypotheses (see Theorem 2.14 of \cite{tsybakov2009introduction} for the original method and \Cref{lem:TsybakovExtension} below for the adaptation to estimating the $p$-th root of the functional) we show in \Cref{prop:minimax_lowerbound_pth-root-TV} (below) under the same assumptions imposed on $|\XC_C|$ and $t$ that 
\begin{align}\label{eq:minimaxLowerBound_TVpth_root}
  \inf _{\widehat{\textup{TV}}_t} \sup _{\mu \in \mathcal{P}(\XC_{C})} \mathbb{E}\left[\left|\widehat{\textup{TV}}_t^{1/p}-\textup{TV}^{1/p}(\mu,\xi)\right|\right] \gtrsim_{p, \kappa} 1\wedge \left( |\XC_C|^{1/2p} t^{-1/2p}\log(t)^{-1+1/2p}\right),
\end{align}
where the infimum is once again taken over all measurable estimators $\widehat{\textup{TV}}_{t}$ based on $N_t \sim \Poi(t)$ i.i.d.\ random variables $X_1, \dots, X_{N_t}\sim \mu$ and the estimator may depend in particular on $\xi$. 

To infer minimax lower bounds for the KRD, we need to quantify $|\XC_C|$ in terms of $C$ and then translate the imposed assumption on $|\XC_C|$ and $t$ to $C$ and $t$. For the first aspect, note by Lemma \ref{lem:impactCoveringC} that $\max_{x_1, \dots, x_m\in \XC}\min_{i\neq j} d(x_i,x_j)\wedge C\geq C\wedge \left(\frac{1}{2}(a/m)^{1/\alpha}\right).$ Hence, it follows that we may choose $\XC_C$ with cardinality at least $m= \lfloor a(2C)^{-\alpha}\rfloor-1$. Consequently, the assumptions for the validity \eqref{eq:minimaxLowerBound_TVpth_root} reduce to 
\begin{enumerate}
  \item[$(i)$] $\lfloor a(2C)^{-\alpha}\rfloor-1\geq 2$, which is equivalent to $C\leq (3/a)^{-1/\alpha},$ 
  \item[$(ii)$]  $t/\log(t) \geq \lfloor  a(2C)^{-\alpha}\rfloor-1$, which is implied  using $(i)$ by
  $t/\log(t) \geq a(2C)^{-\alpha}/2$ which is in turn is equivalent to $C\geq (2 t/(a\log(t)))^{-1/\alpha}/2$,
  \item[$(iii)$] 
   $\log(t) \geq \kappa \log(|\XC_C|)$ which is implied by $t\geq a(2C)^{-\alpha\kappa}$ or equivalently $C\leq 2(t/a)^{-1/(\alpha\kappa)}$,
  \item[$(iv)$] and for some $\kappa'= \kappa'(\kappa)>0$ the inequality $\sqrt{\frac{|\XC_C|}{t\log t}}\;\ge\;\kappa'\!\left(\sqrt{\frac{\log t}{t}} + \frac{\sqrt{|\XC_C|}\log(t)}{t}\right)$. Two sufficient conditions for the validity of these inequalities are given by $|\XC_C| \geq (\kappa'/2)^2 \log(t)^2$, which is implied by $|\XC_C|\geq \log(t)^3$ for $t$ sufficiently large, and $\frac{t}{\log(t)^3}\geq (k'/2)^2$ which is met for $t$ sufficiently large. The first of these two conditions is further implied by $C \leq (2\log^3(t)/a)^{-1/\alpha}/2$.
\end{enumerate}
Based on the assumption that $C\in I(t)$, and recalling that $\KR_{p,C}(\mu, \nu) = \frac{C}{2^{1/p}}\cdot \textup{TV}(\mu, \nu)^{1/p}$ for $\mu, \nu \in \PC(\XC)$, we obtain for $t$ sufficiently large that
\begin{align*}
   \inf_{\hat \KR_t} \sup_{\mu, \nu\in \MC(\XC,1)} \EE\left[\left|\hat\KR_t - \KR_{p,C}(\mu, \nu)\right|\right] &\geq \; \inf_{\hat \KR_t} \sup_{\mu\in \PC(\XC_C)} \EE\left[\left|\hat\KR_t - \KR_{p,C}(\mu, \xi)\right|\right]\\
  &\geq \;C \inf _{\widehat{\textup{TV}}_t} \sup _{\mu \in \mathcal{P}(\XC_{C})} \mathbb{E}\left[\left|\widehat{\textup{TV}}_t^{1/p}-\textup{TV}^{1/p}(\mu,\xi)\right|\right]\\
  &\gtrsim_{a,  p, \alpha, \kappa} \;C \wedge \left(C^{1-\alpha/2p} t^{-1/2p} \log(t)^{-1+1/2p}\right),
\end{align*}
which proves the desired lower bound. 

\emph{Step 3}: %
To derive the lower bound for the regime $\alpha>2p$, we first show a lower bound for the minimax risk of estimating the KRD between probability measures on $\XC$, or equivalently the quantity $\OT_{d^p\wedge C^p}^{1/p}$ (\Cref{prop:interpolate_C}), and where we consider estimators based on $n\in \NN$ i.i.d.\ observations from both measures. To this end, we follow along the proof of Theorem~11 in \cite{niles2019estimation}, which asserts under the covering number assumption on $\XC$ a minimax lower bound for the estimation of the Wasserstein distance with metric $d$.  A key ingredient in their proof is their Proposition 9, which forms the basis for the method of two fuzzy hypotheses and which we now adapt to the setting when the original metric is replaced by the truncated metric $d\wedge C$. 

\begin{lemma}\label{lem:adaptation_Prop9_NWR19}
  Assume $\alpha>2p \geq 2$ and let $(\XC,d)$ be a metric space which fulfills $\diam(\XC) = 1$ and $\NC(\epsilon, \XC, d)\epsilon^{\alpha}\in [a,A]$ for all $\epsilon\in (0,1]$ for some $A\geq a>0$. Further, let $m\in\NN$ be large enough such that $\frac{1}{2}(a/m)^{1/\alpha}\leq C$ and denote by $u$ the uniform distribution on $\{1, \dots,m\}$. Then, there is a random function $F\colon \{1, \dots, m\}\to \XC$ such that for any probability distribution~$q$ on $\{1,\dots, m\}$ it holds 
  \begin{align*}
    \frac{1}{2}(a/m)^{1/\alpha} \textup{TV}(q,u)^{1/p}&\leq \OT_{d^p\wedge C^p}^{1/p}(F_{\#}q, F_{\#}u) %
    \lesssim_{A,p,\alpha} m^{-1/\alpha}(\chi^2(q,u))^{1/\alpha}\textup{TV}(q,u)^{1/p-2/\alpha}
  \end{align*}
  with probability at least 0.9.
\end{lemma}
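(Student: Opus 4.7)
The plan is to adapt the proof of \citet[Proposition~9]{niles2019estimation} to the truncated ground cost $d^p \wedge C^p$. Throughout I will write $\delta_m := \tfrac{1}{2}(a/m)^{1/\alpha}$; the standing hypothesis $\delta_m \leq C$ will ensure that the geometric constructions used in the original argument fit comfortably inside the truncation scale, so that only the lower bound and the tail of the upper bound require attention.

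\emph{Step 1 (construction of $F$).} I will first exploit packing--covering duality. From $\NC(2\delta_m, \XC, d) \geq a(2\delta_m)^{-\alpha} = m$ one obtains a $\delta_m$-separated subset of $\XC$ of cardinality at least $m$. For the upper bound more is needed: following \citet[Lemma~14]{niles2019estimation}, I will construct a nested sequence of partitions $\{\PC_j\}_{j \geq 0}$ of $\XC$ whose $j$-th level has mesh at most a constant multiple of $2^{-j/\alpha}m^{-1/\alpha}$ and $|\PC_j|\asymp 2^j$, all furnished by the two-sided covering bound $\NC(\eps,\XC,d)\eps^\alpha \in [a,A]$. The random function $F$ will be defined by drawing $F(1),\ldots,F(m)$ from the cells of $\PC_{\lceil \log_2 m\rceil}$ in a way that preserves $\delta_m$-separation of the image almost surely.

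\emph{Step 2 (lower bound).} Conditional on $\{F(1),\ldots,F(m)\}$ being $\delta_m$-separated, any feasible coupling $\pi$ between $F_\#q$ and $F_\#u$ has off-diagonal mass at least $\textup{TV}(F_\#q, F_\#u)=\textup{TV}(q,u)$ (by injectivity of $F$). Since $\delta_m \leq C$, the truncated distance between any two distinct image points is at least $\delta_m$, so
\[
  \int (d^p \wedge C^p)\, d\pi \;\geq\; \delta_m^{p}\, \textup{TV}(q,u).
\]
Infimizing over $\pi$ and taking $p$-th roots yields the claimed lower bound deterministically on the event of probability $1$ that $F$ has $\delta_m$-separated image.

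\emph{Step 3 (upper bound).} I will build a hierarchical transport plan. For each level $j$, aggregate the signed measure $F_\# q - F_\# u$ over the cells of $\PC_j$; the total variation of the aggregated signed measure at level $j$ is controlled, uniformly in $q$, by the concentration of $F_\# q$ around its mean. This step is where $\chi^2(q,u)$ enters, via an Efron--Stein / Bernstein argument applied to the linear functionals $q \mapsto F_\#q(A)$ for $A \in \PC_j$, exactly as in \citet[pages~16--17]{niles2019estimation}. The cost of transport between adjacent levels scales as $(2^{-j/\alpha}m^{-1/\alpha})^p \wedge C^p$, and summing the geometric series and optimizing over the coarsest level one stops at produces the bound
\[
  \OT_{d^p\wedge C^p}^{1/p}(F_\#q,F_\#u)\;\lesssim_{A,p,\alpha}\; m^{-1/\alpha}\,\chi^2(q,u)^{1/\alpha}\,\textup{TV}(q,u)^{1/p - 2/\alpha}.
\]
In the regime $\alpha>2p$ the transport is concentrated at scales strictly below $\delta_m\leq C$, so the truncation is inactive at the relevant scales and the bound coincides with the untruncated one in \cite{niles2019estimation}; any mass remaining at scales above $C$ can be transported at cost at most $C^p$ per unit mass, which is absorbed into the leading term because of the same inequality $\delta_m\leq C$. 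A union bound over the finitely many levels drives the probability-$0.9$ event.

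\emph{Main obstacle.} The lower bound is essentially a one-line packing argument once the truncation is seen to be inactive. The substantive part is verifying that the multi-scale aggregation argument of NWR transfers uniformly in $q$ under the truncated cost, and in particular that the $\chi^2(q,u)^{1/\alpha}$ factor (arising from concentration of $F_\# q$ at each scale) is unaffected by replacing $d$ by $d\wedge C$. The hypothesis $\delta_m\leq C$ is precisely what ensures the truncation does not interact with the finest scale of the hierarchical plan; once this is checked the remainder is a careful bookkeeping of the NWR argument.
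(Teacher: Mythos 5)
Your lower bound argument (Step 2) is correct and essentially identical to the paper's: you construct a maximal $m$-packing, note any feasible coupling has off-diagonal mass at least $\textup{TV}(q,u)$, and use the separation $\geq \delta_m \leq C$. This holds deterministically, as you observe.

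For the upper bound (Steps 1 and 3), however, you substantially overcomplicate what the paper does, and in the process you miss the one key observation that makes the truncation a non-issue. The paper simply notes that $d^p \wedge C^p \leq d^p$ pointwise, hence $\OT_{d^p\wedge C^p}(\mu,\nu) \leq \OT_{d^p}(\mu,\nu)$ for any pair of measures on $\XC_m$, and the upper bound then follows \emph{verbatim} by citing the untruncated Proposition~9 of Niles-Weed and Rigollet, with $F$ being a uniformly random bijection onto the packing (no nested partitions needed). You instead propose to re-derive the NWR multi-scale transport plan from scratch, tracking the effect of truncation level by level. That would eventually work, but your stated justification for why it works --- ``the transport is concentrated at scales strictly below $\delta_m \leq C$, so the truncation is inactive at the relevant scales'' --- is not correct as a claim about the NWR plan (the hierarchical plan certainly does move mass at coarse scales, up to $\diam(\XC)=1$), and the follow-up sentence about residual mass at scales above $C$ being ``absorbed into the leading term'' is hand-waving that would require substantial additional bookkeeping to justify. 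None of that is needed: truncating the cost only decreases the optimal transport value, so the untruncated upper bound transfers immediately. The probability-$0.9$ event is exactly the one NWR already provide; you do not need a union bound over levels.

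One further nitpick: your Step 1 proposes to draw $F(1),\ldots,F(m)$ from cells of a nested partition ``in a way that preserves $\delta_m$-separation of the image almost surely.'' In the paper, $F$ is simply a uniformly random bijection onto a fixed $\delta_m$-separated set, so the separation is automatic; introducing a partition-based sampling scheme adds complexity without benefit here.
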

\begin{proof}[Proof of Lemma \ref{lem:adaptation_Prop9_NWR19}]
  We follow along the same construction as in \citet[Proposition 9]{niles2019estimation}. To this end, we consider a maximal $m$-packing $\XC_m\coloneqq \{x_1, \dots, x_m\}$ of $\XC$, which by our assumptions on the covering number of $\XC$ and the size of $m$ by \Cref{lem:impactCoveringC} that $\min_{i\neq j} d(x_i,x_j)\wedge C \geq \frac{1}{2}(a/m)^{1/\alpha}$. Further, we select $F$ uniformly at random among the bijections between $\{1, \dots, m\}$ and $\{x_1, \dots, x_m\}$. For the lower bound, we observe since any two distinct elements in $\XC_m$ are separated with respect to $d\wedge C$ by $\frac{1}{2}(a/m)^{1/\alpha}$ that 
  \begin{align*}
    \OT_{d^p\wedge C^p}^{1/p}(F_{\#}q, F_{\#}u) \geq \frac{1}{2}(a/m)^{1/\alpha}\textup{TV}(q,u).
  \end{align*}
  This confirms the lower bound with probability 1. For the upper bound we note that $\OT_{d^p\wedge C}(\cdot, \cdot) \leq \OT_{d^p}(\cdot, \cdot)$ on $\PC(\XC_m)$, hence the assertion follows from the upper bound in \citet[Proposition 9]{niles2019estimation} for the non-truncated setting.
\end{proof}

The definition of the random function $F$ in \Cref{lem:adaptation_Prop9_NWR19} yields that \begin{align*}
  \inf_{\widehat \OT_n} \sup_{\mu, \nu\in \PC(\XC)} \EE\left[\left|\widehat\OT_n - \OT_{d^p\wedge C}^{1/p}(\mu, \nu)\right|\right] &\geq \inf_{\widehat \OT_n} \sup_{q\in \PC(\{1, \dots, m\})} \EE\left[\left|\widehat\OT_n - \OT_{d^p\wedge C}^{1/p}(F_{\#}u, F_{\#}q)\right|\right],
\end{align*}
where the infimum on the left-hand side is taken over measurable estimators based on i.i.d.\ random variables from $\mu$ and $\nu$ in $\PC(\XC)$, whereas on the right-hand side we consider i.i.d.\ random variables from $F_{\#}q$ and $F_{\#}u$ with $q,u\in \PC(\{1, \dots, m\})$ and $u$ denoting the uniform distribution. Moreover, the validity of the inequalities in \Cref{lem:adaptation_Prop9_NWR19} for the regime where $m\in \NN$ is large enough such that $\frac{1}{2}(a/m)^{1/\alpha}\leq C$ implies that lower bounds for the right-hand side in the above display can be inferred from minimax lower bounds for estimating the TV norm as long as it is bounded away from zero and the $\chi^2$-distance is suitably bounded. For this regime, as part of the proof of Theorem 11 in \cite{niles2019estimation}, the authors confirm for the choice $m = \lceil \kappa n \log(n+1)\rceil$ for some sufficiently large universal constant $\kappa>0$ and $n$ sufficiently large that
\begin{align*}
  &\inf_{\widehat \OT_n} \sup_{q\in \PC(\{1, \dots, m\})} \EE\left[\left|\widehat\OT_n - \OT_{d^p\wedge C}^{1/p}(F_{\#}u, F_{\#}q)\right|\right]\\
  \geq & \frac{1}{32}\left(\frac{a}{m}\right)^{1/\alpha} \inf_{\widehat \OT_n} \sup_{q\in \PC(\{1, \dots, m\})}   \PP\left[\left|\widehat\OT_n - \OT_{d^p\wedge C}^{1/p}(F_{\#}u, F_{\#}q)\right|\geq \left(\frac{a}{m}\right)^{1/\alpha}\right]\\
  \geq & \frac{0.8}{32}\left(\frac{a}{m}\right)^{1/\alpha} \gtrsim_{a, p, \alpha} (n \log(n+1))^{-1/\alpha}. 
\end{align*}
Note that these minimax lower bounds only find validity for the regime where $\frac{1}{2}(a/m)^{1/\alpha}\leq C$, i.e., when $a/(2C)^{\alpha}\leq m =  \lceil \kappa n \log(n)\rceil$. Whenever $n$ is too small, then we may nonetheless infer for $n^*$, the smallest possible sample size such that the above inequalities hold and for which $a/(2C)^{\alpha}\leq m =  \lceil \kappa n \log(n)\rceil$ is met, that
\begin{align*}
   &\inf_{\widehat \OT_n} \sup_{q\in \PC(\{1, \dots, m\})} \EE\left[\left|\widehat\OT_n - \OT_{d^p\wedge C}^{1/p}(F_{\#}u, F_{\#}q)\right|\right] \gtrsim_{a, A, \alpha} (n^* \log(n^*))^{-1/\alpha} \gtrsim_{a, \alpha} C ,
\end{align*}
since any estimator based on $n$ data points can also be interpreted as an estimator for $n^*$ data points but where the latter $n^*-n$ are not used. Combining all previous displays, we arrive at 
\begin{align*}
  \inf_{\widehat \OT_n} \sup_{\mu, \nu\in \PC(\XC)} \EE\left[\left|\widehat\OT_n - \OT_{d^p\wedge C}^{1/p}(\mu, \nu)\right|\right]  \gtrsim_{a, A, \alpha} C\wedge (n\log(n+1))^{-1/\alpha}, 
\end{align*}
for all $C\in (0,1]$ and all $n\in \NN$.  Since according to \Cref{prop:interpolate_C} it holds $\KR_{p,C}(\mu, \nu ) = \OT_{d^p\wedge C^p}^{1/p}(\mu, \nu)$ for all $\mu, \nu \in \PC(\XC)$, we thus also infer that 
\begin{align*}
  \inf_{\hat \KR_n} \sup_{\mu, \nu\in \PC(\XC)} \EE\left[\left|\hat\KR_n - \KR_{p,C}(\mu, \nu)\right|\right]  \gtrsim_{a, A, \alpha} C\wedge (n\log(n+1))^{-1/\alpha}. 
\end{align*}
Now, to conclude the desired bound for samples from a Poisson point process from here, first condition on the number of observations $N_t$ and $M_t$ from both Poisson point processes 
\begin{align*}
  &\inf_{\hat \KR_t} \sup_{\mu, \nu\in \MC(\XC,1)} \EE\left[\left|\hat\KR_t - \KR_{p,C}(\mu, \nu)\right|\right]\\
  &\geq \; \inf_{\hat \KR_t} \sup_{\mu, \nu\in \PC(\XC)} \EE\left[\EE\left[\left|\hat\KR_t - \KR_{p,C}(\mu, \nu)\right| \Big| N_t, M_t\right]\right]\\
  &\geq \; \EE\left[\inf_{\hat \KR_{N_t \vee M_t}} \sup_{\mu, \nu\in \PC(\XC)} \EE\left[\left|\hat\KR_{N_t\vee M_t} - \KR_{p,C}(\mu, \nu)\right| \Big| N_t, M_t\right]\right]\\
  &\gtrsim_{a,A,\alpha} \; \EE\left[ C\wedge (N_t \vee M_t)\log(N_t\vee M_t+1) \mathds{1}(N_t\vee M_t\geq 1) +  C\mathds{1}(N_t\vee M_t= 0) \right]\\
  &\geq \; \EE\left[ C\wedge ((N_t \vee M_t+1)\log(N_t\vee M_t+2))^{1/\alpha}\right]\\
  &\geq \; C\wedge (t \log(t))^{-1/\alpha},
\end{align*}
where the first inequality follows from restricting the supremum from $\MC(\XC,1)$ to $\PC(\XC)$, the second utilizes that the distribution of $N_t$ and $M_t$ does not depend on $\mu, \nu$. In the third inequality we use that any estimator based on $N_t$ and $M_t$ can also be interpreted as an estimator of two collection of $N_t\vee M_t$ i.i.d.\ samples. The fourth inequality is a consequence of the above considerations, and the final inequality follows from \Cref{lem:PoissonRateLowerbound} for $t$ sufficiently large and all $C>0$.
\end{proof}

The remainder of this appendix is concerned with proving minimax lower bounds for estimating the $p$-th root of the TV distance over a finite domain. Most lower bounds on functional estimation results, such as those presented in \cite{jiao2018minimax}, see also \cite{lepski1999estimation,cai2011testing,niles2019estimation, han2021optimal,wang2021optimal}, rely extensively on the method of two fuzzy hypotheses, see \cite[Theorem 2.14]{tsybakov2009introduction} for comprehensive treatment. The following lemma is a straightforward adaptation of this method for estimating the $p$-th root of certain functionals.

\begin{lemma}\label{lem:TsybakovExtension}
    Let $\Theta$ be a measurable space and $F\colon \Theta\to [0, \infty)$ a Borel-measurable functional. Denote by $\mu_0, \mu_1$ two probability distributions on $\Theta$. Further, denote by $\{P_\theta\}_{\theta\in \Theta}$ a collection of probability measures on a measurable space $(\XC, \mathcal{A})$. Assume the following. \begin{enumerate}
        \item[$(i)$] There exist $b, c, s\geq 0$, and $0\leq \beta_0, \beta_1<1$ such that 
        \begin{align*}
            &\mu_0(\theta \colon F(\theta)\leq c) \geq 1-\beta_0,\\
            &\mu_1(\theta \colon F(\theta)\geq c + 2s, F(\theta) \leq b ) \geq 1-\beta_1.
        \end{align*}    
    \item[$(ii)$]For each measurable set $A\in \mathcal{A}$ the map $\theta\mapsto P_\theta(A)$ is measurable.  Further, for the posterior probability measures $\mathbb{P}_0$ and $\mathbb{P}_1$ on $(\XC, \mathcal{A})$, defined for $A\in\mathcal{A}$ and $j \in \{0,1\}$ as $\mathbb{P}_j(A) \coloneqq \int P_\theta(S) \dif\mu_j(\theta)$, there exist $\tau>0$ and $0<\alpha<1$ such that 
    \begin{align*}
        \mathbb{P}_1\left(\frac{\dif\mathbb{P}_0^a}{\dif\mathbb{P}_1}\geq \tau \right)\geq 1-\alpha,
    \end{align*}
    where $\mathbb{P}_0^a$ is the absolutely continuous component of $\mathbb{P}_0$ with respect to $\mathbb{P}_1$. 
\end{enumerate}
Then, for any $p\geq 1$ and a measurable estimator $\hat F_p\colon \XC \to [0,\infty)$, 
    \begin{align*}
        \sup_{\theta\in \Theta}P_\theta\left( |\hat F_p - F^{1/p}(\theta)|\geq s  b^{\frac{1-p}{p}}p^{-1}\right) \geq \frac{\tau(1-\alpha - \beta_1)-\beta_0}{1+\tau}.
    \end{align*}
\end{lemma}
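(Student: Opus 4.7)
The plan is to reduce the claim to a direct application of Tsybakov's method of two fuzzy hypotheses (Theorem 2.14 in \cite{tsybakov2009introduction}) applied to the transformed functional $\widetilde F \coloneqq F^{1/p}$. In other words, assumption $(ii)$ is already stated in the form required by Tsybakov's theorem and depends only on $\mu_0, \mu_1$ and the model $\{P_\theta\}_{\theta \in \Theta}$, so it carries over without change. The only task is therefore to translate the two-level control of $F$ provided by assumption $(i)$ into an analogous two-level control of $\widetilde F$ with an appropriate separation $s'$ that matches the claimed resolution $sb^{(1-p)/p}/p$.

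The key auxiliary step is a pointwise lower bound for the increment of the map $x \mapsto x^{1/p}$ on $[0,b]$. By the mean value theorem, for $0 \leq x < y \leq b$ there exists $\xi \in (x,y)$ with $y^{1/p}-x^{1/p} = p^{-1}\xi^{1/p-1}(y-x)$. Since $p \geq 1$ the exponent $1/p-1$ is non-positive, hence $\xi \mapsto \xi^{1/p-1}$ is non-increasing and $\xi \leq b$ yields $\xi^{1/p-1} \geq b^{(1-p)/p}$, so
\begin{equation*}
  y^{1/p}-x^{1/p} \;\geq\; \frac{b^{(1-p)/p}}{p}\,(y-x) \quad \text{for all } 0 \leq x < y \leq b.
\end{equation*}
Applied with $x=c$ and $y=c+2s$ (both of which lie in $[0,b]$ under the events in assumption $(i)$, using $c+2s \leq F(\theta) \leq b$ on the $\mu_1$-event and $c \leq c+2s \leq b$ on the $\mu_0$-event), this gives the separation $(c+2s)^{1/p} - c^{1/p} \geq 2s'$ with $s' \coloneqq sb^{(1-p)/p}/p$.

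Setting $c' \coloneqq c^{1/p}$, assumption $(i)$ therefore entails
\begin{align*}
  \mu_0(\theta : \widetilde F(\theta) \leq c') \;&\geq\; 1-\beta_0, \\
  \mu_1(\theta : \widetilde F(\theta) \geq c' + 2s') \;&\geq\; 1-\beta_1,
\end{align*}
which is exactly the pair of fuzzy-hypothesis conditions required by \citet[Theorem 2.14]{tsybakov2009introduction} for the functional $\widetilde F$ at resolution $s'$. Combining this with assumption $(ii)$, that theorem delivers, for any measurable estimator $\hat F_p$ of $\widetilde F$,
\begin{equation*}
  \sup_{\theta\in\Theta} P_\theta\bigl(|\hat F_p - \widetilde F(\theta)| \geq s'\bigr) \;\geq\; \frac{\tau(1-\alpha-\beta_1)-\beta_0}{1+\tau},
\end{equation*}
which is precisely the asserted bound.

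I anticipate the only subtle point to be the verification that both $c$ and $c+2s$ may be placed inside $[0,b]$ on the relevant events, which is what makes the upper bound $F(\theta) \leq b$ in assumption $(i)$ indispensable; the rest reduces mechanically to invoking the classical fuzzy-hypotheses lower bound.
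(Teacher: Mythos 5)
Your proposal is correct and takes essentially the same approach as the paper: both proofs reduce the claim to Tsybakov's Theorem~2.14 applied to the transformed functional $F^{1/p}$, with the only technical step being to convert the separation $2s$ for $F$ (on the interval $[0,b]$) into a separation $2s'$ for $F^{1/p}$ with $s' = s b^{(1-p)/p}/p$. The paper derives this via the algebraic inequality $a^p - b^p \leq p(a-b)a^{p-1}$ applied to the random quantities $a = F^{1/p}(\theta)$, $b = c^{1/p}$, whereas you apply the mean value theorem to the deterministic pair $(c,c+2s)$, but these are the same elementary convexity bound and yield identical conclusions.
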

\begin{remark}
    Assumption $(i)$ in \Cref{lem:TsybakovExtension} necessitates that $b\geq c+2s$, which overall implies that $F(\theta)$ is bounded with high probability by $b$ under $\mu_0$ and $\mu_1$. Under this assumption, the pair of distributions $\mu_0, \mu_1$ designed for a minimax lower bound on $F(\theta)$ immediately implies a related minimax lower bound for $F^{1/p}(\theta)$.  In particular, if $b = \kappa s$ for $\kappa\geq 2$, then the lower bound simplifies to $p^{-1} \kappa^{(1-p)/p} s^{1/2p} \asymp_{p,\kappa} s^{1/2p}$. 
\end{remark}

\begin{proof} For $p = 1$ the proof is stated, without the constraint $F(\theta)\leq b$ in condition $(i)$, in \citet[Theorem 2.14]{tsybakov2009introduction}.
For $p>1$ we take the functional $F^{1/p}$ and note that
\begin{align*}
    \mu_0(\theta \colon F^{1/p}(\theta)\geq C^{1/p}) = \mu_0(\theta \colon F(\theta)\geq c) \geq 1 - \beta_0.
\end{align*}
Further, using the inequality $a^p - b^p \leq p(a-b)a^{p-1}$ for $a\geq b\geq 0$ it follows under $\mu_1$ with probability at least $1-\beta_0$ that 
\begin{align*}
    F^{1/p}(\theta) - C^{1/p} \geq \frac{F(\theta) - c}{pF^{\frac{p-1}{p}}(\theta)}\geq 2s  b^{\frac{1-p}{p}}p^{-1},
\end{align*}
which implies the assertion by invoking \citet[Theorem 2.14]{tsybakov2009introduction}.
\end{proof}

Based on our adaptation in \Cref{lem:TsybakovExtension} we now use the hypotheses considered in the proof for the minimax lower bound by \cite{jiao2018minimax} on estimating the TV-distance on a finite domain to infer a related statement for the $p$-th root functional. 

\begin{proposition}\label{prop:minimax_lowerbound_pth-root-TV}
	Let $t\in [e,\infty), S\in \NN$ and define $\XC_S = \{1, \dots, S\}$ and the uniform distribution $U \coloneqq S^{-1}\sum_{i = 1}^{S} \delta_{i}$ on $\XC_S$. Assume that $t/\log t \geq S$ and that there exists a constant $\kappa>0$ such that  $\log t \geq \kappa \log S$ with $S \geq 2$. Then, there exists a constant $\kappa^{\prime}>0$ that only depends on $\kappa$ such that if $\sqrt{\frac{S}{t \log t}}\geq \kappa^{\prime}\left(\sqrt{\frac{\log t}{t}}+\frac{\sqrt{S} \log t}{t}\right)$, then  for $p\geq 1$ it holds for $t>0$ %
\begin{equation}\label{eq:minimax_lowerbound_pth-root-TV}
\inf _{\widehat{\textup{TV}}_t} \sup _{P \in \mathcal{P}(\XC_{S})} \mathbb{E}\left[\left|\widehat{\textup{TV}}_t^{1/p}-\textup{TV}^{1/p}(P,U)\right|\right] \gtrsim_{p, \kappa}1 \wedge \left(S^{1/2p} t^{-1/2p}\log(t)^{-1+1/2p}\right),%
\end{equation}
where the infimum is taken over all possible estimators based on realizations of a Poisson point process with intensity measure $t \cdot P$.
\end{proposition}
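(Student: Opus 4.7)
The plan is to apply Lemma \ref{lem:TsybakovExtension} to the same fuzzy hypotheses $(\mu_0,\mu_1)$ that Jiao, Han and Weissman (2018) use in their proof of Theorem~3 for the minimax lower bound on total variation estimation. Their construction draws the coordinates of $P\in\PC(\XC_S)$ as independent rescaled copies of one of two univariate laws $\pi_0,\pi_1$ on a bounded interval, chosen via best-polynomial-approximation so that (a) $\pi_0$ and $\pi_1$ match their first $L\asymp\log t$ moments, which after Poissonisation yields the bounded-likelihood-ratio condition $(ii)$ of Lemma \ref{lem:TsybakovExtension} with absolute constants $\tau>0$ and $\alpha<1$, and (b) $\int|x-\mathbb{E}x|\dif\pi_1-\int|x-\mathbb{E}x|\dif\pi_0\asymp 2s$ with $s\asymp\sqrt{S/(t\log t)}$, which transfers to a $2s$-separation of $\textup{TV}(P,U)$ between the two priors.

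First, I would extract the constants $c$, $c+2s$ and $b$ of Lemma \ref{lem:TsybakovExtension} from this construction. Since $\textup{TV}(P,U)$ is a sum of $S$ independent bounded terms under $\mu_j$, McDiarmid's inequality gives concentration around the mean $c_j\coloneqq\mathbb{E}_{\mu_j}\textup{TV}(P,U)$ with small failure probability $\beta_j=o(1)$ (possibly at the price of shrinking $s$ by a logarithmic factor). Setting $c\coloneqq c_0$ and noting that in the minimax-difficult regime both $c_0$ and $c_1$ are themselves of order $s$, I can take the upper envelope $b=O(s)$; this is the crucial deviation from the naive calibration $b=O(1)$ that one would use for estimating $\textup{TV}$ directly.

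Lemma \ref{lem:TsybakovExtension} then produces a lower bound of order $s b^{(1-p)/p}/p\asymp s^{1/p}/p$ on the maximum tail probability of any estimator $\widehat{\textup{TV}}_t^{1/p}$, and Markov's inequality converts this into the corresponding $L^1$-risk lower bound
\begin{equation*}
\EE\!\left[\left|\widehat{\textup{TV}}_t^{1/p}-\textup{TV}^{1/p}(P,U)\right|\right]\gtrsim_p s^{1/p}.
\end{equation*}
Substituting the calibrated value $s\asymp\sqrt{S/(t\log t)}\cdot(\log t)^{-(1-1/p)}$, where the logarithmic correction is exactly what is required for McDiarmid concentration at failure probability $o(1)$, yields $s^{1/p}\asymp S^{1/(2p)}t^{-1/(2p)}(\log t)^{-1+1/(2p)}$, matching the claimed rate; the trivial regime $s^{1/p}>1$ is absorbed into the outer $1\wedge(\cdot)$ in \eqref{eq:minimax_lowerbound_pth-root-TV}.

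The main difficulty lies in showing that the JHW construction can be arranged with $b=O(s)$: the standard calibration uses absolute constants for $b$ and $c$, which via Lemma \ref{lem:TsybakovExtension} only yields the weaker rate $\sqrt{S/(t\log t)}/p$, insufficient for $p>1$ (since $(S/(t\log t))^{1/(2p)}$ dominates $\sqrt{S/(t\log t)}$ in the regime imposed by the assumption $\sqrt{S/(t\log t)}\geq\kappa'(\sqrt{\log t/t}+\sqrt{S}\log t/t)$). Verifying that $\pi_0,\pi_1$ can be rescaled so as to simultaneously preserve the moment-matching identity underlying condition $(ii)$ and keep both mean TV-values in $[0,O(s)]$ is the principal technical obstacle; once granted, the remaining steps are routine bookkeeping through Lemma \ref{lem:TsybakovExtension}.
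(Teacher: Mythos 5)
Your diagnosis of the difficulty is correct: calibrating the bound $b$ in Lemma~\ref{lem:TsybakovExtension} with an absolute constant gives only $s/p$, which is too weak for $p>1$ in the regime of interest. But your proposed fix $b=O(s)$ overshoots, and the algebra you use to match the target rate does not in fact close.

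Concretely: in the JHW construction (case~2), the coordinates $p_i$ are drawn independently from priors $\pi_j$ supported on $[q-r,q+r]$ with $q=S^{-1}$ and $r=\sqrt{\log t/(2St)}$. The separation in $\EE_{\pi_j}|p_i-q|$ between $j=0$ and $j=1$ comes from a degree-$L$ polynomial approximation of $|\cdot|$ with $L\asymp\log t$, and is therefore of order $r/L\asymp\sqrt{1/(St\log t)}$, whereas $\EE_{\pi_j}|p_i-q|$ \emph{itself} is a constant fraction of $r$. Summing over $S$ coordinates, $\chi\asymp\sqrt{S/(t\log t)}\eqqcolon 4s$, but $c_j=\EE_{\mu_j}\textup{TV}(P,U)\asymp S\cdot r\asymp\sqrt{S\log t/t}\asymp s\log t$. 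So both $c_0$ and $c_1$ exceed $s$ by a factor of $\log t$, and any valid choice of $b$ (by McDiarmid or otherwise) must satisfy $b\gtrsim s\log t$, not $b=O(s)$. McDiarmid also buys you nothing new here: the deterministic support constraint already gives $\textup{TV}(P,U)=\sum_{i<S}|p_i-1/S|\leq S\cdot r=\sqrt{S\log t/(2t)}$ $(\tilde\mu_0+\tilde\mu_1)$-a.s., which is exactly the $b$ the paper uses, and the JHW conditioning sets $E_0,E_1$ already yield $\beta_0=\beta_1=0$ so there is no failure probability to trade against.

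The $p$-dependent shrinkage you then introduce, $s\rightsquigarrow s\cdot(\log t)^{-(1-1/p)}$, is both unmotivated (concentration does not know about $p$) and arithmetically off: with that $s$, one gets $s^{1/p}=S^{1/(2p)}t^{-1/(2p)}(\log t)^{-1/(2p)-1/p+1/p^2}$, and equating the log-exponent with $-1+1/(2p)$ forces $(1-1/p)^2=0$, i.e.\ $p=1$ only. The correct bookkeeping is to keep $s\asymp\sqrt{S/(t\log t)}$ unchanged, take $b=\sqrt{S\log t/(2t)}\asymp s\log t$ from the support bound, and observe that Lemma~\ref{lem:TsybakovExtension} then gives
\begin{equation*}
s\,b^{(1-p)/p}\asymp\Bigl(\tfrac{S}{t\log t}\Bigr)^{1/2}\Bigl(\tfrac{S\log t}{t}\Bigr)^{(1-p)/(2p)}=S^{1/(2p)}t^{-1/(2p)}(\log t)^{-1+1/(2p)},
\end{equation*}
which is the claimed rate. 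So your strategy of pushing through Lemma~\ref{lem:TsybakovExtension} with the JHW priors is the right one, but the envelope $b$ must be read off from the prior's support (it is $\asymp s\log t$), not forced down to $O(s)$.
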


\begin{proof}
The proof of the minimax lower bound by \cite{jiao2018minimax} for estimating the TV-distance between probability measures relies on a variant of the method of two fuzzy hypotheses \citep[Theorem 2.15]{tsybakov2009introduction} (but where  Assumption $(i)$ in \Cref{lem:TsybakovExtension} with $c$ and $c+2s$ are parametrized as $\zeta-s$ and $\zeta+s$ respectively, for suitable $\zeta\geq s>0$). Their two fuzzy hypotheses are denoted by $\mu_0, \mu_1$. For our assertion, we utilize the same construction and utilize \Cref{lem:TsybakovExtension}. To this end, it suffices to find $b\geq \zeta+s>0$ such that $\textup{TV}(P,U)\leq b$ for $(\mu_0+\mu_1)$-almost every $P$. %

To define the fuzzy hypothesis $\mu_0$ and $\mu_1$, considered by \cite{jiao2018minimax}, first define auxiliary measures
 $\tilde \mu_0$ and $\tilde \mu_1$, 
\begin{align*}
  \tilde \mu_0 \coloneqq  \left(\bigotimes_{j = 1}^{S-1} \tilde \mu_0^{S}\right) \otimes \delta_{1/S}\quad \text{ and } \quad 
   \tilde \mu_1 \coloneqq \left(\bigotimes_{j = 1}^{S-1} \tilde \mu_1^{S}\right) \otimes \delta_{1/S},
\end{align*}
for certain probability measures $\tilde \mu_0^S, \tilde \mu_1^S$ on $\RR$ with mean $S^{-1}$. Further, by assumption we have $S^{-1} \geq  \log(t)/t >  \log(t)/2t$. Therefore, case 2 of the construction in \cite{jiao2018minimax} for the probability measures $\tilde \mu_0^S$ and $\tilde \mu_1^S$ yields measures defined on the interval $[q-\sqrt{\log(t)/(2St)}, q+\sqrt{\log(t)/(2St)}]$. Hence, for $(\tilde \mu_0+\tilde \mu_1)$-almost every $P$ we obtain \begin{align}\label{eq:boundTV-norm-fuzzy-hypothesis}
  \textup{TV}(P,U) = \sum_{i =1}^{S-1} |p_i-1/S| \leq \sqrt{\frac{S\log(t)}{2t}}\eqqcolon b.
\end{align}
According to Equation (170) of \cite{jiao2018minimax} the two fuzzy hypotheses $\mu_0$ and $\mu_1$ are now defined by conditioning onto suitable sets $E_0$ and $E_1$ (defined in Equation (158)), respectively, of non-negative measures which nearly sum up to one. In particular, the bound in \eqref{eq:boundTV-norm-fuzzy-hypothesis} is still valid for $(\mu_0+\mu_1)$-almost every $P$ since the set $E_i$ admits positive mass under $\tilde \mu_i$ for $i\in \{0,1\}$. 

In addition, according Equations (151), (154), (157), and (171--172) from \cite{jiao2018minimax} it follows for $c = \zeta -s $ and $c +2s = \xi +s$ with  \begin{align*}
  \zeta &\coloneqq \EE_{\mu_0}[\textup{TV}(P,U)] + \chi/2, \quad s \coloneqq \chi/4,\quad \chi \coloneqq \EE_{\mu_1}[\textup{TV}(P,U)] - \EE_{\mu_0}[\textup{TV}(P,U)] \gtrsim_{\kappa} \sqrt{\frac{S}{t \log(t)}} 
\end{align*}
that Assumption $(i)$ of \Cref{lem:TsybakovExtension} is met for $\mu_0$ and $\mu_1$ with $\beta_0=\beta_1 = 0$. 
Moreover, Assumption $(ii)$ of \Cref{lem:TsybakovExtension} follows from Equations (173--179) of \cite{jiao2018minimax} by invoking \citet[Theorem 2.15(i)]{tsybakov2009introduction} with $\alpha = o(1)$ for $t\to \infty$. Hence, by \Cref{lem:TsybakovExtension} we infer, \begin{align*}
  \inf _{\widehat{\textup{TV}}_t} \sup _{P \in \mathcal{P}(\XC_{S})} \mathbb{P}\left(\left|\widehat{\textup{TV}}_t^{1/p}-\textup{TV}^{1/p}(P,U)\right| \geq  sb^{\frac{1-p}{p}} p^{-1}\right) &\gtrsim 1-\alpha = 1-o(1),
\end{align*}
where the last equality is to be understood for the regime $t \to \infty$. Hence, by Markov's inequality, we obtain 
\begin{align*}
  \inf _{\widehat{\textup{TV}}_t} \sup _{P \in \mathcal{P}(\XC_{S})} \mathbb{E}\left[\left|\widehat{\textup{TV}}_t^{1/p}-\textup{TV}^{1/p}(P,U)\right|\right]&\geq (1-\alpha)  sb^{\frac{1-p}{p}} p^{-1} \\
  &\gtrsim_{\kappa} (1-o(1)) \left( \frac{S \log(t)}{t }\right)^{\frac{1-p}{2p}} \left(\frac{S}{t\log(t)}\right)^{1/2} \\
  &=(1-o(1)) S^{1/2p} t^{-1/2p} \log(t)^{-1+1/2p}.\qedhere
\end{align*}

\end{proof}

\section{Technical results} \label{sec:aux_results}

\begin{lemma}[Impact of $C$ on covering and packing numbers]\label{lem:impactCoveringC}
    For a totally bounded metric space  $(\XC,d)$ the following assertions hold. 
    \begin{enumerate}
        \item[$(i)$] For $C>0$ it follows $\NC(\epsilon, \XC, d\wedge C)  = \mathds{1}(\epsilon\geq C) + \mathds{1}(\epsilon<C)\NC(\epsilon, \XC, d)$.
        \item[$(ii)$] Assume there exists some $\alpha, \, C>0$ such that $\NC(\epsilon, \XC, d)\geq c \, \epsilon^{-\alpha}$ for  all $\epsilon \in (0,\diam(\XC)]$. Then, for any integer $k\in \NN$ the packing radius (recall its definition in \Cref{fn:packingRadius}) is lower bounded by $\mathcal{R}(k, \XC, d\wedge C)\geq C\wedge \left(\frac{1}{2}(c/k)^{1/\alpha}\right)$, and it holds $\mathcal{R}(2, \XC, d\wedge C) = C \wedge \diam(\XC)$. 
    \end{enumerate}
    \end{lemma}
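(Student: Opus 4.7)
Part $(i)$ is a straightforward case split: if $\epsilon \geq C$ then $d \wedge C \leq C \leq \epsilon$ so one point forms an $\epsilon$-net and the covering number equals $1$; if $\epsilon < C$, then $d(x,y) \wedge C \leq \epsilon$ is equivalent to $d(x,y) \leq \epsilon$, because $d(x,y) > \epsilon$ forces $\min(d(x,y),C) > \epsilon$ regardless of the ordering of $d(x,y)$ and $C$. So the $\epsilon$-balls in the two metrics coincide and the covering numbers agree.

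For part $(ii)$, the case $k = 2$ is essentially definitional since $\mathcal{R}(2, \XC, d \wedge C) = \sup_{x, y \in \XC}\bigl( d(x,y) \wedge C \bigr) = \diam(\XC) \wedge C$. For general $k$ I plan to invoke the standard packing--covering inequality: letting $\mathcal{M}(r, \XC, d)$ denote the maximal cardinality of an $r$-separated subset of $\XC$, any maximal such set is automatically an $r$-covering (otherwise one could append a further well-separated point), so $\mathcal{M}(r, \XC, d) \geq \mathcal{N}(r, \XC, d) \geq c\, r^{-\alpha}$ on $(0, \diam(\XC)]$ by assumption. Picking $r \leq (c/k)^{1/\alpha}$ then yields $k$ points with pairwise $d$-distance at least $r$ (note that $(c/k)^{1/\alpha} \leq \diam(\XC)$ always holds, since evaluating the hypothesis at $\epsilon = \diam(\XC)$ gives $c \leq \diam(\XC)^\alpha$).

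To pass from $d$ to $d \wedge C$, I would reuse the observation from $(i)$ that for $r \leq C$ the inequality $(d \wedge C)(x,y) \geq r$ is equivalent to $d(x,y) \geq r$, so any $d$-packing at scale $r \leq C$ remains a $(d \wedge C)$-packing at the same scale. Combining this with the previous paragraph delivers $\mathcal{R}(k, \XC, d \wedge C) \geq C \wedge (c/k)^{1/\alpha}$, and the extra factor $\tfrac12$ in the stated bound is a cosmetic slack absorbing any strict-versus-weak inequality convention in the packing--covering step. The only bookkeeping I anticipate is keeping the packing convention (strict vs.\ non-strict separation) and the open-versus-closed ball distinction in the maximality argument consistent; no deeper obstacle is foreseeable.
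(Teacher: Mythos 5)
Your part $(i)$ is the same argument as the paper's. For part $(ii)$, the paper dispatches the general-$k$ case by citing Lemma~6 of Niles-Weed and Rigollet (2022), which supplies $k$ points that are pairwise at least $\tfrac{1}{2}(c/k)^{1/\alpha}$-separated in $d$, and then truncates by $C$ exactly as you do. You instead prove the underlying packing fact from scratch: a maximal $r$-separated set is automatically an $r$-net, so its cardinality is at least $\NC(r,\XC,d)\ge c\,r^{-\alpha}$, and choosing $r=C\wedge(c/k)^{1/\alpha}$ (noting $(c/k)^{1/\alpha}\le\diam(\XC)$ from the hypothesis at $\epsilon=\diam(\XC)$, and $r\le C$ so the $d$-packing survives truncation) produces $k$ points with pairwise $(d\wedge C)$-distance at least $r$. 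This is more self-contained — it avoids the external citation — and it actually yields $\mathcal{R}(k,\XC,d\wedge C)\ge C\wedge(c/k)^{1/\alpha}$, i.e., without the factor $\tfrac12$; you are right that the $\tfrac12$ in the lemma statement is slack given the paper's closed-ball convention for covering numbers (the factor presumably originates from the differing conventions in the cited reference). The $k=2$ case is handled the same way in both: $\sup_{x,y}\bigl(d(x,y)\wedge C\bigr)=\diam(\XC)\wedge C$ because $t\mapsto t\wedge C$ is monotone. Your argument is complete and correct; the only thing I'd tighten is the claim that the $\tfrac12$ absorbs ``strict-vs-weak'' bookkeeping — under the paper's definitions no such loss occurs, so the factor is pure slack rather than a needed cushion.
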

    \begin{proof}
    For Assertion $(i)$ we note by definition of covering numbers for $\epsilon \geq C$ that since $d(x,y)\wedge C\leq C\leq \epsilon$ for every $x,y\in \XC$, hence any single point defines an $\epsilon$-covering, so $\NC(\epsilon, \XC, d\wedge C) = 1$. Moreover, for $\epsilon< C$ consider a minimal $\epsilon$-covering $\{x_1, \dots, x_m\}$ of $\XC$ with respect to $d$. Then $\sup_{x\in \XC}\min_{i = 1, \dots, m}d(x,x_i)\leq \epsilon$ and hence $\sup_{x\in \XC}\min_{i = 1, \dots, m}d(x,x_i)\wedge C\leq \epsilon\wedge C = \epsilon$, which yields $\NC(\epsilon, \XC, d\wedge C)\leq \NC(\epsilon, \XC, d)$. Conversely, note that a  minimal $\epsilon$-covering $\{x_1, \dots, x_m\}$ of $\XC$ with respect to $d\wedge C$ also defines an $\epsilon$-covering of $\XC$ with respect to $d$. Thus, for $\epsilon< C$ we have $\NC(\epsilon, \XC, d\wedge C) = \NC(\epsilon, \XC, d)$. 
    
    To show Assertion $(ii)$ we first note by \citet[Lemma 6]{niles2019estimation} that there exist by assumption $k$ elements $x_1, \dots, x_k\in \XC$ such that $\min_{i\neq j} d(x_i,x_j)\geq \frac{1}{2}(c/k)^{1/\alpha}$. This implies  $\min_{i\neq j} d(x_i,x_j)\wedge C\geq C\wedge \left(\frac{1}{2}(c/k)^{1/\alpha}\right)$ and yields the claim. The special case for $k = 2$ follows from $\max_{x,y\in \XC} d(x,y) = \diam(\XC)$. 
    \end{proof}
    
\begin{lemma} \label{lem:sum_Var_and_cov}
	For a sequence $T_1, T_2, \dots$ of bounded, identically distributed random variables, such that for $t\in \NN$ it holds $\Cov(T_1, T_t) = t^a$ with $a > -1$, it holds 
	$$
		\Var\left(\sum_{i=1}^t T_i\right) = t \, \Var(T_1) + 2 \sum_{m=1}^{t-1} (t-m) m^a \asymp t \, \Var(T_1) + \frac{2}{(a+1)(a+2)} t^{a + 2}
	$$
	for large $t$.
\end{lemma}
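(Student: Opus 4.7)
\textbf{Proof plan for Lemma \ref{lem:sum_Var_and_cov}.} The identity will come straight from expanding the variance. Interpreting the covariance hypothesis in the natural (second-order stationary) sense, $\Cov(T_i,T_j) = |i-j|^a$ whenever $i\neq j$, I would write
\begin{equation*}
  \Var\!\Bigl(\sum_{i=1}^t T_i\Bigr) \;=\; \sum_{i=1}^t \Var(T_i) \;+\; 2\!\!\sum_{1\leq i<j\leq t}\!\Cov(T_i,T_j) \;=\; t\,\Var(T_1) \;+\; 2\sum_{m=1}^{t-1}(t-m)\,m^a,
\end{equation*}
where the last step reindexes the double sum by the lag $m=j-i$ and uses that each lag $m \in \{1,\dots,t-1\}$ is attained by exactly $t-m$ pairs. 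This settles the equality.

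For the asymptotic equivalence, only the tail term $S(t) := \sum_{m=1}^{t-1}(t-m)m^a$ needs attention, and I would compare it with the integral $I(t) := \int_0^t (t-x)\,x^a\,dx$. A direct computation gives
\begin{equation*}
   I(t) \;=\; t\cdot\frac{t^{a+1}}{a+1} - \frac{t^{a+2}}{a+2} \;=\; \frac{t^{a+2}}{(a+1)(a+2)},
\end{equation*}
which is finite and positive precisely because $a>-1$. The map $x\mapsto (t-x)x^a$ is, on $[0,t]$, either monotone or unimodal (depending on the sign of $a$), so the standard sum–integral sandwich yields $S(t) = I(t) + O(t^{a+1})$ as $t\to\infty$; the error is of lower order than $t^{a+2}$ since $a+1 < a+2$. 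Consequently $S(t) \asymp \frac{1}{(a+1)(a+2)}\,t^{a+2}$ for large $t$, and combined with the exact identity above this yields the claimed asymptotic equivalence.

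The only mild subtlety is handling the cases $-1<a<0$ (where $m^a$ is singular near the lower endpoint of the integral but $I(t)$ is still finite) and $a=0$ (where the sum is exactly $t(t-1)/2$); both are absorbed by the integral comparison, so no separate case analysis is really required. There is no hard step: the proof is essentially a one-line expansion plus a standard Riemann-sum estimate.
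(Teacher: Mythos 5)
Your proposal is correct and follows the same skeleton as the paper's proof: expand the variance, reindex the covariance double sum by the lag $m$ to obtain $S(t)=\sum_{m=1}^{t-1}(t-m)m^a$, and then extract the asymptotics by a sum--integral comparison. The one genuine difference is in that last step: the paper first splits $S(t)=t\sum_{N=1}^{t-1}N^a-\sum_{N=1}^{t-1}N^{a+1}$ and applies the integral test to each of the two \emph{monotone} monomial sums separately, whereas you compare the whole sum directly against $\int_0^t(t-x)x^a\,dx$, which forces you to argue unimodality/monotonicity of $x\mapsto(t-x)x^a$. Both routes work and give the same leading term; the paper's split is slightly cleaner because each piece is monotone and the standard integral test applies without case distinctions. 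One minor slip on your side: for $-1<a<0$ the integrand is monotone decreasing and the boundary contribution $\int_0^1(t-x)x^a\,dx=\Theta(t)$, so the sum--integral discrepancy is $O(t)$, not $O(t^{a+1})$ as you state (indeed $t^{a+1}<t$ in that regime, so you are overclaiming). This does not affect the conclusion because $t=o(t^{a+2})$ whenever $a>-1$, but the stated error exponent is wrong for negative $a$.
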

\begin{proof}
	Observe that
	$$
		\Var\left(\sum_{i=1}^t T_i\right) = \sum_{i=1}^t \Var\left( T_i\right)  + 2 \sum_{1 \leq n < m \leq t} \Cov(T_n, T_m) = t \,\Var\left( T_1\right) + 2 \sum_{1 \leq n < m \leq t} (m-n)^a.
	$$
	For each $N \coloneqq n-m$ there are exactly $t-N$ ordered pairs $(n,m)$ with this gap, so we can rewrite the sum as
	$$
		\sum_{1 \leq n < m \leq t} (m-n+1)^a = \sum_{N=1}^{t-1} (t - N)N^a = t \sum_{N=1}^{t-1} N^a - \sum_{N=1}^{t-1} N^{a+1},
	$$
	and, using the integral test, we conclude that
	$$
		\Var\left(\sum_{i=1}^t T_i\right) = t \Var\left( T_1\right) + \frac{2}{(a+1)(a+2)} t^{a+2} + o(t^{a+2})
	$$
	as $t \to \infty$ as desired.
\end{proof}

\begin{lemma}\label{lem:PoissonRateLowerbound}
Let $N_t, M_t\sim \Poi(t)$ be independent Poisson random variables with parameter $t\geq 1$. Then, for $\alpha>0$ and $\beta\geq 0$ it follows for $t$ sufficiently large and every $C>0$ that 
\begin{align*}
    &\quad \EE\left[ C\wedge \left((N_t+1)^{-\alpha}\log(N_t+2)^{-\beta}\right) \right]\\
   &\geq \EE\left[ C\wedge \left(([N_t\vee M_t]+1)^{-\alpha}\log([N_t\vee M_t]+2)^{-\beta}\right) \right]\\
   &\gtrsim_{\alpha, \beta} C\wedge \left(t^{-\alpha}\log(t)^{-\beta}\right).
\end{align*}
\end{lemma}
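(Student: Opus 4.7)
\medskip
\noindent\textbf{Proof proposal.} The first inequality is immediate from monotonicity: the map $x \mapsto (x+1)^{-\alpha}\log(x+2)^{-\beta}$ is non-increasing on $[0,\infty)$ (since both $(x+1)^{-\alpha}$ and $\log(x+2)^{-\beta}$ are), and taking the minimum with $C$ preserves this monotonicity. Since $N_t \leq N_t \vee M_t$ pointwise, the stated pointwise inequality between the two integrands holds and thus the first inequality follows by taking expectations.

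For the second (nontrivial) inequality, the plan is to exploit Poisson concentration: $N_t \vee M_t$ cannot be much larger than $t$ with substantial probability. Concretely, I would establish that there exist $t_0 \geq e$ and a universal constant $p_0 > 0$ such that $\PP(N_t \vee M_t \leq 2t) \geq p_0$ for all $t \geq t_0$. This follows from a Chernoff-type bound for Poisson random variables giving $\PP(N_t > 2t) \leq e^{-c t}$ for some $c > 0$, so by independence $\PP(N_t \vee M_t \leq 2t) = \PP(N_t \leq 2t)^2 \geq (1-e^{-ct})^2 \to 1$ as $t \to \infty$; pick $p_0 = 1/4$ say.

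On the event $\{N_t \vee M_t \leq 2t\}$ and for $t \geq t_0$, we have
\begin{equation*}
  (N_t \vee M_t + 1)^{-\alpha}\log(N_t \vee M_t + 2)^{-\beta} \geq (2t+1)^{-\alpha}\log(2t+2)^{-\beta} \geq c_{\alpha,\beta}\, t^{-\alpha}\log(t)^{-\beta},
\end{equation*}
for a constant $c_{\alpha,\beta} > 0$ depending only on $\alpha$ and $\beta$ (the $\log$-factor is handled using $\log(2t+2) \leq 2\log(t)$ for $t$ large, and the polynomial factor via $2t+1 \leq 3t$). Consequently, on this event,
\begin{equation*}
  C \wedge \bigl((N_t \vee M_t + 1)^{-\alpha}\log(N_t \vee M_t + 2)^{-\beta}\bigr) \;\geq\; C \wedge \bigl(c_{\alpha,\beta}\, t^{-\alpha}\log(t)^{-\beta}\bigr) \;\geq\; (1 \wedge c_{\alpha,\beta})\, \bigl(C \wedge t^{-\alpha}\log(t)^{-\beta}\bigr).
\end{equation*}

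Taking expectations and using the probability bound gives
\begin{equation*}
  \EE\bigl[C \wedge ([N_t\vee M_t]+1)^{-\alpha}\log([N_t\vee M_t]+2)^{-\beta}\bigr] \;\geq\; p_0 \, (1 \wedge c_{\alpha,\beta})\, \bigl(C \wedge t^{-\alpha}\log(t)^{-\beta}\bigr),
\end{equation*}
which is the claimed bound. There is no real obstacle; the main care is in choosing the threshold (I used $2t$ for simplicity) and verifying that the constants work out with the $\log(t)^{-\beta}$ factor, both of which are elementary.
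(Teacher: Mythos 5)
Your proof is correct, and it takes a genuinely different and more elementary route than the paper's. The paper establishes the second inequality via a sequence of central limit theorem and delta method arguments: it first derives the precise asymptotic expansion
\begin{equation*}
    ([N_t\vee M_t]+1)^{-\alpha}\log([N_t\vee M_t]+2)^{-\beta} = t^{-\alpha}\log(t)^{-\beta} + \mathcal{O}_p\bigl(t^{-1/2 -\alpha}\log(t)^{-\beta}\bigr),
\end{equation*}
applying the delta method repeatedly to $(N_t+\eta, M_t+\eta)/t$ for the (directionally differentiable) maximum, for the power map, and for the logarithm, before concluding via Markov's inequality. You instead use a one-line Chernoff-type bound to get $\PP(N_t \vee M_t \leq 2t) \geq p_0$, then crudely bound the integrand on that event. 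Your argument is shorter and requires only a tail bound rather than a weak convergence result, at the cost of not producing the precise first-order expansion the paper's display gives. Since the lemma only asserts a one-sided $\gtrsim_{\alpha,\beta}$ bound, the extra precision of the paper's approach is not needed here, and your concentration-based argument is arguably the more natural one for this purpose. Both approaches rely on the same final step: bounding $C \wedge (c\,a) \geq (1 \wedge c)(C \wedge a)$ and multiplying by a constant probability.
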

\begin{proof} 
    The first inequality follows since $N_t \vee M_t \geq N_t$ (deterministically) and the function $t \in [0,\infty)\mapsto C\wedge (t+1)^{-\alpha}\log(t+2)^{-\beta}$ is non-increasing. For the second inequality, we show in the following for $t$ sufficiently large that \begin{align}\label{eq:asympBehaviorN}
  ([N_t\vee M_t]+1)^{-\alpha}\log([N_t\vee M_t]+2)^{-\beta} = t^{-\alpha}\log(t)^{-\beta} + \mathcal{O}_p(t^{-1/2 -\alpha}\log(t)^{-\beta}).
\end{align}
To this end, note by the central limit theorem for Poisson random variables in conjunction with Slutzky's lemma for $t \to \infty$ and $\eta\geq 0$ fixed that $$t^{1/2}\left(\frac{(N_t+ \eta ,M_t+ \eta)^\intercal}{t}- 1\right) %
\konvD \mathcal{N}(0,\textup{Diag}(1,1)).$$ By the delta method for the directionally differentiable maximum function $(x,y) \in \RR^2 \mapsto (x \vee y)$ we thus obtain for $t\to \infty$ that
\begin{align}\label{eq:WeakLimitPoisson}
    t^{1/2}\left(\frac{[N_t\vee M_t]+ \eta}{t}- 1\right) %
\konvD  \mathcal{N}_2(0,1),
\end{align}
where $\mathcal{N}_2(0,\sigma^2)$ represents the distribution of the maximum of two independent centered normal random variables with variance $\sigma^2>0$. Invoking the delta method for \eqref{eq:WeakLimitPoisson} with $\eta = 1$ and the function $x\in (0,\infty)\mapsto x^{-\alpha}$, it follows that
\begin{align}\label{eq:PowerLimitNonZero}
	t^{1/2}\left(([N_t\vee M_t]/t + 1/t)^{-\alpha} - 1\right)\konvD \mathcal{N}_2(0,\alpha^2). 
\end{align}
Utilizing the delta method once again on \eqref{eq:WeakLimitPoisson} with $\eta=2$ for the map $x\in (0,\infty)\mapsto \log(x)$ it follows $t^{-1/2}\log(([N_t\vee M_t] + 2)/t) \to \mathcal{N}_2(0,1)$, and by another application of the delta method for $x\mapsto (1+x)^{-\beta}$ it follows for $\beta\geq 0$ and $t\to \infty$ that 
\begin{align*}
    & t^{1/2}\log(t)^{1+\beta} \left(\log([N_t\vee M_t]+2)^{-\beta}-\log(t)^{-\beta}\right)\\=\;  &t^{1/2}\log(t)\left(\left(1+ \frac{\log\left(\frac{[N_t\vee M_t]+2}{t}\right)}{\log(t)}\right)^{-\beta} - 1\right) \konvD \mathcal{N}_2(0,\beta^2).
\end{align*}
Slutzky's lemma thus implies for $t\to \infty$ convergence in probability \begin{align}\label{eq:LogLimitZero}
  t^{1/2}\left(\left(1+ \frac{\log\left(\frac{[N_t\vee M_t]+2}{t}\right)}{\log(t+1)}\right)^{-\beta} - 1\right) \konvP 0.
\end{align}
Combining \eqref{eq:PowerLimitNonZero} and \eqref{eq:LogLimitZero} with the delta method for the map $(x,y) \in \RR^2\mapsto x\cdot y$ we obtain  
\begin{align*}
  & t^{1/2 + \alpha}\log(t+1)^{\beta}\left( ([N_t\vee M_t]+1)^{-\alpha}\log([N_t\vee M_t]+2)^{-\beta} - t^{-\alpha}\log(t)^{-\beta} \right) \\
  = \;& t^{1/2}\left(\left(\frac{[N_t\vee M_t] +1}{t}\right)^{-\alpha}\left(1+ \frac{\log\left(\frac{[N_t\vee M_t]+2}{t}\right)}{\log(t)}\right)^{-\beta} - 1\right) \konvD \mathcal{N}_2(0,\alpha^2), 
\end{align*}
which confirms the validity of \eqref{eq:asympBehaviorN}. In consequence, there exist some positive constants $\kappa= \kappa(\alpha, \beta)\in (0,1)$ and $T= T(\alpha, \beta)\in [1,\infty)$ such that for all $t\geq T$ it holds with probability at least $\kappa$ that 
\begin{align*}
 ([N_t\vee M_t]+1)^{-\alpha}\log([N_t\vee M_t]+2)^{-\beta} &\geq t^{-\alpha}\log(t)^{-\beta} -t^{-1/2 -\alpha}\log(t)^{-\beta}/2\\
 &=  t^{-\alpha}\log(t)^{-\beta}(1- t^{-1/2}/2)\\
 &\geq 2^{-1} t^{-\alpha}\log(t)^{-\beta}. 
\end{align*}
Hence, for every $C>0$ and $t\geq T$ we infer that with probability at least $\kappa$ it holds \begin{align*}
 C\wedge  \left(([N_t\vee M_t]+1)^{-\alpha}\log([N_t\vee M_t]+2)^{-\beta}\right) &\geq C\wedge \left( 2^{-1} t^{-\alpha}\log(t)^{-\beta}\right)\\
 &\gtrsim C\wedge \left( t^{-\alpha}\log(t)^{-\beta}\right),
\end{align*}
The assertion now follows at once from Markov's inequality since for $t\geq T$ we have 
\begin{align*}
  \EE\left[ C\wedge \left(([N_t\vee M_t]+1)^{-\alpha}\log([N_t\vee M_t]+2)^{-\beta}\right) \right] &\geq \kappa C\wedge \left( t^{-\alpha}\log(t)^{-\beta}\right) \\ &\gtrsim_{\alpha, \beta}C\wedge \left( t^{-\alpha}\log(t)^{-\beta}\right). \qedhere
\end{align*}
\end{proof}

 \newpage
\addcontentsline{toc}{section}{References}
\small

\end{document}